\newcommand{\R}{\mathbb{R}} \newcommand{\N}{\mathbb{N}}              
 \newcommand{\C}{\mathbb{C}}      
\newcommand{\Rn}{{\R}^n}
\newcommand{\cd}{{\mathcal D}}
\newcommand{\ce}{{\mathcal E}}
\newcommand{\cs}{{\mathcal S}}
\newcommand{\cf}{{\mathcal F}}
\newcommand{\cfi}{{\mathcal F}^{-1}}
\providecommand{\cal}[1]{\mathcal{#1}}
\newcommand{\loc}{\operatorname{loc}}
\newcommand{\id}{\operatorname{id}}
\newcommand{\ext}{\operatorname{ext}}
\newcommand{\univ}{\operatorname{u}}
\newcommand{\ind}[1]{\ensuremath{\mathbbm{1}_{#1}}}
\newcommand{\supp}{{\operatorname{supp}}\,}
\newcommand{\dist}{{\operatorname{dist}}\,}
\newtheorem*{theorem*}{{\bf Theorem\/}}
\newtheorem{theorem}{Theorem}\numberwithin{theorem}{section}
\newtheorem{corollary}[theorem]{Corollary}
\newtheorem{lemma}[theorem]{Lemma}
\newtheorem{proposition}[theorem]{Proposition}
\theoremstyle{definition}
\newtheorem{definition}[theorem]{Definition}
\newtheorem{remark}[theorem]{Remark}
\begin{document}

\title[Traces and Mixed Norms]{%
Anisotropic Lizorkin--Triebel Spaces with Mixed Norms --- Traces on Smooth Boundaries}
\author[Johnsen, Munch Hansen, Sickel]{%
J.~Johnsen,\address{Department of Mathematical Sciences, Aalborg University,
Fredrik Bajers Vej 7G, DK-9220 Aalborg {\O}st, Denmark} 
\email{jjohnsen@math.aau.dk} 
S.~Munch~Hansen,
\address{Department of Mathematical Sciences, Aalborg University,
Fredrik Bajers Vej 7G, DK-9220 Aalborg {\O}st, Denmark} 
\email{sabrina\underline{~}privat@hotmail.com}
 W.~Sickel
\address{Mathematisches Institut, Ernst-Abbe-Platz 2, D-07740 Jena, Germany} 
\email{Winfried.Sickel@uni-jena.de}
}
\enlargethispage{2\baselineskip}
\thanks{J.~Johnsen and S.~Munch Hansen were supported by the 
Danish Council for Independent Research, Natural Sciences (Grant no.~11-106598)
\\[5\jot]
{\tt Accepted by Mathematische Nachrichten, June 2014}}

\keywords{Trace operators, mixed norms, cylindrical domains, parabolic boundary problems}

 \begin{abstract}
This article deals with trace operators on anisotropic Lizorkin--Triebel spaces with 
mixed norms over cylindrical domains with smooth boundary.
As a preparation we include a rather self-contained exposition of 
Lizorkin--Triebel spaces on manifolds and extend these results to
mixed-norm Lizorkin--Triebel spaces on cylinders in Euclidean space. 
In addition Rychkov's universal extension operator for a half space is shown to be bounded with respect 
to the mixed norms, and a support preserving right-inverse of the trace is given explicitly
and proved to be continuous in the scale of mixed-norm Lizorkin--Triebel spaces.
As an application, the heat equation is considered in these spaces,
and the necessary compatibility conditions on the data are deduced.
 \end{abstract}
 
\maketitle
\sloppy
\section{Introduction}
The present paper departs from the work~\cite{JS08} of the first and
third author dealing with traces on hyperplanes of 
anisotropic Lizorkin--Triebel spaces $F^{s,\vec a}_{\vec p,q}(\Rn)$ with mixed norms. 

The application of these spaces to parabolic differential equations is to some extent known. It
was outlined in the introduction to~\cite{JS08} how they can apply to fully inhomogeneous initial and
boundary value problems: for such problems the $F^{s,\vec a}_{\vec p,q}$-spaces are in general 
\textit{inevitable} for a correct description of the boundary data. Previously, a somewhat similar conclusion
was obtained in works of Weidemaier \cite{wei98,wei02,wei05} (and also
by Denk, Hieber and Pr\"{u}ss \cite{DHP07}). He discovered the necessity of  
isotropic Lizorkin--Triebel spaces (for vector-valued functions) for an optimal description of the
time regularity of the boundary data. However, with integral exponents $p_x$ and $p_t$ in the space and time
directions, respectively, Weidemaier worked under the technical restriction that $p_x \le p_t$. 

For the reader's sake, it is recalled that the main purpose of \cite{JS08} was to 
extend the classical theory of trace operators to the $F^{s,\vec a}_{\vec p,q}$-scales.
However, because the mixed norms do not allow a change of
integration order, this meant that the techniques had to be worked out both for the `inner' and
`outer' traces given on, say smooth functions as
\begin{equation*}
  u(x_1,x'') \mapsto u(0,x''),\quad\text{resp.}\quad 
  u(x',x_n) \mapsto u(x',0).
\end{equation*}
When $u\in F^{s,\vec a}_{\vec p,q}(\Rn)$, then in the first case the trace was proved 
to be surjective on the mixed-norm Lizorkin--Triebel space $F^{s-a_1/p_1,a''}_{p'',p_1}(\R^{n-1})$
having the specific sum exponent $q=p_1$, while in the second case the trace space is (as usual)
a Besov space, namely $B^{s-a_n/p_n,a'}_{p',p_n}(\R^{n-1})$.

Previously Berkola{\u\i}ko~\cite{Ber85} obtained such results for the classical range
$1<p_j, q<\infty$.
As indicated, only traces on hyperplanes were covered in~\cite{JS08}; but the study included (almost) necessary and
sufficient conditions on $s$ in relation to $\vec a$, $\vec p$ and $q$, also in
combination with normal derivatives (Cauchy traces), and existence and continuity of
right-inverses. Furthermore,  Weidemaier's restriction on the integral
exponents was never encountered with the framework and methods adopted
in~\cite{JS08}. 

These investigations in~\cite{JS08} are in this work followed up with
a general study of trace operators and their right-inverses in the scales $F^{s,\vec a}_{\vec p,q}$
of anisotropic Lizorkin--Triebel spaces with mixed norms defined on smooth open cylinders $\Omega\times
I$ (where $I:=\,]0,T[\,$) and their curved boundaries $\Gamma\times I$ ($\Gamma:=\partial\Omega$).

In doing so, it is a main technical question to obtain invariance of the spaces $\overline F^{s,\vec a}_{\vec p,q}(U)$
under the map $f\mapsto f\circ\sigma$, when $U\subset\Rn$ is open and $\sigma$ is a $C^\infty$-bijection.
We addressed this question in our joint paper~\cite{HJS13b}, where we proved invariance e.g.~under the restriction that 
$\sigma$ only affects groups of coordinates $x_j$ for which the corresponding $p_j$ are equal in the vector of integral exponents $\vec p= ( p_1,\ldots,p_n)$; and similarly for the moduli of anisotropy $a_j$. 

This was done by generalising Triebel's method in~\cite[4.3.2]{tri92}. Indeed, having reduced to large $s$ using a lift operator, 
it relies on Taylor expansion of the inner and outer functions, whereby 
most terms are manageable when the $F^{s,\vec a}_{\vec p,q}$-spaces are normed via kernels of localised means developed in~\cite{HJS13a};
an underlying parameter-dependent estimate obtained in~\cite{HJS13a} finally gives control over the effects of the Jacobian matrices.

In this paper, we proceed to develop the consequences for trace operators. 
E.g.~the trace $r_0$ at $\{t=0\}$ of $u\in \overline F^{s,\vec  a}_{\vec p,q}(\Omega\times I)$  is given a
meaning in a pedestrian way using an arbitrary extension of $u$ to
$\R^{n+1}$ and applying the trace at $\{t=0\}$ from~\cite{JS08}. 
In terms of the splitting $\vec p = (p',p_t)$ with all entries in
$p'$ being equal to $p_0$ and likewise for~$\vec a$, we can abbreviate our result for $r_0$
as follows:

\begin{theorem*}
When $s>\frac{a_t}{p_t}$, $1\le p_0,p_t<\infty$, $1\le q\le\infty$, the operator $r_0$ is a bounded \emph{surjection}
\begin{equation*}
  r_0: \overline F^{s,\vec a}_{\vec p,q}(\Omega\times I)\to \overline B^{s-{a_t}/{p_t},a'}_{p',p_t}(\Omega).
\end{equation*}
Furthermore, $r_0$ has a right-inverse $K_0$ going the opposite way and it is bounded for every $s\in\R$,
\begin{equation*}
K_0: \overline B^{s-{a_t}/{p_t},a'}_{p',p_t}(\Omega)\to \overline F^{s,\vec a}_{\vec p,q}(\Omega\times I).
\end{equation*}
\end{theorem*}
The classical borderline $s=1/p$ is recovered from this in the isotropic case, 
as $\vec a=(1,\dots,1)$ then.
But the full statement in Theorem~\ref{main11} below requires $s$ to be larger 
(by $\frac{a_0}{p_0}-a_0$) if $0<p_0<1$, 
so for such $p_0$ even the borderline $s=a_t/p_t$ is shifted upwards.

It is more involved to give meaning to the trace $\gamma$ of $u$ at the curved boundary
$\Gamma\times I$, since it
requires to work locally first and then observe that the local pieces together give a globally 
defined trace. 
Using the splitting $\vec p=(p_0,p'')$, where $p''=(p_0,\dots,p_0,p_t)$ and
likewise for~$\vec a$, we may state the

\begin{theorem*}
When $\partial\Omega$ is compact and $s>\frac{a_0}{p_0}$, $1\le p_0,p_t<\infty$, $1\le q\le\infty$, 
then $\gamma$ is a bounded \emph{surjection}
\begin{equation*}
\gamma: \overline F^{s,\vec a}_{\vec p,q}(\Omega \times I) \to \overline F^{s-a_0/p_0,a''}_{p'',p_0}(\Gamma\times I).
\end{equation*}
Furthermore, $\gamma$ has a right-inverse $K_\gamma$ going the opposite way and it is bounded for every $s\in\R$,
\begin{equation*}
K_\gamma: \overline F^{s-a_0/p_0,a''}_{p'',p_0}(\Gamma\times I)\to \overline F^{s,\vec a}_{\vec p,q}(\Omega \times I).
\end{equation*}
\end{theorem*}

Here Theorem~\ref{thm:boundednessCurvedTrace} contains a stronger condition on $s$ if any of $p_0$,
$p_t$ or $q$ are given in $\,]0,1[\,$.

Note that the sum exponent of the codomain inherits the value $q=p_0$ after the normal variable (say
$x_1$ or $x_n$) has been eliminated by restriction to the boundary. While that is analogous to the
case for $r_0$ above ($q=p_t$), we should emphasise that for $p_t\ne p_0$ the trace space for
$\gamma$ does not just have a mixed-norm, it is moreover \emph{outside} the Besov scale because it equals
the Lizorkin--Triebel space $F^{s-a_0/p_0,a''}_{p'',p_0}$.

The operator $K_\gamma$ is constructed using the right-inverse in~\cite[Thm.~2.6]{JS08} to the trace
at $\{x_1=0\}$ and Rychkov's universal extension operator \cite{ryc99ext}. The latter is modified
to a version  ${\cal E}_{\operatorname{u}}$ with good properties in
anisotropic, mixed-norm Lizorkin--Triebel spaces over half-spaces in
Theorem~\ref{thm:rychkovExtension} below.

As a novelty, from the construction of ${\cal E}_{\operatorname{u}}$,
we derive in Theorem~\ref{thm:constructionOfRightInverseOnCylinderPerservingSupport} an \emph{explicit}  
construction of an operator $Q_\Omega$ going from $\Rn$ to $\R^{n+1}$,
which in $\Omega$ has $r_0$ as a left-inverse and yet it preserves support 
with respect to the $x$-variable:
\begin{equation*}
Q_\Omega: \overset{\circ}{B}{}^{s,a'}_{p',p_t}(\overline\Omega)\to \overset{\circ}{F}{}^{s,\vec a}_{\!\vec p,q}(\overline\Omega\times\R) \quad\text{for all }s\in\R.
\end{equation*}
This is important for reduction of parabolic problems to homogeneous
ones: e.g.\ surjectivity of $\gamma$ allows to get zero data on $\Gamma\times I$,
and $Q_\Omega$ gives a further reduction to zero initial data; cf.~Remark~\ref{rem:compatibility}.

Indeed, after an analysis of traces at the curved corner $\Gamma\times \{0\}$ of the cylinder
$\Omega\times I$, we follow up in Theorem~\ref{thm:compatibility} by extending
the necessity of the compatibility conditions of Grubb and Solonnikov~\cite{GS90} 
to solutions in the mixed-norm Lizorkin--Triebel spaces of the heat equation. 

\subsection*{Contents}
Section~\ref{sec:preliminariesMathNr} contains a review of our notation and the definition of anisotropic 
Lizorkin--Triebel spaces with mixed norms is recalled, together with some needed properties 
and a pointwise multiplier assertion.
Moreover, a basic lemma for elements in $F^{s,\vec a}_{\vec p,q}$ with e.g.\ compact support on
cross sections of the cylindrical domain is proved. 

In Section~\ref{sec:approach1MathNr} sufficient conditions for $f \mapsto f \circ \sigma$
to leave the spaces
$F^{s,\vec a}_{\vec p,q}(\Rn)$ invariant for a certain range of the parameters,
including negative values of $s$, are recalled.

Section~\ref{chap:domains} contains first a preparatory treatment of unmixed Lizorkin--Triebel spaces on general $C^\infty$-manifolds
and these results are then extended to $F^{s,\vec a}_{\vec p,q}$-spaces on the curved boundary of a cylinder.

Rychkov's universal extension operator in~\cite{ryc99ext} is modified to $\overline F^{s,\vec a}_{\!\vec p,q}(\Rn_+)$ in Section~\ref{sec:rychkovExtension}. 
Moreover, its properties on temperate distributions are analysed in addition.

Finally, Section~\ref{traces} contains a discussion of the trace at the flat as well as at the curved boundary of a cylindrical domain, including applications to e.g.\ the Dirichlet boundary problem for the heat equation.

\section{Preliminaries}
\label{sec:preliminariesMathNr}
\subsection{Notation}
The Schwartz space $\cs(\Rn)$ consists of the rapidly decreasing $C^\infty$-functions and
it is equipped with the family of seminorms, using $D^\alpha := (-\mathrm{i}\partial_{x_1})^{\alpha_1}\cdots(-\mathrm{i}\partial_{x_n})^{\alpha_n}$ for each multi-index $\alpha =(\alpha_1,\ldots,\alpha_n)$ with $\alpha_j\in\N_0:=\N\cup\{0\}$, $\mathrm{i}^2=-1$ and $\langle x\rangle^2 := 1+|x|^2$,
\begin{equation*}
p_M(\varphi) := \sup \big\{\, \langle x\rangle^M |D^\alpha \varphi(x)|\, \big|\, x\in\Rn, |\alpha|\leq M\big\},\quad M\in\N_0.
\end{equation*}
By duality, the Fourier transformation $\cf g(\xi)=\widehat{g}(\xi) = \int_{\Rn} e^{-\mathrm{i}
  x\cdot \xi} g(x)\, dx$ for $g\in\cs(\Rn)$ extends to the dual space $\cs'(\Rn)$ of temperate
distributions. $\langle u,\psi\rangle$ denotes the value of $u\in\cs'$ on $\psi\in\cs$.

Throughout, inequalities for vectors $\vec{p}=(p_1,\ldots,p_n)$ are understood componentwise; likewise for functions,
e.g.~$\vec{p}\,!=p_1!\cdots p_n!$, while $t_+ := \max(0,t)$ for $t\in\R$.
For $0<\vec p\leq \infty$ the space $L_{\vec{p}}(\Rn)$ consists of the Lebesgue measurable functions such that 
\begin{alignat*}{1}
\|\, u \, | L_{\vec{p}}(\Rn)\| := 
\bigg(\int_{-\infty}^\infty \bigg( \ldots 
\bigg( \int_{-\infty}^\infty |u(x_1,\ldots ,x_n)|^{p_1} dx_1
\bigg)^{p_2/p_1}  
\ldots  
\bigg)^{p_n/p_{n-1}} dx_n \bigg)^{1/p_n} <\infty;
\end{alignat*}
in case $p_j=\infty$, the essential supremum over $x_j$ is used.
When equipped with this quasi-norm, $L_{\vec p}$ is a quasi-Banach space (normed if $\vec p \ge 1$);
it was considered e.g.\ by Benedek and Panzone~\cite{BePa61}.

In addition, we shall for $0 < q \le \infty$ denote by 
$L_{\vec p} (\ell_q)(\Rn)$ the space of sequences 
$(u_k)_{k\in\N_0}$ of Lebesgue measurable
functions $u_k: \Rn \to \C$ such that
\[
\| \, (u_k)_{k\in\N_0} \, |L_{\vec p} (\ell_q)(\Rn)\| :=
\bigg\| \, \Big(\sum_{k=0}^\infty |u_k|^q
\Big)^{1/q}\, \bigg|L_{\vec p} (\Rn)\bigg\| < \infty ;
\]
with supremum over $k$ in case $q=\infty$.
For brevity, $\| \, (u_k)_{k\in\N_0} \, |L_{\vec p} (\ell_q)(\Rn)\|$ 
is written
$\| \, u_k \, |L_{\vec p} (\ell_q)\| $
and when $\vec{p}=(p,\ldots,p)$, then $L_{\vec{p}}$ is simplified to $L_p$ etc.
We recall that sequences of 
$C_0^\infty$-functions are dense in $L_{\vec p} (\ell_q)$ if $\max (p_1, \ldots, p_n, q)<\infty$. 

Generic constants will be denoted by $c$ or $C$, with their dependence on certain parameters explicitly stated when 
relevant. Lastly, the closure of an open set $U\subset\Rn$ is denoted $\overline U$ and $B(0,r)$ is
the ball centered at $0$ with radius $r>0$; the dimension of the surrounding Euclidean space will be
clear from the context or otherwise stated explicitly.

\subsection{Anisotropic Lizorkin--Triebel Spaces with Mixed Norms}
This section only contains the Fourier-analytic definition of the mixed-norm Lizorkin--Triebel
spaces and a few essential properties used below. For an introduction to these spaces
we refer the reader to~\cite{JS07} and~\cite[Sec.~3]{JS08}. 

First we recall the definition of
the anisotropic distance function $|\cdot|_{\vec a}$, where $\vec{a}=(a_1,\ldots,a_n)\in [1,\infty[\,^n$, 
on $\Rn$ and some of its properties.
Using the quasi-homogeneous dilation 
$t^{\vec a}x:=(t^{a_1}x_1,\ldots,t^{a_n}x_n)$ for $t\ge0$, $|x|_{\vec a}$
is for $x\in\Rn\setminus\{0\}$ defined as the unique $t>0$ such that $t^{-\vec a}x\in S^{n-1}$ ($|0|_{\vec a}:=0$), i.e.~
\begin{equation*}
  \frac{x_1^2}{t^{2a_1}}+\dots+  \frac{x_n^2}{t^{2a_n}}=1.
\end{equation*}
For basic properties of $|\cdot|_{\vec a}$ we refer to~\cite[Sec.~3]{JS07} or \cite{Y1}.

The Fourier-analytic definition also relies on a Littlewood--Paley decomposition, i.e.~$1=\sum_{j=0}^\infty \Phi_j(\xi)$,
which is based on a (for convenience fixed) $\psi\in C_0^\infty$ such that $0\leq\psi(\xi)\leq 1$ for all $\xi$, $\psi(\xi)=1$ if $|\xi|_{\vec a}\leq 1$
and $\psi(\xi)=0$ if $|\xi|_{\vec a}\geq 3/2$. Setting $\Phi = \psi - \psi(2^{\vec a}\cdot)$, we define
\begin{equation}\label{unity}
\Phi_0 (\xi) = \psi (\xi), \qquad \Phi_j (\xi) = \Phi(2^{-j\vec a}\xi), \quad j=1,2,\ldots 
\end{equation}

\begin{definition}
The Lizorkin--Triebel space $F^{s,\vec a}_{\vec p,q}(\Rn)$ with $s\in\R$, $0<\vec{p} < \infty$ and
$0 < q \le \infty$ consists of the $u\in\cs'(\R^n)$ such that
\[
\| \, u \, | F^{s,\vec a}_{\vec p,q}(\Rn)\| := 
\bigg\|\, \bigg(\sum_{j=0}^\infty 2^{jsq} \left|\cfi \left(\Phi_j (\xi) 
\cf u(\xi)\right) ( \cdot )\right|^q \bigg)^{1/q} \, \bigg| L_{\vec{p}}(\Rn)
\bigg\| < \infty .
\]
\end{definition}

The number $q$ is a sum exponent (sometimes called the microscopic or fine index)
and the entries in $\vec p$ are integral exponents,
while $s$ is a smoothness index.
In case $\vec a = (1,\ldots,1)$, the parameter $\vec a$ is omitted.

Let us also recall (cf.\ \cite[Sec.~2.3]{HJS13b} or \cite[Prop.~2.10]{JS08})
that there is an identification with the well-known anisotropic Bessel potential spaces,
\begin{equation*}
  H^{s,\vec a}_{\vec p}(\Rn) = F^{s,\vec a}_{\vec p, 2}(\Rn) \quad\text{for $s\in\R$, $1<\vec p<\infty$}.
\end{equation*}

When studying traces on the flat boundary of a cylinder, Besov spaces are inevitable:

\begin{definition}
The Besov space $B^{s,\vec a}_{\vec p,q}(\R^n)$ with $s\in\R$ and $0<\vec{p},q\leq \infty$ consists of
the $u\in\cs'(\R^n)$ such that
\[
  \|\, u\, | B^{s,\vec a}_{\vec p,q}(\R^n) \| :=
  \bigg( \sum_{j=0}^\infty 2^{jsq}  \|\, \cfi \left(\Phi_j   \cf u\right) \, | L_{\vec{p}}(\R^n) \|^q\bigg)^{1/q} <\infty.
\]
\end{definition}

For a general reference on the mixed-norm spaces $F^{s,\vec a}_{\vec p,q}$ and $B^{s,\vec a}_{\vec p,q}$ 
the reader may consult the book of Besov, Il'in and Nikol'ski{\u\i}~\cite{BIN79,BIN96}, or that of
Schmeisser and Triebel~\cite{ScTr87} ($n=2$). The set-up with $\vec a$ was analysed by Yamazaki~\cite{Y1}.
Here we review a few facts needed below.

Both $F^{s,\vec a}_{\vec p,q}$ and $B^{s,\vec a}_{\vec p,q}$ are quasi-Banach spaces (normed if $\min(p_1,\ldots,p_n,q)\geq 1$) and the quasi-norm is subadditive when raised to the power $d:=\min(1,p_1,\ldots,p_n,q)$,
\begin{equation}\label{eq:subadditivity}
\| \, u+v \, |F^{s,\vec a}_{\vec p,q}\|^d \le 
\| \, u \, |F^{s,\vec a}_{\vec p,q}\|^d  +  \| \, v \, |F^{s,\vec a}_{\vec p,q} \|^d,\quad 
u,v \in F^{s,\vec a}_{\vec p,q}(\R^n).
\end{equation}
Different choices of the anisotropic decomposition of unity give the same space (with equivalent quasi-norms)
and there are continuous embeddings
\begin{equation}\label{eq:SFembedding}
\cs (\Rn) \hookrightarrow F^{s,\vec a}_{\vec p,q} (\Rn) \hookrightarrow \cs' (\Rn),
\end{equation}
where $\cs$ is dense in $F^{s,\vec a}_{\vec p,q}$ for $q<\infty$.
Both \eqref{eq:subadditivity} and \eqref{eq:SFembedding} hold verbatim for $B^{s,\vec a}_{\vec p,q}(\Rn)$ as well.

\begin{lemma}\label{lem:lambdaLemmaBesov}
For $\lambda>0$ so large that $\lambda \vec a\ge 1$, the spaces $B^{s,\vec a}_{\vec p,q}(\Rn)$,
$F^{s,\vec a}_{\vec p,q}(\Rn)$ coincide with $B^{\lambda s,\lambda\vec a}_{\vec p,q}(\Rn)$,
respectively $F^{\lambda s,\lambda \vec a}_{\vec p,q}(\Rn)$ and the corresponding quasi-norms are
equivalent. 
\end{lemma}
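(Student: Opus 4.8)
The plan is to reduce the claim to a change of the dyadic base in the anisotropic Littlewood--Paley decomposition, and then to run the standard maximal function argument. The hypothesis $\lambda\vec a\ge1$ merely ensures that $F^{\lambda s,\lambda\vec a}_{\vec p,q}$, $B^{\lambda s,\lambda\vec a}_{\vec p,q}$ are legitimate members of the scales. The starting observation is the elementary identity $t^{\lambda\vec a}=(t^\lambda)^{\vec a}$ for $t\ge0$, i.e.\ $|\xi|_{\lambda\vec a}=|\xi|_{\vec a}^{1/\lambda}$. Hence, if $(\tilde\Phi_j)_{j\in\N_0}$ is a family of the type \eqref{unity} built for the anisotropy $\lambda\vec a$, so that $\tilde\Phi_j=\tilde\Phi(2^{-j\lambda\vec a}\cdot)$ for $j\ge1$, then \emph{measured in $|\cdot|_{\vec a}$} it is ``$2^\lambda$-adic'': there are $0<c_0<c_1$ with $\supp\tilde\Phi_j\subset\{\,c_0\,2^{\lambda j}\le|\xi|_{\vec a}\le c_1\,2^{\lambda j}\,\}$ for $j\ge1$ and $\supp\tilde\Phi_0\subset\{|\xi|_{\vec a}\le c_1\}$; moreover, for $j\ge1$ the kernel $\cfi\tilde\Phi_j$ is the fixed Schwartz function $\cfi\tilde\Phi$ dilated anisotropically by $2^{j\lambda\vec a}$, so $\|\cfi\tilde\Phi_j\,|L_1\|$ and the weighted norms entering the maximal estimate below are bounded uniformly in $j$. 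Fix also the standard family $(\Phi_k)_{k\in\N_0}$ from \eqref{unity} for $\vec a$, with $\supp\Phi_k\subset\{2^{k-1}\le|\xi|_{\vec a}\le 3\cdot2^{k-1}\}$ for $k\ge1$. Since, as recalled above, different admissible decompositions of unity give equivalent quasi-norms, it suffices to prove a two-sided estimate between $\|u\,|F^{\lambda s,\lambda\vec a}_{\vec p,q}\|$ computed from $(\tilde\Phi_j)$ and $\|u\,|F^{s,\vec a}_{\vec p,q}\|$ computed from $(\Phi_k)$; the Besov case is analogous and in fact simpler, so I would write up only the $F$-scale.

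Next I would record the overlap combinatorics. Because $\supp\tilde\Phi_j$ is concentrated at $|\xi|_{\vec a}\sim2^{\lambda j}$, it meets $\supp\Phi_k$ only for $k$ in a set $I_j\subset\N_0$ with $\#I_j\le N$; dually, each $k$ lies in $I_j$ for at most $M$ values of $j$ ($N,M$ depending on $\lambda,\vec a$ but not on the running indices), and $k\in I_j$ forces $|k-\lambda j|\le c$, whence $2^{\lambda j s}\le c\,2^{ks}$ and $2^{ks}\le c\,2^{\lambda j s}$. From $\sum_k\Phi_k\equiv1$, resp.\ $\sum_j\tilde\Phi_j\equiv1$, one gets
\[
\tilde\Phi_j(D)u=\sum_{k\in I_j}\tilde\Phi_j(D)\Phi_k(D)u,\qquad
\Phi_k(D)u=\sum_{j:\,k\in I_j}\Phi_k(D)\tilde\Phi_j(D)u .
\]
The one genuinely analytic ingredient is that each summand is dominated pointwise by a maximal function of its ``inner'' term: since $\widehat{\Phi_k(D)u}$ is supported in $\{|\xi|_{\vec a}\le3\cdot2^{k-1}\}$ and, for $k\in I_j$, the kernel $\cfi\tilde\Phi_j$ matches the frequency scale $2^k$ (by the weight comparison), the standard estimate for convolution of a band-limited function against a rapidly decaying kernel yields
$|\tilde\Phi_j(D)\Phi_k(D)u(x)|\le c\,(\Phi_k(D)u)^{*}(x)$,
with $(\Phi_k(D)u)^{*}$ the Peetre maximal function adapted to $\vec a$ at scale $2^k$ and $c$ independent of $j,k,u$; and symmetrically with the two families interchanged. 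These anisotropic, mixed-norm maximal estimates and the accompanying Fefferman--Stein inequality in $L_{\vec p}(\ell_q)$, valid for $0<\vec p<\infty$, $0<q\le\infty$, are available from \cite{JS07} (see also \cite{HJS13a}).

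With these two inputs the estimate assembles routinely. For $\|u\,|F^{\lambda s,\lambda\vec a}_{\vec p,q}\|$ one has $2^{\lambda j s}|\tilde\Phi_j(D)u|\le c\sum_{k\in I_j}2^{ks}(\Phi_k(D)u)^{*}$; raising the $L_{\vec p}(\ell_q)$-quasi-norm (the $\ell_q$-sum over $j$) to the power $d=\min(1,p_1,\dots,p_n,q)$ and using \eqref{eq:subadditivity} at the sequence level, the bounded overlap ($\#I_j\le N$, each $k$ in at most $M$ sets $I_j$) turns the double sum into $\|2^{ks}(\Phi_k(D)u)^{*}\,|L_{\vec p}(\ell_q)\|$, which the Fefferman--Stein inequality bounds by $c\,\|2^{ks}\Phi_k(D)u\,|L_{\vec p}(\ell_q)\|=c\,\|u\,|F^{s,\vec a}_{\vec p,q}\|$. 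The reverse inequality comes out in the same way from the second identity above, now using $2^{ks}\le c\,2^{\lambda j s}$ and the maximal estimate for the band-limited functions $\tilde\Phi_j(D)u$. Combining the two gives the asserted equivalence of quasi-norms for $F$; for $B$ one repeats the argument with $L_{\vec p}(\ell_q)$ replaced by $\ell_q(L_{\vec p})$, where the convolutions can be estimated by Young's (Minkowski's) inequality alone, so the Fefferman--Stein step drops out.

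I expect the only real obstacle to be organisational rather than conceptual: one must keep the two index families $I_j$ and $\{j:k\in I_j\}$, the weight comparisons $2^{\lambda j s}\sim2^{ks}$, and the low-frequency terms $\tilde\Phi_0,\Phi_0$ (each overlapping only a bounded number of the other family's pieces) under simultaneous control. If a fully self-contained account were required, the secondary obstacle would be the verification of the anisotropic Peetre maximal estimate and its vector-valued $L_{\vec p}(\ell_q)$-counterpart in the mixed-norm setting; but these are exactly the tools imported from \cite{JS07,HJS13a}, so I would simply cite them.
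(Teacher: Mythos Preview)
Your proposal is correct and follows essentially the same approach as the paper: the paper does not give an independent proof but refers to \cite[Lem.~3.24]{JS08} for the $F$-case (with \cite[Lem.~3.23]{JS08} supplying the maximal-function ingredient, to be adapted for Besov spaces), and the argument there is precisely the change of dyadic base via the identity $|\xi|_{\lambda\vec a}=|\xi|_{\vec a}^{1/\lambda}$ together with finite overlap and the anisotropic Peetre--Fefferman--Stein estimates that you describe.
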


The proof of this lemma for Besov spaces follows that of Lizorkin--Triebel spaces, which can be found in~\cite[Lem.~3.24]{JS08}. Indeed, the only exception is that~\cite[Lem.~3.23]{JS08} needs to be adapted to Besov spaces, but this is easily done using the modifications indicated just above Lemma 3.21 there.

In view of Lemma~\ref{lem:lambdaLemmaBesov}, one could envisage that most results obtained for the
scales when $\vec a\geq 1$ can be extended to the range $0< \vec a < \infty$.  
For details on this we refer to~\cite[Rem.~2.6]{HJS13a}.

The Banach space $C_{\operatorname{b}}(\R^n)$ of continuous, bounded functions is equipped with the sup-norm, while the subspace $L_{1,\operatorname{loc}}(\Rn)\subset \cd'(\Rn)$ of locally integrable functions is endowed with the Fr\'{e}chet space topology defined from the seminorms
$u\mapsto  \int_{|x|\leq j} |u(x)|\, dx$, $j\in\N$. 

\begin{lemma}[{\cite[Lem.~1]{HJS13b}}]\label{derivativeTraces}
Let $s\in\R$ and $\alpha\in\N^n_0$ be arbitrary.\\
\indent\indent{\rm (i)} The differential operator $D^\alpha$ is bounded $F^{s,\vec a}_{\vec p,q}(\Rn) \to F^{s-\alpha \cdot \vec{a}, \vec{a}}_{\vec{p},q} (\Rn)$.\\
\indent\indent{\rm (ii)} For $s> \sum_{\ell = 1}^n \big(\frac{a_\ell}{p_\ell} - a_\ell\big)_+$ there is an embedding $F^{s,\vec a}_{\vec p,q} (\Rn)\hookrightarrow L_{1,\operatorname{loc}}(\Rn)$.\\
\indent\indent{\rm (iii)} The embedding $F^{s,\vec a}_{\vec p,q}(\Rn)\hookrightarrow C_{\operatorname{b}}(\Rn)$ holds for $s>\frac{a_1}{p_1}+\cdots+\frac{a_n}{p_n}$.
\end{lemma}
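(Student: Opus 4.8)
The plan is to derive all three parts straight from the Fourier-analytic definition. Part~(i) reduces, after a rescaling of the Littlewood--Paley pieces, to the anisotropic mixed-norm maximal inequality; parts~(ii)--(iii) follow by chaining a handful of one-line embeddings down to a Nikol'ski{\u\i}-type estimate.

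For (i), by factoring $D^\alpha$ into first-order operators it suffices to treat one $\alpha$, and the point is that $D^\alpha$ acts almost diagonally on the pieces \eqref{unity}. Since $\cf(D^\alpha u)=\xi^\alpha\cf u$ and, by quasi-homogeneity, $\xi^\alpha\Phi(2^{-j\vec a}\xi)=2^{j(\alpha\cdot\vec a)}\,\widetilde\Phi(2^{-j\vec a}\xi)$ with $\widetilde\Phi(\eta):=\eta^\alpha\Phi(\eta)\in C_0^\infty$ (and $\Phi_0(\xi)\xi^\alpha=:\widetilde\Phi_0(\xi)\in C_0^\infty$ for the term $j=0$), one gets for $j\ge1$ that $2^{j(s-\alpha\cdot\vec a)}|\cfi(\Phi_j\cf D^\alpha u)|=2^{js}|\cfi(\widetilde\Phi(2^{-j\vec a}\cdot)\cf u)|$. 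As $\widetilde\Phi$ and $\Phi$ have the same anisotropic annular support, $\widetilde\Phi(2^{-j\vec a}\cdot)\cf u=\widetilde\Phi(2^{-j\vec a}\cdot)\sum_{|k-j|\le1}\Phi_k\cf u$, so $\cfi(\widetilde\Phi(2^{-j\vec a}\cdot)\cf u)$ is the convolution of a fixed Schwartz function rescaled to frequency $2^j$ with $\sum_{|k-j|\le1}\cfi(\Phi_k\cf u)$. I would then bound that convolution pointwise by the anisotropic Peetre maximal function of $\cfi(\Phi_k\cf u)$ at scale $2^k$, shift the summation index from $j$ back to $k$ (at a bounded cost, since $|k-j|\le1$), and apply the mixed-norm Peetre/Fefferman--Stein maximal inequality in $L_{\vec p}(\ell_q)$ -- valid once its exponent is taken large relative to $\vec p$, $q$, $\vec a$; see \cite{JS07} -- together with \eqref{eq:subadditivity} to recombine the finitely many terms. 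The term $j=0$ is handled identically with $\widetilde\Phi_0$. This yields $\|D^\alpha u\mid F^{s-\alpha\cdot\vec a,\vec a}_{\vec p,q}\|\le C\|u\mid F^{s,\vec a}_{\vec p,q}\|$; the left-hand side is meaningful on all of $F^{s,\vec a}_{\vec p,q}$ by the embedding \eqref{eq:SFembedding}.

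For (ii) the idea is to spend the available smoothness $\sigma:=\sum_\ell(a_\ell/p_\ell-a_\ell)_+$ on raising the integral exponents below $1$ up to $1$. With $p_{0,\ell}:=\max(1,p_\ell)$ one has $\vec p\le\vec p_0$, all $p_{0,\ell}\ge1$, and $\sum_\ell a_\ell(1/p_\ell-1/p_{0,\ell})=\sigma$. I would then chain: $F^{s,\vec a}_{\vec p,q}\hookrightarrow B^{s,\vec a}_{\vec p,\infty}$ (keep one term of the inner $\ell_q$-sum); $B^{s,\vec a}_{\vec p,\infty}\hookrightarrow B^{\sigma,\vec a}_{\vec p,1}$, since $s>\sigma$ makes $\sum_j2^{-j(s-\sigma)}$ finite; $B^{\sigma,\vec a}_{\vec p,1}\hookrightarrow B^{0,\vec a}_{\vec p_0,1}$ by the mixed-norm Nikol'ski{\u\i} inequality $\|\cfi(\Phi_j\cf u)\mid L_{\vec p_0}\|\le C\,2^{j\sum_\ell a_\ell(1/p_\ell-1/p_{0,\ell})}\|\cfi(\Phi_j\cf u)\mid L_{\vec p}\|$ for band-limited functions (cf.~\cite{JS07}); and $B^{0,\vec a}_{\vec p_0,1}\hookrightarrow L_{\vec p_0}(\Rn)$, because $\|u\mid L_{\vec p_0}\|\le\sum_j\|\cfi(\Phi_j\cf u)\mid L_{\vec p_0}\|$ with the series converging in the Banach space $L_{\vec p_0}$ ($\vec p_0\ge1$) to the same limit as in $\cs'$. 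Finally $L_{\vec p_0}(\Rn)\hookrightarrow L_{1,\loc}(\Rn)$ by iterated H\"older on bounded boxes. Part~(iii) runs the identical chain with $\vec p_0$ replaced by the vector all of whose entries are $\infty$: one reaches $B^{0,\vec a}_{\infty,1}$ (the smoothness spent now being $\sum_\ell a_\ell/p_\ell<s$), and $B^{0,\vec a}_{\infty,1}\hookrightarrow C_{\operatorname b}(\Rn)$ because $\|u\mid C_{\operatorname b}\|\le\sum_j\|\cfi(\Phi_j\cf u)\mid L_\infty\|$ and each $\cfi(\Phi_j\cf u)$ is continuous and bounded.

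The main obstacle is supplying the two quantitative inputs -- the anisotropic mixed-norm maximal inequality for (i) and the mixed-norm Nikol'ski{\u\i} inequality for (ii)--(iii) -- everything else being bookkeeping with \eqref{unity} and \eqref{eq:subadditivity}. The subtle feature is that (ii) allows entries $p_\ell<1$: for $p_\ell\ge1$ the local inclusion $L_{p_\ell}\hookrightarrow L_1$ needs no smoothness, whereas for $p_\ell<1$ one must pay $a_\ell/p_\ell-a_\ell$ units of smoothness to bring that exponent up to $1$, and the sum of these is precisely the threshold $\sum_\ell(a_\ell/p_\ell-a_\ell)_+$.
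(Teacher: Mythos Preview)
The paper does not give its own proof of this lemma; it is quoted verbatim from \cite[Lem.~1]{HJS13b} and used as a black box. So there is no in-paper argument to compare against.

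On its own merits your sketch is sound and follows the standard route. For (i), the rescaling identity $\xi^\alpha\Phi(2^{-j\vec a}\xi)=2^{j(\alpha\cdot\vec a)}\widetilde\Phi(2^{-j\vec a}\xi)$ is exactly right, and the remainder is the usual multiplier argument via the Peetre maximal inequality (the mixed-norm version you need is in \cite[Thm.~4.8]{HJS13a}, which the present paper already invokes elsewhere). For (ii)--(iii), the chain $F^{s,\vec a}_{\vec p,q}\hookrightarrow B^{s,\vec a}_{\vec p,\infty}\hookrightarrow B^{\sigma,\vec a}_{\vec p,1}\hookrightarrow B^{0,\vec a}_{\vec p_0,1}\hookrightarrow L_{\vec p_0}$ is correct; the only substantive step is the anisotropic mixed-norm Nikol'ski{\u\i} inequality, and your accounting of the smoothness cost $\sum_\ell a_\ell(1/p_\ell-1/p_{0,\ell})$ matches the stated thresholds precisely. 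One small remark: the opening reduction ``by factoring $D^\alpha$ into first-order operators'' is unnecessary, since your computation already handles general $\alpha$ in one step.
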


Next, we recall a paramultiplication result and refer to~\cite[Sec.~2.4]{HJS13b} for details.

\begin{lemma}\label{multTraces}
Let $s \in \R$ and take $s_1 > s$ such that also
\begin{equation}\label{eq:assumptionMult}
s_1 > \sum_{\ell=1}^n \Big(\frac{a_\ell}{\min(1,q,p_1, \ldots , p_\ell)} -a_\ell \Big) -s.
\end{equation}
Then each $u\in B^{s_1, \vec{a}}_{\infty,\infty}(\Rn)$ defines a pointwise multiplier of $F^{s,\vec a}_{\vec p,q}(\Rn)$ and
\[
\| \, u\cdot v \, | F^{s,\vec a}_{\vec p,q}\| \le c \, 
\| \, u \, | B^{s_1, \vec{a}}_{\infty,\infty}\|\cdot \|\, v \, | F^{s, \vec{a}}_{\vec p,q}\|,\quad v\in F^{s,\vec a}_{\vec p,q}(\Rn).
\]
In particular, it holds for $u$ in $C^\infty_{L_\infty}(\Rn) := \{\, g\in C^\infty(\Rn)\, |\,
\forall\alpha\in \N_0^n:\, D^\alpha g\in L_\infty(\Rn)\}$. 
\end{lemma}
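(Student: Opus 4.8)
The plan is to give $uv$ a meaning --- and to estimate it --- by a Bony-type paraproduct splitting built on the anisotropic Littlewood--Paley resolution $\{\Phi_j\}$, handling the three pieces with the Fourier-support (Nikol'ski{\u\i}-type) estimates from the mixed-norm localised-means calculus in~\cite{HJS13a}. Writing $u_j:=\cfi(\Phi_j\cf u)$, $v_j:=\cfi(\Phi_j\cf v)$, $S_mw:=\sum_{\ell\le m}w_\ell$ (so $S_mw=0$ for $m<0$), and fixing an overlap $N\in\N$ --- large, depending only on the quasi-triangle constant of $|\cdot|_{\vec a}$ --- I would decompose
\[
  uv=\Pi_1+\Pi_2+\Pi_3,\qquad
  \Pi_1:=\sum_{j\ge0}S_{j-N}u\cdot v_j,\quad
  \Pi_2:=\sum_{j\ge0}\sum_{|k-j|<N}u_k\,v_j,\quad
  \Pi_3:=\sum_{k\ge0}u_k\cdot S_{k-N}v .
\]
With $N$ large the $j$-th summand of $\Pi_1$, resp.\ the $k$-th summand of $\Pi_3$, has spectrum in an anisotropic annulus $\{c2^j\le|\xi|_{\vec a}\le C2^j\}$, resp.\ $\{c2^k\le|\xi|_{\vec a}\le C2^k\}$, while the $j$-th summand of $\Pi_2$ has spectrum in a ball $\{|\xi|_{\vec a}\le C2^j\}$. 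The hypotheses force $s_1>0$ (add $s_1>s$ and $s_1>-s+\sum_\ell(\frac{a_\ell}{\min(1,q,p_1,\dots,p_\ell)}-a_\ell)$, the sum being $\ge0$), so $B^{s_1,\vec a}_{\infty,\infty}(\Rn)\hookrightarrow L_\infty(\Rn)$, $\|u_j\,|L_\infty\|\le2^{-js_1}\|u\,|B^{s_1,\vec a}_{\infty,\infty}\|$ for all $j\ge0$, and hence $\|S_mu\,|L_\infty\|\le c\,\|u\,|B^{s_1,\vec a}_{\infty,\infty}\|$ uniformly in $m$. When $u\in C^\infty_{L_\infty}(\Rn)$ the product $u\cdot v$ is already well defined in $\cs'$; in general the paraproduct series, whose convergence in $\cs'(\Rn)$ follows by standard arguments from the spectral localisations and these bounds, serves as the definition.

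For $\Pi_1$ and $\Pi_3$ I would use the \emph{annulus} version of the estimate of~\cite{HJS13a}, which holds for \emph{every} $s\in\R$: $\|\sum_jg_j\,|F^{s,\vec a}_{\vec p,q}\|\le c\,\|(2^{js}g_j)_j\,|L_{\vec p}(\ell_q)\|$ when $\supp\cf g_j\subset\{c2^j\le|\xi|_{\vec a}\le C2^j\}$. For $\Pi_1$ the uniform bound on $S_{j-N}u$ gives $|S_{j-N}u\cdot v_j|\le c\,\|u\,|B^{s_1,\vec a}_{\infty,\infty}\|\,|v_j|$ pointwise, so $\|\Pi_1\,|F^{s,\vec a}_{\vec p,q}\|\le c\,\|u\,|B^{s_1,\vec a}_{\infty,\infty}\|\,\|v\,|F^{s,\vec a}_{\vec p,q}\|$. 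For $\Pi_3$ one has $2^{ks}|u_k\,S_{k-N}v|\le\|u\,|B^{s_1,\vec a}_{\infty,\infty}\|\sum_{\ell\le k-N}2^{(k-\ell)(s-s_1)}(2^{-\ell s_1}2^{\ell s}|v_\ell|)$; since $s_1>0$ one drops $2^{-\ell s_1}\le1$, and since $s_1>s$ the kernel $(2^{m(s-s_1)}\ind{m\ge N})_m$ lies in $\ell_{\min(1,q)}$, so Young's inequality for sequences (in the form $\ell_{\min(1,q)}\ast\ell_q\hookrightarrow\ell_q$, valid for all $0<q\le\infty$) applied pointwise in $x$ and then under $\|\cdot\,|L_{\vec p}\|$ yields $\|\Pi_3\,|F^{s,\vec a}_{\vec p,q}\|\le c\,\|u\,|B^{s_1,\vec a}_{\infty,\infty}\|\,\|v\,|F^{s,\vec a}_{\vec p,q}\|$. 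Thus $\Pi_1$ and $\Pi_3$ cost nothing beyond $s_1>\max(0,s)$.

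The real obstacle is $\Pi_2$, whose generic summand has only ball-shaped spectrum: here one must invoke the \emph{ball} version, $\|\sum_jh_j\,|F^{s',\vec a}_{\vec p,q}\|\le c\,\|(2^{js'}h_j)_j\,|L_{\vec p}(\ell_q)\|$ for $\supp\cf h_j\subset\{|\xi|_{\vec a}\le C2^j\}$, which is valid precisely for $s'>\sigma_0:=\sum_{\ell=1}^n(\frac{a_\ell}{\min(1,q,p_1,\dots,p_\ell)}-a_\ell)$ --- the nested minima being exactly the loss incurred because in a genuinely mixed norm the Hardy--Littlewood maximal operator has to be applied to one variable at a time; note $\sigma_0-s$ is precisely the bound in~\eqref{eq:assumptionMult}. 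I would pick $s'$ with $\max(s,\sigma_0)<s'\le s+s_1$; this interval is non-empty exactly because $s_1>0$ and $s_1>\sigma_0-s$, i.e.\ exactly under the standing hypotheses $s_1>s$ and~\eqref{eq:assumptionMult}. For such $s'$, using $\|u_k\,|L_\infty\|\le c_N\,2^{-js_1}\|u\,|B^{s_1,\vec a}_{\infty,\infty}\|$ for $|k-j|<N$, together with $s'-s_1-s\le0$ and $j\ge0$, one gets $2^{js'}|h_j|\le c_N\,\|u\,|B^{s_1,\vec a}_{\infty,\infty}\|\,2^{js}|v_j|$ pointwise, hence $\|\Pi_2\,|F^{s',\vec a}_{\vec p,q}\|\le c\,\|u\,|B^{s_1,\vec a}_{\infty,\infty}\|\,\|v\,|F^{s,\vec a}_{\vec p,q}\|$; finally $F^{s',\vec a}_{\vec p,q}(\Rn)\hookrightarrow F^{s,\vec a}_{\vec p,q}(\Rn)$ since $s'\ge s$.

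Summing the three contributions by~\eqref{eq:subadditivity} gives the asserted bound. The final assertion follows at once: for $u\in C^\infty_{L_\infty}(\Rn)$, integration by parts against the (vanishing-moment) kernels $\cfi\Phi_j$ gives $\|u_j\,|L_\infty\|\le c_M\,2^{-jM}$ for every $M$, so $u\in B^{s_1,\vec a}_{\infty,\infty}(\Rn)$ for all $s_1$. Beyond invoking the two Fourier-support estimates of~\cite{HJS13a}, the only points needing genuine (but routine) attention are the verification of the spectral-support claims --- which dictates how large $N$ must be taken --- and the $\cs'$-convergence of the paraproduct series.
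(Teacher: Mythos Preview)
The paper does not itself prove this lemma; it is stated with a reference to \cite[Sec.~2.4]{HJS13b} for the details. Your paraproduct argument --- splitting $uv=\Pi_1+\Pi_2+\Pi_3$, handling $\Pi_1$ and $\Pi_3$ via the annulus-support Nikol'ski{\u\i} estimate (valid for every $s\in\R$) and $\Pi_2$ via the ball-support estimate (valid only for $s'>\sigma_0$) --- is the standard route and is essentially what is carried out in that reference; in particular your identification of the threshold $\sigma_0=\sum_{\ell}\big(a_\ell/\min(1,q,p_1,\dots,p_\ell)-a_\ell\big)$ as the precise loss in the mixed-norm ball estimate, and the observation that $s_1>s$ together with~\eqref{eq:assumptionMult} forces $s_1>0$, are exactly the two key points.
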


The characterisation of $F^{s,\vec a}_{\vec p,q}(\Rn)$ by kernels of local and localised
means as developed in~\cite[Thm.~5.2]{HJS13a} is utilised below, hence it is included here for convenience, using the notation
\begin{equation}\label{eq:defOfSubscript_j}
  \varphi_j(x) = 2^{j|\vec a|}\varphi(2^{j\vec a}x),\quad \varphi\in\cs,\enskip j\in\N.
\end{equation}

\begin{theorem}\label{thm:local}
Let $k_0,k^0 \in {\mathcal{S}}({\mathbb{R}}^n)$ such that $\int k_0(x)\, dx \neq 0 \neq \int k^0(x)\, dx$ and
set $k (x)= \Delta^N k^0(x)$ for some $N\in{\mathbb{N}}$.
When $0 < \vec{p} <\infty$, $0< q \le \infty$, and $s < 2N\, \min(a_1,\ldots,a_n)$,
then a distribution $f \in {\mathcal{S}}'({\mathbb{R}}^n)$ belongs to $F^{s,\vec{a}}_{\vec{p},q}({\mathbb{R}}^n)$ 
if and only if 
\begin{equation}\label{eq:charOfF}
\|\, f \, | F^{s,\vec{a}}_{\vec{p},q}\|^* :=
\| \, k_0 * f\, |L_{\vec{p}}\| + \| \{2^{sj} k_j * f\}_{j=1}^\infty \, | L_{\vec{p}}(\ell_q)\|<\infty .
\end{equation}
Furthermore, $\|\, f \, | F^{s,\vec{a}}_{\vec{p},q}\|^*$ is an equivalent
quasi-norm on $F^{s,\vec{a}}_{\vec{p},q}({\mathbb{R}}^n)$.
\end{theorem}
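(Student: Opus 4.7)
The plan is to prove the equivalence $\|f\mid F^{s,\vec{a}}_{\vec{p},q}\|^* \approx \|f\mid F^{s,\vec{a}}_{\vec{p},q}\|$ by reducing both inequalities to Peetre-type maximal function estimates in the mixed-norm vector-valued space $L_{\vec p}(\ell_q)$. Two structural ingredients will drive the argument: the vanishing-moment bound $|\widehat{k}(\xi)| \lesssim |\xi|^{2N}$ near the origin coming from $k = \Delta^N k^0$, and a Calder\'on-type reproducing formula built from the given kernels $k_0, k$.

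For the estimate $\|f\mid F^{s,\vec{a}}_{\vec{p},q}\|^* \le C\|f\mid F^{s,\vec{a}}_{\vec{p},q}\|$, I would expand $f = \sum_{\ell \ge 0} u_\ell$ with $u_\ell := \cfi(\Phi_\ell \widehat f)$, so $k_j * f = \sum_\ell k_j * u_\ell$, and split the sum according to $\ell \le j$ versus $\ell > j$. In the regime $\ell > j$, the anisotropic frequency support of $u_\ell$ combines with the vanishing-moment bound to yield a geometric decay factor $2^{-2N(\ell-j)\min\vec a}$; in the regime $\ell \le j$, the Schwartz decay of $\widehat k$ at infinity produces the analogous factor. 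In each case the convolution $k_j * u_\ell$ is dominated pointwise by an anisotropic Peetre maximal function of $u_\ell$. Multiplying by $2^{sj}$ and summing over $j$ converges precisely because $s < 2N\min\vec a$, and a final application of the Fefferman--Stein vector-valued maximal inequality on $L_{\vec p}(\ell_q)$ returns the standard Fourier-analytic quasi-norm.

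For the converse, I would construct auxiliary Schwartz kernels $\lambda_0, \lambda^0$ such that, with $\lambda := \Delta^N \lambda^0$,
\begin{equation*}
\widehat{\lambda_0}(\xi)\widehat{k_0}(\xi) + \sum_{j \ge 1}\widehat\lambda(2^{-j\vec a}\xi)\,\widehat k(2^{-j\vec a}\xi) \equiv 1.
\end{equation*}
The inversion is feasible since $\widehat{k_0}(0), \widehat{k^0}(0) \neq 0$. The resulting identity $f = \lambda_0 * k_0 * f + \sum_{j \ge 1}\lambda_j * k_j * f$, convergent in $\cs'$, is then substituted into $\cfi(\Phi_m \widehat f)$, yielding that Littlewood--Paley piece as a sum of convolutions $\Phi_m * \lambda_j * k_j * f$. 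The same large/small $|m - j|$ splitting---with the low-frequency terms handled by the vanishing moments of $\lambda = \Delta^N \lambda^0$---reduces the estimate to the mixed-norm Fefferman--Stein inequality applied to the sequence $\{2^{js} k_j * f\}$.

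The principal obstacle is the underlying anisotropic mixed-norm maximal inequality: the Peetre-type functions
\begin{equation*}
u_\ell^*(x) = \sup_{y \in \Rn} \frac{|u_\ell(x - y)|}{(1 + |2^{\ell \vec a} y|_{\vec a})^{r}}
\end{equation*}
must be controlled in $L_{\vec p}(\ell_q)$ uniformly in $\ell$, and in the quasi-Banach range $\min(\vec p, q) < 1$ one is forced to choose $r$ small enough that raising to the power $r$ reduces matters to iterated Hardy--Littlewood maximal operators; this reduction must respect the order of the mixed integrations, which is where the anisotropy and the mixed-norm structure interact most delicately.
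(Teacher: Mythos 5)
The paper does not actually prove Theorem~\ref{thm:local}: it is quoted verbatim from \cite[Thm.~5.2]{HJS13a}, and the text explicitly says it ``is included here for convenience.'' So there is no in-paper proof to compare against; the relevant reference is the authors' earlier paper on characterisation by local means, which (like your sketch) adapts the Bui--Paluszy\'nski--Taibleson/Rychkov strategy of a Calder\'on reproducing formula combined with anisotropic Peetre maximal functions and a mixed-norm vector-valued maximal inequality. In that sense your overall architecture is the right one: a Littlewood--Paley resolution of $f$, pointwise domination of $k_j*u_\ell$ by maximal functions with geometrically decaying weights, and then an $L_{\vec p}(\ell_q)$ maximal inequality; and in the converse direction, inverting $\widehat{k_0},\widehat k$ via the non-vanishing at the origin to build $\lambda_0,\lambda$ with the reproducing identity, then re-running the same estimates. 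The delicacy you flag at the end---that the exponent $r$ in the Peetre weight must be small enough to push the maximal estimate through the mixed integrations in the quasi-Banach range---is genuine, and is exactly the content that \cite{HJS13a} supplies via its anisotropic mixed-norm analogues of the Fefferman--Stein inequality.

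However, the description of the two frequency regimes is reversed, and this is not cosmetic. On $\supp\Phi_\ell$ one has $|2^{-j\vec a}\xi|_{\vec a}\sim 2^{\ell-j}$. When $\ell<j$ this argument of $\widehat k$ is small, so the vanishing-moment bound $|\widehat k(\eta)|\lesssim|\eta|^{2N}$ applies and, since $|\eta|\lesssim(|\eta|_{\vec a})^{\min\vec a}$ for $|\eta|_{\vec a}\le1$, it gives the factor $2^{-2N(j-\ell)\min\vec a}$; multiplying by $2^{sj}$ and rearranging, one needs $s<2N\min\vec a$ for the sum over $j>\ell$ to telescope. When $\ell>j$ the argument of $\widehat k$ is large and it is the Schwartz decay of $\widehat k$ at infinity that yields $2^{-M(\ell-j)}$ for arbitrarily large $M$, imposing no condition on $s$. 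You have interchanged the roles: applying the moment bound in the regime $\ell>j$ and Schwartz decay for $\ell\le j$ would, after carrying through the arithmetic, lead to the (incorrect) constraint $s>-2N\min\vec a$ rather than $s<2N\min\vec a$. The fix is straightforward---swap the two half-sums---but as stated the sketch would not reproduce the theorem's hypothesis on $s$, which is its one quantitative ingredient.

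Two smaller points worth attending to if you write this out in full. First, the ``if'' direction requires the Calder\'on formula $f=\lambda_0*k_0*f+\sum_{j\ge1}\lambda_j*k_j*f$ to converge in $\cs'(\Rn)$ for an arbitrary temperate distribution with $\|f\|^*<\infty$, not merely for $f$ already known to be in the Lizorkin--Triebel space; this needs an argument along the lines of Lemma~\ref{lemma:XinRychkov} in Section~\ref{sec:rychkovExtension} of this paper, not just formal substitution. Second, the estimate $|\widehat k(\eta)|\lesssim|\eta|^{2N}$ controls an \emph{isotropic} power of $\eta$, while the Littlewood--Paley annuli are anisotropic; the passage from $|\eta|_{\vec a}\sim2^{\ell-j}$ to $|\eta|\lesssim2^{(\ell-j)\min\vec a}$ is precisely where $\min(a_1,\dots,a_n)$ enters the statement and should be made explicit.
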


For spaces over open sets $U\subset\Rn$ we use the notation introduced by
H\"{o}rmander~\cite[App.~B.2]{hor07} and place a bar over $F$ and $B$, to indicate that it is a space
of restricted distributions:

\begin{definition}\label{def:Fsubspace}
$1^\circ$ The space $\overline F^{s,\vec a}_{\vec p,q}(U)$ is defined as the set of 
$u \in \cd'(U )$ such that there exists a distribution $f \in F^{s,\vec a}_{\vec p,q}({\R}^{n})$
satisfying
\begin{equation}\label{eq:distributionRestriction}
 \langle f,\varphi\rangle = \langle u,\varphi\rangle \quad \mbox{for all}\quad \varphi \in 
C_0^\infty (U) .
\end{equation}
We equip $\overline F^{s,\vec a}_{\vec p,q}(U)$ with the quotient quasi-norm, which is a norm if $\vec p,q\geq 1$, 
\begin{equation*}
\|\, u\, | \overline F^{s,\vec a}_{\vec p,q}(U)\| := \inf_{r_U f=u} \|\, f\, |F^{s,\vec a}_{\vec p,q}(\Rn)\|.
\end{equation*}

$2^\circ$ The space ${\overset{\circ}{F}}{}_{\!\vec p,q}^{s,\vec a}(\overline U)$ consists of the distributions 
$u\in F^{s,\vec a}_{\vec p,q}(\Rn)$ that satisfy $\supp u\subset\overline U$. 

$3^\circ$ The Besov spaces $\overline B^{s,\vec a}_{\vec p,q}(U)$ and
${\overset{\circ}{B}}{}_{\!\vec p,q}^{s,\vec a}(\overline U)$ are defined analogously. 
\end{definition} 

Recall that since $F^{s,\vec a}_{\vec p,q}(\Rn)$ is a quasi-Banach space, $\overline F^{s,\vec
  a}_{\!\vec p,q}(U)$ is seen to be so too, using the usual arguments for quotient spaces modified
to exploit the subadditivity in~\eqref{eq:subadditivity}. 

In \eqref{eq:distributionRestriction} it is tacitly understood that on the left-hand side $\varphi$
is extended by 0 outside $U$. For this we henceforth use the operator notation $e_U \varphi$.
Likewise $r_U$ denotes restriction to $U$, whereby $u=r_U f$ in \eqref{eq:distributionRestriction}.
We shall refer to such $f$ as an extension of $u$.

\begin{remark}\label{rem:normedByInfnorm}
Theorem~\ref{thm:local} induces an equivalent quasi-norm $\|\, u\, | \overline F^{s,\vec a}_{\vec p,q}(U)\|^*$ on $\overline F^{s,\vec a}_{\vec p,q}( U)$	by taking the infimum of $\|\, f\, |F^{s,\vec a}_{\vec p,q}(\Rn)\|^*$ for $r_U f = u$.
\end{remark}

As a preparation we include a slightly modified version of~\cite[Lem.~8]{HJS13b}:

\begin{lemma}\label{lem:extensionBy0}
Let $U\subset\Rn$ be open. When $\overline F^{s,\vec a}_{\vec p,q}(U\times\R)$ is normed as in Remark~\ref{rem:normedByInfnorm} using kernels of local means with $\supp k_0,\supp k\subset B(0,r)$ for an $r>0$, and when $K\subset U$ is a compact set fulfilling
\begin{equation}\label{eq:choiceOfr}
  \dist (K,\Rn\setminus U)>2r,
\end{equation}
then it holds for every $f\in \overline F^{s,\vec a}_{\vec p,q}(U\times\R)$ with $\supp f\subset K\times\R$ that
\begin{equation*}
  \|\, f\, | \overline F^{s,\vec a}_{\vec p,q}( U\times\R)\|^* = \|\, e_{U\times\R}f\, |F^{s,\vec a}_{\vec p,q}(\R^{n+1})\|^*.
\end{equation*}
That is, the infimum is for such $f$ attained at $e_{U\times\R}f$.
\end{lemma}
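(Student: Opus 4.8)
The plan is to exploit the characterisation of $\overline F^{s,\vec a}_{\vec p,q}(U\times\R)$ by kernels of local means from Theorem~\ref{thm:local} and Remark~\ref{rem:normedByInfnorm}, and to show that the compact support hypothesis forces any extension to agree with $e_{U\times\R}f$ on the "relevant" region where the means are evaluated. First I would note that the inequality $\|\,f\,|\overline F^{s,\vec a}_{\vec p,q}(U\times\R)\|^* \le \|\,e_{U\times\R}f\,|F^{s,\vec a}_{\vec p,q}(\R^{n+1})\|^*$ is trivial, since $e_{U\times\R}f$ is one admissible extension and the infimum is taken over all of them; so the content is the reverse inequality.

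For the reverse inequality, let $g\in F^{s,\vec a}_{\vec p,q}(\R^{n+1})$ be an arbitrary extension of $f$, i.e.\ $r_{U\times\R}g=f$. The key geometric observation is that $\supp k_0$ and $\supp k$ are contained in $B(0,r)\subset\R^{n+1}$, and the dilated kernels $k_j$ used in \eqref{eq:defOfSubscript_j} have support in $B(0,r)$ as well (since $2^{j\vec a}\cdot$ with $\vec a\ge 1$ shrinks supports for $j\ge 1$, and $k_0$ is untouched); here I am using that the anisotropic scaling only contracts, so $\supp k_j \subset B(0,r)$ for all $j$. Consequently, for a point $(y,t)$ with $y$ in the $2r$-neighbourhood of $K$, the convolution $k_j*g$ and $k_j*e_{U\times\R}f$ both only see values of the argument inside $U\times\R$, hence they coincide there because $g$ and $e_{U\times\R}f$ agree on $U\times\R$. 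For $(y,t)$ with $y$ outside that neighbourhood — in particular outside $K\times\R$, so where $f$ and hence $e_{U\times\R}f$ vanish — the means $k_j*e_{U\times\R}f$ vanish identically, since $\supp(e_{U\times\R}f)\subset K\times\R$ and $\supp k_j\subset B(0,r)$ give $\supp(k_j*e_{U\times\R}f)\subset (K\times\R)+B(0,r)$, which by \eqref{eq:choiceOfr} is contained in the $2r$-neighbourhood of $K$ crossed with $\R$. Putting these two cases together: on the neighbourhood, $k_j*e_{U\times\R}f=k_j*g$ pointwise; off it, $k_j*e_{U\times\R}f=0$. Hence $|k_j*e_{U\times\R}f|\le |k_j*g|$ pointwise on all of $\R^{n+1}$, for every $j\in\N_0$.

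From this pointwise domination, monotonicity of the $L_{\vec p}(\ell_q)$ and $L_{\vec p}$ quasi-norms gives immediately
\[
\|\,e_{U\times\R}f\,|F^{s,\vec a}_{\vec p,q}\|^* \;=\; \|k_0*e_{U\times\R}f\,|L_{\vec p}\| + \|\{2^{sj}k_j*e_{U\times\R}f\}_{j\ge1}\,|L_{\vec p}(\ell_q)\| \;\le\; \|\,g\,|F^{s,\vec a}_{\vec p,q}\|^*.
\]
Taking the infimum over all admissible extensions $g$ of $f$ yields $\|\,e_{U\times\R}f\,|F^{s,\vec a}_{\vec p,q}\|^* \le \|\,f\,|\overline F^{s,\vec a}_{\vec p,q}(U\times\R)\|^*$, which combined with the trivial inequality gives equality; and since $e_{U\times\R}f$ is itself an admissible extension, the infimum is attained there.

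The main obstacle I anticipate is bookkeeping with the supports: one must be careful that for $j\ge 1$ the anisotropic dilation $k_j(x)=2^{j|\vec a|}k(2^{j\vec a}x)$ keeps $\supp k_j$ inside $B(0,r)$ (this uses $\vec a\ge 1$, so $|2^{j\vec a}x|\ge |x|$ for $|x|\le 1$-type estimates, contracting the support), and that the measurability and finiteness issues in comparing $k_j*g$ with $k_j*e_{U\times\R}f$ are handled correctly — but since everything here is a genuine convolution of a tempered distribution with a compactly supported Schwartz kernel, these are smooth functions and the pointwise identities make sense literally. The factor-of-$2r$ margin in \eqref{eq:choiceOfr} is exactly what is needed: one $r$ to locate where $e_{U\times\R}f$'s means live, and another $r$ to guarantee that the kernel footprint at those points still lies in $U\times\R$.
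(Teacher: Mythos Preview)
Your approach is correct and essentially the same as the paper's: both exploit that the supports of $k_j*e_{U\times\R}f$ lie in the $r$-neighbourhood of $K\times\R$, and that on this set the convolution sees only points of $U\times\R$ (where any extension agrees with $e_{U\times\R}f$). There is a minor slip in your case split: you should split at the $r$-neighbourhood of $K$, not the $2r$-neighbourhood---as written, for $y$ at distance nearly $2r$ from $K$ the kernel footprint reaches out to distance nearly $3r$ and may leave $U$; your own closing paragraph (``one $r$ to locate where the means live, another $r$ for the kernel footprint'') actually describes the correct bookkeeping.
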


\begin{proof}
For an arbitrary extension $\widetilde f$ of $f$, it holds for $g:=\widetilde f-e_{U\times\R}f$ that $\supp e_{U\times\R} f\cap \supp g =\emptyset$, hence by~\eqref{eq:choiceOfr},
\begin{equation*}
  \supp(k_j*e_{U\times\R}f)\cap \supp(k_j*g)=\emptyset,\quad j\in\N_0.
\end{equation*}
When $g\neq 0$, there exists $j\in\N_0$ such that $\supp(k_j*g)\neq\emptyset$, thus $k_j*g(x)\neq 0$ on an open set disjoint from $\supp(k_j*e_{U\times\R}f)$. This term therefore effectively contributes to the $L_{\vec p\,}$-norm in the local means characterisation, yielding $\|\, \widetilde f\, | F^{s,\vec a}_{\vec p,q}(\R^{n+1})\|^* > \|\, e_{U\times\R} f \,| F^{s,\vec a}_{\vec p,q}(\R^{n+1})\|^*$.
\end{proof}

For temperate distributions vanishing in the time direction, we let $e_{I \to I'}$ denote extension by 0 from $\R^{n-1}\times I$ to $\R^{n-1}\times I'$ for open intervals $I\subset I'$. Then we similarly get

\begin{lemma}\label{lem:extensionBy0inLastCoord}
Let $I=\,]b,c[\,$ and $I'=\,]a,c[\,$ where $-\infty\le a<b<c\le \infty$. When $\overline F^{s,\vec a}_{\vec p,q}(\R^{n-1}\times I)$ is normed as in Remark~\ref{rem:normedByInfnorm} using kernels of local means with $\supp k_0,\supp k\subset B(0,r)$ for an $r>0$, 
then it holds for every $f\in \overline F^{s,\vec a}_{\vec p,q}(\R^{n-1}\times I)$ satisfying 
$f(\cdot,t)=0$ for $t \in \, ]b,b+2r[\,$ 
that 
\begin{equation}\label{eq:extensionBy0inLastCoord}
\|\, f\, |\overline F^{s,\vec a}_{\vec p,q}(\R^{n-1}\times I)\|^* 
= \|\, e_{I\to I'} f\, |\overline F^{s,\vec a}_{\vec p,q}(\R^{n-1}\times I')\|^*.
\end{equation}
A similar equality holds for extension from $I=\,]a,b[\,$ to $I'$, when $f(\cdot,t)=0$ for $t\in \,]b-2r,c[\,$.
\end{lemma}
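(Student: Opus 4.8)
The plan is to mimic the proof of Lemma~\ref{lem:extensionBy0}, only now with the support condition imposed in the time variable rather than on cross sections. Write $\Rn = \R^{n-1}\times\R$ with generic point $(x',t)$, and fix kernels $k_0,k$ of local means with $\supp k_0,\supp k\subset B(0,r)$, so that $\supp k_j\subset B(0,r)$ for all $j\in\N_0$ by~\eqref{eq:defOfSubscript_j}. The key geometric observation is that for any function $g$ supported where $t\le b$ (in particular $g=\widetilde f-e_{I\to I'}f$ for an arbitrary extension $\widetilde f$ of $f$, noting that $e_{I\to I'}f$ vanishes for $t<b$ precisely because $f(\cdot,t)=0$ on $]b,b+2r[$ is needed to glue cleanly), the convolution $k_j*g$ is supported where $t\le b+r$. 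Meanwhile $e_{I\to I'}f$ vanishes for $t\le b$, so $k_j*(e_{I\to I'}f)$ vanishes for $t\le b-r$ but — and this is where the hypothesis $f(\cdot,t)=0$ on $]b,b+2r[$ is used — it in fact vanishes for $t\le b+r$, since on $]b,b+2r[$ the extension agrees with $f$ which is zero there. Hence $\supp(k_j*(e_{I\to I'}f))$ and $\supp(k_j*g)$ are disjoint (up to the null set $\{t=b+r\}$, which is harmless for $L_{\vec p}$) for every $j\in\N_0$.

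With this disjointness in hand the argument runs exactly as before: for a general extension $\widetilde f$ of $f$ to $\R^{n-1}\times I'$ (equivalently to $\R^{n+1}$, identifying $\overline F$-norms via restriction), write $\widetilde f = e_{I\to I'}f + g$ with $g$ supported in $\{t\le b\}$. If $g\ne 0$ then some $k_j*g$ is nonzero on an open set disjoint from $\supp(k_j*(e_{I\to I'}f))$, so that term strictly increases the $L_{\vec p}(\ell_q)$-contribution, giving $\|\,\widetilde f\,|\overline F^{s,\vec a}_{\vec p,q}\|^* > \|\,e_{I\to I'}f\,|\overline F^{s,\vec a}_{\vec p,q}\|^*$; if $g=0$ there is nothing to prove. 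Taking the infimum over all extensions $\widetilde f$ yields that it is attained at $e_{I\to I'}f$, which is~\eqref{eq:extensionBy0inLastCoord}. One should be slightly careful that $e_{I\to I'}f$ is itself a legitimate extension of $f$, i.e.\ lies in $\overline F^{s,\vec a}_{\vec p,q}(\R^{n-1}\times I')$: this follows since, by the vanishing of $f$ near the new part of the boundary, $e_{I\to I'}f$ coincides locally with genuine elements of $F^{s,\vec a}_{\vec p,q}(\R^{n+1})$ obtained from any extension of $f$ (alternatively, the inequality just proved shows a fortiori that its $\|\cdot\|^*$-value is finite).

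The case of extending from $I=\,]a,b[$ to $I'$ under the hypothesis $f(\cdot,t)=0$ on $]b-2r,c[$ is entirely symmetric: reflect $t\mapsto -t$, which maps local-mean kernels to local-mean kernels with supports of the same radius, and apply the first case.

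The one genuine obstacle — as in Lemma~\ref{lem:extensionBy0} — is verifying the disjointness of the supports of $k_j*(e_{I\to I'}f)$ and $k_j*g$ uniformly in $j$; this is exactly why the buffer of width $2r$ appears in the hypothesis, matching the radius-$r$ support of each $k_j$ against the radius-$r$ spreading caused by convolution. Everything else is a verbatim repetition of the bookkeeping in the proof of Lemma~\ref{lem:extensionBy0}, now in the $t$-variable instead of a cross-sectional compact set.
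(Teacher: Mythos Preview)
Your disjoint-support heuristic is exactly the right mechanism, but the framing has a genuine gap. Both quasi-norms in~\eqref{eq:extensionBy0inLastCoord} are infima over extensions to all of $\Rn$ (cf.\ Definition~\ref{def:Fsubspace} and Remark~\ref{rem:normedByInfnorm}), not over extensions to $\R^{n-1}\times I'$. Consequently the decomposition $\widetilde f = e_{I\to I'}f + g$ does not make sense for an extension $\widetilde f$ to $\Rn$, because $e_{I\to I'}f$ lives only on $\R^{n-1}\times I'$; the convolutions $k_j*(e_{I\to I'}f)$ are therefore undefined, and the conclusion ``the infimum is attained at $e_{I\to I'}f$'' is not what~\eqref{eq:extensionBy0inLastCoord} asserts. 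If instead you tacitly extend $e_{I\to I'}f$ by zero to $\Rn$, then (i) this zero-extension across $t=c$ need not lie in $F^{s,\vec a}_{\vec p,q}(\Rn)$, and (ii) the remainder $g=\widetilde f-(\text{zero-extension})$ is supported in $\{t\le b\}\cup\{t\ge c\}$, not just $\{t\le b\}$, so your support separation breaks down for $c<\infty$.

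The paper's argument fixes precisely this: the inequality $\le$ is trivial since the right-hand infimum runs over a smaller set of extensions. For $\ge$, one takes an extension $\widetilde f\in F^{s,\vec a}_{\vec p,q}(\Rn)$ of $f$ that is \emph{not} an extension of $e_{I\to I'}f$ (hence $\widetilde f\not\equiv 0$ on $\,]a,b[\,$), and then constructs from it a competitor $h$ for the right-hand infimum by a smooth cutoff in $t$: since $\widetilde f$ already vanishes on $\,]b,b+2r[\,$, a cutoff $\chi$ with $\chi=1$ for $t<b$ and $\chi=0$ for $t>b+2r$ gives $h:=(1-\chi)\widetilde f$ with $\supp h\subset\{t\ge b+2r\}$ and $\supp(\chi\widetilde f)\subset\{t\le b\}$, while $h$ agrees with $\widetilde f$ for $t\ge c$. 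The $2r$-gap then yields $\|h\|^*<\|\widetilde f\|^*$ via the Lemma~\ref{lem:extensionBy0} mechanism you described. The point is that $h$ is built from $\widetilde f$ (so the behaviour for $t\ge c$ matches), not from $e_{I\to I'}f$ alone.
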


\begin{proof}
The inequality $\le$ follows immediately, since the distributions considered in the infimum on the right-hand side in~\eqref{eq:extensionBy0inLastCoord} also are considered on the left-hand side. 

To prove equality we assume that $<$ holds. Then there exists an extension $\widetilde f$ of $f$ which is not among the distributions considered in the infimum on the right-hand side, and which, with an infimum over $r_{\R^{n-1}\times I'}h=e_{I\to I'}f$, moreover fulfils 
\begin{equation}\label{eq:equationToObtainContradiction}
\|\, \widetilde f\, |F^{s,\vec a}_{\vec p,q}(\Rn)\| < \inf \|\, h\, |F^{s,\vec a}_{\vec p,q}(\Rn)\|.
\end{equation}

Actually, it suffices to consider those~$h$ for which $h\equiv 0$ on $\R^{n-1}\times\, ]-\infty,b+2r[\,$. Indeed, for any other $h$ the distribution $(1-\chi(t))h(\cdot,t)$, where $\chi\in C^\infty(\R)$ with $\chi(t)=1$ for $t\in\,]-\infty,a[\,$ and $\chi(t)=0$ for $t\in \,]b,\infty[\,$, has a smaller quasi-norm than $h$. This can be verified similarly to the proof of Lemma~\ref{lem:extensionBy0}, using that the distance between $\supp h \cap (\R^{n-1}\times\, ]-\infty,a[\,)$ and $\supp (1-\chi)h$ is at least $2r$. 

Now for such $h$ we have $\supp h \subset \R^{n-1}\times [b+2r,\infty[\,$, and since $\widetilde f(t)\not\equiv 0$ for $a<t<b$ it is easily seen by the proof strategy of Lemma~\ref{lem:extensionBy0} that $\|\, \widetilde f\,| F^{s,\vec a}_{\vec p,q}(\Rn)\|>\|\, h\, | F^{s,\vec a}_{\vec p,q}(\Rn)\|$, which contradicts~\eqref{eq:equationToObtainContradiction}.
\end{proof}

For simplicity of notation the ${}^*$ on the quasi-norm is omitted in the following.

\section{Invariance under Diffeomorphisms}
\label{sec:approach1MathNr}
To introduce Lizorkin--Triebel spaces on manifolds, it is essential that the spaces $\overline F^{s,\vec a}_{\vec p,q}( U)$ for certain open subsets $U\subset \Rn$ are invariant under suitable $C^\infty$-bijections $\sigma$. An extensive treatment of this subject can be found in~\cite{HJS13b}, but for convenience we recall the needed results. These hold for $0<\vec p<\infty$, $0<q\le\infty$ and $s\in\R$ unless additional requirements are specified.
First a result on isotropic spaces:

\begin{theorem}\label{thm:isotropicCompactSupport}
When $\sigma: U\to V$ is a $C^\infty$-bijection between open sets $U,V\subset\Rn$ and $f\in \overline F^s_{p,q}( V)$ has compact support, then $f\circ\sigma \in \overline F^s_{p,q}( U)$ and 
\begin{equation}\label{eq:isotropicCompactSupport}
  \|\, f\circ \sigma\, | \overline F^s_{p,q}( U)\| 
  \leq c \|\, f\, | \overline F^s_{p,q}( V)\|
\end{equation}
holds for a constant $c$ depending only on $\sigma$ and the set $\supp f$.
\end{theorem}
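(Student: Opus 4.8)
The plan is to reduce the statement to the known invariance of the Lizorkin--Triebel spaces $F^s_{p,q}(\Rn)$ on the full space under diffeomorphisms that are linear outside a compact set, together with the localised-means characterisation recalled in Theorem~\ref{thm:local}. Since $f\in\overline F^s_{p,q}(V)$ has compact support, say $\supp f\subset L\Subset V$, I would first fix a cut-off function $\chi\in C_0^\infty(V)$ with $\chi\equiv 1$ on a neighbourhood of $L$, and a second cut-off $\chi'$ with $\chi'\equiv 1$ on $\supp\chi$. The point is that only the behaviour of $\sigma$ on $\supp\chi'$ matters; so I would modify $\sigma$ outside a slightly larger compact set so that the modified map $\tilde\sigma$ agrees with $\sigma$ on $\supp\chi'$ but coincides with (an affine extension of) the identity near infinity, hence extends to a global $C^\infty$-bijection $\tilde\sigma:\Rn\to\Rn$ with $\tilde\sigma$ and $\tilde\sigma^{-1}$ having all derivatives bounded. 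This is routine but is the first technical point: one must check that such a smooth interpolation between $\sigma$ and an affine map exists with uniformly nondegenerate Jacobian, which follows because $D\sigma$ is continuous and invertible on the compact set in question, so a convex combination with a nearby constant matrix stays invertible.

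Next I would choose an extension $g\in F^s_{p,q}(\Rn)$ of $f$ with $\|g\,|F^s_{p,q}(\Rn)\|\le 2\|f\,|\overline F^s_{p,q}(V)\|$, and replace $g$ by $\chi g$; this still extends $f$ (since $\chi\equiv1$ near $\supp f$) and, because $\chi\in C^\infty_{L_\infty}$, the paramultiplication Lemma~\ref{multTraces} gives $\|\chi g\,|F^s_{p,q}\|\le c\|g\,|F^s_{p,q}\|$. Thus I may assume the extension $g$ is supported in $\supp\chi$. Now apply the global diffeomorphism invariance on $\Rn$ (this is exactly the content of the full-space version of Theorem~\ref{thm:isotropicCompactSupport}, i.e.\ the case $U=V=\Rn$ with $\sigma$ affine near infinity, which is the hypothesis we may assume as "stated earlier" — or equivalently Triebel's theorem in \cite{tri92}): $g\circ\tilde\sigma\in F^s_{p,q}(\Rn)$ with $\|g\circ\tilde\sigma\,|F^s_{p,q}(\Rn)\|\le c\|g\,|F^s_{p,q}(\Rn)\|$, where $c$ depends only on $\tilde\sigma$, hence only on $\sigma$ and $L$. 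Finally, since $\tilde\sigma=\sigma$ on $\sigma^{-1}(\supp\chi')\supset\supp\chi$ and $g$ is supported in $\supp\chi$, we have $r_U(g\circ\tilde\sigma)=(r_V g)\circ\sigma=f\circ\sigma$ on $U$; therefore $f\circ\sigma\in\overline F^s_{p,q}(U)$ with
\[
\|f\circ\sigma\,|\overline F^s_{p,q}(U)\|\le\|g\circ\tilde\sigma\,|F^s_{p,q}(\Rn)\|\le c\|g\,|F^s_{p,q}(\Rn)\|\le 2c\|f\,|\overline F^s_{p,q}(V)\|.
\]

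The main obstacle I anticipate is making precise the claim that "only $\sigma$ on a compact set matters", i.e.\ the construction of $\tilde\sigma$ and the verification that the resulting constant $c$ genuinely depends only on $\sigma$ and $\supp f$ and not on the chosen extension $g$ — this is where the support-preservation of the construction (that $g\circ\tilde\sigma$ is again supported where we need it, so its restriction to $U$ depends only on $f|_V$) has to be used carefully. A secondary subtlety, if the full-space invariance result is only available for integer or large $s$, is to first reduce to that case by the standard lift-operator argument (using Lemma~\ref{lem:lambdaLemmaBesov} and a Bessel-potential lift), though since the statement quotes the isotropic full-space result as a black box I would simply invoke it directly. The quasi-norm (rather than norm) case for $p<1$ or $q<1$ causes no trouble here since all estimates above are linear in the extension and use only the subadditivity \eqref{eq:subadditivity} implicitly through the quotient-quasi-norm definition.
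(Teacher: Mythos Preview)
The paper does not prove this theorem here; it refers to \cite{HJS13b} (Theorem~6 there), and the description given in the introduction and in Section~\ref{sec:isotropicBesovManifolds} shows that the cited proof works \emph{directly}: it uses the localised-means characterisation, a Taylor expansion of both the inner and the outer function to control the composition for large $s$, a parameter-dependent estimate for the Jacobian contributions, and then a lift operator $(1-\Delta)^{r/2}$ to reduce small $s$ to large $s$. In particular, no global extension of $\sigma$ to $\Rn$ is used.

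Your route is genuinely different --- reduce to Triebel's global invariance on $\Rn$ by extending $\sigma$ to a diffeomorphism $\tilde\sigma:\Rn\to\Rn$ with all derivatives bounded --- and it would be a perfectly legitimate shortcut in the isotropic case if the extension step were sound. But as written there is a real gap in the construction of $\tilde\sigma$. Two separate things go wrong with the ``convex combination with a nearby constant matrix'' argument. First, on an arbitrary compact $K\Subset U$ the differential $D\sigma$ need not be uniformly close to \emph{any} single constant matrix (think of a diffeomorphism whose differential is a rotation by angle $\pi$ at one point of $K$ and the identity at another: the straight-line interpolation of the differentials passes through a singular matrix). Second, even if you could arrange the Jacobian of $\tilde\sigma=\chi\sigma+(1-\chi)A$ to be invertible everywhere, that gives only a local diffeomorphism; global injectivity can fail outright. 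For a concrete obstruction, take $\sigma(x)=x+v$ a translation with $|v|$ large and $K=\overline{B(0,1)}$: any smooth interpolation to the identity on a shell of radius $R<|v|$ must collide with the fixed points outside.

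The standard remedy --- and the piece your sketch is missing --- is a partition of unity on the $V$ side to reduce to $f$ supported in a \emph{small} ball on which $\sigma^{-1}$ is $C^1$-close to its affine tangent map; on such a ball the interpolation $\tilde\sigma_j=\eta\,\sigma+(1-\eta)A_j$ really does produce a global diffeomorphism (now Hadamard's global inverse theorem, or a direct properness-plus-local-diffeomorphism argument, applies), and you then sum the finitely many pieces using~\eqref{eq:subadditivity} and Lemma~\ref{multTraces}. With that localisation inserted, your strategy goes through; without it, the extension $\tilde\sigma$ simply need not exist.
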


In the anisotropic situation it cannot be expected, e.g.~if $\sigma $
is a rotation, that
$f\circ \sigma$ has the same regularity as~$f$, nor that $f\circ \sigma \in L_{\vec{p}}$ when $f\in L_{\vec{p}}$.
We therefore restrict to $\vec p$ of the form
\begin{eqnarray}\label{eq:localVersionConditionsOna}
\vec{p} = (\underbrace{p_1, \ldots, p_1}_{N_1}, \underbrace{p_{2} , \ldots , p_2}_{N_2}, \,
\ldots \, ,\underbrace{p_m, \ldots, p_m}_{N_m}), \quad  N_1 + \dots + N_m=n,\enskip m\ge 2,
\end{eqnarray}
and $\vec a$ having the same structure.

\begin{theorem}\label{thm:localVersion}
Let $\sigma_j: U_j\to V_j$, $j=1,\ldots,m$, be $C^\infty$-bijections, where $U_j,V_j \subset \R^{N_j}$ are open.
When $\vec a$, $\vec p$ fulfil~\eqref{eq:localVersionConditionsOna} and $f\in \overline F^{s,\vec
  a}_{\vec p,q}(U_1\times\dots\times U_m)$ has compact support,
then~\eqref{eq:isotropicCompactSupport} holds for $U=U_1\times\cdots\times U_m$ and
$V=V_1\times\cdots\times V_m$ for a constant $c$ depending only on $\sigma$ and the set $\supp f$.
\end{theorem}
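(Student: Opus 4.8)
The plan is to reduce the mixed-norm, multi-factor statement to the isotropic single-factor result of Theorem~\ref{thm:isotropicCompactSupport} by an iterative "one block at a time" argument, combined with the $\lambda$-lift of Lemma~\ref{lem:lambdaLemmaBesov} to normalise the anisotropy within each block. First I would observe that, because $\vec a$ has the same block structure as $\vec p$ in~\eqref{eq:localVersionConditionsOna}, on the $j$-th block of $N_j$ coordinates the anisotropy vector is $(a_j,\dots,a_j)$, i.e.\ a scalar multiple of $(1,\dots,1)$; after applying $\lambda$-lifts with $\lambda$ chosen so that $\lambda a_j$ is an integer (or at least $\ge 1$) we may as well assume each block is isotropic with exponent $1$, so that $F^{s,\vec a}_{\vec p,q}$ locally behaves, block by block, like an ordinary (unmixed, isotropic) Lizorkin--Triebel space in the $N_j$ variables of that block. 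The diffeomorphism $\sigma=\sigma_1\times\cdots\times\sigma_m$ acts on each block separately, so this reduction is compatible with the product structure.

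The core of the argument is then a telescoping over the blocks. Write $\sigma = \tau_m\circ\cdots\circ\tau_1$, where $\tau_j = \id\times\cdots\times\sigma_j\times\cdots\times\id$ acts only on the $j$-th block of coordinates and as the identity on the others. For each $j$ I would need a "parametrised" version of Theorem~\ref{thm:isotropicCompactSupport}: the estimate $\|f\circ\tau_j \mid \overline F^{s,\vec a}_{\vec p,q}(\,\cdot\,)\| \le c\,\|f \mid \overline F^{s,\vec a}_{\vec p,q}(\,\cdot\,)\|$ where the "extra" variables (those in the blocks $\ne j$) are carried along as parameters and appear in a further mixed $L_{\vec p}(\ell_q)$-norm. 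Concretely, using the kernels-of-localised-means norm from Remark~\ref{rem:normedByInfnorm}/Theorem~\ref{thm:local}, one freezes the variables outside block $j$, applies the isotropic composition estimate in the block-$j$ variables (whose constant, by the statement of Theorem~\ref{thm:isotropicCompactSupport}, depends only on $\sigma_j$ and on $\supp f$, hence is uniform in the frozen variables because $\supp f$ is compact), and then takes the remaining iterated $L_{p}$-norms and $\ell_q$-norm in the outer variables. Iterating this over $j=1,\dots,m$ and composing the constants gives the desired bound for $f\circ\sigma = f\circ\tau_m\circ\cdots\circ\tau_1$; at each stage compactness of the support is preserved (a $C^\infty$-bijection maps compacts to compacts), and one shrinks to slightly larger relatively compact open subsets as needed so that the localised-means kernels "see" only the region where the change of variables is genuinely active.

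The main obstacle I expect is making the "parametrised isotropic estimate" genuinely rigorous rather than merely plausible: Theorem~\ref{thm:isotropicCompactSupport} is stated for a single $C^\infty$-bijection on all of $\Rn$, not with extra passive variables, and the mixed-norm structure means one cannot simply integrate an $x$-wise inequality (the $L_{\vec p}(\ell_q)$ quasi-norm is not an integral of a fixed function of the inner variable). The resolution is to go through the localised-means description: the building blocks $k_\nu * (f\circ\tau_j)$ can be estimated pointwise in the passive variables by the corresponding quantities for $f$ via the proof of Theorem~\ref{thm:isotropicCompactSupport} applied fibrewise, after which Minkowski-type and maximal-function estimates in the block-$j$ variables --- uniform in the parameters because all constants depend only on $\sigma_j$ and the fixed compact $\supp f$ --- allow one to pull the estimate out through the remaining nested norms. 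A secondary technical point is bookkeeping with the $\lambda$-lifts so that a single $\lambda$ works simultaneously for all blocks (take a common multiple of the denominators of the $a_j$, or simply $\lambda$ large with $\lambda\vec a\in\N^n$), and checking that the lift commutes appropriately with composition by the block diffeomorphisms up to lower-order terms controlled by Lemma~\ref{multTraces}; these are routine once the parametrised estimate is in hand.
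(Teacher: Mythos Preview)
The paper does not prove this theorem here; it cites \cite{HJS13b}, whose approach (summarised in the introduction) generalises Triebel's method from \cite[4.3.2]{tri92}: one lifts to large $s$, Taylor-expands both $\sigma$ and $f$, and controls the resulting terms via the localised-means norm of Theorem~\ref{thm:local} together with a parameter-dependent Jacobian estimate from \cite{HJS13a}. This is a direct argument carried out in the full anisotropic mixed-norm setting, not a reduction to the isotropic case.

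Your plan to reduce to Theorem~\ref{thm:isotropicCompactSupport} by telescoping $\sigma=\tau_m\circ\cdots\circ\tau_1$ and applying the isotropic result fibrewise in each block has a genuine gap, precisely at the point you flag but do not close. In the localised-means norm $\big\|\big(\sum_\nu 2^{\nu s q}|k_\nu*f|^q\big)^{1/q}\mid L_{\vec p}\big\|$ the convolution $k_\nu*$ acts in \emph{all} $n$ variables and the single dyadic index $\nu$ ties all blocks together; even with tensor-product kernels the $\ell_q$-sum cannot be separated blockwise. Consequently, for fixed passive variables $x''$, the slice $(k_\nu*f)(\,\cdot\,,x'')$ is \emph{not} the block-$j$ local mean of $f(\,\cdot\,,x'')$, so Theorem~\ref{thm:isotropicCompactSupport} applied to the slice says nothing about the quantity you need. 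Nor is there a Fubini-type identity writing $\|f\mid F^{s,\vec a}_{\vec p,q}(\R^n)\|$ as an $L_{p''}$-norm of block-$j$ $F$-norms of slices; such identities fail for Lizorkin--Triebel spaces when $q\neq p$. Making your ``parametrised isotropic estimate'' rigorous therefore forces you to re-enter the proof of Theorem~\ref{thm:isotropicCompactSupport} and carry the passive variables through the Taylor expansion and the maximal-function estimates of \cite{HJS13a}---which is exactly what the direct proof in \cite{HJS13b} does for all blocks at once. A smaller point: Lemma~\ref{lem:lambdaLemmaBesov} rescales $\vec a$ by a single global $\lambda$, so if the block weights $a_1,\dots,a_m$ are not all equal the space remains genuinely anisotropic after the lift; the $\lambda$-lift does not reduce you to a situation where Theorem~\ref{thm:isotropicCompactSupport} applies.
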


For traces at the curved boundary of cylinders, the next special case is useful:

\begin{theorem}\label{thm:infiniteCylinder}
Let $U,V\subset\R^{n-1}$ be open and let $\sigma: U\times\R\to V\times\R$ be a $C^\infty$-bijection on the form
\begin{equation*}
\sigma (x) = (\sigma'(x_1, \ldots , x_{n-1}),x_n) 
\quad \mbox{for all}\quad x\in U\times\R.
\end{equation*}
When $\vec a, \vec p$ satisfy~\eqref{eq:localVersionConditionsOna} with $m=2$, $N_1=n-1$, $N_2=1$
and $f\in \overline F^{s,\vec a}_{\vec p,q}(V\times\R)$ has $\supp f \subset K\times\R$, whereby $K\subset V$ is compact, then $f\circ\sigma\in \overline F^{s,\vec a}_{\vec p,q}(U\times\R)$ and 
\begin{equation*}
\|\, f\circ\sigma\, | \overline F^{s,\vec a}_{\vec p,q}(U\times\R) \|
\leq c(\supp f,\sigma) \|\, f\, | F^{s,\vec a}_{\vec p,q}(\R^{n+1}) \|.
\end{equation*}
\end{theorem}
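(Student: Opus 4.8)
The plan is to localise in the free variable $x_n$ by a \emph{translation-invariant} partition of unity, reduce each localised piece to Theorem~\ref{thm:localVersion} (which already covers all $s\in\R$, so no preliminary lift via Lemma~\ref{lem:lambdaLemmaBesov} is needed), and reassemble, exploiting that $\sigma$ leaves $x_n$ fixed. Throughout I norm the $\overline F$-spaces by kernels of local means as in Remark~\ref{rem:normedByInfnorm}, with $\supp k_0,\supp k\subset B(0,r)$ for a fixed $r>0$; since $\supp f\subset K\times\R$ with $K\subset V$ compact, $f$ may be regarded as an element of $F^{s,\vec a}_{\vec p,q}(\R^n)$ with support in $K\times\R$. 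Pick $\psi_0\in C_0^\infty(\R)$ with $\supp\psi_0\subset\,]-1,1[\,$ and $\sum_{k\in\Z}\psi_0(\cdot-k)\equiv1$, put $\psi_k(x):=\psi_0(x_n-k)$, fix an integer $L>2r+3$, split $\Z=\bigsqcup_{l=0}^{L-1}\Lambda_l$ with $\Lambda_l:=l+L\Z$, and set $\theta_l:=\sum_{k\in\Lambda_l}\psi_k$. Then $\sum_{l}\theta_l\equiv1$, each $\theta_l$ (being $\psi_0$ repeated with period $L$) lies in $C^\infty_{L_\infty}(\R^n)$ with bounds independent of $l$, so by Lemma~\ref{multTraces} each $\psi_k$ and each $\theta_l$ is a pointwise multiplier of $F^{s,\vec a}_{\vec p,q}(\R^n)$ uniformly in $k,l$, and for each fixed $l$ the supports of $\{\psi_k\}_{k\in\Lambda_l}$ are pairwise disjoint and separated in $x_n$ by more than $2r$. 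Note finally $(\psi_k f)\circ\sigma=\psi_k\cdot(f\circ\sigma)$, since $\sigma$ fixes $x_n$.

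The one ingredient beyond Theorem~\ref{thm:localVersion} is a disjoint-support lemma: if $g_k\in F^{s,\vec a}_{\vec p,q}(\R^n)$ have $\supp g_k\subset\R^{n-1}\times J_k$ with intervals $J_k\subset\R$ pairwise disjoint and mutually separated by more than $2r$, then $\sum_k g_k$ converges in $F^{s,\vec a}_{\vec p,q}(\R^n)$ and $\|\, \sum_k g_k\, |\, F^{s,\vec a}_{\vec p,q}\|$ is comparable, with constants depending only on $p_t$ and $q$, to $\big(\sum_k\|\, g_k\, |\, F^{s,\vec a}_{\vec p,q}\|^{p_t}\big)^{1/p_t}$. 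This is immediate from Theorem~\ref{thm:local}: since $\supp k_j\subset B(0,r)$ for every $j$, the functions $\{k_j*g_k\}_k$ are, for each fixed $j$, pairwise disjointly supported in $x_n$, so in the mixed quasi-norm $\|\cdot\, |\, L_{\vec p}(\ell_q)\|$ --- whose \emph{outermost} integration is over $x_n$ with exponent $p_t$ (recall $\vec p=(p_0,\dots,p_0,p_t)$) --- everything splits exactly over the $J_k$, and the two summands of $\|\cdot\, |\, F^{s,\vec a}_{\vec p,q}\|^*$ are recombined by elementary juggling of $\ell_{p_t}$-(quasi-)norms.

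Assembly then goes as follows. For the decomposition, applying the lemma to $g_k=\psi_k f$ ($k\in\Lambda_l$) and then the multiplier bound for $\theta_l$ gives $\sum_{k\in\Lambda_l}\|\, \psi_k f\, |\, F^{s,\vec a}_{\vec p,q}(\R^n)\|^{p_t}\le c\,\|\, \theta_l f\, |\, F^{s,\vec a}_{\vec p,q}(\R^n)\|^{p_t}\le c'\,\|\, f\, |\, F^{s,\vec a}_{\vec p,q}(\R^n)\|^{p_t}$ for each $l$. Next, each $\psi_k f$ has compact support $\subset K\times[k-1,k+1]\subset V\times\R$, so Theorem~\ref{thm:localVersion} (with $m=2$, $N_1=n-1$, $N_2=1$, $\sigma_1=\sigma'$, $\sigma_2=\id_\R$) yields $(\psi_k f)\circ\sigma\in\overline F^{s,\vec a}_{\vec p,q}(U\times\R)$ with $\|\, (\psi_k f)\circ\sigma\, |\, \overline F^{s,\vec a}_{\vec p,q}(U\times\R)\|\le c_k\,\|\, \psi_k f\, |\, F^{s,\vec a}_{\vec p,q}(\R^n)\|$; moreover $c_k$ can be taken independent of $k$, equal to the constant valid for all distributions supported in the \emph{fixed} compact set $K\times[-1,1]$, because $\sigma$ commutes with the translation $\tau_h\colon x\mapsto(x',x_n+h)$ and $\tau_h$ acts isometrically on each space in play (and maps $U\times\R$, $V\times\R$ onto themselves). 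For the reassembly, fix $l$, take near-optimal extensions $\widetilde g_k\in F^{s,\vec a}_{\vec p,q}(\R^n)$ of $(\psi_k f)\circ\sigma$ and multiply them by $\eta(x_n-k)$, where $\eta\in C_0^\infty(\R)$ equals $1$ on $[-1,1]$ and is supported in a slightly larger interval; the resulting $g_k$ still restrict to $(\psi_k f)\circ\sigma$ on $U\times\R$, have $\supp g_k\subset\R^{n-1}\times J_k$ with the $J_k$ ($k\in\Lambda_l$) pairwise disjoint and separated by more than $2r$ (this is why $L>2r+3$), and satisfy $\|\, g_k\, |\, F^{s,\vec a}_{\vec p,q}\|\le c\,\|\, (\psi_k f)\circ\sigma\, |\, \overline F^{s,\vec a}_{\vec p,q}(U\times\R)\|$ by Lemma~\ref{multTraces}. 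The disjoint-support lemma then gives that $\sum_{k\in\Lambda_l}g_k$ converges in $F^{s,\vec a}_{\vec p,q}(\R^n)$ to an extension of $(\theta_l f)\circ\sigma=\sum_{k\in\Lambda_l}(\psi_k f)\circ\sigma$, with quasi-norm at most $c\big(\sum_{k\in\Lambda_l}\|\, (\psi_k f)\circ\sigma\, |\, \overline F^{s,\vec a}_{\vec p,q}(U\times\R)\|^{p_t}\big)^{1/p_t}$. Chaining the three estimates and summing over the $L$ values of $l$ by the $d$-subadditivity \eqref{eq:subadditivity} of $\overline F^{s,\vec a}_{\vec p,q}(U\times\R)$ (with $d=\min(1,p_1,\dots,p_n,q)$) yields $f\circ\sigma\in\overline F^{s,\vec a}_{\vec p,q}(U\times\R)$ and $\|\, f\circ\sigma\, |\, \overline F^{s,\vec a}_{\vec p,q}(U\times\R)\|\le c\,\|\, f\, |\, F^{s,\vec a}_{\vec p,q}(\R^n)\|$ with $c=c(K,\sigma)$.

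There is no single hard step; the difficulty is organisational. Everything beyond Theorem~\ref{thm:localVersion} rests on the disjoint-support lemma and on the uniformity in $k$ of the Theorem~\ref{thm:localVersion}-constant, both of which are available precisely because $x_n$ occupies the last block of $\vec p$ by itself and is untouched by $\sigma$; the point to watch is that the summation over the localisation index $k$ must be carried out with the exponent $p_t$ of that block --- a crude estimate based on the $d$-th power subadditivity would be false, as already a one-dimensional example shows. The compact $x'$-support of $f$ (i.e.\ $K\subset V$ compact) is exactly what makes each $\psi_k f$ genuinely compactly supported, so that Theorem~\ref{thm:localVersion} applies at all.
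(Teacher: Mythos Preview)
The paper does not actually prove this theorem; it simply records that ``the above three theorems can be found with proofs as Theorems~6--8, respectively, in~\cite{HJS13b}'', so there is no in-paper argument to compare against.

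Your argument is correct. The strategy --- dyadically localise in the unbounded $x_n$-direction by a translation-periodic partition of unity, apply Theorem~\ref{thm:localVersion} to each compactly supported piece with a constant made uniform via the commutation of $\sigma$ with $x_n$-translations, and reassemble via a disjoint-support identity in the local-means norm --- is exactly the natural reduction, and it is essentially the route taken in~\cite{HJS13b}. The key technical points you isolate are the right ones: (i) since all kernels $k_j$ have support in $B(0,r)$, slabs in $x_n$ separated by more than $2r$ decouple \emph{exactly} in the outermost $L_{p_t}$-integration, giving $\|\sum_k g_k\|^{p_t}\asymp\sum_k\|g_k\|^{p_t}$ after the elementary juggling of the two summands in $\|\cdot\|^*$; (ii) the constant from Theorem~\ref{thm:localVersion} can be pinned to the fixed compact set $K\times[-1,1]$ because $\tau_h\circ\sigma=\sigma\circ\tau_h$ and $\tau_h$ is an isometry on all spaces involved. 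Two small remarks: you should state that $r$ is chosen with $2r<\operatorname{dist}(K,\R^{n-1}\setminus V)$ so that Lemma~\ref{lem:extensionBy0} licenses the identification of $f$ with $e_{V\times\R}f\in F^{s,\vec a}_{\vec p,q}(\R^n)$; and in the reassembly step you should observe that $r_{U\times\R}$ is continuous (hence commutes with the $F$-convergent series $\sum_{k\in\Lambda_l}g_k$) to conclude that the limit really extends $(\theta_l f)\circ\sigma$. With these clarifications the proof is complete.
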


The above three theorems can be found with proofs as Theorems~6--8, respectively, in~\cite{HJS13b}.
As needed, we shall tacitly apply these results in situations with $n+1$ variables; when the last of
these is interpreted as time, then we let $t=x_{n+1}$. 

\section{Function Spaces on Manifolds}
\label{chap:domains}
To develop Lizorkin--Triebel spaces over cylinders and to settle the necessary notation, we first
review distributions on manifolds, for the reader's convenience.

\subsection{Distributions on Manifolds}
\label{sec:distributionsMfd}
To allow comparison with existing literature on partial differential equations, 
we follow \cite[Sec.~8.2]{gru09} and~\cite[Sec.~6.3]{hor90}. E.g.~a 
diffeomorphism is in the following a bijective $C^\infty$-map between open sets, and we recall

\begin{definition}
An $n$-dimensional manifold $X$ is a second-countable Hausdorff space which is locally homeomorphic to $\Rn$. 
The manifold $X$ is $C^\infty$ (or smooth), if it is equipped with a $C^\infty$-structure, i.e.~a
family $\cf$ of homeomorphisms $\kappa$ 
mapping open sets $X_\kappa\subset X$ onto open sets $\widetilde{X}_\kappa\subset\R^n$, with
$X=\bigcup_{\kappa\in\cf} X_\kappa$, such that the maps 
\begin{equation}
\label{eq:manifoldDiffCond}
  \kappa\circ \kappa_1^{-1}: \kappa_1(X_\kappa \cap X_{\kappa_1}) \to \kappa(X_\kappa \cap X_{\kappa_1}), \quad
  \kappa,\kappa_1 \in\cf,
\end{equation}
are diffeomorphisms, and $\cf$ contains every homeomorphism $\kappa_0: X_{\kappa_0}\to \widetilde{X}_{\kappa_0}$, 
for which the compositions in \eqref{eq:manifoldDiffCond} with $\kappa=\kappa_0$  are diffeomorphisms.

A subfamily of $\cf$ where the $X_\kappa$ cover $X$ is called a (compatible) atlas, and $\cf_1\subset\cf_2$ means that
every chart $\kappa$ in $\cf_1$ is also a member of $\cf_2$. 
(The definition of a $C^\infty$-manifold $X$ means that a maximal atlas has been chosen on the set $X$.)
\end{definition}

Unless otherwise stated, $X$ denotes an $n$-dimensional $C^\infty$-manifold and $\cf$ is the maximal atlas.
A partition of unity $1=\sum_{j\in\N}\psi_j(x)$ with $\psi_j\in C_0^\infty(X)$ and $\psi_j(x)\ge 0$ for $x\in X$ is said to be subordinate to $\cf$ (instead of to the covering $X=\bigcup_{\kappa\in\cf}X_\kappa$), 
when for each $j\in\N$ there exists a chart $\kappa(j)\in\cf$ such that $\supp \psi_j\subset X_{\kappa(j)}$.
It is locally finite, when $1=\sum\psi_j(x)$ for every $x\in X$ has only finitely many non-trivial terms in some neighbourhood of $x$. Note that for each compact set $K\subset X$, this finiteness extends to an open set $U\supset K$.

We recall the definition of a distribution on a $C^\infty$-manifold, using the notation $\varphi^* u$ 
for the pullback of a distribution $u$ by a function $\varphi$ \cite{hor90}; when $u$ is a
function then $\varphi^* u=u\circ \varphi$. 

\begin{definition}\label{def:distributionMfd}
The space $\cd'(X)$ consists of the families $\{u_\kappa\}_{\kappa\in\cf}$, where $u_\kappa\in \cd'(\widetilde{X}_\kappa)$ and
which for all $\kappa,\kappa_1\in\cf$ fulfil
\begin{equation}
\label{eq:distFamily}
  u_{\kappa_1} = (\kappa\circ \kappa_1^{-1})^* u_\kappa \quad \mbox{on}\quad \kappa_1(X_\kappa\cap X_{\kappa_1}).
\end{equation}
($\cd'(X)$ only identifies with the dual of $C_0^\infty(X)$ if there is a positive density on $X$; cf.~\cite[Ch.~6]{hor90}.)
\end{definition}

Each $u \in C^k(X)$, $k\in \N_0$, can be identified with the family 
$u_\kappa:=u\circ\kappa^{-1}$ of functions in $C^k(\widetilde{X}_\kappa)$, which evidently transform as in~\eqref{eq:distFamily}.
Thus $C^k(X)\subset \cd'(X)$ is obvious. 
For any $u\in\cd'(X)$, the notation $u\circ\kappa^{-1}$ is also used to denote $u_\kappa$.

In~\eqref{eq:distFamily} restriction of e.g.~$u_\kappa$ to $\kappa(X_\kappa\cap X_{\kappa_1})$ is tacitly understood.
To ease notation we will in the rest of the paper, when composing with a chart, suppress such restriction to the chart's co-domain.

\begin{lemma}[{\cite[Thm.~6.3.4]{hor90}}]\label{lem:hor634}
For any atlas $\cf_1\subset\cf$, each family $\{u_\kappa\}_{\kappa\in\cf_1}$ of elements $u_\kappa\in\cd'(\widetilde X_\kappa)$ fulfilling~\eqref{eq:distFamily} for $\kappa,\kappa_1\in\cf_1$ is obtained from a unique $v\in\cd'(X)$ by ``restriction" to $\cf_1$, i.e.~$v\circ\kappa^{-1} = u_\kappa$ for every $\kappa\in\cf_1$.
\end{lemma}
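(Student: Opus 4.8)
The plan is to reduce the statement on the sub-atlas $\cf_1$ to a statement on the maximal atlas $\cf$, by showing that the transition-compatibility condition \eqref{eq:distFamily} automatically propagates from $\cf_1$ to all of $\cf$, and then appeal to Definition~\ref{def:distributionMfd}. First I would fix a locally finite partition of unity $1=\sum_{i\in\N}\psi_i$ subordinate to $\cf_1$, with $\supp\psi_i\subset X_{\kappa(i)}$ for charts $\kappa(i)\in\cf_1$; this exists because $X$ is second-countable and Hausdorff, hence paracompact. Then for an arbitrary chart $\kappa_0\in\cf$ I would \emph{define} a candidate distribution $u_{\kappa_0}\in\cd'(\widetilde X_{\kappa_0})$ by the locally finite sum
\begin{equation*}
u_{\kappa_0}:=\sum_{i\in\N}\big((\psi_i\circ\kappa_0^{-1})\,\cdot\,((\kappa(i)\circ\kappa_0^{-1})^*u_{\kappa(i)})\big),
\end{equation*}
where each summand is understood as extended by zero outside $\kappa_0(X_{\kappa_0}\cap X_{\kappa(i)})$; local finiteness of the partition of unity makes this a well-defined element of $\cd'(\widetilde X_{\kappa_0})$. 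On a chart $\kappa_0=\kappa\in\cf_1$ this reproduces $u_\kappa$: indeed on the overlap one may use \eqref{eq:distFamily} (valid \emph{within} $\cf_1$) to rewrite $(\kappa(i)\circ\kappa^{-1})^*u_{\kappa(i)}=u_\kappa$ there, so the sum collapses to $(\sum_i\psi_i\circ\kappa^{-1})\,u_\kappa=u_\kappa$.

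Next I would verify that the family $\{u_{\kappa_0}\}_{\kappa_0\in\cf}$ just constructed satisfies the transition relations \eqref{eq:distFamily} for \emph{all} pairs $\kappa_0,\kappa_1\in\cf$, so that by Definition~\ref{def:distributionMfd} it defines a genuine $v\in\cd'(X)$. This is a local computation: fix a point in $\kappa_1(X_{\kappa_0}\cap X_{\kappa_1})$, choose a neighbourhood meeting only finitely many $\supp\psi_i$, and on that neighbourhood apply the pullback $(\kappa_0\circ\kappa_1^{-1})^*$ term by term, using functoriality of pullback under composition of the diffeomorphisms in \eqref{eq:manifoldDiffCond} (i.e.\ $(\kappa_0\circ\kappa_1^{-1})^*\circ(\kappa(i)\circ\kappa_0^{-1})^*=(\kappa(i)\circ\kappa_1^{-1})^*$) together with $(\kappa_0\circ\kappa_1^{-1})^*(\psi_i\circ\kappa_0^{-1})=\psi_i\circ\kappa_1^{-1}$. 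The result is exactly the defining expression for $u_{\kappa_1}$ on that neighbourhood, which gives \eqref{eq:distFamily}. Hence $v$ exists with $v\circ\kappa^{-1}=u_{\kappa_0}$ for $\kappa_0\in\cf$, and in particular $v\circ\kappa^{-1}=u_\kappa$ for $\kappa\in\cf_1$.

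Finally, uniqueness: if $v,v'\in\cd'(X)$ both restrict to $\{u_\kappa\}_{\kappa\in\cf_1}$, then $v\circ\kappa^{-1}=v'\circ\kappa^{-1}$ on $\widetilde X_\kappa$ for every $\kappa\in\cf_1$, and since the $X_\kappa$ with $\kappa\in\cf_1$ cover $X$, the distributions $v-v'$ vanishes on each chart domain of a covering family, hence $v=v'$ by the sheaf property of $\cd'$. The main obstacle I anticipate is purely bookkeeping: making the locally finite sum defining $u_{\kappa_0}$ rigorous as a distribution (each summand being a compactly-fitted piece, the sum being locally a finite sum) and carefully checking that the pullback identities commute with this infinite sum on each small neighbourhood; there is no analytic difficulty, only the need to be scrupulous about domains of definition and the tacitly-suppressed restrictions mentioned after Definition~\ref{def:distributionMfd}.
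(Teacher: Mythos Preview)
The paper does not supply its own proof of this lemma; it is simply cited from H\"ormander~\cite[Thm.~6.3.4]{hor90} and used as a black box. Your argument is correct and is essentially the standard proof one finds in H\"ormander: build the extension to the maximal atlas via a partition of unity subordinate to $\cf_1$, check consistency on $\cf_1$ using the given transition relations, verify the full transition relations by functoriality of pullback, and obtain uniqueness from the covering property of $\cf_1$. There is nothing to compare against here beyond noting that your write-up matches the intended reference.
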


So if an open set $U\subset\Rn$ is seen as a manifold $X$, then $\cf_1 = \{\id_U\}$ at once gives $\cd'(U)\hookrightarrow\cd'(X)$;
the surjectivity of this map follows by gluing together, cf.~\cite[Thm.~2.2.4]{hor90}.

For $Y\subset X$ open, the restriction of $u\in\cd'(X)$ to $Y$ is the family $r_Y u := \{ r_{\kappa(Y\cap
  X_\kappa)} u_\kappa\}$ with $\kappa$ running through the charts in $\cf$ for which $X_\kappa\cap
Y\neq \emptyset$. If we only consider an atlas $\cf_1\subset\cf$, then the
corresponding subfamily identifies with a distribution $u_Y\in\cd'(Y)$, cf.~Lemma~\ref{lem:hor634},
and since this is unique $r_Y u = u_Y$, i.e.~it suffices to consider an arbitrary atlas when
determining the restriction of a distribution.  

A distribution $u\in\cd'(X)$ is said to be 0 on an open set $Y\subset X$ if $r_Y u = 0$. Using this,
\begin{equation}\label{eq:supportDist}
\supp u := X\setminus \bigcup \{\, Y\subset X \text{ open}\, |\, u = 0 \text{ on } Y\},
\end{equation}
and it is easily seen that for any atlas $\cf_1\subset\cf$,
\begin{equation}\label{eq:equivSupport}
\supp u = \bigcup_{\kappa_1\in\cf_1} \kappa_1^{-1}(\supp u_{\kappa_1}).
\end{equation}
The space $\ce'(X)$ consists of the distributions $u\in\cd'(X)$ having compact support, while $\ce'(K)$ for an arbitrary $K\subset X$ consists of the $u\in\ce'(X)$ with $\supp u\subset K$.
Any $u\in\ce'(Y)$, where $Y\subset X$ is open, has an ``extension by 0"; even locally in a chart:

\begin{corollary}\label{cor:extensionByZero}
When $Y\subset X$ is open and $u\in \ce'(Y)$, then there exists $v\in\ce'(X)$ such that $r_Y v = u$ and $\supp v = \supp u$.
Moreover, when given $u_\kappa\in \ce'(\widetilde{X}_\kappa)$ for a single $\kappa\in\cf$, then there exists $v\in\ce'(X)$ such that $v_\kappa = u_\kappa$ and $\supp v = \kappa^{-1} (\supp u_\kappa)$.
\end{corollary}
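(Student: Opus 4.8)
The plan is to prove both assertions by gluing local pieces, treating the second (single-chart) statement first since the first follows from it by a partition of unity.

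For the single-chart case, suppose we are given $u_\kappa\in\ce'(\widetilde X_\kappa)$. The idea is to define a family $\{v_{\kappa_1}\}_{\kappa_1\in\cf}$ on a suitable atlas and then invoke Lemma~\ref{lem:hor634}. Concretely, set $K:=\kappa^{-1}(\supp u_\kappa)\subset X_\kappa$; this is compact. Since $X$ is Hausdorff, $X\setminus K$ is open, and together $X_\kappa$ and $X\setminus K$ cover $X$. Using the maximality of $\cf$, pick an atlas $\cf_1\subset\cf$ consisting of $\kappa$ itself together with charts whose domains $X_{\kappa_1}$ are contained in $X\setminus K$ (every point of $X\setminus K$ lies in such a chart, again by maximality of $\cf$ and the fact that we may shrink any chart's domain to an open subset). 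On this atlas define $v_\kappa:=u_\kappa$, and $v_{\kappa_1}:=0\in\cd'(\widetilde X_{\kappa_1})$ for the remaining $\kappa_1\in\cf_1$. The compatibility condition~\eqref{eq:distFamily} must be checked: for two charts both disjoint from $K$ it is trivially $0=0$; for $\kappa$ paired with some $\kappa_1\subset X\setminus K$, note that $\kappa_1(X_\kappa\cap X_{\kappa_1})$ is contained in $\kappa_1(\widetilde X_\kappa\setminus\supp u_\kappa$-preimage$)$, i.e.\ the overlap region maps under $\kappa$ into $\widetilde X_\kappa\setminus\supp u_\kappa$, so $u_\kappa$ restricts to $0$ there, matching the pullback of $v_{\kappa_1}=0$. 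Hence~\eqref{eq:distFamily} holds on $\cf_1$, and Lemma~\ref{lem:hor634} yields a unique $v\in\cd'(X)$ with $v\circ\kappa_1^{-1}=v_{\kappa_1}$ for all $\kappa_1\in\cf_1$. In particular $v_\kappa=u_\kappa$. For the support: by~\eqref{eq:equivSupport} applied to the atlas $\cf_1$, $\supp v=\bigcup_{\kappa_1\in\cf_1}\kappa_1^{-1}(\supp v_{\kappa_1})=\kappa^{-1}(\supp u_\kappa)$, since all other terms vanish. This also shows $\supp v$ is compact, so $v\in\ce'(X)$, and it lies in $\ce'(K)$.

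For the first assertion, given $u\in\ce'(Y)$ with $Y\subset X$ open, write $u$ in a chart. More directly: $\supp u$ is a compact subset of $Y$, so by a locally finite partition of unity $1=\sum\psi_j$ subordinate to $\cf$ we have $u=\sum_j \psi_j u$ with only finitely many nonzero terms, each $\psi_j u\in\ce'(Y\cap X_{\kappa(j)})$ supported in a compact subset of the chart domain $X_{\kappa(j)}$. Viewing $(\psi_j u)\circ\kappa(j)^{-1}$ as an element of $\ce'(\widetilde X_{\kappa(j)})$ (extending by $0$ off $\kappa(j)(Y\cap X_{\kappa(j)})$ within the chart, which is legitimate since its support is a compact subset of the open set $\kappa(j)(Y\cap X_{\kappa(j)})$), the single-chart case just proved produces $v_j\in\ce'(X)$ with $v_j\circ\kappa(j)^{-1}$ equal to that element and $\supp v_j=\supp\psi_j u$. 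Then $v:=\sum_j v_j$ is a finite sum, so $v\in\ce'(X)$, and $r_Y v=\sum r_Y v_j=\sum\psi_j u=u$. For the support equality, $\supp v\subset\bigcup\supp v_j=\bigcup\supp\psi_j u\subset\supp u$, while on any open $Z$ with $r_Z v=0$ we get $r_{Z\cap Y}u=r_{Z\cap Y}v=0$, forcing $Z\cap Y\subset X\setminus\supp u$; combined with $\supp v\subset Y$ (clear since each $v_j$ is supported in $Y$) this gives $\supp v=\supp u$.

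The main obstacle I anticipate is the bookkeeping in the single-chart construction — specifically, verifying the compatibility relation~\eqref{eq:distFamily} cleanly requires being careful that the overlap $X_\kappa\cap X_{\kappa_1}$ for $\kappa_1\subset X\setminus K$ really maps into the complement of $\supp u_\kappa$, which uses that $K=\kappa^{-1}(\supp u_\kappa)$ and the suppressed-restriction convention noted after Definition~\ref{def:distributionMfd}. A secondary subtlety is ensuring the chosen subfamily $\cf_1$ is genuinely an atlas (its domains cover $X$) and consists of members of the maximal atlas $\cf$ — this is where the maximality of $\cf$ is used, since we need to be free to take charts with arbitrarily small domains inside $X\setminus K$. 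Everything else is routine once Lemma~\ref{lem:hor634} and formulas~\eqref{eq:supportDist}--\eqref{eq:equivSupport} are in hand.
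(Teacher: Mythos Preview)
Your proof is correct and follows essentially the same approach as the paper: construct a sub-atlas on which the distribution is nonzero in only one chart, invoke Lemma~\ref{lem:hor634}, and handle the general case via a partition of unity. The main difference is organizational --- you prove the single-chart assertion first and derive the extension-by-zero from it, whereas the paper goes the other direction (a special case of the first assertion, then the general first assertion, then the single-chart statement via viewing $X_\kappa$ as a one-chart manifold). Your construction of $\cf_1$ using charts contained in $X\setminus K$ is slightly more direct than the paper's, which first invokes normality of $X$ to find an open $U$ with $\supp u\subset U\subset\overline U\subset X_\kappa$ and then takes charts whose domains miss~$\overline U$; both work.
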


\begin{proof}
In the case that $\supp u \subset X_\kappa\subset Y$ for some $\kappa\in\cf$, then there exists an open set $U \subset X$ such that $\supp u \subset U\subset\overline{U}\subset X_\kappa$ ($X$ is normal). The family $\cf_1 := \{\kappa\}\cup \{\kappa_1\in\cf\, |\, \overline U \cap X_{\kappa_1} = \emptyset\}$ is an atlas, since its domains cover $X$.
Setting $v_\kappa = u_\kappa$ and $v_{\kappa_1} = 0$ for the other $\kappa_1\in\cf_1$, the family $\{v_{\kappa_1}\}_{\kappa_1\in\cf_1}$ clearly transforms as in~\eqref{eq:distFamily}, hence defines a $v\in\cd'(X)$, cf.~Lemma~\ref{lem:hor634}. 
From~\eqref{eq:equivSupport} it is clear that $\supp v = \supp u$; and $r_Y v = u$ is evident in the atlas $\cf_1$.

In the general case, we use that any $u\in\ce'(Y)$ can be written as a finite sum $u=\sum \psi_j u$,
where $1=\sum\psi_j$ is a locally finite partition of unity subordinate to the atlas $\{\,
\kappa{|_{Y \cap X_\kappa}}\, |\, \kappa\in\cf: Y\cap X_\kappa\neq\emptyset\}$ on $Y$. Since $\supp
\psi_j u \subset Y \cap X_{\kappa(j)}$ is compact for each summand, the above gives the existence of
a $v_j \in\cd'(X)$ such that $r_Y v_j = \psi_j u$ and $\supp v_j = \supp \psi_j u$. Because the
restriction operator is linear, taking $v=\sum v_j$ proves the statement. 

For the last part, consider $X_\kappa$ as a manifold with the atlas containing only the chart $\kappa$. Lemma~\ref{lem:hor634} gives a $w\in \cd'(X_\kappa)$ such that $w_\kappa = u_\kappa$,
hence the special case above applied to $w$ and $Y=X_\kappa$ gives the existence of some $v\in\ce'(X)$
such that $v_\kappa = w_\kappa$, and $\supp v = \supp w$.
\end{proof}

\subsection{Isotropic Lizorkin--Triebel Spaces on Manifolds}\label{sec:LTspacesMfd}
Since we later need a few isotropic results, and since the proofs are much cleaner for isotropic spaces, we shall fix ideas in this section by working with arbitrary
$s\in\mathbb{R}$, $0<p<\infty$ and $0<q\leq\infty$.
Let us add that most references on isotropic spaces over manifolds just describe the outcome without referring directly to the general definitions in~\cite[Ch.~6]{hor90}, thus being inadequate for our generalisations here.

\subsubsection{Manifolds in General}
We first recall that when $U\subset\Rn$ is open,  
$u\in\cd'(U)$ is said to belong to the Lizorkin--Triebel space $\overline F^s_{p,q}( U)$ locally, 
if $\varphi u\in \overline F^s_{p,q}( U)$ for all $\varphi\in C^\infty_0(U)$; the set of such elements is denoted $F^s_{p,q;\loc}( U)$.
Here we use the notation without bar, since $\varphi$ has compact support in $U$. 
This can be generalised to

\begin{definition}\label{def:localLTmanifold}
The local Lizorkin--Triebel space $F^s_{p,q;\loc}(X)$ consists of the $u\in\cd'(X)$ such that $u_\kappa\in  F^s_{p,q;\loc}(\widetilde{X}_\kappa)$ for every $\kappa\in\cf$.
\end{definition}

For $u\in\cd'(X)$ to belong to $F^s_{p,q;\loc}(X)$, it suffices that $u_{\kappa_1}$ is in $ F^s_{p,q;\loc}(\widetilde{X}_{\kappa_1})$ for each $\kappa_1$ in an atlas $\cf_1\subset\cf$. Indeed given $\varphi\in C_0^\infty(\widetilde{X}_\kappa)$, a partition of unity yields a reduction to the case where $\supp(\varphi\circ\kappa)\subset X_\kappa \cap X_{\kappa_1}$, 
and the transition rule in~\eqref{eq:distFamily} gives
\begin{equation*}
(\kappa\circ\kappa_1^{-1})^* (\varphi u_\kappa) = \varphi\circ(\kappa\circ\kappa_1^{-1}) u_{\kappa_1}.
\end{equation*}
Since $\varphi\circ(\kappa\circ\kappa_1^{-1})$ is in $C_0^\infty\left(\kappa_1(X_\kappa\cap X_{\kappa_1})\right)$, the product by $u_{\kappa_1}$ 
is in $\overline F^s_{p,q}\left(\kappa_1(X_\kappa\cap X_{\kappa_1})\right)$ by assumption on $\cf_1$; so by Theorem~\ref{thm:isotropicCompactSupport} one has $\varphi u_\kappa \in \overline F^s_{p,q}(\widetilde{X}_\kappa)$.

For example, when $X$ is an open set $U\subset\Rn$, the identification $\cd'(X)\simeq \cd'(U)$ implies that $F^s_{p,q;\loc}(X)\simeq  F^s_{p,q;\loc}(U)$ as it according to the above suffices to consider the atlas $\{\id_U\}$. 

For a partition of unity $1=\sum_{j=1}^\infty \psi_j$ subordinate to $\cf$, we shall for brevity use
\begin{equation*}
\widetilde\psi_j := \psi_j \circ \kappa(j)^{-1}.
\end{equation*}
The partition is of course already subordinate to $\cf_1 := \{\, \kappa(j)\, |\, j\in\N\}$, which by the above suffices for determining $F^s_{p,q;\loc}(X)$. This is moreover true, when the cut-off functions $\widetilde\psi_j$ of a locally finite partition of unity are invoked:

\begin{lemma}\label{lem:equivalentCondition}
A distribution $u\in\cd'(X)$ belongs to $F^s_{p,q;\loc}(X)$ if and only if 
\begin{equation}
\label{eq:condLTOnManifold}
\widetilde\psi_j u_{\kappa(j)} \in \overline F^s_{p,q}(\widetilde{X}_{\kappa(j)}),\quad j\in\N.
\end{equation}
\end{lemma}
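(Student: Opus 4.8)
The plan is to prove Lemma~\ref{lem:equivalentCondition} by showing that the two conditions — membership in $F^s_{p,q;\loc}(X)$ and the countable family of conditions~\eqref{eq:condLTOnManifold} — are equivalent, exploiting that the partition $1=\sum_j\psi_j$ is locally finite and subordinate to the atlas $\cf_1=\{\kappa(j)\mid j\in\N\}$. The easy direction is ``only if'': if $u\in F^s_{p,q;\loc}(X)$, then $u_{\kappa(j)}\in F^s_{p,q;\loc}(\widetilde X_{\kappa(j)})$ by definition, and since $\widetilde\psi_j=\psi_j\circ\kappa(j)^{-1}\in C_0^\infty(\widetilde X_{\kappa(j)})$, the very definition of $F^s_{p,q;\loc}$ on an open subset of $\Rn$ gives $\widetilde\psi_j u_{\kappa(j)}\in\overline F^s_{p,q}(\widetilde X_{\kappa(j)})$.

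For the converse, assume~\eqref{eq:condLTOnManifold} holds. By the discussion preceding the lemma it suffices to check $u_{\kappa(j_0)}\in F^s_{p,q;\loc}(\widetilde X_{\kappa(j_0)})$ for a single $j_0$, i.e.\ that $\varphi u_{\kappa(j_0)}\in\overline F^s_{p,q}(\widetilde X_{\kappa(j_0)})$ for arbitrary $\varphi\in C_0^\infty(\widetilde X_{\kappa(j_0)})$. The key point is that $K:=\kappa(j_0)^{-1}(\supp\varphi)$ is compact in $X$, so by local finiteness only finitely many $\psi_j$, say those with $j\in J$ (a finite set), are non-trivial on a neighbourhood of $K$; hence $\sum_{j\in J}\psi_j\equiv 1$ near $K$, and consequently, pulling back by $\kappa(j_0)$,
\begin{equation*}
  \varphi u_{\kappa(j_0)} = \sum_{j\in J} \varphi\cdot(\psi_j\circ\kappa(j_0)^{-1})\, u_{\kappa(j_0)}.
\end{equation*}
Thus it remains to show each summand lies in $\overline F^s_{p,q}(\widetilde X_{\kappa(j_0)})$. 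Fix $j\in J$; since $\supp\psi_j\subset X_{\kappa(j)}$, the transition diffeomorphism $\tau:=\kappa(j_0)\circ\kappa(j)^{-1}$ is defined on a neighbourhood of $\kappa(j)(\supp\psi_j\cap\supp(\varphi\circ\kappa(j_0)))$, and the transition rule~\eqref{eq:distFamily} gives, on that set,
\begin{equation*}
  \varphi\cdot(\psi_j\circ\kappa(j_0)^{-1})\, u_{\kappa(j_0)}
  = \big[(\varphi\circ\tau)\cdot\widetilde\psi_j\, u_{\kappa(j)}\big]\circ\tau^{-1}.
\end{equation*}
Here $(\varphi\circ\tau)\cdot\widetilde\psi_j$ is a $C_0^\infty$-function supported in $\widetilde X_{\kappa(j)}$; multiplying the hypothesis $\widetilde\psi_j u_{\kappa(j)}\in\overline F^s_{p,q}(\widetilde X_{\kappa(j)})$ by it keeps us in $\overline F^s_{p,q}$ with compact support (the product is controlled, e.g.\ via Lemma~\ref{multTraces} applied to an extension). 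Then Theorem~\ref{thm:isotropicCompactSupport}, applied to the $C^\infty$-bijection $\tau^{-1}$ between the relevant open subsets of $\Rn$, yields that the composition lies in $\overline F^s_{p,q}$ of the target; since the support is compact inside $\widetilde X_{\kappa(j_0)}$, extension by zero (or directly working with the representative) places it in $\overline F^s_{p,q}(\widetilde X_{\kappa(j_0)})$. Summing the finitely many $j\in J$ and using subadditivity~\eqref{eq:subadditivity} finishes the argument.

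The main obstacle is purely bookkeeping with the overlapping charts: one must be careful that $\varphi\cdot(\psi_j\circ\kappa(j_0)^{-1})$ is supported where the transition map $\tau$ is actually a diffeomorphism, so that~\eqref{eq:distFamily} and Theorem~\ref{thm:isotropicCompactSupport} apply. This is exactly where the reduction via a partition of unity subordinate to $X_{\kappa(j_0)}\cap X_{\kappa(j)}$ — already used in the paragraph preceding the lemma — is invoked once more, shrinking supports so that everything lives in the common domain; I expect no serious analytic difficulty beyond that, since the hard estimates are packaged in Theorem~\ref{thm:isotropicCompactSupport} and Lemma~\ref{multTraces}.
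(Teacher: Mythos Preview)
Your proposal is correct and follows essentially the same approach as the paper: decompose $\varphi u_\kappa$ via the locally finite partition into finitely many pieces, transfer each piece to the chart $\kappa(j)$ via the transition diffeomorphism using Theorem~\ref{thm:isotropicCompactSupport}, and control the resulting $C^\infty$-factor with Lemma~\ref{multTraces}. One small wording slip: you write that it suffices to check $u_{\kappa(j_0)}\in F^s_{p,q;\loc}(\widetilde X_{\kappa(j_0)})$ \emph{for a single} $j_0$, but the preceding discussion requires this for \emph{each} $j_0\in\N$ (i.e.\ for every chart in the atlas $\cf_1$); your argument is of course uniform in $j_0$, so this is harmless.
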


\begin{proof}
Since $\widetilde\psi_j\in C_0^\infty(\widetilde X_{\kappa(j)})$, this condition is necessary for $u$ to be in $F^s_{p,q;\loc}(X)$.

Conversely, for an arbitrary $\varphi\in C_0^\infty(\widetilde{X}_\kappa)$ we obtain $\varphi u_\kappa = \sum_{j\in I} \psi_j\circ\kappa^{-1} \varphi u_\kappa$ with summation over a finite index set $I\subset\N$, 
because $(\psi_j)_{j\in\N}$ is locally finite.
As $\supp (\psi_j\circ\kappa^{-1} \varphi) \subset \kappa (X_\kappa\cap X_{\kappa(j)})$, \cite[Lem.~8]{HJS13b} and then
Theorem~\ref{thm:isotropicCompactSupport} applied to $\kappa(j)\circ\kappa^{-1}$ yields, cf.~\eqref{eq:distFamily},
\begin{equation}
\label{eq:estimateLocalu_kappa}
\|\, \varphi u_\kappa\, | \overline F^s_{p,q}(\widetilde{X}_\kappa)\|
\leq c_\kappa \sum_{j\in I} \big\|\, \widetilde\psi_j\cdot (\varphi\circ \kappa\circ\kappa(j)^{-1})\cdot u_{\kappa(j)}\, \big| \overline F^s_{p,q}(\kappa(j)(X_\kappa\cap X_{\kappa(j)}))\big\|.
\end{equation}
After multiplication with $\chi_j\in C_0^\infty(\widetilde{X}_{\kappa(j)})$ chosen such that $\chi_j\equiv 1$ on $\supp \widetilde\psi_j$, 
we obtain by applying Lemma~\ref{multTraces} with some $s_1>s$ satisfying~\eqref{eq:assumptionMult} and suppressing extension by 0 to $\Rn$ that
\begin{equation}
\label{eq:normEstimatesSubsets}
\|\, \varphi u_\kappa\, | \overline F^s_{p,q}(\widetilde{X}_\kappa)\| 
\leq c_\kappa \sum_{j\in I}\, \|\, 
\varphi\circ \kappa\circ\kappa(j)^{-1}\chi_j\, | B^{s_1}_{\infty,\infty}(\Rn) \|
\|\, \widetilde\psi_j u_{\kappa(j)}\, | \overline F^s_{p,q}(\widetilde{X}_{\kappa(j)}) \|.
\end{equation}
The right-hand side is by~\eqref{eq:condLTOnManifold} finite, hence $u_\kappa\in  F^s_{p,q;\loc}(\widetilde{X}_\kappa)$ for each~$\kappa\in\cf$.
\end{proof}

The space $F^s_{p,q;\loc}(X)$ can be topologised through a separating family of quasi-seminorms, 
\begin{equation}\label{eq:quasiSeminorms}
\mu_j(u):= \|\, \widetilde\psi_j u_{\kappa(j)}\, | \overline F^s_{p,q}(\widetilde{X}_{\kappa(j)}) \|,\quad j\in \N.
\end{equation}
Indeed, if $u$ is non-zero in $F^s_{p,q;\loc}(X)$, then there exists $\kappa\in\cf$ and $\varphi\in C_0^\infty(\widetilde{X}_\kappa)$ such that $\varphi u_\kappa \neq 0$, i.e.\ $\|\, \varphi u_\kappa\, | \overline F^s_{p,q}(\widetilde{X}_\kappa)\|>0$. So~\eqref{eq:normEstimatesSubsets} gives that $\mu_j(u)>0$ for at least one $j\in\N$.

Going a step further, one obtains an equivalent family of quasi-seminorms even for a ``restricted" family $\{v_{\kappa_1}\}_{\kappa_1\in\cf_1}$:

\begin{lemma}\label{lem:compatibleAtlas}
Let $1=\sum\varphi_k$ be a locally finite partition of unity subordinate to some atlas $\cf_1\subset\cf$ and let $\widetilde\varphi_k = \varphi_k\circ\kappa_1(k)^{-1}$.
When a family of distributions $v_{\kappa_1} \in \cd'(\widetilde{X}_{\kappa_1})$, $\kappa_1\in\cf_1$, transforms as in~\eqref{eq:distFamily} and $\widetilde\varphi_k v_{\kappa_1(k)} \in \overline F^s_{p,q}(\widetilde{X}_{\kappa_1(k)})$ for every $k\in\N$, then there exists a unique $u\in F^s_{p,q;\loc}(X)$ such that $u_{\kappa_1} = v_{\kappa_1}$ for all $\kappa_1\in \cf_1$ and
\begin{align}\label{eq:equivalenceSeminorms}
\|\, \widetilde\psi_j u_{\kappa(j)}\, | \overline F^s_{p,q}(\widetilde{X}_{\kappa(j)})\|
\leq c_j \max \|\, \widetilde\varphi_k u_{\kappa_1(k)}\, |\overline F^s_{p,q}(\widetilde{X}_{\kappa_1(k)})\| ,\quad j\in\N,
\end{align}
with maximum over $k\in\N$ for which $\supp \psi_j\cap\supp \varphi_k\neq\emptyset$, cf.~\eqref{eq:supportDist}.
\end{lemma}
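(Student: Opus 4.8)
The plan is to follow the pattern of the proof of Lemma~\ref{lem:equivalentCondition}, in three steps: first produce $u$ as a distribution, then show $u\in F^s_{p,q;\loc}(X)$, and finally establish~\eqref{eq:equivalenceSeminorms}. Since $\cf_1$ is an atlas and $\{v_{\kappa_1}\}_{\kappa_1\in\cf_1}$ transforms according to~\eqref{eq:distFamily}, Lemma~\ref{lem:hor634} supplies a unique $u\in\cd'(X)$ with $u_{\kappa_1}=v_{\kappa_1}$ for all $\kappa_1\in\cf_1$; uniqueness of $u$ within $F^s_{p,q;\loc}(X)$ is then automatic, as any two such distributions agree on the atlas $\cf_1$ and hence coincide in $\cd'(X)$.

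To see $u\in F^s_{p,q;\loc}(X)$ it suffices, by the remark preceding Lemma~\ref{lem:equivalentCondition}, to check $u_{\kappa_1}=v_{\kappa_1}\in F^s_{p,q;\loc}(\widetilde X_{\kappa_1})$ for each $\kappa_1\in\cf_1$. Fix such a $\kappa_1$ and $\varphi\in C_0^\infty(\widetilde X_{\kappa_1})$. Since $(\varphi_k)_{k\in\N}$ is locally finite, $\varphi v_{\kappa_1}=\sum_{k\in I}(\varphi_k\circ\kappa_1^{-1})\,\varphi\, v_{\kappa_1}$ with $I\subset\N$ finite and $\supp\big((\varphi_k\circ\kappa_1^{-1})\varphi\big)\subset\kappa_1(X_{\kappa_1}\cap X_{\kappa_1(k)})$. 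By the transition rule~\eqref{eq:distFamily}, each summand equals $\big(\kappa_1(k)\circ\kappa_1^{-1}\big)^*\big[(\varphi\circ\kappa_1\circ\kappa_1(k)^{-1})\,\widetilde\varphi_k\, v_{\kappa_1(k)}\big]$, so applying~\cite[Lem.~8]{HJS13b} followed by Theorem~\ref{thm:isotropicCompactSupport} for the diffeomorphism $\kappa_1(k)\circ\kappa_1^{-1}$, and then Lemma~\ref{multTraces} after multiplying with a $\chi_k\in C_0^\infty(\widetilde X_{\kappa_1(k)})$ equal to $1$ on $\supp\widetilde\varphi_k$ and some $s_1>s$ obeying~\eqref{eq:assumptionMult}, we obtain with $d=\min(1,p,q)$ and~\eqref{eq:subadditivity}
\begin{equation*}
\|\, \varphi v_{\kappa_1}\, |\overline F^s_{p,q}(\widetilde X_{\kappa_1})\|^d
\le c\sum_{k\in I}\|\, (\varphi\circ\kappa_1\circ\kappa_1(k)^{-1})\chi_k\, | B^{s_1}_{\infty,\infty}(\Rn)\|^d\,
\|\, \widetilde\varphi_k v_{\kappa_1(k)}\, |\overline F^s_{p,q}(\widetilde X_{\kappa_1(k)})\|^d .
\end{equation*}
By hypothesis the right-hand side is finite, so $v_{\kappa_1}\in F^s_{p,q;\loc}(\widetilde X_{\kappa_1})$, whence $u\in F^s_{p,q;\loc}(X)$.

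For~\eqref{eq:equivalenceSeminorms} one runs the same computation, but with the chart $\kappa_1$ replaced by $\kappa(j)\in\cf$ and the cut-off $\varphi$ replaced by $\widetilde\psi_j$ (note that $\kappa_1(k)\circ\kappa(j)^{-1}$ is again a diffeomorphism on the relevant overlap since $\cf_1\subset\cf$). The index set then consists precisely of the $k\in\N$ with $\supp\psi_j\cap\supp\varphi_k\neq\emptyset$; it is finite, and absorbing the finitely many multiplier norms $\|(\widetilde\psi_j\circ\kappa(j)\circ\kappa_1(k)^{-1})\chi_k \,| B^{s_1}_{\infty,\infty}(\Rn)\|$ together with the cardinality of this set into a single constant $c_j$ gives~\eqref{eq:equivalenceSeminorms}.

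The computation itself is essentially bookkeeping; the genuinely delicate point is that $\cf_1$ need not contain the charts $\kappa(j)$ that define the seminorms $\mu_j$, so one must move between the two atlases through the transition maps $\kappa_1(k)\circ\kappa(j)^{-1}$ and $\kappa_1(k)\circ\kappa_1^{-1}$ — verifying these are genuine diffeomorphisms on the overlaps (exactly the compatibility encoded in $\cf_1\subset\cf$) while keeping every support condition intact so that Theorem~\ref{thm:isotropicCompactSupport} and Lemma~\ref{multTraces} remain applicable — and to carry the $d$-th powers throughout, since $\overline F^s_{p,q}$ is in general only quasi-normed.
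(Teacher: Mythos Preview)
Your proof is correct and follows essentially the same route as the paper: invoke Lemma~\ref{lem:hor634} for the unique $u\in\cd'(X)$, then reproduce the computation \eqref{eq:estimateLocalu_kappa}--\eqref{eq:normEstimatesSubsets} from the proof of Lemma~\ref{lem:equivalentCondition} with the partition $1=\sum\varphi_k$ in place of $1=\sum\psi_j$. The only organisational difference is that you first establish $u\in F^s_{p,q;\loc}(X)$ via the atlas $\cf_1$ and then prove the inequality, whereas the paper derives~\eqref{eq:equivalenceSeminorms} directly and reads off membership from Lemma~\ref{lem:equivalentCondition}; your intermediate step is therefore not strictly needed, but it is not wrong.
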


\begin{proof}
There exists a unique distribution $u\in \cd'(X)$ such that $u_{\kappa_1}=v_{\kappa_1}$ for all $\kappa_1\in\cf_1$, cf.~Lemma~\ref{lem:hor634}, and
using~\eqref{eq:normEstimatesSubsets} with $\varphi = \widetilde\psi_j$ and $1=\sum \varphi_k$ as the partition of unity readily shows~\eqref{eq:equivalenceSeminorms}. Consequently $u\in F^s_{p,q;\loc}(X)$.
\end{proof}

Since the opposite inequality of~\eqref{eq:equivalenceSeminorms} can be shown similarly from~\eqref{eq:estimateLocalu_kappa}--\eqref{eq:normEstimatesSubsets}, we obtain

\begin{corollary}\label{cor:equivalentSpaces}
The space $F^s_{p,q;\loc}(X)$ can be equivalently defined from any atlas $\cf_1\subset\cf$. Lemma~\ref{lem:equivalentCondition} holds for any locally finite partition of unity subordinate to $\cf_1$, and the resulting system of quasi-seminorms is equivalent to~\eqref{eq:quasiSeminorms}.
\end{corollary}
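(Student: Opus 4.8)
The plan is to reduce everything to the two-sided norm comparison already half-established in Lemma~\ref{lem:compatibleAtlas}. First I would observe that the \emph{set} equality is almost immediate: if $\cf_1\subset\cf$ is an arbitrary atlas and $u\in\cd'(X)$, then $u\in F^s_{p,q;\loc}(X)$ clearly implies $u_{\kappa_1}\in F^s_{p,q;\loc}(\widetilde X_{\kappa_1})$ for every $\kappa_1\in\cf_1$ (since $\cf_1\subset\cf$); the converse was recorded right after Definition~\ref{def:localLTmanifold} via a partition-of-unity reduction and the transition rule~\eqref{eq:distFamily}. Hence membership can be tested on any atlas, which also shows that Lemma~\ref{lem:equivalentCondition}, whose proof only used that the $\widetilde\psi_j$ form a locally finite partition of unity subordinate to the chosen family $\cf_1=\{\kappa(j)\}$, holds verbatim for any locally finite partition of unity subordinate to any atlas $\cf_1\subset\cf$.

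The substantive point is the equivalence of the quasi-seminorm systems. Fix the maximal system $\mu_j(u)=\|\,\widetilde\psi_j u_{\kappa(j)}\,|\,\overline F^s_{p,q}(\widetilde X_{\kappa(j)})\|$ from~\eqref{eq:quasiSeminorms} attached to one locally finite partition $1=\sum\psi_j$ subordinate to $\cf$, and let $1=\sum\varphi_k$ be any locally finite partition subordinate to an atlas $\cf_1\subset\cf$, with $\widetilde\varphi_k=\varphi_k\circ\kappa_1(k)^{-1}$. Lemma~\ref{lem:compatibleAtlas} already gives
\[
\mu_j(u)=\|\,\widetilde\psi_j u_{\kappa(j)}\,|\,\overline F^s_{p,q}(\widetilde X_{\kappa(j)})\|\le c_j\,\max_k\|\,\widetilde\varphi_k u_{\kappa_1(k)}\,|\,\overline F^s_{p,q}(\widetilde X_{\kappa_1(k)})\|,
\]
the maximum running over the finitely many $k$ with $\supp\psi_j\cap\supp\varphi_k\ne\emptyset$ (finiteness because the $\varphi_k$ are locally finite and $\supp\psi_j$ is compact). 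For the opposite direction I would run the same argument with the roles of the two partitions and atlases interchanged: since $1=\sum\psi_j$ is subordinate to $\cf\supset\cf_1$ and in particular the whole machinery of~\eqref{eq:estimateLocalu_kappa}--\eqref{eq:normEstimatesSubsets} only requires Theorem~\ref{thm:isotropicCompactSupport}, \cite[Lem.~8]{HJS13b} and the multiplier Lemma~\ref{multTraces} --- none of which privileges a particular atlas --- one obtains, writing each $\widetilde\varphi_k u_{\kappa_1(k)}$ via $\varphi_k=\sum_j\psi_j\varphi_k$ and transporting by $\kappa(j)\circ\kappa_1(k)^{-1}$,
\[
\|\,\widetilde\varphi_k u_{\kappa_1(k)}\,|\,\overline F^s_{p,q}(\widetilde X_{\kappa_1(k)})\|\le c_k\,\max_j\mu_j(u),
\]
with maximum over the finitely many $j$ meeting $\supp\varphi_k$. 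The two displays together say the systems $\{\mu_j\}$ and $\{\|\widetilde\varphi_k u_{\kappa_1(k)}\,|\,\overline F^s_{p,q}\|\}_k$ are equivalent in the usual sense for countable families of (quasi-)seminorms, so they define the same Fréchet-type topology on $F^s_{p,q;\loc}(X)$; uniqueness of $u$ with $u_{\kappa_1}=v_{\kappa_1}$ comes from Lemma~\ref{lem:hor634}. This proves all three assertions of the corollary.

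The main obstacle is purely bookkeeping rather than analytic: one must make sure that in the ``interchanged'' estimate the constants and the index sets over which the maxima are taken really are finite and depend only on the geometric data (the supports of the two partitions and the transition maps), not on $u$; this is exactly where local finiteness of \emph{both} partitions is used, together with the fact noted after Definition~\ref{def:distributionMfd} that on a compact set local finiteness extends to a neighbourhood, so that each $\supp\psi_j$ (resp.\ $\supp\varphi_k$) meets only finitely many members of the other partition. Once that finiteness is in hand, the inequalities are immediate consequences of~\eqref{eq:subadditivity}, Theorem~\ref{thm:isotropicCompactSupport} and Lemma~\ref{multTraces}, exactly as in the proof of Lemma~\ref{lem:compatibleAtlas}.
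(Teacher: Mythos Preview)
Your proposal is correct and follows essentially the same approach as the paper: one direction of the seminorm equivalence comes from Lemma~\ref{lem:compatibleAtlas}, and the other is obtained by interchanging the roles of the two partitions and running the estimates~\eqref{eq:estimateLocalu_kappa}--\eqref{eq:normEstimatesSubsets} again. The paper compresses this into a single sentence (``the opposite inequality of~\eqref{eq:equivalenceSeminorms} can be shown similarly from~\eqref{eq:estimateLocalu_kappa}--\eqref{eq:normEstimatesSubsets}''), while you have spelled out the bookkeeping on finiteness and the set equality, but the argument is the same.
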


As a preparation we include an obvious consequence of the proof of Corollary~\ref{cor:extensionByZero}:

\begin{corollary}\label{cor:extensionByZeroLTspaces}
When given $u_\kappa\in\ce'(\widetilde X_\kappa)\cap \overline F^s_{p,q}(\widetilde X_\kappa)$ for a single $\kappa\in\cf$, then there exists $v\in\ce'(X)\cap F^s_{p,q;\loc}(X)$ such that $v_\kappa=u_\kappa$ and $\supp v = \kappa^{-1}(\supp u_\kappa)$.
\end{corollary}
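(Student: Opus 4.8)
The plan is to combine the ``special case'' already isolated in the proof of Corollary~\ref{cor:extensionByZero} with the characterisation of $F^s_{p,q;\loc}(X)$ in terms of partition-of-unity cut-offs (Lemma~\ref{lem:equivalentCondition} and Corollary~\ref{cor:equivalentSpaces}). Given $u_\kappa\in\ce'(\widetilde X_\kappa)\cap \overline F^s_{p,q}(\widetilde X_\kappa)$, the first step is to invoke Corollary~\ref{cor:extensionByZero}, which already produces a $v\in\ce'(X)$ with $v_\kappa=u_\kappa$ and $\supp v = \kappa^{-1}(\supp u_\kappa)$. So the only thing left to verify is that this particular $v$ lies in $F^s_{p,q;\loc}(X)$; the support claim is inherited verbatim. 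Recall how $v$ was built: one picks an open $U$ with $\supp u_\kappa\circ\kappa^{-1}\subset U\subset\overline U\subset X_\kappa$ (normality of $X$), forms the atlas $\cf_1=\{\kappa\}\cup\{\kappa_1\in\cf\mid \overline U\cap X_{\kappa_1}=\emptyset\}$, and sets $v_\kappa=u_\kappa$, $v_{\kappa_1}=0$ otherwise.

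The second step is to check condition~\eqref{eq:condLTOnManifold} for $v$, but relative to the atlas $\cf_1$, which is legitimate by Corollary~\ref{cor:equivalentSpaces}. Choose a locally finite partition of unity $1=\sum_k\varphi_k$ subordinate to $\cf_1$. For each $k$, either $\kappa_1(k)=\kappa$, in which case $\widetilde\varphi_k v_\kappa=\widetilde\varphi_k u_\kappa$, or $\kappa_1(k)\neq\kappa$, in which case $v_{\kappa_1(k)}=0$ and there is nothing to show. In the first case, $\widetilde\varphi_k\in C_0^\infty(\widetilde X_\kappa)$ and $u_\kappa\in\overline F^s_{p,q}(\widetilde X_\kappa)$, so $\widetilde\varphi_k u_\kappa\in\overline F^s_{p,q}(\widetilde X_\kappa)$: indeed $u_\kappa$ has compact support in $\widetilde X_\kappa$, so after multiplying by a cut-off $\chi\in C_0^\infty(\widetilde X_\kappa)$ equal to $1$ near $\supp u_\kappa$ one may apply the pointwise multiplier assertion (Lemma~\ref{multTraces}, with some $s_1>s$ satisfying~\eqref{eq:assumptionMult}) to $\widetilde\varphi_k$ against $u_\kappa$, extending by $0$ to $\Rn$ as in the proof of Lemma~\ref{lem:equivalentCondition}. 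Hence~\eqref{eq:condLTOnManifold} holds for the atlas $\cf_1$ and $v\in F^s_{p,q;\loc}(X)$.

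The only genuinely delicate point is purely bookkeeping rather than analysis: making sure that the $v$ delivered by Corollary~\ref{cor:extensionByZero} is the same object that we test for membership in $F^s_{p,q;\loc}(X)$, i.e.\ that we are allowed to use the non-maximal atlas $\cf_1$ both to construct $v$ and to verify the local Lizorkin--Triebel property; this is exactly what Lemma~\ref{lem:hor634} (uniqueness) together with Corollary~\ref{cor:equivalentSpaces} (atlas-independence of $F^s_{p,q;\loc}$) grant us. Thus, as announced, the statement is an immediate consequence of the proof of Corollary~\ref{cor:extensionByZero}: that proof already yields a $v$ whose only nontrivial local representative is $u_\kappa$ itself, and $u_\kappa\in\overline F^s_{p,q}(\widetilde X_\kappa)$ is precisely the hypothesis, so no further estimates are needed. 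I expect the write-up to be just a couple of lines, citing Corollary~\ref{cor:extensionByZero}, Corollary~\ref{cor:equivalentSpaces}, and Lemma~\ref{multTraces}.
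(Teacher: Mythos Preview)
Your proposal is correct and is exactly the unpacking the paper intends: the paper states this corollary without a separate proof, merely calling it ``an obvious consequence of the proof of Corollary~\ref{cor:extensionByZero}'', and your argument---reusing the atlas $\cf_1$ from that proof together with Corollary~\ref{cor:equivalentSpaces} and Lemma~\ref{multTraces}---is precisely how one makes that consequence explicit. The only remark is that you could shorten the write-up by citing Lemma~\ref{lem:compatibleAtlas} directly instead of going through Lemma~\ref{lem:equivalentCondition} plus Corollary~\ref{cor:equivalentSpaces}, since it already packages the ``family on a subatlas with local $\overline F^s_{p,q}$-membership yields a global element of $F^s_{p,q;\loc}(X)$'' statement in one step.
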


When an open set $U\subset\Rn$ is seen as a manifold $X$, then $ F^s_{p,q;\loc}(U)$ obviously coincides with  $F^s_{p,q;\loc}(X)$, since it by Corollary~\ref{cor:equivalentSpaces} suffices to consider $\cf_1=\{\id_U\}$ and any partition of unity $1=\sum_{j=1}^\infty \psi_j$ on $U$. On $ F^s_{p,q;\loc}(U)$, the family in~\eqref{eq:quasiSeminorms} gives the usual structure of a Fr\'{e}chet space if $p,q\ge 1$, and in general we have:

\begin{theorem}\label{thm:frechetSpace}
The space $F^s_{p,q;\loc}(X)$ is a complete topological vector space with a translation invariant metric; for $p,q\ge 1$ it is locally convex, hence a Fr\'{e}chet space.
\end{theorem}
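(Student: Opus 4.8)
The plan is to establish the three asserted properties of $F^s_{p,q;\loc}(X)$ by transporting everything to the model situation via the charts and the fixed locally finite partition of unity $1=\sum_{j=1}^\infty\psi_j$ subordinate to $\cf$, using the family of quasi-seminorms $\mu_j$ from~\eqref{eq:quasiSeminorms}. First I would observe that the $\mu_j$ separate points (already noted after~\eqref{eq:quasiSeminorms}), so the induced topology is Hausdorff, and since there are countably many of them one obtains a translation-invariant metric in the standard way, e.g.\ $d(u,v)=\sum_j 2^{-j}\frac{\mu_j(u-v)^d}{1+\mu_j(u-v)^d}$ where $d=\min(1,p,q)$; here the exponent $d$ makes each summand satisfy the triangle inequality thanks to the $d$-subadditivity~\eqref{eq:subadditivity} inherited by the quotient quasi-norms on $\overline F^s_{p,q}(\widetilde X_{\kappa(j)})$. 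Vector-space operations are continuous because each $\mu_j$ is a quasi-seminorm; scalar multiplication is handled via the homogeneity $\mu_j(\lambda u)=|\lambda|\mu_j(u)$. Local convexity when $p,q\ge1$ is immediate since then each $\mu_j$ is an honest seminorm, and a countable separating family of seminorms yields a Fr\'echet space.

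The substantive point is completeness. Given a Cauchy sequence $(u^{(m)})_m$ in $F^s_{p,q;\loc}(X)$, for each $j$ the sequence $\bigl(\widetilde\psi_j u^{(m)}_{\kappa(j)}\bigr)_m$ is Cauchy in the quasi-Banach space $\overline F^s_{p,q}(\widetilde X_{\kappa(j)})$ (quasi-Banach by the remark following Definition~\ref{def:Fsubspace}), hence converges to some $g_j\in\overline F^s_{p,q}(\widetilde X_{\kappa(j)})$, with $\supp g_j\subset\supp\widetilde\psi_j$. The candidate limit is defined chartwise by $u_\kappa:=\sum_j (\psi_j\circ\kappa^{-1})\,(\kappa(j)\circ\kappa^{-1})^*g_j$ on $\widetilde X_\kappa$ — a locally finite sum by local finiteness of the partition — and one must check that $\{u_\kappa\}$ transforms correctly, i.e.\ satisfies~\eqref{eq:distFamily}. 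This follows because already at the level of distributions $u^{(m)}_{\kappa}=\sum_j(\psi_j\circ\kappa^{-1})u^{(m)}_\kappa$ and the transition rule~\eqref{eq:distFamily} together with the chart-change invariance (Theorem~\ref{thm:isotropicCompactSupport}) relates $(\kappa(j)\circ\kappa^{-1})^*(\widetilde\psi_j u^{(m)}_{\kappa(j)})$ to $\psi_j\circ\kappa^{-1}\, u^{(m)}_\kappa$; passing to the limit in $\cd'(\widetilde X_\kappa)$ (using $\overline F^s_{p,q}\hookrightarrow\cd'$, which is contained in~\eqref{eq:SFembedding}) yields $\{u_\kappa\}\in\cd'(X)$ with $\widetilde\psi_j u_{\kappa(j)}=g_j$, so $u\in F^s_{p,q;\loc}(X)$ by Lemma~\ref{lem:equivalentCondition}, and $\mu_j(u^{(m)}-u)\to0$ for every $j$, which is convergence in the metric.

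The main obstacle I anticipate is the bookkeeping in the previous step: verifying that the chartwise-defined limit genuinely assembles into a single distribution on $X$, i.e.\ that the pieces $g_j$ — each living only in the chart $\widetilde X_{\kappa(j)}$ — are mutually compatible via~\eqref{eq:distFamily} after being glued by the partition of unity. One cannot simply declare $u_\kappa:=\lim u^{(m)}_\kappa$ because a priori $u^{(m)}_\kappa$ need not converge in $\overline F^s_{p,q}(\widetilde X_\kappa)$ (the seminorms only control compactly supported pieces); convergence must be extracted locally and reassembled. The clean way is to note that on any relatively compact open $V\Subset\widetilde X_\kappa$ only finitely many $\widetilde\psi_j$ are relevant, so $u^{(m)}_\kappa$ converges in $\overline F^s_{p,q}(V)$ to $r_V u_\kappa$; this gives convergence in $\cd'(\widetilde X_\kappa)$ and hence the transition identities~\eqref{eq:distFamily} survive in the limit by continuity of pullback on $\cd'$. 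Beyond this, the argument is routine quotient-space and Fr\'echet-space formalism, and the local convexity and metrisability claims require nothing more than the structure already recorded for the quasi-seminorms $\mu_j$.
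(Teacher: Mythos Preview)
Your overall strategy is sound and parallels the paper's proof, but your explicit formula for the candidate limit contains an error: in $u_\kappa:=\sum_j (\psi_j\circ\kappa^{-1})\,(\kappa(j)\circ\kappa^{-1})^*g_j$ the factor $\psi_j\circ\kappa^{-1}$ should not be there. Since $g_j=\lim\widetilde\psi_j u^{(m)}_{\kappa(j)}$ already carries the cutoff, the pullback $(\kappa(j)\circ\kappa^{-1})^*g_j$ equals $\lim(\psi_j\circ\kappa^{-1})u^{(m)}_\kappa$ on the overlap, so summing over $j$ already reproduces $\lim u^{(m)}_\kappa$. Your extra factor produces $\lim\sum_j(\psi_j\circ\kappa^{-1})^2 u^{(m)}_\kappa$ instead, and then your claim $\widetilde\psi_j u_{\kappa(j)}=g_j$ fails (for the term $l=j$ alone it gives $\widetilde\psi_j^2 g_j$, and there are cross terms). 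The correct chartwise definition is $u_\kappa:=\sum_j(\kappa(j)\circ\kappa^{-1})^*g_j$ with implicit extension by zero, which is legitimate because $\supp g_j\subset\supp\widetilde\psi_j$ is compact.

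That said, the alternative route you sketch in your final paragraph---showing that $u^{(m)}_\kappa$ converges in $\cd'(\widetilde X_\kappa)$ by restricting to relatively compact opens where only finitely many $\psi_j$ matter, and then letting the transition rule~\eqref{eq:distFamily} pass to the limit---is correct and actually cleaner than both your first attempt and the paper's argument. The paper proceeds differently: it invokes Corollary~\ref{cor:extensionByZeroLTspaces} to lift each local limit $g_j$ to a compactly supported $v^{(\kappa(j))}\in F^s_{p,q;\loc}(X)$ on the whole manifold, sums these, verifies the transformation rule for the sum directly, and then needs a separate partition-of-unity computation (using Theorem~\ref{thm:isotropicCompactSupport} and Lemma~\ref{multTraces} again) to establish $\mu_j(u_m-u)\to 0$. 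Your $\cd'$-limit approach yields $\widetilde\psi_j u_{\kappa(j)}=g_j$ for free, from which $\mu_j(u^{(m)}-u)\to 0$ is immediate; this buys simplicity at the cost of momentarily leaving the $F^s_{p,q}$-category.
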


\begin{proof}
It follows straightforwardly from~\cite[Thm.~B.5]{gru09}, which is based on a separating family of seminorms, that the separating family $(\mu_j^d)_{j\in\N}$, whereby $d:=\min(1,p,q)$, 
of subadditive functionals can be used to construct a topology, which turns $F^s_{p,q;\loc}(X)$ into a topological vector space. Indeed, only a minor modification in the proof of continuity of scalar multiplication is needed, since the $\mu^d_j$ are not positive homogeneous --- unless $p,q\ge 1$, and in this case the positive homogeneity implies that $F^s_{p,q;\loc}(X)$ is locally convex.

A translation invariant metric can be defined as in~\cite[Thm.~B.9]{gru09}, i.e.~
\begin{equation}\label{eq:translationInvariantMetricIsotropic}
d'(u,v) = \sum_{j=1}^\infty \frac 1{2^j} \frac{\mu_j(u-v)^d}{1+\mu_j(u-v)^d},
\end{equation}
and the arguments there immediately yield that $d'$ defines the same topology as $(\mu^d_j)_{j\in\N}$.

For an arbitrary Cauchy sequence $(u_m)$ in $F^s_{p,q;\loc}(X)$, the sequence $(\widetilde\psi_j u_{m,\kappa(j)})$, where $u_{m,\kappa(j)}:=u_m\circ\kappa(j)^{-1}$, is Cauchy in $\overline F^s_{p,q}(\widetilde X_{\kappa(j)})$ for each $j\in\N$. Since this space is complete, there exists $\widetilde v_{\kappa(j)}\in \overline F^s_{p,q}(\widetilde X_{\kappa(j)})$ such that 
\begin{equation}\label{eq:convergenceInLocalCoor}
  \|\, \widetilde\psi_j u_{m,\kappa(j)} - \widetilde v_{\kappa(j)}\, | \overline F^s_{p,q}(\widetilde X_{\kappa(j)})\| 
  \to 0\quad\text{for}\quad m\to\infty.
\end{equation}
Clearly $\widetilde v_{\kappa(j)}\in\ce'(\widetilde X_{\kappa(j)})$, hence it follows from Corollary~\ref{cor:extensionByZeroLTspaces} that there exists a~$v^{(\kappa(j))}\in\ce'(X)\cap F^s_{p,q;\loc}(X)$ so that $\supp v^{(\kappa(j))} = \kappa(j)^{-1}(\supp\widetilde v_{\kappa(j)})\subset\supp\psi_j$ and $v^{(\kappa(j))}_{\kappa(j)} = \widetilde v_{\kappa(j)}$. 

To find a limit for $(u_m)$, we note that $\widetilde u_{\kappa(j)} :=\sum_{l\in\N} v^{(\kappa(l))}_{\kappa(j)}$ is well defined in $\cd'(\widetilde X_{\kappa(j)})$, since
on every compact set $K\subset \widetilde X_{\kappa(j)}$ there are only finitely many non-trivial terms. This family transforms as in~\eqref{eq:distFamily}, for in $\cd'(\kappa(j)(X_{\kappa(j)}\cap X_{\kappa(k)}))$,
\begin{equation}\label{eq:transformationIsotropicCase}
  \widetilde u_{\kappa(k)}\circ\kappa(k)\circ\kappa(j)^{-1}=
  \sum_l v^{(\kappa(l))}_{\kappa(k)}\circ\kappa(k)\circ\kappa(j)^{-1}= \sum_l v^{(\kappa(l))}_{\kappa(j)}=\widetilde u_{\kappa(j)}.
\end{equation}
Since $\widetilde\psi_j\widetilde u_{\kappa(j)}=\sum_l \widetilde\psi_j v^{(\kappa(l))}_{\kappa(j)}$ has finitely many summands, hence yields an element of $\overline F^s_{p,q}(\widetilde X_{\kappa(j)})$, existence of $u\in F^s_{p,q;\loc}(X)$
with $u_{\kappa(j)}=\widetilde u_{\kappa(j)}$ for all $j$ follows from Lemma~\ref{lem:compatibleAtlas}.

To show the convergence of $u_m$ to $u$ in $F^s_{p,q;\loc}(X)$, we rely on extra copies of the locally finite partition of unity to estimate by finitely many terms,
\begin{align*}
  \mu_j(u_m-u)^d
  \leq 
   \sum_{\supp \psi_j \cap \supp \psi_k \neq \emptyset}
   \big\|\,\widetilde\psi_j \big( \psi_k\circ\kappa(j)^{-1}  u_{m,\kappa(j)} - v^{(\kappa(k))}_{\kappa(j)}\big)\, 
   \big| \overline F^s_{p,q}(\widetilde X_{\kappa(j)}) \big\|^d.
\end{align*}
For $k\neq j$ the domains can clearly be changed to $\kappa(j)(X_{\kappa(j)} \cap X_{\kappa(k)})$, since $v^{(\kappa(k))}$ and the $\psi_k$ have compact support in $X_{\kappa(k)}$. Using Theorem~\ref{thm:isotropicCompactSupport}, each term can then be estimated by,
\begin{align*}
  c \big\|\, \psi_j \circ \kappa(k)^{-1} (\widetilde\psi_k u_{m,\kappa(k)}-\widetilde v_{\kappa(k)})\,
  \big| \overline F^s_{p,q}\big(\kappa(k)(X_{\kappa(j)} \cap X_{\kappa(k)})\big) \big\|^d.
\end{align*}
By means of a cut-off function equal to 1 on the compact supports, one can extend by 0 to $\Rn$ and apply Lemma~\ref{multTraces} and~\cite[Lem.~8]{HJS13b}, which yields
\begin{align*}
  \mu_j(u_m-u)^d\leq
  c\sum_{\supp \psi_j \cap \supp \psi_k \neq\emptyset}
  \|\, \widetilde\psi_k u_{m,\kappa(k)} - \widetilde v_{\kappa(k)}\,|\overline F^s_{p,q}(\widetilde X_{\kappa(k)})\|^d.
\end{align*}
Each term converges to 0, cf.~\eqref{eq:convergenceInLocalCoor}, hence $F^s_{p,q;\loc}(X)$ is complete.
\end{proof}

\subsubsection{Compact Manifolds} \label{ssect:CompManf}
For trace operators on cylinders, compact manifolds are of special interest, since the intersection of the curved and the flat boundary is often of such nature. 

When $X$ is compact there exists a finite atlas $\cf_0$ and a partition of unity $1=\sum_{\kappa\in\cf_0} \psi_\kappa$ such that $\supp \psi_\kappa\subset X_\kappa$ is compact for each $\kappa\in\cf_0$. 
The space $F^s_{p,q;\loc}(X)$ is in this case just denoted $F^s_{p,q}(X)$, since the elements satisfy a global condition according to

\begin{theorem}\label{thm:compact}
When $X$ is a compact $C^\infty$-manifold, then $F^s_{p,q}(X)$ is a quasi-Banach space (normed if $p,q\geq 1$) when equipped with
\begin{equation}\label{eq:quasinormOnLocSpace}
\|\, u\, | F^s_{p,q}(X)\| := \Big( \sum_{\kappa\in\cf_0} \|\, \widetilde\psi_\kappa u_\kappa\, |\overline F^s_{p,q}(\widetilde{X}_\kappa)\|^d\Big)^{1/d} , \quad d := \min(1,p,q),
\end{equation}
and $\|\, \cdot\, | F^s_{p,q}(X) \|^d$ is subadditive.
\end{theorem}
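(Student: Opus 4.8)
The plan is to verify the three claims in order: that the expression \eqref{eq:quasinormOnLocSpace} is finite and a quasi-norm, that $\|\,\cdot\,|F^s_{p,q}(X)\|^d$ is subadditive, and that the resulting space is complete. Finiteness is immediate from Lemma~\ref{lem:equivalentCondition}: since $\cf_0$ is an atlas and $1=\sum_{\kappa\in\cf_0}\psi_\kappa$ is a (finite, hence locally finite) partition of unity subordinate to it, membership $u\in F^s_{p,q;\loc}(X)$ is equivalent to $\widetilde\psi_\kappa u_\kappa\in\overline F^s_{p,q}(\widetilde X_\kappa)$ for all $\kappa\in\cf_0$, so each summand in \eqref{eq:quasinormOnLocSpace} is finite. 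That the expression is positive definite follows from Corollary~\ref{cor:equivalentSpaces}: the system $(\mu_\kappa)_{\kappa\in\cf_0}$ is equivalent to \eqref{eq:quasiSeminorms} and therefore separating, so $\|\,u\,|F^s_{p,q}(X)\|=0$ forces $u=0$. Homogeneity is clear since each $\mu_\kappa$ is homogeneous.

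For subadditivity of $\|\,\cdot\,|F^s_{p,q}(X)\|^d$, I would combine the subadditivity \eqref{eq:subadditivity} of $\|\,\cdot\,|\overline F^s_{p,q}(\widetilde X_\kappa)\|^d$ (which carries over to the bar-spaces by the quotient construction) with the elementary fact that a finite sum of $d$-subadditive functionals raised to exponent $d$ is again $d$-subadditive — more precisely, if each $\mu_\kappa(u+v)^d\le\mu_\kappa(u)^d+\mu_\kappa(v)^d$ then summing over $\kappa\in\cf_0$ gives $\sum_\kappa\mu_\kappa(u+v)^d\le\sum_\kappa\mu_\kappa(u)^d+\sum_\kappa\mu_\kappa(v)^d$, which is exactly $\|\,u+v\,|F^s_{p,q}(X)\|^d\le\|\,u\,|F^s_{p,q}(X)\|^d+\|\,v\,|F^s_{p,q}(X)\|^d$. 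The quasi-triangle inequality with constant $2^{1/d-1}$ then follows in the standard way, and in the normed case $d=1$ one gets the genuine triangle inequality. The topology induced by $\|\,\cdot\,|F^s_{p,q}(X)\|$ coincides with the Fréchet/metric topology of Theorem~\ref{thm:frechetSpace} because the finitely many $\mu_\kappa$, $\kappa\in\cf_0$, already generate it (again via Corollary~\ref{cor:equivalentSpaces}, comparing with the seminorms attached to any locally finite partition of unity).

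For completeness, I would invoke Theorem~\ref{thm:frechetSpace}: $F^s_{p,q;\loc}(X)=F^s_{p,q}(X)$ is a complete topological vector space, and since here the topology is defined by the single quasi-norm \eqref{eq:quasinormOnLocSpace} equivalent to the finite system $(\mu^d_\kappa)_{\kappa\in\cf_0}$, Cauchy sequences for the quasi-norm are Cauchy for the metric \eqref{eq:translationInvariantMetricIsotropic} and conversely; hence they converge. Thus $F^s_{p,q}(X)$ with \eqref{eq:quasinormOnLocSpace} is a quasi-Banach space, normed when $p,q\ge1$.

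The only point requiring a little care — and the main (though modest) obstacle — is to be sure that the quasi-norm on $X$ genuinely recovers the locally convex / metrizable topology from Theorem~\ref{thm:frechetSpace} rather than merely a comparable one on a dense subspace; this is where I would lean on Corollary~\ref{cor:equivalentSpaces} to assert that passing to the finite atlas $\cf_0$ and the associated partition of unity changes the system of quasi-seminorms only up to equivalence, so that no information about the topology is lost and the space is literally the same topological vector space, now with a concretely given quasi-norm. Everything else is a routine transcription of the quotient-space and finite-direct-sum arguments.
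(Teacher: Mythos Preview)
Your proposal is correct and follows essentially the same route as the paper: subadditivity of $\|\cdot\|^d$ is inherited termwise from \eqref{eq:subadditivity}, the quasi-triangle inequality follows with constant $2^{(1-d)/d}$, and completeness is pulled back from Theorem~\ref{thm:frechetSpace} once one knows the single quasi-norm reproduces the metric topology. The only noticeable difference is in the positive-definiteness step: you appeal to Corollary~\ref{cor:equivalentSpaces} to infer that $(\mu_\kappa)_{\kappa\in\cf_0}$ is separating, whereas the paper argues directly that $\widetilde\psi_\kappa u_\kappa=0$ for all $\kappa$ forces, via the transition rule \eqref{eq:distFamily}, $u_{\kappa_1}=\sum_\kappa(\psi_\kappa\circ\kappa_1^{-1})u_{\kappa_1}=0$ in each chart --- a more hands-on but equivalent verification.
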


\begin{proof}
Positive homogeneity and subadditivity are inherited from the quasi-norms on the $\overline F^s_{p,q}(\widetilde{X}_\kappa)$ and then
the quasi-triangle inequality follows for $d<1$ by using dual exponents $\frac{1}{d},\frac1{1-d}$,
\begin{equation*}
\|\, u+v\, | F^s_{p,q}(X)\| \leq 
2^{\frac{1-d}d} \big( \|\, u\, | F^s_{p,q}(X)\| 
+ \|\, v\, | F^s_{p,q}(X)\| \big), \quad u,v \in F^s_{p,q}(X).
\end{equation*}

For any $u\in F^s_{p,q}(X)$ with $\|\, u\, |F^s_{p,q}(X)\|=0$, clearly $\widetilde\psi_\kappa u_\kappa=0$ on $\widetilde{X}_\kappa$, $\kappa\in\cf_0$.
Also $\psi_\kappa \circ \kappa_1^{-1} u_{\kappa_1} = 0$ for $\kappa,\kappa_1\in\cf_0$ with $X_\kappa\cap X_{\kappa_1} \neq \emptyset$, as~\eqref{eq:distFamily} applies on $\kappa_1(X_\kappa\cap X_{\kappa_1})$.
Therefore $u_{\kappa_1} = \sum_{\kappa\in\cf_0} (\psi_\kappa\circ\kappa_1^{-1}) u_{\kappa_1} = 0$ for all $\kappa_1\in\cf_0$, hence $u=0$.

Completeness follows from Theorem~\ref{thm:frechetSpace}, since we for $X$ compact have a partition
of unity with only finitely many non-zero elements, hence the topology there is equal to the one
defined from~\eqref{eq:quasinormOnLocSpace}. 
\end{proof}

\begin{remark}
  Alternatively, one could work with an additional Riemannian structure as done by Triebel~\cite{tri92}.
\end{remark}

\subsection{Isotropic Besov Spaces on Manifolds}
\label{sec:isotropicBesovManifolds}
For later reference, it is briefly mentioned that all the definitions and results in Section~\ref{sec:LTspacesMfd} can be adapted to Besov spaces $B^s_{p,q;\loc}(X)$. E.g.~they are complete, when endowed with the quasi-seminorms 
\begin{equation*}
\mu_j(u):= \|\, \widetilde \psi_j u_{\kappa(j)}\, | \overline B^s_{p,q}(\widetilde X_{\kappa(j)})\|, \quad j\in\N,
\end{equation*} 
and for $p,q\ge 1$ even Fr\'{e}chet spaces.
Moreover, when $X$ is compact, $B^s_{p,q}(X)$ is a quasi-Banach space under the norm
\begin{equation}\label{eq:theNormOnBesovOverManifold}
\|\, u\, | B^s_{p,q}(X)\| := \Big( \sum_{\kappa\in\cf_0} \|\, \widetilde\psi_\kappa u_\kappa\, | \overline B^s_{p,q}(\widetilde{X}_\kappa)\|^d\Big)^{1/d} , \quad d := \min(1,p,q).
\end{equation}
Indeed, this results since the arguments in Section~\ref{sec:LTspacesMfd} merely rely on Lemma~\ref{multTraces} and Theorem~\ref{thm:isotropicCompactSupport}. For one thing, the paramultiplication result in Lemma~\ref{multTraces} is simply replaced by a Besov version, cf.~\cite{JJ94mlt},~\cite{RS} or~\cite[4.2.2]{tri92},
while we now indicate the needed modifications of the invariance result in Theorem~\ref{thm:isotropicCompactSupport}:

The proof of~\cite[Thm.~6]{HJS13b}, i.e.\ Theorem~\ref{thm:isotropicCompactSupport}, was divided into two steps. For large $s$, the arguments carry over to $B^s_{p,q}$ using~\cite[Sec.~2.7.1,~Rem.~2]{tri83} instead of~\cite[Lem.~1(iii)]{HJS13b} and also using the characterisation of isotropic Besov spaces by kernels of local means, cf.~\cite[Thm.~BPT]{ryc99} or \cite[Thm.~1.10]{tri06}. This characterisation also readily gives a variant of~\cite[Lem.~8]{HJS13b} for $B^s_{p,q}$. 

Then~\cite[Lem.~2]{HJS13b} is replaced by~\cite[Cor.~3.3]{HJS13a} and it is noted that~\cite[Thm.~4.4]{HJS13a} carries over to the quasi-norm $\|\,\cdot\,|\ell_q(L_p)\|$. Indeed, the only modification is to apply the inequality in~\cite[(21)]{ryc99} instead of~\cite[Lem.~2.7]{HJS13a} in the last line of the proof. 

Finally, the reference to~\cite[Thm.~2]{HJS13b} is changed to~\cite[(23)]{ryc99}. However, Rychkov's starting point \cite[(34)]{ryc99} was flawed, as mentioned in~\cite[Rem.~1.1]{HJS13a}, but it can be derived from our anisotropic version in~\cite[Prop.~4.6]{HJS13a},
as the elementary inequality $\prod (1+|2^{ja_l}z_l|)^{r_0}\ge (1+|2^{j\vec a}z|)^{r_0}$
brings us back at once to the isotropic maximal functions. Our anisotropic 
dilations by $2^{j\vec a}$ disappear when invoking the majorant
property of the maximal function (cf.\ its proof in~\cite[p.~57]{ste93}).

For small $s$, the lift operator 
\begin{equation}\label{eq:liftOperatorIsotropic} 
I_r u = \cf^{-1}(\langle\xi\rangle^r \cf u)
\end{equation}
is used instead of~\cite[(22)]{HJS13b}, because application of~\cite[2.3.8]{tri83} then readily gives an $h\in B^{s+r}_{p,q}(\Rn)$ for some even integer $r>s_1-s$ such that $e_V f = I_r h$. Since
\begin{equation*}
I_r h = (1-\Delta)^\frac{r}{2} h,
\end{equation*} 
the rest of the proof is easily carried over to a full proof of the fact that a $C^\infty$-bijection $\sigma: U\to V$ sends $\overline B^s_{p,q}( V)$ boundedly into $\overline B^s_{p,q}( U)$.

\subsection{Mixed-Norm Lizorkin--Triebel Spaces on Curved Boundaries}
\label{cylinders}
As a motivation, we first note that in case of evolution equations, the function $u(x,t)$, depending on the location $x$ in space and time $t$, 
describes to each $t$ in an open interval $I\subset\R$ the state of a system (as a function of $x$ in an open subset $\Omega\subset\Rn$). 
Thus solutions are sought in $C_{\operatorname{b}}(\R_t,L_{\vec r}(\Omega))$, say for some $\vec r\ge 1$, equipped with the norm 
\begin{equation*}
\sup_{t\in I}\|\, u(x,t)\, |L_{\vec r}(\Omega)\|.
\end{equation*}
Thus it should be natural to work in the scale of mixed-norm Lizorkin--Triebel spaces $\overline F^{s,\vec a}_{\vec p,q}(\Omega\times I)$, in which $t$ is taken as the outer integration variable in the norm of $L_{\vec p}$; i.e.~we take $t=x_{n+1}$ with associated weight $a_t$ and integral exponent $p_t$ (when it eases notation, they will be written with $n+1$ as index).

The results in Section~\ref{sec:LTspacesMfd} can be carried over to $\overline F^{s,\vec a}_{\vec p,q}(\Omega\times I)$ under the assumptions that
\begin{equation}\label{eq:conditionsOn_ap_anisotropic}
a_0:=a_1= \ldots  = a_n, \qquad p_0:= p_1 = \ldots =p_n,
\end{equation}
and that $\Omega$ is $C^\infty$ in the sense adopted e.g.~by~\cite{gru09}:

\begin{definition}\label{defn:smoothDomain}
An open set $\Omega\subset\R^n$ with boundary $\Gamma$ is $C^\infty$ (or smooth), when for each boundary point $x\in\Gamma$
there exists a diffeomorphism $\lambda$ defined on an open neighbourhood $U_\lambda\subset \R^n$ such that 
$\lambda: U_\lambda\to B(0,1)\subset\R^n$ is surjective and
\begin{alignat*}{1}
\lambda(x) &= 0,\\
\lambda(U_\lambda\cap\Omega) &= B(0,1)\cap \R^n_+,\\
\lambda(U_\lambda\cap\Gamma) &= B(0,1)\cap \R^{n-1},
\end{alignat*}
whereby $\Rn_+:=\{(x_1,\ldots,x_n)\in\Rn\, |\, x_n>0\}$ and $\R^{n-1}\simeq\R^{n-1}\times\{0\}$.
\end{definition}

The unit ball in $\Rn$ will below be denoted by $B$ and in $\R^{n-1}$ by $B'$.

\subsubsection{Curved Boundaries in General}
Let $I\subset\R$ be an open interval. As $\Gamma\times I$ is a $C^\infty$-manifold, $\cd'(\Gamma\times I)$ is a special case of Definition~\ref{def:distributionMfd} and therefore the results regarding distributions on manifolds in Section~\ref{sec:distributionsMfd} are applicable. The manifold can be equipped with e.g.~the atlas $\cf\times\mathcal{N}$, where  $\cf=\{\kappa\}$ and $\mathcal{N}=\{\eta\}$ are maximal atlases on $\Gamma$, respectively on $I$.

Locally finite partitions of unity $1=\sum\psi_j(x)$ and $1=\sum \varphi_l(t)$ subordinate to $\cf$, respectively to $\mathcal{N}$ give a locally finite partition of unity $1=\sum\psi_j\otimes\varphi_l$ on $\Gamma\times I$.
Note that we formally should sum with respect to a fixed enumeration of the pairs $(j,l)$ in $\N\times\N$, but for simplicity's sake we avoid this. (The sums are locally finite anyway.)
As above, we use the notation $\widetilde{\psi_j\otimes\varphi_l} = (\psi_j \otimes\varphi_l) \circ (\kappa(j)^{-1}\times\eta(l)^{-1})$.

Since the maximal atlas on $\Gamma\times I$ contains charts that do not respect the splitting into $t$ and the $x$-variables, it is not obviously useful for the anisotropic spaces. We have therefore chosen to adopt Lemma~\ref{lem:equivalentCondition} as our point of departure for the 
$F^{s,\vec a}_{\vec p,q}$-spaces on the curved boundary. Because $\Gamma$ is of dimension $n-1$, it is noted that the parameters $\vec a$, $\vec p$ for these spaces only contain $n$ entries.

\begin{definition}\label{def:LTspaceCurvedBoundary}
The space $F^{s,\vec a}_{\vec p,q;\loc}(\Gamma\times I)$ consists of all the $u\in\cd'(\Gamma\times I)$ for which
\begin{equation*}
\widetilde{\psi_j \otimes\varphi_l} u_{\kappa(j)\times\eta(l)} \in \overline F^{s,\vec a}_{\vec p,q}(\widetilde{\Gamma}_{\kappa(j)}\times\widetilde I_{\eta(l)}),\quad j,l\in\N.
\end{equation*}
\end{definition}

The family in~\eqref{eq:quasiSeminorms} and Corollary~\ref{cor:equivalentSpaces} adapted to this set-up, cf.~Theorem~\ref{thm:localVersion}, give that 
\begin{equation}\label{eq:familySeminormsAnisotropicCase}
\mu_{j,l}(u) := \big\|\, \widetilde{\psi_j\otimes\varphi_l} u_{\kappa(j)\times\eta(l)}\, \big| \overline F^{s,\vec a}_{\vec p,q}(\widetilde{\Gamma}_{\kappa(j)}\times\widetilde I_{\eta(l)}) \big\|,\quad j,l\in\N,
\end{equation}
is a separating family of quasi-seminorms and that $F^{s,\vec a}_{\vec p,q;\loc}(\Gamma\times I)$ can be equivalently defined from any atlas $\cf_1\times\mathcal{N}_1$, where $\cf_1\subset\cf$ and $\mathcal{N}_1\subset\mathcal{N}$; with the same topology.

\begin{theorem}\label{thm:frechetSpaceAnisotropic}
The space $F^{s,\vec a}_{\vec p,q;\loc}(\Gamma\times I)$ is a complete topological vector space with a translation invariant metric; for $p_0,p_t,q\ge 1$ it is locally convex, hence a Fr\'{e}chet space.
\end{theorem}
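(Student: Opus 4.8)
The plan is to mimic the proof of Theorem~\ref{thm:frechetSpace}, which handled the isotropic case $F^s_{p,q;\loc}(X)$, and transfer it to the product manifold $\Gamma\times I$ with its anisotropic, mixed-norm local spaces. The ingredients needed for that earlier argument were a paramultiplication estimate (Lemma~\ref{multTraces}), a diffeomorphism-invariance result for compactly supported distributions (Theorem~\ref{thm:isotropicCompactSupport}), an extension-by-zero lemma (\cite[Lem.~8]{HJS13b} / Lemma~\ref{lem:extensionBy0}), and an extension-by-zero statement on manifolds (Corollary~\ref{cor:extensionByZeroLTspaces}). All of these have mixed-norm anisotropic counterparts available in the excerpt: Lemma~\ref{multTraces} is already stated in the needed generality, the diffeomorphism invariance is Theorem~\ref{thm:localVersion} applied with $m=2$, $N_1=n-1$, $N_2=1$ (so that charts of the form $\kappa\times\eta$ are covered), and Lemma~\ref{lem:extensionBy0} serves as the extension-by-zero lemma, while the manifold extension-by-zero carries over to the $F^{s,\vec a}_{\vec p,q}$-setting since Definition~\ref{def:LTspaceCurvedBoundary} and the paragraph after \eqref{eq:familySeminormsAnisotropicCase} already provide the analogues of Lemma~\ref{lem:equivalentCondition}, Lemma~\ref{lem:compatibleAtlas} and Corollary~\ref{cor:equivalentSpaces}.

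Concretely, first I would record that, with $d:=\min(1,p_0,p_t,q)$, the subadditive functionals $\mu_{j,l}^d$ from \eqref{eq:familySeminormsAnisotropicCase} form a separating family; invoking \cite[Thm.~B.5]{gru09} (exactly as in Theorem~\ref{thm:frechetSpace}, with the same minor fix to the continuity-of-scalar-multiplication argument since the $\mu_{j,l}^d$ are not positively homogeneous unless $p_0,p_t,q\ge 1$) yields that these functionals define a topological vector space structure, locally convex precisely when $p_0,p_t,q\ge 1$. A translation-invariant metric is then given by the obvious analogue of \eqref{eq:translationInvariantMetricIsotropic}, summed over a fixed enumeration of the index pairs $(j,l)\in\N\times\N$, and defines the same topology. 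Completeness is the substantive part: given a Cauchy sequence $(u_m)$, each $\widetilde{\psi_j\otimes\varphi_l}\,u_{m,\kappa(j)\times\eta(l)}$ is Cauchy in the complete space $\overline F^{s,\vec a}_{\vec p,q}(\widetilde\Gamma_{\kappa(j)}\times\widetilde I_{\eta(l)})$, hence converges to some $\widetilde v_{\kappa(j)\times\eta(l)}\in\ce'$; using the mixed-norm version of Corollary~\ref{cor:extensionByZeroLTspaces} I extend each by zero to an element $v^{(\kappa(j),\eta(l))}$ of $\ce'(\Gamma\times I)\cap F^{s,\vec a}_{\vec p,q;\loc}(\Gamma\times I)$ with support in $\supp(\psi_j\otimes\varphi_l)$, set $\widetilde u_{\kappa\times\eta}:=\sum_{k,l} v^{(\kappa(k),\eta(l))}_{\kappa\times\eta}$ (locally finite, so well defined in $\cd'$), check that this family transforms as in \eqref{eq:distFamily} by the computation in \eqref{eq:transformationIsotropicCase}, and produce the limit $u$ via the analogue of Lemma~\ref{lem:compatibleAtlas}. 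Finally $\mu_{j,l}(u_m-u)^d\to 0$ is proved by the same device of extra copies of the partition of unity: estimate $\mu_{j,l}(u_m-u)^d$ by a finite sum over $(k,l')$ with $\supp(\psi_j\otimes\varphi_l)\cap\supp(\psi_k\otimes\varphi_{l'})\ne\emptyset$, change domains for the off-diagonal terms using compact supports, apply Theorem~\ref{thm:localVersion} (the invariance result) and then Lemma~\ref{multTraces} together with Lemma~\ref{lem:extensionBy0} to bound each term by $\|\widetilde{\psi_k\otimes\varphi_{l'}}\,u_{m,\kappa(k)\times\eta(l')}-\widetilde v_{\kappa(k)\times\eta(l')}\,|\,\overline F^{s,\vec a}_{\vec p,q}\|^d$, which tends to $0$.

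The main obstacle, such as it is, is bookkeeping rather than a new idea: one must be careful that the relevant charts on $\Gamma\times I$ are of product form $\kappa\times\eta$ so that Theorem~\ref{thm:localVersion}/Theorem~\ref{thm:infiniteCylinder} applies — the full maximal atlas of $\Gamma\times I$ contains charts mixing the $x$- and $t$-directions, which the anisotropic invariance theorems do not handle, but Definition~\ref{def:LTspaceCurvedBoundary} and the discussion after \eqref{eq:familySeminormsAnisotropicCase} have already arranged that only the atlas $\cf_1\times\mathcal N_1$ is ever used. A second point to check is that the double-indexed partition of unity $1=\sum_{j,l}\psi_j\otimes\varphi_l$ is genuinely locally finite on $\Gamma\times I$ and that, on any compact $K\subset\widetilde\Gamma_{\kappa}\times\widetilde I_{\eta}$, the series defining $\widetilde u_{\kappa\times\eta}$ has only finitely many nonzero terms; both follow from local finiteness of $(\psi_j)$ on $\Gamma$ and $(\varphi_l)$ on $I$ together with the remark that local finiteness extends across compact sets. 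Once these compatibility points are in place, every remaining estimate is a verbatim transcription of the corresponding step in the proof of Theorem~\ref{thm:frechetSpace}, with $\overline F^s_{p,q}$ replaced by $\overline F^{s,\vec a}_{\vec p,q}$ and single chart indices replaced by pairs.
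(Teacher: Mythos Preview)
Your proposal is correct and follows essentially the same approach as the paper: the paper's proof explicitly says to construct the topology from the separating family $(\mu_{j,l}^d)_{j,l\in\N}$ as in Theorem~\ref{thm:frechetSpace}, define the metric by the analogue of~\eqref{eq:translationInvariantMetricIsotropic} with a double index, and obtain completeness as in the isotropic case but with Theorem~\ref{thm:localVersion} replacing Theorem~\ref{thm:isotropicCompactSupport}. Your write-up actually spells out more of the completeness argument than the paper does (which just refers back to the isotropic proof), and your remarks on the product-chart restriction and local finiteness of the tensor partition are exactly the bookkeeping the paper leaves implicit.
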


\begin{proof}
For $d:=\min(1,p_0,p_t,q)$ the separating family $(\mu_{j,l}^d)_{j,l\in\N}$, cf.~\eqref{eq:familySeminormsAnisotropicCase}, is used to construct a topology as in Theorem~\ref{thm:frechetSpace}. This immediately gives that $F^{s,\vec a}_{\vec p,q;\loc}(\Gamma\times I)$ is a topological vector space and even locally convex, when $d\ge 1$.

The metric is in this case obtained by letting the $\mu_{j,l}$ enter the summation formula for $d'(u,v)$, cf.~\eqref{eq:translationInvariantMetricIsotropic}, as any enumeration of the $(j,l)$ gives the same sum;
adapting the arguments in the proof of \cite[Thm.~B.9]{gru09} to two summation indices is straightforward.

Completeness follows as in the isotropic case, but with application of Theorem~\ref{thm:localVersion} instead of Theorem~\ref{thm:isotropicCompactSupport} when showing the convergence.
\end{proof}

\subsubsection{Curved Boundaries in the Compact Case}\label{subsec:curvedBoundaries}
When $\Gamma$ is compact, a finite atlas on the boundary can e.g.~be obtained 
from the composite maps $\kappa=\widetilde\gamma_{0,n}\circ\lambda$, where
$\widetilde\gamma_{0,n}\colon(x_1,\ldots,x_n)\mapsto (x_1,\ldots,x_{n-1},0)$ in local coordinates. 
Indeed, according to Definition~\ref{defn:smoothDomain} and the compactness of 
$\Gamma$ there exists on $\Gamma$ a finite open cover $\{U_\lambda\}$, where $\lambda$ runs 
in an index set $\Lambda$, which together with $\Omega$ gives 
an open cover of $\overline{\Omega}$.
Each $\lambda\in\Lambda$ induces a diffeomorphism $\kappa: \Gamma_{\kappa}\to B'$ on $\Gamma_{\kappa}:= U_\lambda\cap \Gamma$ by $\kappa = \widetilde\gamma_{0,n} \circ\lambda$.
These maps form an atlas $\cf_0$ on $\Gamma$ and thereby an atlas $\{\kappa\times\id_\R\}_{\kappa\in\cf_0}$ on $\Gamma\times \R$.

A partition of unity is obtained by using a function $\chi \in C^\infty(\Rn)$ such that $\chi \equiv 1$ on $\Omega\setminus \bigcup_\lambda U_\lambda$ to slightly generalise \cite[Thm.~2.16]{gru09}. This yields a family of functions $\{\psi_\lambda\}\cup\{\psi\}$ with $\psi_\lambda\in C_0^\infty(U_\lambda)$ and $\psi\in C^\infty(\Rn)$ with $\supp\psi\subset\Omega$ 
such that $\sum_\lambda \psi_\lambda(x) + \psi(x) = 1$ for $x\in \overline\Omega$.
(Existence of such $\chi$ is similar to~\cite[Cor.~2.14]{gru09}, where $K$ need not be compact.)

In addition, the functions $\psi_\kappa := \psi_\lambda|_\Gamma\in C_0^\infty(\Gamma_\kappa)$ constitute a finite partition of unity of $\Gamma$ subordinate to $\cf_0$. Hence $1=\sum_{\kappa\in\cf_0} \psi_\kappa\otimes\ind \R$, with $\ind \R$ denoting the characteristic function of $\R$, is a partition of unity on $\Gamma\times \R$.

Recalling that $F^{s,\vec a}_{\vec p,q;\loc}(\Gamma\times I)$ is equivalently defined from any atlas $\cf_1\times\mathcal{N}_1$, where $\cf_1\subset\cf$ and $\mathcal{N}_1\subset\mathcal{N}$, we obtain

\begin{theorem}\label{thm:LTcurvedBoundary}
Let $\Gamma$ be compact and $J\subset\R$ be a compact interval. 
The space
\begin{equation}\label{eq:LTcurvedBoundarySupport}
  \overset{\circ}{F}{}^{s,\vec a}_{\!\vec p,q}(\Gamma\times J)
  := \{\, u\in F^{s,\vec a}_{\vec p,q;\loc}(\Gamma\times\R) \, | \, \supp u\subset\Gamma\times J\} 
\end{equation}
is closed 
and a quasi-Banach space (normed if $\vec p,q\geq 1$), when equipped with the quasi-norm	
\begin{equation}\label{eq:quasinormOnGammaTimesI}
\|\, u\, | \overset{\circ}{F}{}^{s,\vec a}_{\!\vec p,q}(\Gamma\times J)\| 
:= \Big(\sum_{\kappa\in\cf_0} \big\|\, \widetilde{\psi_\kappa\otimes \ind \R} u_{\kappa\times\id_\R}\, \big| \overline F^{s,\vec a}_{\vec p,q}(B'\times\R)\big\|^d \Big)^{1/d},
\end{equation}
where $d:=\min (1,p_0,p_t,q)$. Furthermore, $\|\, \cdot\, |\overset{\circ}{F}{}^{s,\vec a}_{\!\vec p,q}(\Gamma\times J)\|^d$ is subadditive.
\end{theorem}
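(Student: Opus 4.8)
The plan is to reduce everything to the already-established machinery for the local spaces $F^{s,\vec a}_{\vec p,q;\loc}(\Gamma\times\R)$ and for mixed-norm spaces over product domains in $\R^{n+1}$. First I would observe that, since $\Gamma$ is compact and $J$ is a compact interval, the atlas $\cf_0$ from Section~\ref{subsec:curvedBoundaries} is finite and the partition of unity $1=\sum_{\kappa\in\cf_0}\psi_\kappa\otimes\ind\R$ has only finitely many terms. Hence the quasi-norm in~\eqref{eq:quasinormOnGammaTimesI} is, up to the finite power $d$ and equivalence of quasi-norms (Corollary~\ref{cor:equivalentSpaces} adapted via Theorem~\ref{thm:localVersion} to the anisotropic set-up), precisely the restriction to $\overset{\circ}{F}{}^{s,\vec a}_{\!\vec p,q}(\Gamma\times J)$ of the natural metric-defining family $(\mu_{j,l})$ for $F^{s,\vec a}_{\vec p,q;\loc}(\Gamma\times\R)$ from~\eqref{eq:familySeminormsAnisotropicCase}. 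Thus $\|\,\cdot\,|\overset{\circ}{F}{}^{s,\vec a}_{\!\vec p,q}(\Gamma\times J)\|$ and the subadditivity of its $d$-th power are immediate consequences of the corresponding facts for each summand $\overline F^{s,\vec a}_{\vec p,q}(B'\times\R)$, exactly as in the proof of Theorem~\ref{thm:compact}: positive homogeneity and subadditivity are inherited termwise, and the quasi-triangle inequality for $d<1$ follows by the Hölder trick with dual exponents $1/d$, $1/(1-d)$.

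Next I would prove that the quasi-norm is genuinely definite on the subspace, i.e.\ $\|\,u\,|\overset{\circ}{F}{}^{s,\vec a}_{\!\vec p,q}(\Gamma\times J)\|=0$ forces $u=0$. This is the same argument as in Theorem~\ref{thm:compact}: vanishing of all $\widetilde{\psi_\kappa\otimes\ind\R}\,u_{\kappa\times\id_\R}$ gives $\psi_\kappa\circ\kappa_1^{-1}\otimes\ind\R\cdot u_{\kappa_1\times\id_\R}=0$ on overlaps via the transition rule~\eqref{eq:distFamily}, and since $\sum_\kappa\psi_\kappa=1$ on $\Gamma$ one sums to get $u_{\kappa_1\times\id_\R}=0$ for every $\kappa_1$, hence $u=0$ in $\cd'(\Gamma\times\R)$. (One should note here that the $\psi_\kappa\circ\kappa_1^{-1}$ only form a partition of unity on $\Gamma$, not on the larger maximal atlas, but since $F^{s,\vec a}_{\vec p,q;\loc}$ is equivalently defined from $\cf_0\times\{\id_\R\}$ this suffices.) Together with the first step, this shows $(\overset{\circ}{F}{}^{s,\vec a}_{\!\vec p,q}(\Gamma\times J),\|\,\cdot\,|\cdot\|)$ is a quasi-normed space, normed when $\vec p,q\ge1$.

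It then remains to establish completeness and closedness. Here I would use Theorem~\ref{thm:frechetSpaceAnisotropic}: $F^{s,\vec a}_{\vec p,q;\loc}(\Gamma\times\R)$ is a complete topological vector space whose topology, on the finite partition of unity, is metrised by $(\mu_{j,l}^d)$, and restricted to the support-constraint subspace this metric is Lipschitz-equivalent to $\|\,\cdot\,|\overset{\circ}{F}{}^{s,\vec a}_{\!\vec p,q}(\Gamma\times J)\|^d$. Closedness of $\overset{\circ}{F}{}^{s,\vec a}_{\!\vec p,q}(\Gamma\times J)$ inside $F^{s,\vec a}_{\vec p,q;\loc}(\Gamma\times\R)$ follows because the support condition $\supp u\subset\Gamma\times J$ is preserved under convergence in $\cd'$: if $u_m\to u$ in the local space then each $u_{m,\kappa\times\id_\R}\to u_{\kappa\times\id_\R}$ in $\cd'(B'\times\R)$ (this is what convergence of $\mu_{j,l}$ entails, using the continuous embedding $\overline F^{s,\vec a}_{\vec p,q}\hookrightarrow\cd'$), and support is lower-semicontinuous under distributional limits, giving $\supp u_{\kappa\times\id_\R}\subset B'\times J$ for each $\kappa$, hence $\supp u\subset\Gamma\times J$ by~\eqref{eq:equivSupport}. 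A closed subspace of a complete metric TVS is complete, so the quasi-norm is complete.

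The main obstacle is the bookkeeping around which atlases and partitions of unity are admissible: one must be careful that the finite atlas $\cf_0\times\{\id_\R\}$ on $\Gamma\times\R$ really is sufficient to determine $F^{s,\vec a}_{\vec p,q;\loc}$ and its topology (this is where the anisotropic analogue of Corollary~\ref{cor:equivalentSpaces} via Theorem~\ref{thm:localVersion} is essential, since arbitrary charts on $\Gamma\times\R$ mix $t$ with the $x$-variables and are forbidden), and that the cut-offs $\widetilde{\psi_\kappa\otimes\ind\R}$, which are products of a compactly supported $C^\infty$ function in $x'$ with the constant $1$ in $t$, are legitimate pointwise multipliers on $\overline F^{s,\vec a}_{\vec p,q}(B'\times\R)$ in the sense needed for Lemma~\ref{multTraces}; the latter is fine because such functions lie in $C^\infty_{L_\infty}(\R^n)\subset B^{s_1,\vec a}_{\infty,\infty}$ for every $s_1$. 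Once these equivalences are in place, the argument is a direct transcription of Theorems~\ref{thm:compact} and~\ref{thm:frechetSpaceAnisotropic}.
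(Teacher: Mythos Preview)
Your proposal is correct and follows essentially the same route as the paper: verify the quasi-norm axioms by transcription from Theorem~\ref{thm:compact}, establish that on the support-constrained subspace the quasi-norm~\eqref{eq:quasinormOnGammaTimesI} is equivalent to the metric topology of $F^{s,\vec a}_{\vec p,q;\loc}(\Gamma\times\R)$ via Theorem~\ref{thm:localVersion} and Lemma~\ref{multTraces}, then deduce closedness from preservation of supports under distributional convergence and completeness from Theorem~\ref{thm:frechetSpaceAnisotropic}. The one point the paper makes more explicit is that the equivalence of~\eqref{eq:quasinormOnGammaTimesI} with the seminorms $\mu_{j,l}$ relies on replacing $\ind\R$ by a fixed $\chi\in C_0^\infty(\R)$ with $\chi=1$ on $J$ (chosen as a finite sum of the $\varphi_l$), which is legitimate precisely because of the support constraint; you allude to this but it is worth spelling out, since without the support condition the right-hand inequality in the equivalence would fail.
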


The support condition in~\eqref{eq:LTcurvedBoundarySupport} means
$\bigcup_{\kappa\in\cf_0} (\kappa^{-1}\times\id_\R)(\supp u_{\kappa\times\id_\R}) \subset\Gamma\times J$, 
cf.~\eqref{eq:equivSupport}, hence
\begin{equation}
\label{eq:compactSupportLTspaceAtlas}
  \supp u_{\kappa\times\id_\R}\subset B'\times J.
\end{equation}
This implies that each summand in~\eqref{eq:quasinormOnGammaTimesI} is finite, since the factor $\ind \R$ can be replaced by some $\chi\in C_0^\infty(\R)$ where $\chi=1$ on $J$; and this $\chi$ can be chosen as a finite sum of the $\varphi_l$ from Definition~\ref{def:LTspaceCurvedBoundary}.

\begin{proof}
By the same arguments as in Theorem~\ref{thm:compact}, the expression in~\eqref{eq:quasinormOnGammaTimesI} is a quasi-norm.
It gives the same topology on $\overset{\circ}{F}{}^{s,\vec a}_{\!\vec p,q}(\Gamma\times J)$ as the family $(\mu_{j,l}^d)_{j,l\in\N}$, since there exist $c_1,c_2>0$ such that for each $u\in \overset{\circ}{F}{}^{s,\vec a}_{\!\vec p,q}(\Gamma\times J)$, cf.~\eqref{eq:familySeminormsAnisotropicCase},
\begin{equation}\label{eq:doubleInequalityEquivNorms}
c_1 \mu_{j,l}(u)^d \le \|\, u\, | \overset{\circ}{F}{}^{s,\vec a}_{\!\vec p,q}(\Gamma\times J) \|^d \le c_2 \sideset{}{'}\sum_{j',l'\in\N} \mu_{j',l'}(u)^d,
\end{equation}
where the prime indicates that the summation is over finitely many integers.

Indeed, Theorem~\ref{thm:localVersion} yields that $\mu_{j,l}(u)^d$ is bounded from above by 
\begin{align*}
  \sum_{\kappa\in\cf_0} &\big\|\, (\psi_\kappa\otimes\ind \R) \circ (\kappa(j)^{-1}\times\eta(l)^{-1}) \widetilde{\psi_j\otimes\varphi_l} u_{\kappa(j)\times\eta(l)}\, 
  \big| \overline F^{s,\vec a}_{\vec p,q}\big(\kappa(j)\times\eta(l)(\Gamma_{\kappa(j)}\cap\Gamma_\kappa\times\R_{\eta(l)})\big) \big\|^d\\
  &\le c\sum_{\kappa\in\cf_0} \big\|\, \widetilde{\psi_\kappa\otimes\ind \R} (\psi_j\otimes\varphi_l) \circ(\kappa^{-1}\times\id_\R) u_{\kappa\times\id_\R}\, \big| \overline F^{s,\vec a}_{\vec p,q}(\widetilde\Gamma_\kappa\times\R) \big\|^d.
\end{align*}
Using for each $\kappa\in\cf_0$ some function $\chi_\kappa\in C^\infty_{L_\infty}(\Rn)$ chosen such that $\chi_\kappa = 1$ on $\supp\widetilde\psi_\kappa\cap \supp(\psi_j\circ\kappa^{-1})$ and $\supp \chi_\kappa \subset \kappa(\Gamma_{\kappa(j)}\cap\Gamma_\kappa)$, we extend by 0 to $\R^{n+1}$ and apply Lemma~\ref{multTraces} to obtain the left-hand inequality in~\eqref{eq:doubleInequalityEquivNorms}.

The right-hand inequality can be shown similarly by first replacing $\ind \R$ in~\eqref{eq:quasinormOnGammaTimesI} with some $\chi\in C_0^\infty(\R)$ where $\chi=1$ on $J$, as discussed above.

To prove that $\overset{\circ}{F}{}^{s,\vec a}_{\!\vec p,q}(\Gamma\times J)$ is closed, we consider an arbitrary sequence $(u_m)_{m\in\N}$, which belongs to $\overset{\circ}{F}{}^{s,\vec a}_{\!\vec p,q}(\Gamma\times J)$ and converges in $F^{s,\vec a}_{\vec p,q;\loc}(\Gamma\times\R)$ to some $u$.
Since $u_{m,\kappa\times\id_\R}$ converges to $u_{\kappa\times\id_\R}$ in $\cd'(B'\times\R)$ and~\eqref{eq:compactSupportLTspaceAtlas} holds for each $u_{m,\kappa\times\id_\R}$, it follows that $\supp u_{\kappa\times\id_\R}\subset B'\times J$,
whence $\supp u\subset\Gamma\times J$.

Completeness follows immediately, since each Cauchy sequence in $\overset{\circ}{F}{}^{s,\vec a}_{\!\vec p,q}(\Gamma\times J)$ converges to some $u$ in $F^{s,\vec a}_{\vec p,q;\loc}(\Gamma\times\R)$ and closedness of $\overset{\circ}{F}{}^{s,\vec a}_{\!\vec p,q}(\Gamma\times J)$ then gives that $\supp u \subset\Gamma\times J$.
\end{proof}

\section{Rychkov's Universal Extension Operator}
\label{sec:rychkovExtension}
A key ingredient in our construction of right-inverses to the trace operators is a modification of Rychkov's extension operator, introduced in~\cite{ryc99ext} for bounded or special Lipschitz domains $\Omega\subset\Rn$,
\begin{equation}\label{eq:rychkovExt}
 \ce_{\univ,\Omega}: \overline F^s_{p,q}(\Omega) \to F^s_{p,q}(\Rn).
\end{equation}
The linear and bounded operator $\ce_{\univ,\Omega}$ works for all $0<p<\infty$, $0<q\le\infty$, $s\in\R$, cf.~\cite[Thm~4.1]{ryc99ext}; and it also applies to Besov spaces ($p=\infty$ included). Thus it was termed a \emph{universal} extension operator.

In Section~\ref{subsec:traceCurvedBoundaryMathNr} below it will be clear that we for $\Omega=\Rn_+$ also need an extension operator for anisotropic spaces with mixed norms. We therefore modify $\ce_{\univ,\Omega}$ accordingly, relying on the proof strategy in~\cite{ryc99ext}, yet we present significant simplifications in the proof of Proposition~\ref{prop:variantCalderon} and add e.g.\ Proposition~\ref{prop:theAdditionToRychkovsPresentation}. The reader may choose to skip the proofs in a first reading. 

We take another approach than Rychkov when defining $\overline\cs'(\Rn_+)$; this can be justified by~\cite[Prop.~3.1]{ryc99ext} and the remark prior to it.
Similarly to~\cite[App.~A.4]{gru96} we use the following:

\begin{definition}\label{def:DefinitionOfTempDOnSubsets}
For any open set $U\subset\Rn$, the space $\overline\cs'(U)$ is defined as the set of $f\in\cd'(U)$ for which there exists $\widetilde f\in\cs'(\Rn)$ such that $r_U\widetilde f=f$. 

The spaces $\overset{\circ}{\cs}(\overline U)$ and $\overset{\circ}{\cs}{}'(\overline U)$ consist of the functions in $\cs(\Rn)$, respectively the distributions in $\cs'(\Rn)$ supported in $\overline U$.
\end{definition}

We define the convolution $\varphi*f(x)$ for $x\in\Rn_+$, when $f\in\overline\cs'(\Rn_+)$, cf.\ Definition~\ref{def:DefinitionOfTempDOnSubsets}, and when $\varphi\in\cs(\Rn)$ has its support in the opposite half-space $\overline\R^n_-$, that is~$\varphi\in\overset{\circ}{\cs}(\overline\R^n_-)$.
This is done by using an arbitrary extension $\widetilde f\in\cs'(\Rn)$ of $f$, i.e.
\begin{equation}\label{eq:definitionConvolutionSubset}
  \varphi*f(x) := \langle\widetilde f,\varphi(x-\cdot)\rangle, \quad x\in\Rn_+,
\end{equation}
which is well defined, since it as a function on $\Rn_+$ clearly does not depend on the choice of extension $\widetilde f$. 

First of all this is used in a variant of Calder\'{o}n's reproducing formula (cf.\ Proposition~\ref{prop:variantCalderon} below),
\begin{equation}\label{eq:variantCalderon}
  f = \sum_{j=0}^\infty \psi_j * (\varphi_j * f) \quad \text{in} \quad \cd'(\Rn_+),
\end{equation}
to give meaning to each $\psi_j * (\varphi_j * f)$; cf.~\eqref{eq:defOfSubscript_j} for the subscript notation.
Indeed, $\varphi_j*\widetilde f\in C^\infty(\Rn)\cap\cs'(\Rn)$ is an extension of $\varphi_j*f$ by~\eqref{eq:definitionConvolutionSubset}, so~\eqref{eq:definitionConvolutionSubset} also yields that by definition
\begin{equation}\label{eq:meaningOfTripleConvolution}
  \psi_j*(\varphi_j*f)(x) = \psi_j*(\varphi_j*\widetilde f)(x),\quad x\in\Rn_+.
\end{equation}

The idea in Rychkov's extension operator $\ce_{\univ}$ is to use \emph{another} extension of $\varphi_j*f$, namely
\begin{align*}
  e_+ (\varphi_j*f)(x) := 
\begin{cases}
0 &\text{for } x\in\overline\R^n_-,\\
\varphi_j*\widetilde f(x) &\text{for } x\in\Rn_+;
\end{cases}
\end{align*}
for brevity, we use $e_+ = e_{\Rn_+}$ and $r_+ = r_{\Rn_+}$. 
Indeed, $e_+(\varphi_j*f)$ is $C^\infty$ for $x_n\neq 0$, hence measurable, and in $L_{1,\loc}(\Rn)$. Moreover $e_+(\varphi_j*f)$ is in $\cs'(\Rn)$, because it is $O((1+|x|^2)^N)$ for a large $N$. 
So, using~\eqref{eq:definitionConvolutionSubset}, we obtain the alternative formula
\begin{equation}\label{eq:welldefinedConvolution}
  \psi_j*(\varphi_j*f)(x) = \psi_j*e_+(\varphi_j*f)(x),\quad x\in\Rn_+.
\end{equation}

Here we can exploit that $\psi_j*e_+(\varphi_j*f)$ is defined on all of $\Rn$, hence by substituting this into the right-hand side of~\eqref{eq:variantCalderon}, $\ce_{\univ}$ is obtained simply by letting $x$ run through not just $\Rn_+$, but $\Rn$, i.e.
\begin{equation}\label{eq:theInfiniteSeriesForEu}
\ce_{\univ}(f) := \sum_{j=0}^\infty \psi_j *e_+(\varphi_j*f) \quad\text{for}\enskip f\in\overline\cs'(\R^n_+).
\end{equation}
To make this description more precise, we first justify~\eqref{eq:variantCalderon}. 
So we recall that a function $\varphi\in\cs(\Rn)$ fulfils moment conditions of order $L_\varphi$, when
\begin{equation*}
  D^\alpha(\cf \varphi)(0) = 0 \quad\text{for}\quad |\alpha|\le L_\varphi.
\end{equation*}

\begin{proposition}\label{prop:variantCalderon}
There exist 4 functions $\varphi_0, \varphi, \psi_0, \psi\in\cs(\Rn)$ supported in $\Rn_-$ and with 
$L_\varphi = \infty= L_\psi$ such that~\eqref{eq:variantCalderon} holds for all $f\in\overline\cs'(\R^n_+)$.
\end{proposition}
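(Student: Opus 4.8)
The plan is to build the four functions in the classical Calder\'on style but arranged so that all of them are supported in the closed lower half-space, thereby making the convolutions in~\eqref{eq:variantCalderon} meaningful in the sense of~\eqref{eq:definitionConvolutionSubset}. First I would choose a real, radial $\varphi_0\in C_0^\infty(\Rn)$ with $\supp\varphi_0\subset\Rn_-$ (e.g.\ a bump supported in a small ball centered at a point $-R e_n$ with $R$ large compared to the radius) and normalised so that $\widehat{\varphi_0}(0)\ne 0$; then set $\varphi(x):=2^{|\vec a|}\varphi_0(2^{\vec a}x)-\varphi_0(x)$ in the notation of~\eqref{eq:defOfSubscript_j}, so that $\varphi$ is again supported in $\Rn_-$ and $\widehat\varphi$ vanishes at $\xi=0$. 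Since $\varphi_j=2^{j|\vec a|}\varphi(2^{j\vec a}\cdot)$, the telescoping identity gives $\widehat{\varphi_0}(\xi)+\sum_{j=1}^J\widehat{\varphi_j}(\xi)=\widehat{\varphi_0}(2^{-J\vec a}\xi)\to\widehat{\varphi_0}(0)\ne0$ as $J\to\infty$, which is the anisotropic analogue of the usual partition-of-unity-of-frequency picture; one normalises $\varphi_0$ so this limit is $1$ away from $\xi=0$.

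Next I would upgrade $\varphi$ to have \emph{infinitely many} vanishing moments, $L_\varphi=\infty$, i.e.\ $\widehat\varphi$ vanishing to infinite order at $0$. This is the Rychkov trick: starting from a compactly supported $\varphi_0$ one cannot directly get $\widehat\varphi$ flat at $0$, but one can replace $\varphi_0$ by a series $\sum_{k\ge0} c_k\, 2^{k|\vec a|}\varphi_0^{(k)}(2^{k\vec a}\cdot)$ with suitable rapidly decaying coefficients $c_k$ (equivalently, apply to $\varphi_0$ a convolution operator whose symbol kills the Taylor jet at $0$), keeping the support inside $\Rn_-$ because each rescaled copy still lives in $\Rn_-$, and keeping $\widehat{\varphi_0}(0)\ne0$; this is where I would follow~\cite{ryc99ext} closely. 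Having such a $\varphi$ with $L_\varphi=\infty$, I then need a dual family $\psi_0,\psi$, also supported in $\Rn_-$ with $L_\psi=\infty$, reproducing in the sense that $\widehat{\psi_0}\,\widehat{\varphi_0}+\sum_{j\ge1}\widehat{\psi_j}\,\widehat{\varphi_j}\equiv1$ as tempered distributions (equivalently, $\sum_j \psi_j*\varphi_j=\delta$). Concretely, with $\Theta:=\widehat{\varphi_0}\,\widehat{\varphi_0(-\cdot)}+\sum_{j\ge1}\widehat{\varphi_j}\,\widehat{\varphi_j(-\cdot)}$ positive and smooth away from $0$ and $\equiv\widehat{\varphi_0}(0)^2$ near $0$, one defines $\widehat{\psi_0}:=\widehat{\varphi_0(-\cdot)}/\Theta$ and $\widehat\psi$ correspondingly; the point is that $1/\Theta$ is a classical symbol that preserves the half-space support after the reflection is undone, and the infinite flatness of $\widehat\varphi$ forces $L_\psi=\infty$ as well. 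Again this is the construction of~\cite{ryc99ext}, adapted to the dilations $2^{j\vec a}$.

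Finally I would verify~\eqref{eq:variantCalderon}, i.e.\ $\sum_{j\ge0}\psi_j*(\varphi_j*f)=f$ in $\cd'(\Rn_+)$, for every $f\in\overline\cs'(\Rn_+)$. Here I would fix an extension $\widetilde f\in\cs'(\Rn)$, note that by~\eqref{eq:definitionConvolutionSubset}–\eqref{eq:meaningOfTripleConvolution} the left-hand side equals $r_+\bigl(\sum_{j\ge0}\psi_j*\varphi_j*\widetilde f\bigr)$ (the partial sums are genuine tempered distributions), and then invoke the reproducing identity $\sum_j\psi_j*\varphi_j*\widetilde f=\widetilde f$ in $\cs'(\Rn)$, which follows from $\sum_j\widehat{\psi_j}\widehat{\varphi_j}\equiv1$ together with convergence of the series in $\cs'$ (guaranteed by the moment conditions, which give the usual Schwartz-space telescoping estimates uniformly on bounded frequency annuli); restricting to $\Rn_+$ gives $f$. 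The main obstacle I anticipate is \emph{simultaneously} achieving the three constraints on $\varphi$ — compact or at least Schwartz support strictly inside $\overline{\Rn_-}$, infinitely many vanishing moments, and $\widehat{\varphi_0}(0)\ne0$ — since infinite flatness of $\widehat\varphi$ at $0$ is incompatible with $\varphi$ being compactly supported, so one is forced into the Schwartz (non-compact) setting and must control the decay of the corrective series carefully; after that, manufacturing the dual $\psi$ via $1/\Theta$ is routine symbolic calculus but still needs the half-space support to be checked, and the anisotropic dilations $2^{j\vec a}$ (rather than $2^{j}$) require only cosmetic changes to Rychkov's estimates, as already signalled in the excerpt.
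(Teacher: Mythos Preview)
Your plan has a genuine gap in the construction of the dual pair $\psi_0,\psi$ with support in $\Rn_-$. You propose $\widehat{\psi_0}:=\widehat{\varphi_0(-\cdot)}/\Theta$ with $\Theta=\sum_j\widehat{\varphi_j}\,\widehat{\varphi_j(-\cdot)}$, but $\varphi_0(-\cdot)$ is supported in $\Rn_+$, not $\Rn_-$, and the Fourier multiplier $1/\Theta$ is an infinite series, not a finite convolution product, so there is no mechanism by which $\psi_0$ inherits support in $\Rn_-$. Calling this ``routine symbolic calculus'' underestimates the difficulty: a generic pseudodifferential correction destroys one-sided support. Your own caveat (``still needs the half-space support to be checked'') is exactly where the argument breaks.

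The paper's construction avoids this entirely by arranging that $\varphi_0$ \emph{itself} has all higher moments equal to zero, i.e.\ $\widehat{\varphi_0}-1$ is flat at the origin. This is done by choosing a one-dimensional $g\in\cs(\R)$ supported in $[1,\infty[$ with $\int g\neq0$ and $\int t^kg(t)\,dt=0$ for all $k\ge1$, and setting $\varphi_0(x)=g(-x_1)\cdots g(-x_n)/c^n$. Then $\varphi:=\varphi_0-2^{-|\vec a|}\varphi_0(2^{-\vec a}\cdot)$ automatically has $L_\varphi=\infty$. The crucial payoff is that one can now write down $\psi_0,\psi$ \emph{explicitly} as finite polynomial expressions in $\widehat{\varphi_0}$ and its dilate, namely $\widehat{\psi_0}=\widehat{\varphi_0}(2-\widehat{\varphi_0}^2)$ and $\widehat\psi=(\widehat{\varphi_0}+\widehat{\varphi_0}(2^{\vec a}\cdot))(2-\widehat{\varphi_0}^2-\widehat{\varphi_0}(2^{\vec a}\cdot)^2)$, so that $\psi_0=\varphi_0*(2\delta-\varphi_0*\varphi_0)$ and similarly $\psi$ is a finite sum of convolutions of functions supported in $\Rn_-$. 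The support condition is then immediate, $L_\psi=\infty$ follows from the flatness of $\widehat{\varphi_0}-1$, and $\sum_j\widehat{\psi_j}\widehat{\varphi_j}=1$ is a direct telescoping computation. Your final step (restricting the $\cs'$-identity to $\Rn_+$) is fine; what is missing is a workable construction of the four functions.
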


\begin{proof}
We shall exploit the existence of a real-valued function $g\in\cs(\R)$ with 
\begin{equation*}
  \int g(t) \, dt \ne 0, \qquad \int t^k g(t)\, dt = 0\quad \text{for all} \quad k\in\N,
\end{equation*}
and $\supp g\subset [1,\infty[\,$. (This may be obtained as in~\cite[Thm.~4.1(a)]{ryc99ext}.)

With $\varphi_0(x) := g(-x_1)\cdots g(-x_n)/c^n$ for $c=\int g\, dt$,
the properties of $g$ immediately give
\begin{align*}
\supp \varphi_0 &\subset \{x\in\Rn\, |\, x_k<0, k=1,\ldots,n\},\nonumber\\
\int \varphi_0\, dx = 1&, \quad \int x^\alpha\varphi_0(x)\, dx = 0 \quad\text{for}\quad |\alpha|>0.\nonumber
\end{align*}
Thus the support of $\varphi:=\varphi_0- 2^{-|\vec a|}\varphi_0(2^{-\vec a}\cdot)$ lies in $\Rn_-$, and $L_\varphi=\infty$ since for $|\alpha|\ge 0$,
\begin{equation*}
  \int x^\alpha\varphi(x)\, dx 
= \int x^\alpha\varphi_0(x)\, dx - 2^{\vec a\cdot \alpha}\int x^\alpha\varphi_0(x)\, dx = 0.
\end{equation*}

The functions $\psi_0,\psi\in\cs(\Rn)$ are conveniently defined via $\cf$,
\begin{equation}\label{eq:defOfPsi0andPsi}\begin{split}
\widehat\psi_0(\xi)&=\widehat\varphi_0(\xi)(2-\widehat\varphi_0(\xi)^2),\quad\\
\widehat\psi(\xi)&=\big(\widehat\varphi_0(\xi)+\widehat\varphi_0(2^{\vec a}\xi)\big)\big(2-\widehat\varphi_0(\xi)^2-\widehat\varphi_0(2^{\vec a}\xi)^2\big).
\end{split}\end{equation}
Since
$\widehat\varphi_j(\xi)=\widehat\varphi(2^{-j\vec a}\xi)=\widehat\varphi_0(2^{-j\vec a}\xi)-\widehat\varphi_0(2^{(1-j)\vec a}\xi)$
for $j\ge 1$, we obtain by basic algebraic rules,
\begin{align*}
\widehat\psi_j(\xi)\widehat\varphi_j(\xi)
&=\big(2-\widehat\varphi_0(2^{-j\vec a}\xi)^2-\widehat\varphi_0(2^{(1-j)\vec a}\xi)^2\big)\big(\widehat\varphi_0(2^{-j\vec a}\xi)^2-\widehat\varphi_0(2^{(1-j)\vec a}\xi)^2\big)\\
&=2\big(\widehat\varphi_0(2^{-j\vec a}\xi)^2-\widehat\varphi_0(2^{(1-j)\vec a}\xi)^2\big)-\big(\widehat\varphi_0(2^{-j\vec a}\xi)^4-\widehat\varphi_0(2^{(1-j)\vec a}\xi)^4\big).
\end{align*}
This gives a telescopic sum:
\begin{align}\label{eq:telescopicSum}
  \sum_{j=0}^\infty \widehat\psi_j(\xi)\widehat\varphi_j(\xi)
= 2\lim_{N\to\infty} \widehat\varphi_0(2^{-N\vec a}\xi)^2-\lim_{N\to\infty}\widehat\varphi_0(2^{-N\vec a}\xi)^4
= 1,
\end{align}
using that $\widehat\varphi_0(0)=1$.
As the convergence is in $\cs'(\Rn)$, the inverse Fourier transformation yields
\begin{align}\label{eq:deltaIdentityByConvolution}
\sum_{j=0}^\infty \psi_j *\varphi_j = \delta.
\end{align}
The fact that $L_\psi=\infty$ is obvious from~\eqref{eq:defOfPsi0andPsi}, since $D^\alpha\widehat\varphi_0(0)=0$ for all $\alpha\in\N_0^n$.
The inclusion $\supp\psi_0\subset\Rn_-$ is clear, because $\psi_0=\varphi_0*(2\delta-\varphi_0*\varphi_0)$. Similarly $\supp\psi\subset\Rn_-$, since $\psi$ is a sum of convolutions of functions with such support.

To show~\eqref{eq:variantCalderon}, we note that when $\widetilde f\in\cs'(\Rn)$ fulfils $r_+\widetilde f=f$, then~\eqref{eq:deltaIdentityByConvolution} entails
\begin{align}\label{eq:convergenceOfSeriesForAllTempD}
\widetilde f= \sum_{j=0}^\infty \psi_j*(\varphi_j*\widetilde f)\quad\text{in}\quad\cs'(\Rn).
\end{align}
More precisely, to circumvent that the summands in~\eqref{eq:deltaIdentityByConvolution} need not have compact supports,
one can show that $\sum_{j<N} \widehat\psi_j\widehat\varphi_j\cf\widetilde f$ converges to $\cf\widetilde f$ in $\cs'(\Rn)$ by using~\eqref{eq:telescopicSum} and a function in $\cs(\Rn)$. 
Then~\eqref{eq:meaningOfTripleConvolution} gives,
\begin{align*}
f=r_+\widetilde f = \sum_{j=0}^\infty r_+\big(\psi_j*(\varphi_j*\widetilde f)\big) = \sum_{j=0}^\infty \psi_j*(\varphi_j*f)\quad \text{in} \quad \cd'(\Rn_+),
\end{align*}
in view of the continuity of $r_+: \cd'(\Rn)\to \cd'(\Rn_+)$.
\end{proof}

As a novelty, one can now show directly that $\ce_{\univ}$ has nice properties on the space $\overline\cs'(\Rn_+)$ of restricted temperate distributions:

\begin{proposition}\label{prop:theAdditionToRychkovsPresentation}
The series for $\ce_{\univ}(f)$ in~\eqref{eq:theInfiniteSeriesForEu}
converges in $\cs'(\Rn)$ for every~$f\in\overline\cs'(\R^n_+)$,
and the induced map $\ce_{\univ}: \overline\cs'(\Rn_+)\to \cs'(\Rn)$ is w$^*$-continuous. 
\end{proposition}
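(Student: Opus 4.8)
The plan is to establish the two claims separately but by the same underlying device: reducing convergence and continuity on $\overline{\cs}'(\Rn_+)$ to facts about the partial sums acting on a \emph{fixed} extension $\widetilde f\in\cs'(\Rn)$ of $f$, where one can invoke the known mapping properties of Fourier multipliers and Paley--Littlewood-type decompositions on $\cs'(\Rn)$. The key observation is that, by~\eqref{eq:welldefinedConvolution}, for each $j$ the distribution $\psi_j*e_+(\varphi_j*f)$ differs from $\psi_j*(\varphi_j*\widetilde f)$ only on $\overline\R^n_-$; more precisely, writing $g_j:=\varphi_j*\widetilde f\in C^\infty\cap\cs'$, we have $e_+ g_j = g_j - e_-(r_- g_j)$ where $r_-$ restricts to $\overline\R^n_-$, so
\begin{equation*}
\ce_{\univ}(f) = \sum_{j=0}^\infty \psi_j*(\varphi_j*\widetilde f) - \sum_{j=0}^\infty \psi_j * e_-(r_-(\varphi_j*\widetilde f)).
\end{equation*}
The first series converges in $\cs'(\Rn)$ to $\widetilde f$ by~\eqref{eq:convergenceOfSeriesForAllTempD} (this is the assertion already proved inside Proposition~\ref{prop:variantCalderon}), so it remains to handle the second, ``correction'' series.

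First I would show the correction series converges in $\cs'(\Rn)$. Each term $\psi_j*e_-(r_-(\varphi_j*\widetilde f))$ is a temperate distribution (it is the convolution of the Schwartz function $\psi_j$ with a distribution of polynomial growth supported in $\overline\R^n_-$). The essential point is the decay in $j$: because $\varphi$ and $\psi$ satisfy infinitely many moment conditions and are supported in $\Rn_-$, Rychkov's argument gives, for test functions $\omega\in\cs(\Rn)$, a bound of the shape $|\langle \psi_j*e_-(r_-(\varphi_j*\widetilde f)),\omega\rangle|\le c_N 2^{-jN}\, p_M(\omega)\, \|\widetilde f\|_{-M'}$ for every $N$, where $\|\cdot\|_{-M'}$ is a suitable seminorm controlling $\widetilde f$; this uses that $\psi_j$, being a high-order moment function dilated to scale $2^{-j\vec a}$, together with the supports in the closed negative half-space, forces the pairing to pick up cancellation governed by the separation between $\supp\psi_j$ (near the negative half-space) and the region where $\omega$ and its derivatives matter — the standard near-orthogonality estimate as in~\cite[(23)]{ryc99} adapted to the mixed/anisotropic setting via~\cite[Prop.~4.6]{HJS13a}. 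Summing the geometric bound over $j$ gives absolute convergence of the pairing, hence convergence in $\cs'(\Rn)$.

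For the w$^*$-continuity of $\ce_{\univ}$, I would argue that $\ce_{\univ}$ is given by a \emph{fixed} explicit formula once an extension is chosen, but the cleaner route is: w$^*$-convergence $f_m\to f$ in $\overline\cs'(\Rn_+)$ means $\langle \widetilde f_m,\varphi\rangle\to\langle\widetilde f,\varphi\rangle$ for suitable extensions, or more intrinsically that $\langle f_m,\varphi\rangle\to\langle f,\varphi\rangle$ for $\varphi\in C_0^\infty(\Rn_+)$ (and one notes $\overline\cs'(\Rn_+)$ carries the quotient w$^*$-topology from $\cs'(\Rn)/\overset{\circ}{\cs}{}'(\overline\R^n_-)$). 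Using the decomposition above, $\ce_{\univ}(f)$ is affine-linear in $\widetilde f$ in each $\cs'$-continuous piece: the map $\widetilde f\mapsto\widetilde f$ is trivially w$^*$-continuous, and for the correction series each finite partial sum $\widetilde f\mapsto\sum_{j<N}\psi_j*e_-(r_-(\varphi_j*\widetilde f))$ is w$^*$-continuous (convolution with a Schwartz function, restriction, extension by zero, and convolution again are all w$^*$-continuous on $\cs'$), and the tail is uniformly small by the geometric estimate of the previous paragraph \emph{locally uniformly} on w$^*$-bounded sets — here one invokes the Banach--Steinhaus theorem on the barrelled space $\cs'(\Rn)$ to upgrade pointwise bounds to equicontinuity, making the $\varepsilon/3$ argument legitimate. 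Finally, since $\ce_{\univ}(f)$ depends only on $f$ (not on the choice of $\widetilde f$), by~\eqref{eq:meaningOfTripleConvolution}, the induced map on $\overline\cs'(\Rn_+)$ is well defined and w$^*$-continuous.

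\textbf{Main obstacle.} The delicate point is the uniform-in-$j$ decay estimate for the correction terms $\psi_j*e_-(r_-(\varphi_j*\widetilde f))$ when tested against an arbitrary $\omega\in\cs(\Rn)$: one must convert the combination of (a) infinitely many vanishing moments of $\psi$ and $\varphi$, (b) the one-sided supports in $\overline\R^n_-$, and (c) the anisotropic dilation scale $2^{-j\vec a}$, into a genuine $2^{-jN}$ gain with constants controlled by finitely many Schwartz seminorms of $\omega$ and a fixed seminorm of $\widetilde f$. This is exactly where the anisotropic/mixed-norm bookkeeping (and the elementary product inequality $\prod_\ell(1+|2^{ja_\ell}z_\ell|)^{r_0}\ge(1+|2^{j\vec a}z|)^{r_0}$ reducing to isotropic maximal functions, as noted after~\eqref{eq:liftOperatorIsotropic} discussion) does the real work; once that estimate is in hand, both the $\cs'$-convergence and the equicontinuity needed for w$^*$-continuity follow by routine summation and Banach--Steinhaus.
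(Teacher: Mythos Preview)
Your decomposition $\ce_{\univ}(f)=\widetilde f-\sum_j\psi_j*e_-(r_-(\varphi_j*\widetilde f))$ is a legitimate starting point, and the first piece does converge by~\eqref{eq:convergenceOfSeriesForAllTempD}. But the mechanism you describe for the decay of the correction terms is not right. There is no ``separation between $\supp\psi_j$ and the region where $\omega$ and its derivatives matter'': the test function $\omega\in\cs(\Rn)$ is arbitrary and may be supported entirely in $\overline\R^n_-$. The references you cite (\cite[(23)]{ryc99}, \cite[Prop.~4.6]{HJS13a}) are pointwise $L_p$-type maximal-function estimates and play no role here. What actually gives the $2^{-jN}$ gain is much simpler and is exactly what the paper uses on the undecomposed series: since $L_\psi=\infty$, \cite[Lem.~4.2]{HJS13a} yields $p_M(\check\psi_j*\omega)=O(2^{-jN})$ for every $N$, while $e_\pm(\varphi_j*\widetilde f)$ tested against a Schwartz function grows only like $p_M(\varphi_j)=O(2^{j(|\vec a|+Ma^0)})$; multiplying gives rapid decay. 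So the convergence claim is salvageable, but not for the reason you state.

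The w$^*$-continuity argument has a genuine gap. Your tail estimate is of the form $c_N2^{-jN}p_M(\omega)\|\widetilde f\|_{-M'}$, but $\|\cdot\|_{-M'}$ is not a w$^*$-seminorm on $\cs'(\Rn)$: w$^*$-seminorms are finite maxima of $|\langle\cdot,\varphi_k\rangle|$. Banach--Steinhaus applied to a w$^*$-convergent \emph{sequence} gives an equicontinuity bound of the type you need, so your $\varepsilon/3$ argument at best yields \emph{sequential} w$^*$-continuity. Upgrading to genuine w$^*$-continuity (for nets) amounts to showing that for each fixed $\eta\in\cs(\Rn)$ the linear functional $\widetilde f\mapsto\langle\ce_{\univ}(r_+\widetilde f),\eta\rangle$ is evaluation at some element of $\overset{\circ}{\cs}(\overline\R^n_+)$. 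That is precisely what the paper does: it exhibits an explicit operator $T:\cs(\Rn)\to\overset{\circ}{\cs}(\overline\R^n_+)$, $T\eta=\sum_j\check\varphi_j*(\ind{\Rn_+}\check\psi_j*\eta)$, with $\langle\ce_{\univ}(f),\eta\rangle=\langle\widetilde f,T\eta\rangle$, after which w$^*$-continuity is immediate from the definition of the topology on $\overline\cs'(\Rn_+)$. Your route does not avoid this step; it merely postpones it.
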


\begin{remark}
The space $\overline\cs'(\Rn_+)$ is endowed with the seminorms $f\mapsto |\langle\widetilde f,\varphi\rangle|$ 
for $\varphi\in\overset{\circ}{\cs}(\overline\R^n_+)$ and $r_+\widetilde f=f$, using the well-known fact that
it is the dual of $\overset{\circ}{\cs}(\overline\R^n_+)$.
(I.e.~$f_\nu\to 0$ means that for some (hence every) net $\widetilde f_\nu$ of extensions, one has
$\langle\widetilde f_\nu,\varphi\rangle\to 0$ for all $\varphi\in\overset{\circ}{\cs}(\overline\R^n_+)$.)
\end{remark}

\begin{proof}
It suffices according to the limit theorem for $\cs'$ 
to obtain convergence of the series 
\begin{equation}\label{eq:infiniteSumForMathcalEu}
  \sum_{j=0}^\infty \langle e_+(\varphi_j*f), \check\psi_j*\eta\rangle
\quad\text{for $\eta\in\cs(\Rn)$},
\end{equation}
where $\check\psi(x)=\psi(-x)$ as usual. Since $L_\psi=\infty$, it follows at once from 
\cite[Lem.~4.2]{HJS13a} that the second entry tends rapidly to zero, i.e.\ for any seminorm $p_M$
one has
\begin{equation}
  p_M(\check\psi_j*\eta)=O(2^{-jN}) \quad\text{for every $N>0$}.
\label{eq:bigOForSeminorm}
\end{equation}

For the first entries, a test against an arbitrary $\phi\in\cs(\Rn)$ gives,
for some $M$,
\begin{align}
    |\langle e_+(\varphi_j*f), \phi\rangle|
  &=\big|\int \langle \tilde f(y),\varphi_j(x-y)\rangle 1_{\Rn_+}(x)\phi(x)\,dx\big|\nonumber
\\
  &=| \langle 1_{\Rn_+}\otimes\tilde f(x,x-y),\phi\otimes\varphi_j)\rangle_{\Rn\times\Rn}|\label{eq:theFirstEntriesTested3paper}
\\
  &\le c p_M(\phi\otimes\varphi_j)
  \le c'p_M(\phi)p_M(\varphi_j).\nonumber
\end{align}
Here $p_M(\varphi_j)=p_M(2^{j|\vec a|}\varphi(2^{j\vec a}\cdot))=O(2^{j(|\vec a|+Ma^0)})$ grows at a fixed rate.
Therefore the choice $\phi=\check\psi_j*\eta$ 
shows via \eqref{eq:bigOForSeminorm} that the series has rapidly decaying terms, hence converges. 

To obtain continuity of $\ce_{\univ}$, it clearly suffices to show that $T\eta := \sum_{j=0}^\infty \check \varphi_j * \big(\ind {\R^n_+}(\check\psi_j * \eta)\big)$
defines a transformation $T: \cs(\Rn)\to \overset{\circ}{\cs}(\overline\R^n_+)$
satisfying
\begin{equation}\label{eq:theIdentityThatTheOperatorTsatisfies}
\langle\ce_{\univ}(f),\eta\rangle 
=\langle\widetilde f,T\eta\rangle\quad\text{for all } \eta\in\cs(\Rn).
\end{equation}
To this end we may let $\ind {\Rn_+}$ act first in~\eqref{eq:theFirstEntriesTested3paper}, which via~\eqref{eq:infiniteSumForMathcalEu} gives
\begin{equation}\label{eq:calculationsTheAdjointOfTheOperatorMathcalE}
\langle\ce_{\univ}(f),\eta\rangle 
= \sum_{j=0}^\infty \big\langle\widetilde f, \int \check\psi_j * \eta(x) \ind {\Rn_+}(x) \varphi_j(x-y)\, dx\big\rangle.
\end{equation}
The integral is in $\cs(\Rn)$ as a function of $y$ (cf.~the theory of tensor products), and since $\supp\varphi_j\subset\overline\R^n_-$ it is only non-zero for $y_n\ge x_n>0$. Hence the summands in $T\eta$ belong to $\overset{\circ}{\cs}(\overline\R^n_+)$, so $T$ has range in this subspace, if its series converges in $\cs(\Rn)$.
But by the completeness, this follows since any seminorm $p_M$ applied to $\int \check\psi_j * \eta(x) \ind {\Rn_+}(x) \varphi_j(x-y)\, dx$ is estimated by $c p_M(\check\varphi_j) p_{M+n+1}(\check\psi_j * \eta)$, which tends rapidly to 0 as above.

Finally,~\eqref{eq:calculationsTheAdjointOfTheOperatorMathcalE} now yields~\eqref{eq:theIdentityThatTheOperatorTsatisfies} by summation in the second entry.
\end{proof}

In the next convergence result, the familiar dyadic corona condition, cf.\ e.g.~\cite[Lem.~3.20]{JS08}, has been weakened to one involving convolution with a function $\psi$ satisfying a moment condition of infinite order. It appeared implicitly in~\cite{ryc99ext}.

\begin{lemma}\label{lemma:XinRychkov}
Let $(g^j)_{j\in\N_0}$ be a sequence of measurable functions on $\Rn$ such that
\begin{equation*}
\|\,(g^j)\,\| := \|\, 2^{js}G^j\, | L_{\vec p}(l_q)\| < \infty,
\end{equation*}
where for some $\vec r>0$, 
\begin{equation*}
  G^j(x) = \sup_{y\in\Rn} \frac{|g^j(y)|}{\prod_{l=1}^n (1+2^{ja_l}|x_l-y_l|)^{r_l}},\quad x\in\Rn.
\end{equation*}
When $\psi_0,\psi\in\cs(\Rn)$ with $L_\psi=\infty$, then $\sum_{j=0}^\infty \psi_j*g^j$ converges in $\cs'(\Rn)$ for any such $(g^j)_{j\in\N_0}$ and
\begin{equation}\label{eq:210inRychkov}
\Big\|\, \sum_{j=0}^\infty \psi_j*g^j\, \Big|F^{s,\vec a}_{\vec p,q}\Big\|\le c_{q,s} \|\, (g^j)\, \|
\end{equation}
with a constant $c_{q,s}$ independent of $(g^j)_{j\in\N_0}$.
\end{lemma}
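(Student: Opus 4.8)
The plan is to reduce \eqref{eq:210inRychkov} to the kernel-of-localised-means characterisation in Theorem~\ref{thm:local}, using a fixed system $k_0 = k^0$, $k = \Delta^N k^0$ with $N$ chosen so large that $s < 2N\min(a_1,\dots,a_n)$; this is harmless since the claimed estimate is for the genuine quasi-norm, which is equivalent to $\|\cdot\|^*$. First I would establish the convergence statement: by the limit theorem for $\cs'$ it suffices to test $\sum_{j<M}\psi_j*g^j$ against $\eta\in\cs(\Rn)$, and since $L_\psi=\infty$ the functions $\check\psi_j*\eta$ tend rapidly to $0$ in every Schwartz seminorm, cf.~\cite[Lem.~4.2]{HJS13a}, while $|\langle g^j,\check\psi_j*\eta\rangle|$ is controlled by $\|2^{js}G^j|L_{\vec p}(\ell_q)\|$ times a fixed power of $2^j$ coming from the majorant $G^j$; hence the series is Cauchy in $\cs'(\Rn)$.

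For the norm estimate, write $h := \sum_j \psi_j*g^j$ and compute $k_\nu * h = \sum_{j=0}^\infty k_\nu * \psi_j * g^j$ for $\nu\in\N_0$. The point is that $k_\nu*\psi_j$ is a ``nice'' kernel: because $k$ carries $2N$ vanishing moments and $\psi$ carries infinitely many, the standard interplay between moment conditions and decay (the anisotropic analogue of \cite[Lem.~2.7]{HJS13a} or the estimates in~\cite[Sec.~2.4]{HJS13a}) gives a Peetre-type bound
\begin{equation*}
  |k_\nu*\psi_j*g^j(x)| \le c\,2^{-|\nu-j|\,L}\, 2^{\min(\nu,j)|\vec a|}\!\int_{\Rn}\frac{|g^j(y)|}{\prod_l(1+2^{\min(\nu,j)a_l}|x_l-y_l|)^{r_l}}\,dy
\end{equation*}
for an $L$ that can be taken larger than $|s|$ (using $L_\psi=\infty$ when $j\le\nu$ and the $2N$ moments of $k$ when $j>\nu$, with $N$ enlarged if necessary). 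The inner integral is in turn dominated by $C\,G^j(x)$ up to the usual maximal-function trick: replacing the scale $2^{\min(\nu,j)a_l}$ by $2^{ja_l}$ costs only a Hardy--Littlewood maximal operator $\mathcal M_{\vec a}$ applied coordinatewise, as in the proof of~\cite[Prop.~4.6]{HJS13a}. Thus $|k_\nu*h(x)| \le c\sum_j 2^{-|\nu-j|L}\,\mathcal M_{\vec a}(G^j)(x)$ (the $\nu=0$ term handled the same way with $k_0$).

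It then remains to multiply by $2^{\nu s}$, take the $\ell_q$-norm in $\nu$, and then the $L_{\vec p}$-norm. The convolution inequality in $\nu$ versus $j$ absorbs the geometric factor $2^{-|\nu-j|L}$ against $2^{\nu s}$ (here $L>|s|$ is used), leaving $\big\|\,2^{js}\mathcal M_{\vec a}(G^j)\,\big|L_{\vec p}(\ell_q)\big\|$, and the Fefferman--Stein maximal inequality in the mixed-norm, anisotropic setting — available from \cite{Y1} or~\cite[Sec.~3]{HJS13a} — bounds this by $c\,\|2^{js}G^j|L_{\vec p}(\ell_q)\| = c\,\|(g^j)\|$. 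This gives $\|h|F^{s,\vec a}_{\vec p,q}\|^* \le c_{q,s}\|(g^j)\|$ and hence \eqref{eq:210inRychkov}. The main obstacle I anticipate is the kernel estimate for $k_\nu*\psi_j$ in the anisotropic regime and, tied to it, choosing $N$ so that simultaneously $s<2N\min a_l$ (for Theorem~\ref{thm:local}) and $2N\min a_l > |s|$ plus the slack needed for the $\nu$-summation; everything else is bookkeeping with the maximal function. A clean alternative, if one wishes to minimise new computation, is to cite the scalar anisotropic mixed-norm convolution lemma already behind~\cite[Prop.~4.6]{HJS13a} and only verify that the hypothesis there is met by $(\psi_j,g^j)$ with $L_\psi=\infty$ replacing the dyadic corona condition.
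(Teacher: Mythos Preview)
Your overall architecture---test against $\eta\in\cs$ for convergence, prove a kernel bound of the form $2^{-|\nu-j|L}$, then use an $\ell_q$-convolution inequality---matches the paper's. But two steps in the middle are either unnecessary or problematic, and the paper's route avoids them entirely.

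First, there is no need to pass through Theorem~\ref{thm:local} and local means $k_\nu$. The paper works directly with the Littlewood--Paley pieces $\Phi_l$ from \eqref{unity}, for which $L_{\cfi\Phi}=\infty$; combined with $L_\psi=\infty$, this gives via \cite[Lem.~4.5]{HJS13a} that $I_{j,l}:=\int|\cfi\Phi_l*\psi_j(z)|\prod_l(1+2^{ja_l}|z_l|)^{r_l}\,dz\le C_M2^{-|l-j|M}$ for \emph{every} $M>0$, with no bookkeeping about choosing $N$.

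Second, and this is the real gap: your passage from the displayed integral bound to $G^j$ via ``the usual maximal-function trick'' does not work as written. The function $G^j$ is a \emph{supremum}, not an integral average, so neither $\mathcal M_{\vec a}(G^j)$ nor $\mathcal M_{\vec a}(g^j)$ reproduces it; and invoking Fefferman--Stein would impose a lower bound on $\vec p,q$ relative to $\vec r$ that the lemma does not assume. The paper sidesteps this completely by using the sup definition of $G^j$ \emph{before} integrating: from $|g^j(x-z)|\le G^j(x)\prod_l(1+2^{ja_l}|z_l|)^{r_l}$ one gets at once
\[
  |\cfi\Phi_l*\psi_j*g^j(x)|\le G^j(x)\cdot I_{j,l}\le C_M\,2^{-|l-j|M}G^j(x),
\]
with the weight at scale $j$ throughout, so no scale conversion and no maximal operator are needed. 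From here the $\ell_q$-convolution lemma \cite[Lem.~2.7]{HJS13a} finishes the norm estimate, and convergence is obtained by summing in the auxiliary space $F^{s-2\varepsilon,\vec a}_{\vec p,1}$.
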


\begin{proof}
By assumption $\|\, (g^j)\, \|<\infty$, hence $G^j(\widetilde x)<\infty$ for an $\widetilde x\in\Rn$, $j\in\N_0$, implying $|g^j(x)|\le G^j(\widetilde x)\prod_{l=1}^n (1+2^{ja_l}|\widetilde x_l-x_l|)^{r_l}$. Thereby, $g^j$ belongs to $L_{1,\loc}(\Rn)$ and grows at most polynomially, thus $g^j$ and therefore also $\psi_j*g^j$ are in $\cs'(\Rn)$.

Using $\Phi_l$ from~\eqref{unity}, the following estimate holds for $l\in\N_0$, $x\in\Rn$,
\begin{equation}\label{eq:estimateOfIjl}
|\cfi\Phi_l * \psi_j*g^j(x)| \le \int |\cfi\Phi_l*\psi_j(z)| |g^j(x-z)|\, dz \le I_{j,l}\cdot G^j(x),
\end{equation}
where
\begin{equation*}
I_{j,l} = \int |\cfi\Phi_l*\psi_j(z)|\prod_{l=1}^n (1+2^{ja_l}|z_l|)^{r_l}\, dz.
\end{equation*}
Since $L_\psi=\infty=L_{\cfi\Phi}$, 
a straightforward application of~\cite[Lem.~4.5]{HJS13a} yields the following estimate of the anisotropic dilations in $I_{j,l}$: for every $M>0$ there is some $C_M>0$ such that 
\begin{equation*}
  I_{j,l} \le C_M 2^{-|l-j|M} \quad\text{for all}\enskip j,l\in\N_0.
\end{equation*}
For $M=\varepsilon+|s|$, where $\varepsilon>0$ is arbitrary, we obtain from~\eqref{eq:estimateOfIjl},
\begin{equation}\label{eq:calculation210inRychkov}
  2^{ls}|\cfi\Phi_l*\psi_j*g^j(x)| \le c_s 2^{js} 2^{-|l-j|\varepsilon} G^j(x),
\end{equation}
which implies, using $|j-l|\ge j-l$, 
\begin{align*}
  \|\, \psi_j*g^j\, |F^{s-2\varepsilon,\vec a}_{\vec p,1}\|
  \le c_s \Big(\sum_{l=0}^\infty 2^{(-|j-l|-2l)\varepsilon}\Big) \|\, 2^{js}G^j\, | L_{\vec p}\| 
  \le c_{s} 2^{-j\varepsilon} \|\, (g^j)\, \|.
\end{align*}
This yields for $d:=\min(1,p_1,\ldots,p_n)$,
\begin{equation*}
\sum_{j=0}^\infty \|\, \psi_j*g^j\, |F^{s-2\varepsilon,\vec a}_{\vec p,1}\|^d \le c_{s}^d \|\, (g^k)\,\|^d \sum_{j=0}^\infty 2^{-j\varepsilon d}<\infty,
\end{equation*}
hence $\sum_{j=0}^\infty \psi_j*g^j$ converges in the quasi-Banach space $F^{s-2\varepsilon,\vec a}_{\vec p,1}$ and thus in $\cs'$.

Finally, by~\eqref{eq:calculation210inRychkov} and~\cite[Lem.~2.7]{HJS13a} applied to $(2^{js}G^j)_{j\in\N_0}$,
\begin{align*}
\Big\|\, \sum_{j=0}^\infty \psi_j*g^j\,\Big|F^{s,\vec a}_{\vec p,q}\Big\|
\le c_{q,s} \Big\|\,\Big(\sum_{j=0}^\infty 2^{-|l-j|\varepsilon} 2^{js}G^j\Big)_{l\in\N_0}\, \Big|L_{\vec p}(\ell_q)\Big\|
\le c_{q,s} \|\, 2^{js}G^j\, |L_{\vec p}(\ell_q)\|,
\end{align*}
which shows~\eqref{eq:210inRychkov}.
\end{proof}

We recall a variant $\varphi_j^+$ of the Peetre-Fefferman-Stein maximal operators induced by $(\varphi_j)_{j\in\N_0}$, where $\varphi_0,\varphi\in \cs(\Rn)$ are supported in $\Rn_-$; i.e.~for $f\in\overline\cs'(\R^n_+)$ and $\vec r>0$,
\begin{equation}\label{eq:maximalFunctionOnAllRn}
  \varphi_j^+ f(x) = \sup_{y\in\Rn_+} \frac{|\varphi_j*f(y)|}{\prod_{l=1}^n (1+2^{ja_l}|x_l-y_l|)^{r_l}},\quad x\in \Rn_+,\enskip j \in\N_0.
\end{equation}
Now we are ready to state the main theorem of this section:

\begin{theorem}\label{thm:rychkovExtension}
When $\varphi_0,\varphi,\psi_0,\psi\in\cs(\Rn)$ are functions as in Proposition~\ref{prop:variantCalderon}, then 
\begin{equation}\label{eq:expressionForExt}
\ce_{\univ}(f) := \sum_{j=0}^\infty \psi_j * e_+(\varphi_j * f)
\end{equation}
is a linear extension operator from $\overline\cs'(\R^n_+)$ to $\cs'(\Rn)$, i.e.~$r_+\ce_{\univ} f=f$ in $\Rn_+$ for every $f\in\overline\cs'(\R^n_+)$.  
Moreover, $\ce_{\univ}$ is bounded for all $s\in\R$, $0<\vec p<\infty$ and $0<q\le\infty$,
\begin{equation*}
  \ce_{\univ}: \overline F^{s,\vec a}_{\!\vec p,q}(\Rn_+)\to F^{s,\vec a}_{\vec p,q}(\Rn).  
\end{equation*}
\end{theorem}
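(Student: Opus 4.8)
The plan is to separate the two assertions — that $r_+\ce_{\univ}f=f$ on $\Rn_+$ and that $\ce_{\univ}$ is bounded on $F^{s,\vec a}_{\vec p,q}$ — and to let Lemma~\ref{lemma:XinRychkov}, together with the Peetre maximal function estimate behind the local means characterisation, carry the analytic load; all the convergence bookkeeping for the series has already been done in Propositions~\ref{prop:variantCalderon}--\ref{prop:theAdditionToRychkovsPresentation}.

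First I would record that $\ce_{\univ}$ is a well-defined linear map into $\cs'(\Rn)$: by~\eqref{eq:SFembedding} restriction sends $\overline F^{s,\vec a}_{\vec p,q}(\Rn_+)$ into $\overline\cs'(\Rn_+)$, the terms $e_+(\varphi_j*f)$ are independent of the chosen extension $\widetilde f\in\cs'(\Rn)$ of $f$ (since $r_+(\varphi_j*\widetilde f)=\varphi_j*f$ for any such $\widetilde f$), and $\cs'$-convergence of the series in~\eqref{eq:expressionForExt} is precisely Proposition~\ref{prop:theAdditionToRychkovsPresentation}. For the extension property I would restrict~\eqref{eq:expressionForExt} to $\Rn_+$: by~\eqref{eq:welldefinedConvolution} each summand satisfies $r_+\bigl(\psi_j*e_+(\varphi_j*f)\bigr)=\psi_j*(\varphi_j*f)$ in $\cd'(\Rn_+)$, and since $r_+\colon\cs'(\Rn)\to\cd'(\Rn_+)$ is continuous it commutes with the $\cs'$-convergent series, whence $r_+\ce_{\univ}f=\sum_{j=0}^\infty\psi_j*(\varphi_j*f)=f$ by the variant Calderón formula~\eqref{eq:variantCalderon}.

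For boundedness I would fix $f\in\overline F^{s,\vec a}_{\vec p,q}(\Rn_+)$, choose a near-optimal extension $\widetilde f\in F^{s,\vec a}_{\vec p,q}(\Rn)$ with $\|\,\widetilde f\,|\,F^{s,\vec a}_{\vec p,q}(\Rn)\|\le 2\,\|\,f\,|\,\overline F^{s,\vec a}_{\vec p,q}(\Rn_+)\|$, and apply Lemma~\ref{lemma:XinRychkov} to $g^j:=e_+(\varphi_j*f)$. The point is that $\varphi_j*f=r_+(\varphi_j*\widetilde f)$, so for every $y\in\Rn$ the value $g^j(y)$ is either $0$ or equals $\varphi_j*\widetilde f(y)$, hence $|g^j(y)|\le|\varphi_j*\widetilde f(y)|$ pointwise; consequently, for a vector $\vec r>0$ to be fixed below, the function $G^j$ from that lemma is dominated by the full-space Peetre–Fefferman–Stein maximal function
\[
G^j(x)\ \le\ \varphi^{*}_j\widetilde f(x):=\sup_{y\in\Rn}\frac{|\varphi_j*\widetilde f(y)|}{\prod_{l=1}^n(1+2^{ja_l}|x_l-y_l|)^{r_l}},\qquad x\in\Rn .
\]
Lemma~\ref{lemma:XinRychkov} then shows that the $\cs'$-limit $\ce_{\univ}f$ lies in $F^{s,\vec a}_{\vec p,q}$ with $\|\,\ce_{\univ}f\,|\,F^{s,\vec a}_{\vec p,q}\|\le c_{q,s}\,\|\,2^{js}G^j\,|\,L_{\vec p}(\ell_q)\|\le c_{q,s}\,\|\,2^{js}\varphi^{*}_j\widetilde f\,|\,L_{\vec p}(\ell_q)\|$, and since $\varphi_0,\varphi\in\cs(\Rn)$ satisfy $\int\varphi_0\,dx\ne0$ and $L_\varphi=\infty$, the kernels $(\varphi_j)$ fall under the (maximal function form of the) local means characterisation of $F^{s,\vec a}_{\vec p,q}$ from~\cite{HJS13a} — the infinite moment condition removing any upper restriction on $s$ — so that $\|\,2^{js}\varphi^{*}_j\widetilde f\,|\,L_{\vec p}(\ell_q)\|\le c\,\|\,\widetilde f\,|\,F^{s,\vec a}_{\vec p,q}(\Rn)\|$ as soon as each $r_l$ is chosen large enough. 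Taking the infimum over extensions of $f$ then gives $\|\,\ce_{\univ}f\,|\,F^{s,\vec a}_{\vec p,q}(\Rn)\|\le c\,\|\,f\,|\,\overline F^{s,\vec a}_{\vec p,q}(\Rn_+)\|$.

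The hard part will be the compatible choice of $\vec r$: Lemma~\ref{lemma:XinRychkov} tolerates any $\vec r>0$ but with a constant that deteriorates as $\vec r$ grows, whereas the maximal inequality for $\varphi^{*}_j$ needs each $r_l$ above an explicit threshold in $\vec p$, $q$ and $\vec a$; both demands are met by taking $\vec r$ sufficiently large, and one must also make sure that this maximal estimate is genuinely available for our non-standard kernels — supported in $\Rn_-$ and with infinitely many vanishing moments — for every $s\in\R$, which is exactly what $L_\varphi=\infty$ secures. With that in place the theorem is a synthesis of Propositions~\ref{prop:variantCalderon}--\ref{prop:theAdditionToRychkovsPresentation} and Lemma~\ref{lemma:XinRychkov}; the Besov version, if needed, goes through verbatim with the corresponding Besov maximal characterisation.
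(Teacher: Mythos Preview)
Your argument is correct and follows the paper's strategy: both feed $g^j=e_+(\varphi_j*f)$ into Lemma~\ref{lemma:XinRychkov}, reduce to a Peetre maximal inequality for the kernels $\varphi_j$, and obtain $r_+\ce_{\univ}f=f$ from the Calder\'on formula via continuity of $r_+$. Your direct pointwise bound $|g^j|\le|\varphi_j*\widetilde f|$, hence $G^j\le\varphi_j^*\widetilde f$ on all of $\Rn$, is in fact a minor simplification of the paper, which instead works with the half-space maximal function $\varphi_j^+f$ (supremum over $y\in\Rn_+$) and handles $x_n<0$ by the reflection estimate $\widetilde\varphi_j^+f(x',x_n)\le\varphi_j^+f(x',-x_n)$. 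One point you should sharpen: the maximal inequality $\|2^{js}\varphi_j^*\widetilde f\,|\,L_{\vec p}(\ell_q)\|\le c\|\widetilde f\,|\,F^{s,\vec a}_{\vec p,q}\|$ is not a direct instance of the local-means characterisation, because $\varphi$ carries no Tauberian condition (only $L_\varphi=\infty$); the paper makes this explicit and routes through \cite[Thm.~4.4]{HJS13a} to pass from $\varphi_j^*$ to $(\cfi\Phi_j)^*$ and then \cite[Thm.~4.8]{HJS13a} to reach the $F$-norm, which is what your last paragraph should say rather than merely flagging the issue.
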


\begin{proof}
First it is shown using~\eqref{eq:maximalFunctionOnAllRn} that for any $f\in \overline F^{s,\vec a}_{\vec p,q}(\Rn_+)$ and $\vec r >\min(q,p_1,\ldots,p_n)^{-1}$,
\begin{equation}\label{eq:47inRychkov}
\|\, 2^{js} \varphi_j^+f\, | L_{\vec p}(\ell_q)(\Rn)\|
\le c \|\, f\, |\overline F^{s,\vec a}_{\vec p,q}(\Rn_+)\|.
\end{equation}
Besides $\varphi_j^+ f$, we shall use the well-known maximal operator $\varphi_j^* f$, where the supremum in~\eqref{eq:maximalFunctionOnAllRn} is replaced by supremum over $\Rn$. Hence for every $g\in F^{s,\vec a}_{\vec p,q}(\Rn)$ such that $r_+g=f$, we get from~\eqref{eq:definitionConvolutionSubset} that
\begin{equation}\label{eq:estimateTwoMaxFunctions}
\varphi_j^+ f(x) = \sup_{y\in\Rn_+}\frac{|\varphi_j*g(y)|}{\prod_{l=1}^n (1+2^{ja_l}|x_l-y_l|)^{r_l}} \le \varphi_j^*g(x),\quad x\in\Rn_+.
\end{equation}
This yields~\eqref{eq:47inRychkov} when combined with the following, obtained from techniques behind~\cite[Thm.~5.1]{HJS13a}:
\begin{equation}\label{eq:step3inUniversal}
\inf_{r_+ g=f} \|\, 2^{js}\varphi_j^* g\,|L_{\vec p}(\ell_q)(\Rn)\|
\le c \inf_{r_+ g=f} \|\, g\, |F^{s,\vec a}_{\vec p,q}(\Rn)\|
= c \|\, f\,|\overline F^{s,\vec a}_{\vec p,q}(\Rn_+)\|.
\end{equation}

More precisely, since we only have $L_\varphi = \infty$ available, it is perhaps simplest to exploit that the Tauberian conditions are fulfilled by the functions $\cfi \Phi_0$, $\cfi \Phi$ appearing in the definition of $F^{s,\vec a}_{\vec p,q}$, cf.~\eqref{unity}. Therefore~\cite[Thm.~4.4]{HJS13a} yields that the quasi-norm on the left-hand side in~\eqref{eq:step3inUniversal} is estimated by $\|\, 2^{js}(\cfi\Phi_j)^* g\, |L_{\vec p}(\ell_q)\|$, which in turn is estimated by $\|\, g\, |F^{s,\vec a}_{\vec p,q}\|$ using~\cite[Thm.~4.8]{HJS13a}.

To apply Lemma~\ref{lemma:XinRychkov}, we estimate $\|\, (e_+(\varphi_j*f))\,\|$ using the extension of~\eqref{eq:maximalFunctionOnAllRn} to $\Rn$, that is
\begin{align*}
\widetilde\varphi_j^+ f(x) := \sup_{y\in\Rn_+} \frac{|\varphi_j*f(y)|}{\prod_{l=1}^n (1+2^{ja_l}|x_l-y_l|)^{r_l}},\quad x\in \Rn,\enskip j \in\N_0,
\end{align*}
with which it is immediate to see that
\begin{align*}
  \|\, (e_+(\varphi_j*f))\,\| = \|\, 2^{js} \widetilde\varphi_j^+f\,|L_{\vec p}(\ell_q)\|.
\end{align*}
A splitting of the integral on the right-hand side in one over $\Rn_+$, respectively one over $\Rn_-$ yields, using the obvious inequality $\widetilde\varphi_j^+ f(x',x_n)\le \varphi_j^+ f(x',-x_n)$ 
for $x\in\Rn_-$ and~\eqref{eq:47inRychkov}, cf.\ Lemma~\ref{lemma:XinRychkov},
\begin{equation*}
  \|\, \ce_{\univ} f\,|F^{s,\vec a}_{\vec p,q} \|
  \le c\|\, (e_+(\varphi_j*f))\,\| \le 2c\|\, 2^{js}\varphi_j^+f\,|L_{\vec p}(\ell_q)(\Rn_+) \|
  \le 2c \|\, f\,|\overline F^{s,\vec a}_{\!\vec p,q}(\Rn_+)\|.
\end{equation*}

Finally, continuity of $r_+:\cd'(\Rn)\to\cd'(\Rn_+)$ together with~\eqref{eq:welldefinedConvolution} and Proposition~\ref{prop:variantCalderon} give
\begin{equation*}
r_+(\ce_{\univ} f)
= \sum_{j=0}^\infty r_+ \big(\psi_j*e_+(\varphi_j*f)\big)
= \sum_{j=0}^\infty \psi_j*(\varphi_j*f) = f,
\end{equation*}
hence $\ce_{\univ} f$ is an extension of $f$.
\end{proof}

In the study of trace operators, it will be necessary to extend from more general domains.
Indeed, using the splitting $x=(x',x_n)$ on $\Rn$ and writing $f(x',C-x_n)$ as $f(\cdot,C-\cdot)$, 
the fact that $x\mapsto (x',C-x_n)$ is an involution 
easily gives a universal extension from the half-line $\,]-\infty,C[\,$:

\begin{corollary}\label{cor:rychkovExtension}
For any $C\in\R$, the operator 
\begin{equation*}
  \ce_{\univ,C} f(x) := \ce_{\univ}\big(f(\cdot,C-\cdot)\big)(x',C-x_n)
\end{equation*}
is a linear and bounded extension from $\overline F^{s,\vec a}_{\vec p,q}(\R^{n-1}\times]-\infty,C[)$ to $F^{s,\vec a}_{\vec p,q}(\Rn)$.
\end{corollary}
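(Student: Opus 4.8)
The plan is to produce $\ce_{\univ,C}$ by conjugating $\ce_{\univ}$ with the affine involution $\rho_C\colon x=(x',x_n)\mapsto(x',C-x_n)$. This map is a bijection of $\Rn$ with $\rho_C\circ\rho_C=\id$, and it carries $\R^{n-1}\times\,]-\infty,C[\,$ bijectively onto $\Rn_+$ because $x_n<C\iff C-x_n>0$; in operator notation $\ce_{\univ,C}f=\big(\ce_{\univ}(f\circ\rho_C)\big)\circ\rho_C$. The one substantial point to settle is that $g\mapsto g\circ\rho_C$ is a bounded linear self-map of $F^{s,\vec a}_{\vec p,q}(\Rn)$; once that is in hand, the corollary follows by formal composition.

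For that boundedness I would argue directly from the Fourier-analytic definition. Writing $\rho_C$ as the translation $x_n\mapsto x_n+C$ followed by the reflection $R\colon x\mapsto(x',-x_n)$, the translation only multiplies $\cf g$ by a unimodular factor and leaves $L_{\vec p}$ invariant, so it suffices to treat $R$. Since $R=R^{-1}=R^{T}$ commutes with the dilations $2^{-j\vec a}\cdot$ and the anisotropic distance $|\cdot|_{\vec a}$ depends on $x_n$ only through $x_n^2$, the system $\{\Phi_j\circ R\}$ is again an admissible anisotropic Littlewood--Paley decomposition of unity; moreover $\cfi\big(\Phi_j\cdot\cf(g\circ R)\big)=\big(\cfi((\Phi_j\circ R)\,\cf g)\big)\circ R$, and the substitution $x\mapsto Rx$ does not change the $L_{\vec p}(\ell_q)$-quasinorm. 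As different admissible decompositions give equivalent quasinorms on $F^{s,\vec a}_{\vec p,q}(\Rn)$, this yields $\|g\circ\rho_C\,|F^{s,\vec a}_{\vec p,q}\|\le c\,\|g\,|F^{s,\vec a}_{\vec p,q}\|$. (Alternatively, one can localise and invoke Theorem~\ref{thm:localVersion} with a coordinate block of length one and $\sigma(x_n)=C-x_n$.) Passing to quotient norms: if $\widetilde f\in F^{s,\vec a}_{\vec p,q}(\Rn)$ restricts to $f$ on $\R^{n-1}\times\,]-\infty,C[\,$, then $\widetilde f\circ\rho_C$ restricts to $f\circ\rho_C$ on $\Rn_+=\rho_C(\R^{n-1}\times\,]-\infty,C[\,)$ with comparable norm; taking the infimum over such $\widetilde f$ shows that $f\mapsto f\circ\rho_C$ is bounded $\overline F^{s,\vec a}_{\vec p,q}(\R^{n-1}\times\,]-\infty,C[\,)\to\overline F^{s,\vec a}_{\vec p,q}(\Rn_+)$, and in particular $f\circ\rho_C\in\overline\cs'(\Rn_+)$.

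Now Theorem~\ref{thm:rychkovExtension} applies to $f\circ\rho_C$ and gives $\ce_{\univ}(f\circ\rho_C)\in F^{s,\vec a}_{\vec p,q}(\Rn)$ with $r_+\ce_{\univ}(f\circ\rho_C)=f\circ\rho_C$ and $\|\ce_{\univ}(f\circ\rho_C)\,|F^{s,\vec a}_{\vec p,q}\|\le c\,\|f\,|\overline F^{s,\vec a}_{\vec p,q}(\R^{n-1}\times\,]-\infty,C[\,)\|$. Applying the bounded self-map $\,\cdot\circ\rho_C$ once more shows that $\ce_{\univ,C}$ maps into $F^{s,\vec a}_{\vec p,q}(\Rn)$ with the asserted bound. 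For the extension property, let $x$ satisfy $x_n<C$, so that $\rho_C(x)\in\Rn_+$; then $\ce_{\univ,C}f(x)=\ce_{\univ}(f\circ\rho_C)(\rho_C(x))=(f\circ\rho_C)(\rho_C(x))=f(x)$, using $r_+\ce_{\univ}(f\circ\rho_C)=f\circ\rho_C$ on $\Rn_+$ and $\rho_C\circ\rho_C=\id$; hence $\ce_{\univ,C}f$ and $f$ agree on $\R^{n-1}\times\,]-\infty,C[\,$. The only genuine (and minor) work is the reflection invariance of $F^{s,\vec a}_{\vec p,q}(\Rn)$ in the second paragraph; everything else is bookkeeping with the involution.
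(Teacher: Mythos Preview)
Your proof is correct and follows essentially the same route as the paper: conjugate $\ce_{\univ}$ by the involution $\rho_C$, reduce the needed invariance of $F^{s,\vec a}_{\vec p,q}(\Rn)$ under $\rho_C$ to translation invariance plus reflection invariance in $x_n$, and handle the latter via the fact that the reflected system $\{\Phi_j\circ R\}$ is again admissible (the paper phrases this as ``we may assume $\Phi_0,\Phi$ are invariant under the reflection''). You add the explicit verification that $\ce_{\univ,C}f$ restricts to $f$ on $\R^{n-1}\times\,]-\infty,C[\,$, which the paper leaves implicit; one small quibble is that your decomposition of $\rho_C$ as ``translation by $+C$ followed by $R$'' has the sign/order slightly off, but this is cosmetic and does not affect the argument.
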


\begin{proof}
The quasi-norm on $F^{s,\vec a}_{\vec p,q}(\Rn)$ is invariant under translations $\tau_h u = u(\cdot-h)$, cf.~\cite[Prop.~3.3]{JS08}, and under the reflection $\mathcal R u = u(\cdot,-\cdot)$, when $\Phi_0, \Phi$ are invariant under $\mathcal R$,
as we may assume up to equivalence.
So, clearly $u(x',C-x_n)$ is in $F^{s,\vec a}_{\vec p,q}(\Rn)$ with the same quasi-norm as $u$.
 
By Definition~\ref{def:Fsubspace} this readily implies that the change of coordinates is also continuous from the space $\overline F^{s,\vec a}_{\vec p,q}(\R^{n-1}\times]-\infty,C[)$ to $\overline F^{s,\vec a}_{\vec p,q}(\R^{n-1}\times]0,\infty[)$. Thus
\begin{align*}
\|\, \ce_{\univ,C} f\, |F^{s,\vec a}_{\vec p,q}(\Rn)\| 
&\le c\|\, \ce_{\univ}(f(\cdot,C-\cdot))\,|F^{s,\vec a}_{\vec p,q}(\Rn)\|\\
&\le c\|\, f(\cdot,C-\cdot)\, |\overline F^{s,\vec a}_{\vec p,q}(\R^{n-1}\times]0,\infty[)\|
\le c\|\, f\,|\overline F^{s,\vec a}_{\vec p,q}(\R^{n-1}\times]-\infty,C[)\|,
\end{align*}
and the linearity of $\ce_{\univ,C}$ follows directly from the linearity of $\ce_{\univ}$.
\end{proof} 

In comparison with the well-known half-space extension by Seeley~\cite{see64} we note that the above construction is applicable for all $s\in\R$, even in the mixed-norm case. Also it has the advantage that several results from~\cite{HJS13a} can be utilised, making the argumentation less cumbersome.

\section{Trace Operators}\label{traces}
Under the assumption in~\eqref{eq:conditionsOn_ap_anisotropic}, we study the trace at the flat boundary of a cylinder $\Omega\times I$, where $\Omega\subset\Rn$ is $C^\infty$ and $I:=\,]0,T[\,$, possibly $T=\infty$. The trace at the curved boundary is studied only for $T<\infty$ and under the additional assumption that $\partial\Omega$ is compact.
The associated operators are 
\begin{alignat*}{2}
&r_0 : \enskip &f(x_1, \ldots , x_n,t)  &\mapsto   f(x_1, \ldots , x_n,0)  , \\
&\gamma :  \enskip  &f(x_1, \ldots , x_n,t) &\mapsto  f(x_1, \ldots , x_n,t){|_{\Gamma}} .
\end{alignat*}
As a preparation (for a discussion of compatibility conditions), the chapter ends with a discussion of traces on both the flat and the curved boundary at the corner $\partial\Omega\times\{0\}$ of the cylinder.

For the reader's sake, we recall some notation from~\cite{JS08}, namely that the trace 
at the hyperplane where $x_k=0$ is denoted by $\gamma_{0,k}$:
\begin{equation}\label{eq:definitionOfTraceAtHyperplanek}
\gamma_{0,k}:  \enskip  f(x_1, \ldots , x_n,t) \mapsto  f(x_1, \ldots,0,\ldots , x_n,t).
\end{equation}
It will be 
convenient for us to use $p':=(p_1,\ldots,p_{k-1})$, $p'':=(p_{k+1},\ldots,p_n,p_t)$, analogously for $\vec{a}$, and to set $r_l = \max(1,p_l)$. 
Furthermore, we recall that $x_{n+1}=t$, $a_{n+1}=a_t$, $p_{n+1}=p_t$, hence we shall work with $\vec a,\vec p$ of the form, cf.~\eqref{eq:conditionsOn_ap_anisotropic},
\begin{equation}\label{eq:conditionsOn_ap_anisotropic2}
  \vec a = (a_0,\ldots,a_0,a_t), \qquad \vec p = (p_0,\ldots,p_0,p_t)<\infty,
\end{equation}
where the finiteness of $\vec p$ is assumed in order to apply the results in~\cite{JS08}.

\subsection{The Trace at the Flat Boundary}\label{subsec:traceFlatBoundaryMathNr}
The trace $r_s$, defined by evaluation at $t=s$, is for each $s\in I$ well defined on the subspace,
\begin{equation}\label{eq:embeddingContDist}
C(I,\cd'(\Omega)) \subset \cd'(\Omega\times I),
\end{equation}
where the embedding can be seen by modifying the proof of~\cite[Prop.~3.5]{joh00}.
On the smaller subspace $C(\overline I,\cd'(\Omega))$ consisting of the elements having a continuous extension in $t$ to $\R$, even the trace $r_0$ is well defined (and it induces a similar operator also denoted $r_0$).
Indeed, for $u\in C(\overline I,\cd'(\Omega))$ all extensions $f$ are equal in $\Omega\times I$ and by continuity therefore also at $t=0$, hence 
\begin{equation}\label{eq:defOfr0WhenfDependsSmoothlyOnt}
  r_0 u := f(\cdot,0).
\end{equation}

Now, it was shown in~\cite[Thm.~2.4]{JS08} that
\begin{equation}
F^{s,\vec a}_{\vec p,q}(\R^{n+1})\hookrightarrow C_{\operatorname{b}}(\R,L_{r'}(\Rn)) \quad\text{when}\quad
  s>\frac{a_t}{p_t}+ n\Big(\frac{a_0}{\min(1,p_0)}-a_0\Big),
  \label{spqn-cnda}
\end{equation}
and this induces an embedding $\overline F^{s,\vec a}_{\!\vec p,q}(\Omega\times I)\hookrightarrow C(\overline I,L_{r'}(\Omega))$,
so the trace $r_0$ can be applied to $u$ in $\overline F^{s,\vec a}_{\vec p,q}(\Omega\times I)$, i.e.~for an arbitrary extension $f$ in $F^{s,\vec a}_{\vec p,q}(\R^{n+1})$,
\begin{equation}\label{eq:r0AppliedToFspace}
  r_0 u = r_\Omega f(\cdot,0).
\end{equation}

To define a right-inverse of $r_0$ when applied to $\overline F^{s,\vec a}_{\vec p,q}(\Omega\times I)$, we recall that a bounded right-inverse $K_{n+1}$ of the analogous trace $\gamma_{0,n+1}$ on Euclidean space, cf.~\cite[Thm.~2.6]{JS08},
\begin{equation}\label{eq:rightInverseToFlatTrace}
K_{n+1}: B^{s-\frac{a_t}{p_t},a'}_{p',p_t}(\Rn)\to F^{s,\vec a}_{\vec p,q}(\R^{n+1}), \quad s\in\R,
\end{equation}
is given by the following, where $\psi\in C^\infty(\R)$ such that $\psi(0)=1$ and $\supp \cf\psi\subset[1,2]$,
\begin{equation}\label{eq:defOfK1}
K_{n+1}v(x) := \sum_{j=0}^\infty \psi(2^{ja_{n+1}}x_{n+1}) \cf^{-1}(\Phi_j(\xi',0)\cf v(\xi'))(x').
\end{equation}
A right-inverse was given in this form by Triebel~\cite{tri83} in the isotropic case.

\begin{theorem}\label{main11}
When $\vec a, \vec p$ fulfil~\eqref{eq:conditionsOn_ap_anisotropic2} and $s$ satisfies the inequality in~\eqref{spqn-cnda},
then 
\begin{equation*}
  r_0: \overline F^{s,\vec a}_{\vec p,q}(\Omega\times I)\to \overline B^{s-\frac{a_t}{p_t},a'}_{p',p_t}(\Omega)
\end{equation*}
is a bounded \emph{surjection} and it has a right-inverse $K_0$.
More precisely, the operator $K_0$ can be chosen so that $K_0: \overline B^{s-\frac{a_t}{p_t},a'}_{p',p_t}(\Omega)\to \overline F^{s,\vec a}_{\!\vec p,q}(\Omega\times I)$ is bounded for all $s\in\R$.
\end{theorem}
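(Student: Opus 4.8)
The plan is to derive everything from the corresponding statements on the full Euclidean space together with the extension operator $\ce_{\univ,C}$ from Corollary~\ref{cor:rychkovExtension}, so the structure mirrors the classical quotient-space argument. First I would prove boundedness of $r_0$. Given $u\in\overline F^{s,\vec a}_{\vec p,q}(\Omega\times I)$, pick an extension $f\in F^{s,\vec a}_{\vec p,q}(\R^{n+1})$ with $r_{\Omega\times I}f=u$ and $\|f\,|F^{s,\vec a}_{\vec p,q}(\R^{n+1})\|\le 2\|u\,|\overline F^{s,\vec a}_{\vec p,q}(\Omega\times I)\|$; by~\eqref{spqn-cnda} and~\eqref{eq:r0AppliedToFspace} one has $r_0 u=r_\Omega\gamma_{0,n+1}f$, and since $\gamma_{0,n+1}$ is bounded into $B^{s-a_t/p_t,a'}_{p',p_t}(\Rn)$ (it is the trace from~\cite[Thm.~2.4 and 2.6]{JS08}, which returns a Besov space in the outer variable), restriction to $\Omega$ gives $\|r_0u\,|\overline B^{s-a_t/p_t,a'}_{p',p_t}(\Omega)\|\le c\|f\|\le c'\|u\|$. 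Here one must check that the choice of extension is immaterial for $r_0u$: two extensions agree in $\Omega\times I$, hence by the embedding into $C(\overline I,L_{r'}(\Omega))$ they agree at $t=0$ in $\Omega$, so $r_0u$ is well defined.

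Next, surjectivity and the right-inverse. Given $v\in\overline B^{s-a_t/p_t,a'}_{p',p_t}(\Omega)$, choose an extension $\bar v\in B^{s-a_t/p_t,a'}_{p',p_t}(\Rn)$ with comparable norm, apply the Euclidean right-inverse $K_{n+1}$ from~\eqref{eq:defOfK1}--\eqref{eq:rightInverseToFlatTrace} to obtain $w:=K_{n+1}\bar v\in F^{s,\vec a}_{\vec p,q}(\R^{n+1})$ with $\gamma_{0,n+1}w=\bar v$, and then set $K_0 v:=r_{\Omega\times I}\,(\ce_{\univ,T}\,r_{\R^n\times]-\infty,T[}\,w)$ when $T<\infty$, and simply $K_0v:=r_{\Omega\times I}w$ when $T=\infty$. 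In the finite case one first restricts $w$ to the half-space $\R^n\times]-\infty,T[$ (in the $t$-variable), which is bounded $F^{s,\vec a}_{\vec p,q}(\R^{n+1})\to\overline F^{s,\vec a}_{\vec p,q}(\R^n\times]-\infty,T[)$ simply because restriction decreases the quotient quasi-norm, and then re-extends by $\ce_{\univ,T}$, bounded back to $F^{s,\vec a}_{\vec p,q}(\R^{n+1})$ by Corollary~\ref{cor:rychkovExtension}; crucially $r_+$ of this extension equals $w$ on $\R^n\times]-\infty,T[$, so in particular the value at $t=0$ is unchanged. Restricting to $\Omega\times I\subset\R^n\times]-\infty,T[$ then gives $K_0v\in\overline F^{s,\vec a}_{\vec p,q}(\Omega\times I)$ with $\|K_0v\|\le c\|v\|$ for every $s\in\R$ (the extension and trace-right-inverse are bounded for all $s$). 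Finally $r_0(K_0v)=r_\Omega\big(\gamma_{0,n+1}w\big)=r_\Omega\bar v=v$, using that $\bar v$ extends $v$; this proves $K_0$ is a right-inverse and, a fortiori, that $r_0$ is onto.

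The step I expect to require the most care is the reduction of $w$ to the strip and back: one must ensure that composing $r_{\R^n\times]-\infty,T[}$ with $\ce_{\univ,T}$ does not disturb the trace at $t=0$, i.e.\ that $r_+\ce_{\univ,T}g=g$ holds in the \emph{open} half-space $\R^n\times]-\infty,T[$ which contains the hyperplane $\{t=0\}$ since $T>0$; this is exactly the extension property in Theorem~\ref{thm:rychkovExtension}, combined with the fact (from the embedding~\eqref{spqn-cnda} into $C(\overline I,L_{r'})$, valid also on the strip) that equality of two elements of the space on an open set containing $\{t=0\}$ forces equality of their $t=0$ traces. A minor subtlety is that $r_0$ on $\overline F^{s,\vec a}_{\vec p,q}(\Omega\times I)$ is only defined under~\eqref{spqn-cnda}, whereas $K_0$ is claimed bounded for all $s\in\R$; this is consistent because the identity $r_0K_0=\id$ is asserted only in the range where $r_0$ itself makes sense, while boundedness of $K_0$ is a separate, unconditional statement following directly from boundedness of $K_{n+1}$ and $\ce_{\univ,T}$.
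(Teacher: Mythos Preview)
Your boundedness argument for $r_0$ is correct and matches the paper's: pick a near-optimal extension $f$, apply the Euclidean trace $\gamma_{0,n+1}$ from \cite[Thm.~2.5]{JS08}, restrict to $\Omega$.

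For the right-inverse, however, you have applied the extension operator in the wrong variable. Your map $v\mapsto\bar v$ from $\overline B^{s-a_t/p_t,a'}_{p',p_t}(\Omega)$ to $B^{s-a_t/p_t,a'}_{p',p_t}(\Rn)$ is just ``choose an extension with comparable norm''; this is neither linear nor canonical, so the resulting $K_0$ is not a well-defined linear operator. What is needed here is a linear bounded extension operator in the \emph{spatial} variable, from $\Omega$ to $\Rn$. The paper supplies exactly this by taking Rychkov's $\ce_{\univ,\Omega}$ from \eqref{eq:rychkovExt} (which applies to the isotropic Besov space over $\Omega$, after the rescaling $(s,a')\leftrightarrow s/a_0$ via Lemma~\ref{lem:lambdaLemmaBesov}), and sets
\[
K_0=r_{\Omega\times I}\circ K_{n+1}\circ\ce_{\univ,\Omega}.
\]
Your subsequent detour through $\ce_{\univ,T}$ in the time variable is then superfluous: once $w=K_{n+1}\bar v$ lies in $F^{s,\vec a}_{\vec p,q}(\R^{n+1})$, restricting directly to $\Omega\times I$ already lands in $\overline F^{s,\vec a}_{\vec p,q}(\Omega\times I)$, and $r_0$ of that restriction is $r_\Omega\gamma_{0,n+1}w=r_\Omega\bar v=v$. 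The restrict-then-re-extend step in $t$ neither helps nor hurts, but it does not address the actual gap, which is the missing linear spatial extension.
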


\begin{proof}
The analogue of this theorem on Euclidean spaces, cf.~\cite[Thm.~2.5]{JS08}, yields for any  
$f\in F^{s,\vec a}_{\vec p,q}(\R^{n+1})$ the existence of a constant $c$ (only depending on 
$s,\vec p,q,\vec a$)  such that
\begin{equation*}
\big\|\, \gamma_{0,n+1} f\, \big| B^{s-\frac{a_t}{p_t},a'}_{p',p_t}(\Rn)\big\|
\leq c \|\, f\, | F^{s,\vec a}_{\vec p,q}(\R^{n+1})\|.
\end{equation*}
Choosing $f$ in~\eqref{eq:r0AppliedToFspace} so the right-hand side is bounded by $2c \|\, u\, |\overline F^{s,\vec a}_{\vec p,q}(\Omega\times I)\|$,  we obtain boundedness of~$r_0$, since $r_\Omega (\gamma_{0,n+1} f) = r_0 u$, cf.~\eqref{eq:definitionOfTraceAtHyperplanek}.

A right-inverse $K_0$ is constructed using $K_{n+1}$ in~\eqref{eq:rightInverseToFlatTrace} and Rychkov's extension operator in~\eqref{eq:rychkovExt}:
\begin{equation}\label{eq:rightInverseToFlatTraceMathNRNew}
  K_0 := r_{\Omega\times I} \circ K_{n+1}\circ \ce_{\univ,\Omega}: \overline B^{s-\frac{a_t}{p_t},a'}_{p',p_t}(\Omega)\to \overline F^{s,\vec a}_{\vec p,q}(\Omega\times I).
\end{equation}
(Since~\eqref{eq:rychkovExt} applies only to isotropic spaces over $\Omega\subset\Rn$, one can exploit~\eqref{eq:conditionsOn_ap_anisotropic2} to make rescalings $(s,a')\leftrightarrow s/a_0$, 
cf.\ Lemma~\ref{lem:lambdaLemmaBesov}.)

It is bounded for all $s\in\R$, because $K_{n+1}$ and $\mathcal{E}_{u,\Omega}$ are so.
Finally,~\eqref{eq:r0AppliedToFspace} yields for any $v\in \overline B^{s-\frac{a_t}{p_t},a'}_{p',p_t}(\Omega)$,
\begin{align*}
r_0 \circ K_0 v = r_\Omega ( K_{n+1}\circ \ce_{\univ,\Omega} v)(x_1,\ldots,x_n,0) = r_\Omega \circ \gamma_{0,n+1} \circ K_{n+1} \circ \ce_{\univ,\Omega} v = v,
\end{align*}
hence $K_0$ is a right-inverse of $r_0$.
\end{proof}

\subsection{A Support Preserving Right-Inverse}
Having treated the trace at $\{t=0\}$, we now construct an \emph{explicit} 
support preserving right-inverse of it. 
It is useful for parabolic problems e.g.\ in the reduction to homogeneous boundary conditions. 
At no extra cost, general $\vec a$ and $\vec p$ are treated in most of this section.

It is known from \cite{JS08} that whenever $s>\frac{a_t}{p_t}+\sum_{k\le n} \Big(\frac{a_k}{\min(1,p_1,\ldots,p_k)}-a_k\Big)$,
then $r_0$ is bounded,
\begin{equation*}
  r_0: F^{s,\vec a}_{\vec p,q}(\Rn\times\R)\to B^{s-\frac{a_t}{p_t},a'}_{p',p_t}(\Rn).
\end{equation*}

The particular right-inverse in~\eqref{eq:defOfK1} shall now be replaced by a finer construction
of a right-inverse $Q$ having the useful property that
\begin{equation}
  \supp u \subset \overline\R^n_+  \implies \supp Qu \subset \overline\R^n_+ \times \R.
\label{suppQu-eq}
\end{equation}
Roughly speaking the idea is to replace the use of Littlewood--Paley
decompositions by the kernels of localised means $(k_j)_{j\in\N_0}$; cf.~Theorem~\ref{thm:local}.  
That is, we tentatively take $Q$ of the form
\begin{equation}
  Qu(x,t)=\sum_{j=0}^\infty \eta(2^{ja_t}t)k_j*u(x).
\label{Qu-id}
\end{equation}
Hereby the auxiliary function $\eta\in\cs(\R)$ is again chosen with $\eta(0)=1$ and such that
$\supp\widehat\eta\subset [1,2]$.

The main reason for this choice of $Qu$ is that the property
\eqref{suppQu-eq} will eventually result when the kernels $k_j$ are so
chosen that 
\begin{equation}
  \supp u \subset \overline\R^n_+  \implies \supp k_j*u \subset \overline\R^n_+.
\label{suppkju-eq}
\end{equation}
By the support rule for convolutions, this follows if $\supp k_j\subset\overline\R^n_+$.
However, in order to choose the $k_j$, we shall first draw on the construction of 
${\cal E}_{\operatorname{u}}$ in Section~\ref{sec:rychkovExtension} and
take functions $\varphi_0,\varphi,\psi_0,\psi$ in $\cs(\Rn)$ with support in $\overline\R^n_+$ 
and satisfying 
\begin{equation}
  \int\varphi_0\, dx=1=\int\psi_0\, dx,\qquad L_\varphi=\infty=L_\psi,
\label{moment-cnd}
\end{equation}
in such a way that by setting e.g.\  $\psi_j(x)=2^{j|\vec a|}\psi(2^{j\vec a}x)$ for $j\ge1$,
one has Calder\'{o}n's reproducing formula
\begin{equation}
  u=\sum_{j=0}^\infty \psi_j*\varphi_j*u\quad\text{ for}\enskip
    u\in\cs'(\Rn).
\label{calderon-id}
\end{equation}
Existence of these functions may be obtained as in the proof of
Proposition~\ref{prop:variantCalderon}, simply by omitting the reflection in the definition of $\varphi_0$ and
proceeding with the argument for~\eqref{eq:convergenceOfSeriesForAllTempD} in the proof there.

Now we obtain $\supp k_j\subset\overline\R^n_+$ by choosing 
\begin{equation*}
  k_0=\psi_0*\varphi_0,\quad k =\psi*\varphi.
\end{equation*}
Then \eqref{calderon-id} states that $u=\sum_{j\ge0}k_j*u$, which
together with the condition $\eta(0)=1$ will imply that $Q$ is a
right-inverse of $r_0$. 

Since the supports of the $k_j$ are only confined to be in the
half-space $\overline\R^n_+$, we refer to the $k_j$ as kernels of
\emph{localised} means. (Triebel termed them local
in case the supports are compact.)

In addition we need to recall an $\cs'$-version of \cite[Prop.~3.5]{joh00}.

\begin{lemma}\label{cs-lem}
  There is an (algebraic) embedding $C_{\operatorname{b}}(\R,\cs'(\Rn))\subset\cs'(\Rn\times\R)$
  given by
  \begin{equation*}
    \langle \Lambda_f,\psi\rangle=\int_\R\langle f(t),\psi(\cdot,t)\rangle_{\Rn}\,dt
  \end{equation*}
  for each continuous, bounded map $f:\R\to \cs'(\Rn)$ and $\psi\in\cs(\Rn\times\R)$.
\end{lemma}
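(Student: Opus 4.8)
The plan is to check, in this order, that for a given $f\in C_{\operatorname{b}}(\R,\cs'(\Rn))$ the prescription $\langle\Lambda_f,\psi\rangle=\int_\R\langle f(t),\psi(\cdot,t)\rangle_{\Rn}\,dt$ (i) makes sense as an absolutely convergent integral for every $\psi\in\cs(\Rn\times\R)$, (ii) defines a continuous linear functional, so $\Lambda_f\in\cs'(\Rn\times\R)$, and (iii) depends injectively on $f$, which gives the algebraic embedding. Everything hinges on one uniform estimate: since $f(\R)$ is by assumption a bounded subset of $\cs'(\Rn)$ and $\cs(\Rn)$ is barrelled, Banach--Steinhaus makes $f(\R)$ equicontinuous, so there are $M_0\in\N_0$ and $C>0$ with
\[
  |\langle f(t),\varphi\rangle_{\Rn}|\le C\,p_{M_0}(\varphi)\qquad\text{for all }t\in\R,\ \varphi\in\cs(\Rn).
\]

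For (i) and (ii), first note $\psi(\cdot,t)\in\cs(\Rn)$ for each $t$. Since $1+|x|^2+t^2$ dominates both $1+|x|^2$ and $1+t^2$, one has $\langle x\rangle^{M_0}\langle t\rangle^{N}\le\langle(x,t)\rangle^{M_0+N}$ for any $N\in\N_0$, and as the $x$-derivatives $D^\alpha_x$ with $|\alpha|\le M_0$ are among those counted by the Schwartz seminorm $p_{M_0+N}$ on $\R^{n+1}$, this yields $\langle t\rangle^{N}p_{M_0}(\psi(\cdot,t))\le p_{M_0+N}(\psi)$. Combined with the uniform bound,
\[
  |\langle f(t),\psi(\cdot,t)\rangle_{\Rn}|\le C\,p_{M_0}(\psi(\cdot,t))\le C\langle t\rangle^{-N}p_{M_0+N}(\psi).
\]
Moreover $t\mapsto\psi(\cdot,t)$ is continuous (indeed smooth) from $\R$ into $\cs(\Rn)$ by a mean value argument, and the pairing $\cs'(\Rn)\times\cs(\Rn)\to\C$ is continuous once the first slot is restricted to the equicontinuous set $f(\R)$; with the continuity of $f$ this makes $t\mapsto\langle f(t),\psi(\cdot,t)\rangle_{\Rn}$ continuous. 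Taking $N=2$, the integrand is continuous and $O(\langle t\rangle^{-2})$, so the integral converges absolutely and
\[
  |\langle\Lambda_f,\psi\rangle|\le C\Big(\int_\R\langle t\rangle^{-2}\,dt\Big)p_{M_0+2}(\psi),
\]
which with the obvious linearity in $\psi$ shows $\Lambda_f\in\cs'(\Rn\times\R)$.

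For (iii), $f\mapsto\Lambda_f$ is clearly linear, so it suffices to deduce $f=0$ from $\Lambda_f=0$. Testing against $\varphi\otimes\chi\in\cs(\Rn\times\R)$ with $\varphi\in\cs(\Rn)$ and $\chi\in C_0^\infty(\R)$ gives $\int_\R\langle f(t),\varphi\rangle_{\Rn}\,\chi(t)\,dt=0$ for all such $\chi$; since $t\mapsto\langle f(t),\varphi\rangle_{\Rn}$ is continuous, the fundamental lemma of the calculus of variations forces it to vanish identically, and as $\varphi$ ranges over $\cs(\Rn)$ we conclude $f(t)=0$ in $\cs'(\Rn)$ for every $t$, i.e.\ $f=0$. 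This establishes the embedding $C_{\operatorname{b}}(\R,\cs'(\Rn))\hookrightarrow\cs'(\Rn\times\R)$. The one genuinely delicate point is extracting the uniform seminorm bound from mere boundedness of $f$ (the Banach--Steinhaus step); the remainder is routine bookkeeping with Schwartz seminorms, paralleling the $C(\R,\cd'(\Omega))$-version in \cite[Prop.~3.5]{joh00}.
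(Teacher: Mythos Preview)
Your proof is correct and follows essentially the same approach as the paper: both extract equicontinuity of $\{f(t)\}_{t\in\R}$ from boundedness (Banach--Steinhaus), obtain the uniform bound $|\langle f(t),\phi\rangle|\le C\,p_M(\phi)$, and then estimate the integrand by $c\,p_{M+2}(\psi)/(1+t^2)$ to get both integrability and the temperate bound on $\Lambda_f$. Your argument is in fact more complete than the paper's terse version, since you spell out the continuity of the integrand and supply the injectivity step (via tensor test functions and the fundamental lemma), which the paper leaves implicit despite calling the map an embedding.
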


\begin{proof}
  By the boundedness, the family $\{f(t)\}_{t\in\R}$ is equicontinuous,
  so for some $M>0$ we have $|\langle f(t),\phi\rangle|\le cp_M(\phi)$
  for all $t\in\R$ and $\phi\in\cs(\Rn)$. Hence the integrand is continuous and 
  estimated crudely by $cp_{M+2}(\psi)/(1+t^2)$, so
  $\Lambda_f$ makes sense and 
  $|\langle \Lambda_f,\psi\rangle|\le c\pi p_{M+2}(\psi)$.
\end{proof}

Using this lemma, we can now improve on \eqref{Qu-id} by giving $Qu$ a
more precise meaning as an element of $C_{\operatorname{b}}(\R_t,\cs'(\Rn_x))$.
Namely, $Qu(\cdot,t)$ is the distribution given on $\phi\in\cs(\Rn)$ by 
\begin{equation}\label{Qu'-id}
  \langle Qu(\cdot,t),\phi\rangle  = \sum_{j=0}^\infty \eta(2^{ja_t}t)\langle k_j*u,\phi\rangle.
\end{equation}
This will be clear from the proof of

\begin{proposition}\label{prop:suppQu'-eq}
  The operator $Q$ is a well-defined w$^*$-continuous linear map
  $\cs'(\Rn)\to\cs'(\Rn\times\R)$ having range in
  $C_{\operatorname{b}}(\R_t,\cs'(\Rn_x))$. It is a right-inverse of $r_0$ preserving
  supports in $\overline\R^n_+$ in the strong form
  \begin{equation}
    \supp u\subset\overline\R^n_+ \implies \forall t: \supp
    Qu(\cdot,t)\subset\overline\R^n_+.
    \label{suppQu'-eq}
  \end{equation}
In particular, $Q: \overset{\circ}{\cs}{}'(\overline\R^n_+) \to \overset{\circ}{\cs'}(\overline\R^n_+\times\R)$,
cf.\ Definition~\ref{def:DefinitionOfTempDOnSubsets}.
\end{proposition}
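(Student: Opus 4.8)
The plan is to verify the assertions in the order stated, leaning on the techniques already used for $\ce_{\univ}$. First I would fix $u\in\cs'(\Rn)$ and check that~\eqref{Qu'-id} defines an element of $\cs'(\Rn)$ for each $t$, with a seminorm bound uniform in $t$. Writing $\check k_j(x):=k_j(-x)$, one has $\langle k_j*u,\phi\rangle=\langle u,\check k_j*\phi\rangle$ for $\phi\in\cs(\Rn)$, and since $k=\psi*\varphi$ with $L_\psi=\infty$ one gets $L_{\check k}=\infty$; hence, exactly as in the proof of Proposition~\ref{prop:theAdditionToRychkovsPresentation} (via \cite[Lem.~4.2]{HJS13a}, noting $\check k_j=(\check k)_j$), for every seminorm $p_M$ and every $N>0$ there are constants with $p_M(\check k_j*\phi)\le c_{M,N}2^{-jN}p_{M'}(\phi)$. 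As $\eta$ is bounded, the series in~\eqref{Qu'-id} converges absolutely, uniformly in $t$, defining $Qu(\cdot,t)\in\cs'(\Rn)$ with $|\langle Qu(\cdot,t),\phi\rangle|\le Cp_{M'}(\phi)$ and $C$ independent of $t$ (the term $j=0$, where no moment condition is available, being estimated trivially). Each summand of~\eqref{Qu'-id} is continuous in $t$ because $\eta$ is, and the convergence is uniform; hence $t\mapsto Qu(\cdot,t)$ lies in $C_{\operatorname{b}}(\R_t,\cs'(\Rn_x))$, and Lemma~\ref{cs-lem} realises it as the element $Qu\in\cs'(\Rn\times\R)$. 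Linearity in $u$ is immediate from~\eqref{Qu'-id}.

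For w$^*$-continuity I would exhibit, for each $\psi\in\cs(\Rn\times\R)$, a function $Q^*\psi\in\cs(\Rn)$ with $\langle Qu,\psi\rangle=\langle u,Q^*\psi\rangle$ for all $u$; then $Q$ is the transpose of $Q^*$ and the claim follows. Unfolding $\langle Qu,\psi\rangle$ by Lemma~\ref{cs-lem} and moving $u$ outside the $t$-integral and the $j$-sum (legitimate by continuity of $u$ and the convergence below) gives
\begin{equation*}
  Q^*\psi(x)=\sum_{j=0}^\infty\int_\R \eta(2^{ja_t}t)\,(\check k_j*\psi(\cdot,t))(x)\,dt.
\end{equation*}
Here $p_M(\check k_j*\psi(\cdot,t))\le c_{M,N}2^{-jN}p_{M'}(\psi(\cdot,t))$ as above, and since $\psi$ is Schwartz in $(x,t)$ one has $p_{M'}(\psi(\cdot,t))\le C_\psi(1+t^2)^{-1}$; together with $\int_\R|\eta(2^{ja_t}t)|\,dt=O(2^{-ja_t})$ this makes each $j$-th term have $p_M$-seminorm $O(2^{-jN})$, controlled by a seminorm of $\psi$. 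So the series converges in $\cs(\Rn)$, $Q^*:\cs(\Rn\times\R)\to\cs(\Rn)$ is continuous, and $Q$ is w$^*$-continuous.

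Next I would settle the remaining three points. Since $\psi_j(x)=2^{j|\vec a|}\psi(2^{j\vec a}x)$ and likewise for $\varphi$, a change of variables gives $\psi_j*\varphi_j=k_j$, so~\eqref{calderon-id} reads $u=\sum_{j\ge0}k_j*u$ in $\cs'(\Rn)$; evaluating~\eqref{Qu'-id} at $t=0$ and using $\eta(0)=1$ yields $\langle Qu(\cdot,0),\phi\rangle=\sum_j\langle k_j*u,\phi\rangle=\langle u,\phi\rangle$, i.e.\ $r_0(Qu)=u$ (the trace $r_0$ on $C_{\operatorname{b}}(\R,\cs'(\Rn))$ being evaluation at $t=0$, cf.~\eqref{eq:defOfr0WhenfDependsSmoothlyOnt}). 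For supports, $\supp\psi_0,\supp\varphi_0,\supp\psi,\supp\varphi\subset\overline\R^n_+$ forces $\supp k_0,\supp k\subset\overline\R^n_+$, and the dilations $2^{j\vec a}$ map $\overline\R^n_+$ into itself, so $\supp k_j\subset\overline\R^n_+$; the support rule for convolutions then gives $\supp(k_j*u)\subset\overline\R^n_+$ whenever $\supp u\subset\overline\R^n_+$, whence every term of~\eqref{Qu'-id} vanishes on $\phi$ with $\supp\phi\cap\overline\R^n_+=\emptyset$, which is~\eqref{suppQu'-eq}. Finally, if $u\in\overset{\circ}{\cs}{}'(\overline\R^n_+)$ and $\psi\in\cs(\Rn\times\R)$ has $\supp\psi\cap(\overline\R^n_+\times\R)=\emptyset$, then $\supp\psi(\cdot,t)\cap\overline\R^n_+=\emptyset$ for every $t$, so by~\eqref{suppQu'-eq} the integrand of $\langle Qu,\psi\rangle=\int_\R\langle Qu(\cdot,t),\psi(\cdot,t)\rangle\,dt$ is identically zero; hence $\supp Qu\subset\overline\R^n_+\times\R$ and $Q$ maps $\overset{\circ}{\cs}{}'(\overline\R^n_+)$ into $\overset{\circ}{\cs}{}'(\overline\R^n_+\times\R)$.

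The hard part, such as it is, will be the bookkeeping for the uniform-in-$t$ rapid-decay estimates in the first two steps; once the quantitative bound $p_M(\check k_j*\phi)\le c_{M,N}2^{-jN}p_{M'}(\phi)$ coming from $L_{\check k}=\infty$ is in hand, the interchanges of sum, integral and distributional pairing are routine and parallel the treatment of $\ce_{\univ}$ in Proposition~\ref{prop:theAdditionToRychkovsPresentation}.
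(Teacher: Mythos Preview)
Your proposal is correct and follows essentially the same route as the paper's proof: both use $L_k=\infty$ together with \cite[Lem.~4.2]{HJS13a} to get the rapid decay $p_M(\check k_j*\phi)=O(2^{-jN})$, deduce uniform-in-$t$ convergence of~\eqref{Qu'-id}, identify $Q$ as the transpose of a continuous map $\cs(\R^{n+1})\to\cs(\Rn)$ (your $Q^*$ is the paper's $T$), and read off the right-inverse and support properties from~\eqref{calderon-id} and~\eqref{suppkju-eq}. The only cosmetic difference is that the paper first shows $\sum_j\eta(2^{ja_t}t)\check k_j*\psi$ converges in $\cs(\R^{n+1})$ (absorbing the growth $p_M(\eta(2^{ja_t}\cdot))=O(2^{ja_tM})$ into the rapid decay of $p_M(\check k_j*\psi)$) and then integrates in $t$, whereas you integrate in $t$ term-by-term first; both bookkeepings are fine.
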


\begin{remark}
We can of course add that
\eqref{suppQu'-eq}$\implies$\eqref{suppQu-eq}, for we may apply
Lemma~\ref{cs-lem} to $f=Qu$ and consider the $\psi(x,t)$ that vanish
for $x_n\ge0$: when \eqref{suppQu'-eq} holds, the integrand is
identically $0$.  (Unlike \eqref{suppQu'-eq}, property \eqref{suppQu-eq} is
  meaningful also without continuity of $Qu$ with respect to $t$.)
\end{remark}

\begin{proof}
  It is first noted that $\sum\langle k_j*u,\phi\rangle$ 
  converges absolutely for each test function $\phi\in\cs'(\Rn)$. In fact,
  using the notation $\check k_j(x) = k_j(-x)$, the estimate
  $|\langle u,\check k_j*\phi\rangle|\le c p_M(k_j*\phi)\le c 2^{-jN}$
  holds for any $N>0$; this follows from the infinitely many vanishing
  moments, i.e. $L_k=\infty$, cf.~\cite[Lem.~4.2]{HJS13a}.

  Hence $\sum\langle k_j*u,\phi\rangle\eta(2^{ja_t}t)$ is a 
  Cauchy series for each $\phi\in\cs(\Rn)$
  as $\eta(2^{ja_t}t)$ is a bounded sequence for fixed $t$.
  Since it converges, $Qu$ is defined in $\cs'(\Rn)$ for each $t$.

  The convergence is absolute and uniform in $t$, so
  $t\mapsto \langle Qu(t),\phi\rangle$ is continuous; and
  bounded by $c\sum|\langle k_j*u,\phi\rangle|$. Therefore 
  $Qu$ is in the subspace $C_{\operatorname{b}}(\R_t,\cs'(\Rn_x))$; cf.~Lemma~\ref{cs-lem}.
  
  Consequently $r_0Qu$ is defined by evaluation at $t=0$, which gives
  $\sum\eta(0)k_j*u(x)$, hence gives back $u$ because of
  \eqref{calderon-id}. Using the convergence in 
  $\cs'(\Rn)$, the support preservation in \eqref{suppQu'-eq} is
  immediate from \eqref{suppkju-eq} by test against any $\phi\in
  C_0^\infty(\Rn)$ vanishing for $x_n\ge 0$.

  Finally, continuity of $Q$ follows at once if $\langle
  Qu,\psi\rangle= \langle u,T\psi\rangle$ for $\psi\in\cs(\R^{n+1})$,
  i.e.\ if $Q$ is the transpose of $T:\cs(\R^{n+1})\to\cs(\Rn)$ given by
  \begin{equation*}
    (T\psi)(x)= \int_\R \sum_{j=0}^\infty \check k_j* \psi(x,t)\eta(2^{ja_t}t)\,dt.
  \end{equation*}
  Here the sum is a Cauchy series 
  in the space $\cs(\R^{n+1})$, for a
  seminorm $p_M$ applied to the general term is less than
  $p_M(\eta(2^{ja_t}t))= O(2^{ja_tM})$ times $p_M(\check k_j*
  \psi)$, which decays rapidly as $L_k=\infty$. Denoting the sum by
  $S(x,t)$, also $x\mapsto\int S(x,t)\,dt$ is a Schwartz function, so
  $T\psi$ is well defined and by the definition of tensor products we get 
  \begin{equation*}
   \langle u,T\psi\rangle=\langle u\otimes 1,
  S\rangle=\int\langle u,S(\cdot,t)\rangle\,dt
  =\int\langle Qu(\cdot,t),\psi(\cdot,t)\rangle\,dt=\langle
  Qu,\psi\rangle,   
  \end{equation*}
  using~\eqref{Qu'-id} and Lemma~\ref{cs-lem}.
\end{proof}

Before we go deeper into the boundedness
of $Q$ in the scales of mixed-norm Lizorkin--Triebel spaces, we first
sum up the fundamental estimate in the next result.
In the isotropic case it goes back at least to the trace
investigations of Triebel~\cite[p.~136]{tri83}.

\begin{proposition}\label{Q-prop}
 For $\vec p=(p_1,\ldots,p_n,r)$ in 
 $\,]0,\infty[\,^{n+1}$, a real number $a>0$ and $0<q\le\infty$
 there is a constant $c$ with the property that 
 \begin{equation*}
  \big\|\,\big\{v_j\otimes 2^{j\frac{a}{r}}f(2^{ja}\cdot)\big\}_{j=0}^\infty\,\big| L_{\vec
    p}(\ell_q)(\R^{n+1})\big\|
\le c
  \Big(\sum_{j=0}^\infty \|\,v_j\,| L_{p'}(\Rn)\|^{r}\Big)^{1/r}   
 \end{equation*}
 whenever $(v_j)$ is a sequence of measurable functions on $\Rn$ and
 $f\in C(\R)$ is such that $t^Nf(t)$ is bounded for some $N>0$ satisfying $Nr>1$.
\end{proposition}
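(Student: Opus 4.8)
The plan is to strip off the $t$-integration as the outermost part of the mixed norm and to reduce the whole inequality to a one-dimensional convolution estimate in the summation index~$j$.

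Set $p'=(p_1,\dots,p_n)$, $c_j(t):=2^{ja/r}|f(2^{ja}t)|$ and $w_j:=\|\,v_j\,|L_{p'}(\Rn)\|$; we may assume all $w_j<\infty$ and $\sum_j w_j^r<\infty$, for otherwise the claim is trivial. As $c_j(t)$ does not depend on $x$, the $r$-th power of the left-hand side equals $\int_\R\big\|\,(v_jc_j(t))_{j\ge0}\,\big|L_{p'}(\ell_q)(\Rn)\big\|^{\,r}\,dt$. Since $f\in C(\R)$ and $t^Nf(t)$ is bounded, one has $|f(s)|\le C_f(1+|s|)^{-N}$, hence $c_j(t)\le C_f\,2^{ja/r}(1+2^{ja}|t|)^{-N}$. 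Using that $\|\cdot\,|L_{p'}(\ell_q)\|^{d_0}$ is subadditive for $d_0:=\min(1,p_1,\dots,p_n,q)$ (cf.\ the exponent $d$ in~\eqref{eq:subadditivity}) and that a sequence whose only nonzero entry is $v_jc_j(t)$ has $L_{p'}(\ell_q)$-norm $c_j(t)w_j$, I obtain
\[
\big\|\,(v_jc_j(t))_j\,\big|L_{p'}(\ell_q)\big\|\le\Big(\sum_{j\ge0}c_j(t)^{d_0}w_j^{d_0}\Big)^{1/d_0}\le C_f\Big(\sum_{j\ge0}\gamma_j(t)\,\alpha_j\Big)^{1/d_0},
\]
where $\alpha_j:=w_j^{d_0}$, $\gamma_j(t):=2^{ja/u}(1+2^{ja}|t|)^{-M}$, $u:=r/d_0$ and $M:=Nd_0$, so that $Mu=Nr>1$ and $M>1/u$. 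Raising to the power $r$ and integrating in $t$, it remains to prove
\[
\int_\R\Big(\sum_{j\ge0}\gamma_j(t)\,\alpha_j\Big)^{u}\,dt\le c_0\sum_{j\ge0}\alpha_j^{\,u},\qquad c_0=c_0(u,M,a),
\]
for every nonnegative sequence $(\alpha_j)$; then $\sum_j\alpha_j^u=\sum_j w_j^r$ and the proposition follows with a constant $c$ depending only on $f$ (through $N$ and $C_f$), $\vec p$, $q$ and $a$.

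To prove this last inequality I split $\R=\{|t|\ge1\}\cup\{0<|t|<1\}$. On $\{|t|\ge1\}$ one has $\gamma_j(t)\le 2^{-(M-1/u)aj}|t|^{-M}$, so H\"older in $j$ (if $u\ge1$) or $u$-subadditivity in $j$ (if $u<1$) gives $\sum_j\gamma_j(t)\alpha_j\le c\,|t|^{-M}\big(\sum_j\alpha_j^u\big)^{1/u}$, and $\int_{|t|\ge1}|t|^{-Mu}\,dt<\infty$ precisely because $Mu>1$. On $\{0<|t|<1\}$, for each $t$ let $j_0=j_0(t)\ge0$ be the largest integer with $2^{j_0a}|t|\le1$, so $2^{(j_0+1)a}|t|>1$; estimating $\gamma_j(t)\le2^{ja/u}$ for $j\le j_0$ and $\gamma_j(t)\le 2^{ja/u}(2^{ja}|t|)^{-M}\le 2^{aM}2^{j_0a/u}2^{-(M-1/u)a(j-j_0)}$ for $j>j_0$, I get the two-sided bound $\gamma_j(t)\le c\,2^{j_0a/u}2^{-\varepsilon a|j-j_0|}$ with $\varepsilon:=\min(1/u,\,M-1/u)>0$. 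Hence $\sum_j\gamma_j(t)\alpha_j\le c\,2^{j_0(t)a/u}(K*\alpha)(j_0(t))$ for the geometric kernel $K(l):=2^{-\varepsilon a|l|}$. Because $|\{t:j_0(t)=m\}|\le2\cdot2^{-ma}$ for $m\ge0$, raising to the power $u$ and integrating over $\{0<|t|<1\}$ cancels the factor $2^{ma}$ and leaves $c\sum_{m\ge0}(K*\alpha)(m)^u$, which is $\le c\sum_j\alpha_j^u$ by Young's inequality $\ell_1*\ell_u\to\ell_u$ (for $u\ge1$), respectively by $\|K*\alpha\|_{\ell_u}^u\le\|K\|_{\ell_u}^u\|\alpha\|_{\ell_u}^u$ (for $u<1$), since $K$ decays geometrically. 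Adding the two regions gives the required estimate.

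The only genuinely delicate point is the region $\{0<|t|<1\}$: the crude bound $\sum_j\gamma_j(t)\alpha_j\lesssim 2^{j_0(t)a/u}\sup_j\alpha_j$ — and likewise any bound obtained by applying H\"older in $j$ before integrating in $t$ — is too wasteful, since $\int_{|t|<1}2^{j_0(t)a}\,dt$ diverges logarithmically. The remedy is to keep the $j_0(t)$-dependence encoded as the convolution $K*\alpha$ evaluated at $j_0(t)$ and to invoke Young's inequality only \emph{after} the $t$-integration; this is exactly where the two-sided geometric decay of $\gamma_j(t)$ about $j\approx j_0(t)$, hence the hypothesis $Nr>1$ guaranteeing $\varepsilon>0$, is used. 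All remaining estimates are routine.
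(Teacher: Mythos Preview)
Your proof is correct. The paper itself does not give an argument but defers to \cite[Sec.~4.2.3]{JS08}, so a line-by-line comparison is not possible here; your self-contained route---using $d_0$-subadditivity of $\|\cdot|L_{p'}(\ell_q)\|$ to reduce to a scalar inequality, then a dyadic decomposition in $t$ combined with a discrete Young inequality for the geometric kernel $K$---is a standard and efficient way to obtain estimates of this type and is in the same spirit as the cited reference.
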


\begin{proof}
  To save a page of repetition from \cite[Sec.~4.2.3]{JS08}, we leave it
  to the reader to carry over the proof given there with a few
  notational changes. (Note that $f$ itself is bounded, so the
  arguments there extend to our case without any Schwartz class
  assumptions on $f$.)
\end{proof}

\begin{theorem}\label{thm:mappingPropertiesOfQBetweenBFspaces}
  The operator $Q$ is for $0<\vec p<\infty$, $0<q\le\infty$ and $\vec a\ge 1$
  a bounded map 
  \begin{align}
    Q&: B^{s,a'}_{p',p_t}(\Rn)\to 
    F^{s+\frac{a_t}{p_t},\vec a}_{\vec p,q}(\Rn\times\R)
   \quad\text{for all } s\in\R,
\label{Qspq}
\\
  Q&: \overset{\circ}{B}{}^{s,a'}_{\!p',p_t}(\overline\R^n_+)\to 
      \overset{\circ}{F}{}^{s+\frac{a_t}{p_t},\vec a}_{\!\vec p,q}(\overline\R^n_+\times\R)\quad\text{for all
      }s\in\R.
\label{Qspq'}
  \end{align}
\end{theorem}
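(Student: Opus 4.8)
The plan is to rewrite $Q$ as a sum of genuine $(n{+}1)$-dimensional convolutions, so that Lemma~\ref{lemma:XinRychkov} and Proposition~\ref{Q-prop} apply directly. Since $\eta\in\cs(\R)$ has $\supp\widehat\eta\subset[1,2]$, first choose $\zeta\in\cs(\R)$ with $\widehat\zeta$ compactly supported in $\,]0,\infty[\,$ and $\widehat\zeta\equiv 1$ on $[1,2]$; then $L_\zeta=\infty$, and with $\zeta_j(t):=2^{ja_t}\zeta(2^{ja_t}t)$ one checks via the Fourier transform that $\eta(2^{ja_t}\cdot)=\zeta_j*\eta(2^{ja_t}\cdot)$ for every $j\ge 0$ (taking $\zeta_0=\zeta$). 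Combining this with $k_j*u=\psi_j*(\varphi_j*u)$ and writing $G^j:=\varphi_j*u$, $g^j(x,t):=G^j(x)\,\eta(2^{ja_t}t)$, $\Theta:=\psi\otimes\zeta$ and $\Theta_0:=\psi_0\otimes\zeta$, one obtains
\begin{equation*}
  Qu=\sum_{j=0}^\infty \Theta_j*g^j \quad\text{in}\quad \cs'(\R^{n+1}),
\end{equation*}
where $\Theta_j$ is the dilate from~\eqref{eq:defOfSubscript_j} in $\R^{n+1}$ with anisotropy $(a',a_t)$, so that $\Theta_j=\psi_j\otimes\zeta_j$; moreover $L_\Theta=\infty$, since $\cf\Theta=\widehat\psi\otimes\widehat\zeta$ already vanishes to infinite order at the origin through its first factor. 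This is precisely the situation of Lemma~\ref{lemma:XinRychkov} with $(\psi_0,\psi)$ replaced by $(\Theta_0,\Theta)$, and the $\cs'$-convergence is consistent with the meaning of $Qu$ fixed in Proposition~\ref{prop:suppQu'-eq}.

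Next, apply Lemma~\ref{lemma:XinRychkov} on $\R^{n+1}$ with smoothness $s':=s+\frac{a_t}{p_t}$ and codomain $F^{s',\vec a}_{\vec p,q}(\R^{n+1})$, $\vec a=(a',a_t)$, $\vec p=(p',p_t)$. For any weight vector $\vec r>0$ the majorant $\mathcal G^j$ attached to $g^j$ factorises, because $g^j(y,\sigma)=G^j(y)\eta(2^{ja_t}\sigma)$:
\begin{equation*}
  \mathcal G^j(x,t)=\sup_{(y,\sigma)}\frac{|g^j(y,\sigma)|}{\prod_{l=1}^n(1+2^{ja_l}|x_l-y_l|)^{r_l}\,(1+2^{ja_t}|t-\sigma|)^{r_{n+1}}}=\varphi_j^*u(x)\cdot H(2^{ja_t}t),
\end{equation*}
where $\varphi_j^*u$ is the anisotropic Peetre maximal function of $u$ built from $\varphi_j$ (supremum over all of $\Rn$) and $H(\tau):=\sup_w|\eta(w)|(1+|\tau-w|)^{-r_{n+1}}$ is continuous and rapidly decreasing (an elementary estimate, splitting at $|w|=|\tau|/2$). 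Hence $2^{js'}\mathcal G^j=(2^{js}\varphi_j^*u)\otimes 2^{ja_t/p_t}H(2^{ja_t}\cdot)$, which is exactly the object estimated in Proposition~\ref{Q-prop} (with $r=p_t$, $a=a_t$, $f=H$, and $N$ chosen so that $Np_t>1$, possible since $H$ decays rapidly). Lemma~\ref{lemma:XinRychkov} followed by Proposition~\ref{Q-prop} then yields
\begin{equation*}
  \|\, Qu\, | F^{s',\vec a}_{\vec p,q}(\R^{n+1})\|
  \le c\,\|\, \{2^{js'}\mathcal G^j\}_j\, | L_{\vec p}(\ell_q)(\R^{n+1})\|
  \le c\,\Big(\sum_{j=0}^\infty 2^{jsp_t}\|\, \varphi_j^*u\, | L_{p'}(\Rn)\|^{p_t}\Big)^{1/p_t}.
\end{equation*}

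It remains to bound the last quantity by $c\,\|\, u\, | B^{s,a'}_{p',p_t}(\Rn)\|$. Since $\int\varphi_0\neq 0$ and $L_\varphi=\infty$, this is the Peetre-maximal (localised means) characterisation of the anisotropic mixed-norm Besov space $B^{s,a'}_{p',p_t}(\Rn)$ --- the Besov analogue of~\eqref{eq:47inRychkov}, valid for every $s\in\R$ provided $\vec r$ is taken sufficiently large; it follows from the techniques of~\cite[Sec.~4]{HJS13a} carried over to the quasi-norm $\ell_{p_t}(L_{p'})$ as indicated in Section~\ref{sec:isotropicBesovManifolds}, cf.\ also~\cite{Y1}. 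Fixing such an $\vec r$ establishes~\eqref{Qspq} for all $s\in\R$. Finally,~\eqref{Qspq'} is immediate: if $\supp u\subset\overline\R^n_+$, then $\supp Qu\subset\overline\R^n_+\times\R$ by Proposition~\ref{prop:suppQu'-eq}, whence $Qu\in\overset{\circ}{F}{}^{s',\vec a}_{\!\vec p,q}(\overline\R^n_+\times\R)$ by Definition~\ref{def:Fsubspace}, while $\overset{\circ}{B}{}^{s,a'}_{\!p',p_t}(\overline\R^n_+)$ carries the quasi-norm induced from $B^{s,a'}_{p',p_t}(\Rn)$, so the bound from~\eqref{Qspq} transfers verbatim. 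The main obstacle is the bookkeeping of the first paragraph: producing the convolution form $Qu=\sum_j\Theta_j*g^j$ with a genuine $(n{+}1)$-dimensional moment condition $L_\Theta=\infty$ and a cleanly factorising majorant --- once this is in place, the estimate is a direct application of the already established Lemma~\ref{lemma:XinRychkov}, Proposition~\ref{Q-prop} and the Besov maximal inequality.
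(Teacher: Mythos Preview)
Your proof is correct and follows essentially the same route as the paper: your auxiliary $\zeta$ is the paper's $\tilde\eta$, and both arguments rewrite $Qu$ as $\sum_j(\psi\otimes\zeta)_j*(\varphi_j*u\otimes\eta(2^{ja_t}\cdot))$, invoke Lemma~\ref{lemma:XinRychkov}, factorise the resulting maximal function, apply Proposition~\ref{Q-prop}, and finish with the Besov maximal inequality. One small imprecision: $H$ is not literally rapidly decreasing for fixed $r_{n+1}$ (it decays only like $(1+|\tau|)^{-r_{n+1}}$), but since $r_{n+1}>0$ is at your disposal in Lemma~\ref{lemma:XinRychkov} you may simply fix $r_{n+1}p_t>1$ and take $N=r_{n+1}$, exactly as the paper does.
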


\begin{proof}
The property in \eqref{Qspq'} is a direct consequence of \eqref{Qspq} and 
Proposition~\ref{prop:suppQu'-eq}. 

As for \eqref{Qspq}, by means of an auxiliary function ${\cal F}\tilde\eta\in
  C^\infty_0(\R)$ taken so that $ {\cal F}\tilde\eta=1$ on
  $[1,2]\supset\supp\widehat\eta$ and $\supp{\cal
    F}\tilde\eta\subset\,]0,\infty[\,$, we rewrite $Qu$ in terms of
  convolutions on $\R^{n+1}$, using that $k_j=\psi_j*\varphi_j$,
  \begin{equation*}
    Qu=\sum_{j=0}^\infty \tilde\eta_j*\eta(2^{ja_t}\cdot)(t)k_j*u(x)
      =\sum_{j=0}^\infty (\psi\otimes\tilde\eta)_j*(\varphi_j*u\otimes\eta(2^{ja_t}\cdot)).
  \end{equation*}
Hereby it is understood for $j=0$ that the first factor is 
$\psi_0\otimes\tilde\eta$.

Now we may invoke Lemma~\ref{lemma:XinRychkov} as the function
$\psi\otimes\tilde\eta$ has all its moments equal to $0$, because its Fourier
transformed function is supported in a half-plane disjoint from the
origin in $\R^{n+1}$. This gives an estimate of the Lizorkin--Triebel
norm as follows,
\begin{equation*}
  \big\|\, Qu\,\big| F^{s+\frac{a_t}{p_t},\vec a}_{\vec p,q}(\R^{n+1})\big\|\le c
  \big\|\,\big\{2^{(s+\frac{a_t}{p_t})j}(\varphi_j*u\otimes \eta(2^{ja_t}\cdot))_j^*\big\}_{j=0}^\infty\,\big|
  L_{\vec p}(\ell_q)(\R^{n+1})\big\| .
\end{equation*}
Here the maximal function $(\cdot)_j^*$ considered in the lemma allow us to estimate
the $j^{\text{th}}$ term by
\begin{equation*}
  \sup_{y,y_t}\big|2^{sj}\varphi_j*u(y)2^{j\frac{a_t}{p_t}}\eta(2^{ja_t}y_t)\big|
  \prod_{l=1}^{n+1}(1+2^{ja_l}|x_l-y_l|)^{-r_l}
  \le v_j(x)2^{j\frac{a_t}{p_t}}f(2^{ja_t}t)
\end{equation*}
if we set
\begin{align*}
  v_j&=\sup_{y}|2^{sj}\varphi_j*u(y)|\prod_{l=1}^{n}(1+2^{ja_l}|x_l-y_l|)^{-r_l},
\\
  f(t)&=\sup_{y_t}{|\eta(y_t)|}{(1+|t-y_t|)^{-r_{t}}}.
\end{align*}
To invoke Proposition~\ref{Q-prop}, we note
that $v_j$, $f$ are continuous (by an argument similar to e.g.~
\cite[(6)--(7)]{joh11})
and, moreover, $\sup |t^Nf(t)|<\infty$ for $0<N\le r_t$.
We therefore apply the proposition for
$r=p_t$, $a=a_t$ and note that if we fix the above parameter $r_t$ such
that $r_tp_t>1$, then $Np_t>1$ is fulfilled at least for $N=r_t$. This gives
\begin{equation*}
  \big\|\, Qu\,\big| F^{s+\frac{a_t}{p_t},\vec a}_{\vec p,q}(\R^{n+1})\big\|
  \le c \big\|\,\big\{v_j\otimes 2^{j\frac{a_t}{p_t}}f(2^{ja_t}\cdot)\big\}_{j=0}^\infty\,\big|
  L_{\vec p}(\ell_q)(\R^{n+1})\big\|
  \le c
 \Big(\sum_{j=0}^\infty \|\,v_j\,| L_{p'}(\Rn)\|^{p_t}\Big)^{1/p_t}.   
\end{equation*}
So by writing the $v_j$ in terms of the Peetre--Fefferman--Stein
maximal function $\varphi_j^*u(x)$,
\begin{equation*}
  \big\|\, Qu\,\big| F^{s+\frac{a_t}{p_t},\vec a}_{\vec p,q}(\R^{n+1})\big\|
  \le c
 \Big(\sum_{j=0}^\infty \|2^{sj}\varphi_j^*u \,|
 L_{p'}(\Rn)\|^{p_t}\Big)^{1/p_t}
  \le c \|\,u\,|B^{s,a'}_{p',p_t}(\Rn)\|.   
\end{equation*}
The last inequality is essentially known from \cite[(4)]{ryc99}, but
to account for effects of the flaws pointed out in
\cite[Rem.~1.1]{HJS13a}, let us
briefly note the following: if we apply
\cite[(21)]{ryc99} to the very last formula in the proof of~\cite[Thm.~4.4]{HJS13a}, 
then we get an estimate of the above sum by
$\|\,2^{sj}({\mathcal{F}^{-1}} \Phi)_j^*u\,|\ell_{p_t}(L_{p'})\|$. 
This can be controlled by the $\ell_{p_t}(L_{p_0})$-norm of the convolutions 
$2^{sj}{\mathcal{F}^{-1}}\Phi_j*u$,
i.e.\ by the stated $\|\,u\,|B^{s,a'}_{p',p_t}(\Rn)\|$,
by following the argument for \cite[(23)]{ryc99} 
(after the remedy discussed in Section~\ref{sec:isotropicBesovManifolds} above), 
say for simplicity
with $r_0:=r_1=\ldots=r_n$ and $r_0p_0>n$.
\end{proof}

The operator $Q$ above is now used to replace the particular right-inverse to $r_0$
in~\eqref{eq:rightInverseToFlatTraceMathNRNew}  by an operator $Q_\Omega$ that preserves support in
$\overline\Omega$. 

The construction uses the partition of unity $1=\sum_{\lambda}\psi_\lambda + \psi$ on
$\overline\Omega$ constructed in Section~\ref{ssect:CompManf} as well as cut-off functions
$\eta_\lambda\in C_0^\infty(\Rn)$, $\lambda\in\Lambda$, chosen with $\supp\eta_\lambda\subset
B$ and $\eta_\lambda =1$ on $\supp\widetilde\psi_\lambda$;
and some $\eta_\Omega\in C^\infty_{L_\infty}(\Rn)$, cf.\ Lemma~\ref{multTraces} for the definition
of $C^\infty_{L_\infty}$, such that $\supp\eta_\Omega\subset \Omega$ with $\eta_\Omega=1$ on $\supp\psi$.

For convenience we let $u_{(\lambda)} = e_B \big((\psi_\lambda
u)\circ\lambda^{-1}\big)$, which enters the definition of the operator $Q_\Omega$:
\begin{equation}\label{eq:definitionOfQOmegaOverCylinder}
Q_\Omega u = 
    \sum_\lambda e_{U_\lambda\times\R}\big( (\eta_\lambda Q u_{(\lambda)}) 
    \circ (\lambda\times\id_\R)\big) + \eta_\Omega Q(\psi u).
\end{equation}

\begin{theorem}\label{thm:constructionOfRightInverseOnCylinderPerservingSupport}
Let $\Omega\subset\Rn$ be a smooth set with compact boundary $\partial \Omega$ as in Section~\ref{ssect:CompManf}.
When $\vec a, \vec p$ satisfy~\eqref{eq:conditionsOn_ap_anisotropic2}, $0<q\le\infty$ and $s\in\R$, 
then $Q_\Omega$ is a bounded linear map
\begin{equation*}
Q_\Omega: B^{s,a'}_{p',p_t}(\Rn)\to F^{s+a_t/p_t,\vec a}_{\vec p,q}(\R^{n+1}),
\end{equation*}
which on the subspace $\overset{\circ}{B}{}^{s,a'}_{\!p',p_t}(\overline\Omega)$ has $r_0$ as a left-inverse:
\begin{equation}
  \label{eq:rQI}
  r_0 Q_\Omega u=u \quad\text{whenever $\supp u\subset\overline\Omega$ for $u\in B^{s,a'}_{p',p_t}(\Rn)$}.
\end{equation}
Moreover, $Q_\Omega$ has range in $C(\R_t,\cd'(\Rn_x))$ and preserves supports in $\overline\Omega$ in the strong form
\begin{equation}\label{eq:supportImplicationForQOmegau}
\supp u \subset\overline\Omega \implies \forall t: \supp Q_\Omega u(\cdot,t)\subset \overline\Omega.
\end{equation}
\end{theorem}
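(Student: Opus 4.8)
The plan is to establish the three asserted properties in turn. Linearity of $Q_\Omega$ is immediate from linearity of $Q$ (Proposition~\ref{prop:suppQu'-eq}) and of the auxiliary operations in~\eqref{eq:definitionOfQOmegaOverCylinder} — multiplication by a fixed function, composition with a fixed diffeomorphism, and extension by zero. Boundedness is then obtained by estimating the individual summands in~\eqref{eq:definitionOfQOmegaOverCylinder} and invoking subadditivity~\eqref{eq:subadditivity} of the $F$-quasi-norm; finally, the left-inverse identity~\eqref{eq:rQI} and the support property~\eqref{eq:supportImplicationForQOmegau} follow by tracking what each operator in~\eqref{eq:definitionOfQOmegaOverCylinder} does to supports and to the slice at $t=0$.

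For the boundedness, first consider $\eta_\Omega Q(\psi u)$. As $\psi$ is a pointwise multiplier of $B^{s,a'}_{p',p_t}(\Rn)$ (by the Besov analogue of Lemma~\ref{multTraces}, cf.\ Section~\ref{sec:isotropicBesovManifolds}), one has $\psi u\in B^{s,a'}_{p',p_t}(\Rn)$; then~\eqref{Qspq} of Theorem~\ref{thm:mappingPropertiesOfQBetweenBFspaces} gives $Q(\psi u)\in F^{s+a_t/p_t,\vec a}_{\vec p,q}(\R^{n+1})$, and multiplication by $\eta_\Omega\in C^\infty_{L_\infty}$ is bounded on this space by Lemma~\ref{multTraces}. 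For the $\lambda$-terms one follows the chain of operators: $\psi_\lambda u\in B^{s,a'}_{p',p_t}(\Rn)$ has support in the compact set $\supp\psi_\lambda\subset U_\lambda$; since $a'=(a_0,\dots,a_0)$ and $p'=(p_0,\dots,p_0)$, Lemma~\ref{lem:lambdaLemmaBesov} identifies $B^{s,a'}_{p',p_t}$ with an \emph{isotropic} Besov space, so the Besov analogue of Theorem~\ref{thm:isotropicCompactSupport} applies to $\lambda^{-1}\colon B\to U_\lambda$ and yields $(\psi_\lambda u)\circ\lambda^{-1}\in\overline B^{s,a'}_{p',p_t}(B)$; extension by zero (the Besov version of~\cite[Lem.~8]{HJS13b}, permissible since the support is a compact subset of $B$) gives $u_{(\lambda)}\in B^{s,a'}_{p',p_t}(\Rn)$ with controlled norm. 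Applying~\eqref{Qspq} produces $Qu_{(\lambda)}\in F^{s+a_t/p_t,\vec a}_{\vec p,q}(\R^{n+1})$; multiplying by $\eta_\lambda\in C_0^\infty(\Rn)$ (Lemma~\ref{multTraces}) and composing with $\lambda\times\id_\R$, a $C^\infty$-bijection fixing the last (time) coordinate, one may invoke Theorem~\ref{thm:infiniteCylinder} — its support hypothesis holds because $\eta_\lambda Qu_{(\lambda)}$ is supported in $\supp\eta_\lambda\times\R$ with $\supp\eta_\lambda$ compact in $B$ — and finally $e_{U_\lambda\times\R}$ is bounded by Lemma~\ref{lem:extensionBy0}, the resulting support lying in $\lambda^{-1}(\supp\eta_\lambda)\times\R$. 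Summing over the finite index set $\Lambda$ and using~\eqref{eq:subadditivity} gives the stated bound.

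For the range and the left-inverse, note that $Q(\psi u)$ and each $Qu_{(\lambda)}$ lie in $C_{\operatorname{b}}(\R_t,\cs'(\Rn_x))$ by Proposition~\ref{prop:suppQu'-eq}, and that multiplication by a function of $x$, composition with $\lambda\times\id_\R$ (which fixes $t$), and extension by zero in $x$ all preserve continuity in $t$ with values in $\cd'(\Rn_x)$; hence $Q_\Omega u\in C(\R_t,\cd'(\Rn_x))$ and $r_0 Q_\Omega u$ is well defined by evaluation at $t=0$. Since $r_0$ commutes with these $x$-operations and with $\lambda\times\id_\R$, and $r_0 Q=\id$, one gets $r_0(\eta_\Omega Q(\psi u))=\eta_\Omega\cdot\psi u=\psi u$ (as $\eta_\Omega\equiv1$ on $\supp\psi$) and similarly $r_0 e_{U_\lambda\times\R}\big((\eta_\lambda Qu_{(\lambda)})\circ(\lambda\times\id_\R)\big)=\psi_\lambda u$, using that $u_{(\lambda)}\circ\lambda=\psi_\lambda u$ on $U_\lambda$, that $\eta_\lambda\circ\lambda\equiv1$ on $\supp\psi_\lambda$, and that $\psi_\lambda u$ is supported in $\supp\psi_\lambda\subset U_\lambda$. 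Thus $r_0 Q_\Omega u=\big(\sum_\lambda\psi_\lambda+\psi\big)u$. Now $\zeta:=1-\sum_\lambda\psi_\lambda-\psi\in C^\infty(\Rn)$ vanishes on $\overline\Omega$; being smooth and identically $0$ on the closed set $\overline\Omega$, it vanishes to infinite order there (at interior points trivially, and along $\Gamma$ all derivatives must vanish by smoothness). Since $u\in B^{s,a'}_{p',p_t}(\Rn)\subset\cs'(\Rn)$ is of finite order and $\supp u\subset\overline\Omega$, it follows that $\zeta u=0$, which is~\eqref{eq:rQI}.

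Finally, for the support property assume $\supp u\subset\overline\Omega$. The term $\eta_\Omega Q(\psi u)(\cdot,t)$ is supported in $\supp\eta_\Omega\subset\Omega\subset\overline\Omega$ for every $t$, regardless of $u$. For the $\lambda$-terms, $\supp(\psi_\lambda u)\subset\supp\psi_\lambda\cap\overline\Omega$, so $(\psi_\lambda u)\circ\lambda^{-1}$ is supported in $\lambda(U_\lambda\cap\overline\Omega)=B\cap\overline\R^n_+$, whence $\supp u_{(\lambda)}\subset\overline\R^n_+$; then~\eqref{suppQu'-eq} of Proposition~\ref{prop:suppQu'-eq} gives $\supp Qu_{(\lambda)}(\cdot,t)\subset\overline\R^n_+$ for all $t$, so $\eta_\lambda Qu_{(\lambda)}(\cdot,t)$ is supported in $\supp\eta_\lambda\cap\overline\R^n_+\subset B\cap\overline\R^n_+$, and composing with $\lambda\times\id_\R$ followed by extension by zero yields a distribution supported, for each $t$, in $\lambda^{-1}(B\cap\overline\R^n_+)=U_\lambda\cap\overline\Omega\subset\overline\Omega$. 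Summing, $\supp Q_\Omega u(\cdot,t)\subset\overline\Omega$ for all $t$, which is~\eqref{eq:supportImplicationForQOmegau}. The main obstacle is the boundedness step: each auxiliary operator must be shown to land in the correct mixed-norm space with controlled quasi-norm, the delicate points being the reduction of the anisotropic Besov spaces to isotropic ones via Lemma~\ref{lem:lambdaLemmaBesov} so as to apply the diffeomorphism invariance of Section~\ref{sec:isotropicBesovManifolds}, and the verification that $\lambda\times\id_\R$, together with the compact $x$-support of $\eta_\lambda Qu_{(\lambda)}$, meets the hypotheses of Theorem~\ref{thm:infiniteCylinder}; the remainder is bookkeeping with supports and with the slice at $t=0$.
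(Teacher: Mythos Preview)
Your proof is correct and follows essentially the same route as the paper's: bound each summand of~\eqref{eq:definitionOfQOmegaOverCylinder} via the chain paramultiplication $\to$ isotropic diffeomorphism invariance for Besov spaces (after the rescaling of Lemma~\ref{lem:lambdaLemmaBesov}) $\to$ Theorem~\ref{thm:mappingPropertiesOfQBetweenBFspaces} $\to$ Lemma~\ref{multTraces} $\to$ Theorem~\ref{thm:infiniteCylinder} $\to$ Lemma~\ref{lem:extensionBy0}; then read off $r_0 Q_\Omega u$ and the supports using Proposition~\ref{prop:suppQu'-eq}. The one place you add substance is the justification of $(\sum_\lambda\psi_\lambda+\psi)u=u$ for $\supp u\subset\overline\Omega$: the paper simply invokes the support condition, while you spell out that $\zeta:=1-\sum_\lambda\psi_\lambda-\psi$ vanishes to infinite order on $\overline\Omega$ (since it vanishes on the open set $\Omega$ and derivatives extend by continuity to $\Gamma$) and that $u\in\cs'(\Rn)$ has finite order, whence $\zeta u=0$ --- a welcome clarification of a step the paper leaves implicit.
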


\begin{proof}
For the terms in the sum over~$\lambda$ in~\eqref{eq:definitionOfQOmegaOverCylinder}, we note that
the multiplication result in~\cite[4.2.2]{tri92} together with the Besov version of
Theorem~\ref{thm:isotropicCompactSupport}, cf.\ Section~\ref{sec:isotropicBesovManifolds}, imply 
\begin{equation}\label{eq:defOfuLambdaInQOmega}
  u_{(\lambda)} = e_B \big((\psi_\lambda u)\circ\lambda^{-1}\big)\in B^{s,a'}_{p',p_t}(\Rn).
\end{equation}
(These results apply to isotropic Besov spaces, so we use Lemma~\ref{lem:lambdaLemmaBesov} to
rescale $(s, a')\leftrightarrow s/a_0$, cf.~\eqref{eq:conditionsOn_ap_anisotropic2}.) 

Theorem~\ref{thm:mappingPropertiesOfQBetweenBFspaces} and the paramultiplication
result~\cite[Lem.~7]{HJS13b} now gives that $\eta_\lambda Qu_{(\lambda)}$ belongs to $F^{s+a_t/p_t,\vec
  a}_{\vec p,q}(\R^{n+1})$, hence according to Theorem~\ref{thm:infiniteCylinder},   
\begin{equation*}
(\eta_\lambda Qu_{(\lambda)}) \circ( \lambda\times\id_\R) \in\overline F^{s+a_t/p_t,\vec a}_{\!\vec p,q}(U_\lambda\times\R).
\end{equation*}
As $\supp \eta_\lambda\subset B$, Lemma~\ref{lem:extensionBy0} gives that extension of this
composition by 0 belongs to $F^{s+a_t/p_t,\vec a}_{\vec p,q}(\R^{n+1})$. 

For the last term in~\eqref{eq:definitionOfQOmegaOverCylinder}, it is an immediate consequence
of~\cite[4.2.2]{tri92} that $\psi u$ belongs to $B^{s,a'}_{p',p_t}(\Rn)$, since $\psi\in
C^\infty_{L_\infty}$ (as $\psi = 1-\sum_\lambda\psi_\lambda$ on $\overline \Omega$ and
$\partial\Omega$ is compact). 

This shows that $Q_\Omega u\in F^{s+a_t/p_t,\vec a}_{\vec p,q}(\R^{n+1})$ and by applying the
quasi-norm estimates in the theorems and lemmas referred to above, we obtain 
\begin{equation*}
\|\, Q_\Omega u\, | F^{s+a_t/p_t,\vec a}_{\vec p,q}(\R^{n+1})\| \le c \|\, u\, | B^{s,a'}_{p',p_t}(\Rn)\|.
\end{equation*}

Furthermore, it follows from Proposition~\ref{prop:suppQu'-eq} that $Q_\Omega u\in
C(\R_t,\cd'(\Rn_x))$ and therefore the effect of $r_0$ on $Q_\Omega u$ is simply restriction to
$t=0$, cf.~\eqref{eq:defOfr0WhenfDependsSmoothlyOnt}. Hence for $u\in B^{s,a'}_{p',p_t}(\Rn)$, 
\begin{equation*}
r_0 Q_\Omega u 
  = \sum_\lambda e_{U_\lambda} \big( (\eta_\lambda Q u_{(\lambda)})(\cdot,0) \circ \lambda\big)  
  + \eta_\Omega Q(\psi u)(\cdot,0).
\end{equation*}
Since $Q$ according to Proposition~\ref{prop:suppQu'-eq} is a right-inverse of $r_0$, this sum
equals the following by using~\eqref{eq:defOfuLambdaInQOmega} as well as the properties of
$\eta_\lambda,\eta_\Omega$, and in the final step that $\supp u\subset\overline\Omega$, 
\begin{equation*}
\sum_\lambda e_{U_\lambda} \big( (\eta_\lambda u_{(\lambda)}) \circ \lambda \big) + \psi u
= \sum_\lambda e_{U_\lambda} \big( \eta_\lambda\circ\lambda \cdot \psi_\lambda u \big) + \psi u
= \sum_\lambda \psi_\lambda u + \psi u
= u.
\end{equation*}
This shows \eqref{eq:rQI}.
Finally, the support preserving property in~\eqref{eq:supportImplicationForQOmegau} follows
from~\eqref{suppQu'-eq}. Indeed, when $\supp u \subset\overline\Omega$, then the support of each
$u_{(\lambda)}$ is contained in $\overline\R^n_+$ and therefore $\supp (\eta_\lambda Q u_{(\lambda)}
)\circ (\lambda(\cdot),t) \subset \overline\Omega$ for all $t\in\R$, which immediately gives that
$\supp Q_\Omega u(\cdot,t)\subset \overline\Omega$. 
\end{proof}

\subsection{The Trace at the Curved Boundary}\label{subsec:traceCurvedBoundaryMathNr}
We now address the trace $\gamma$ of distributions in $\overline F^{s,\vec a}_{\!\vec p,q}(\Omega\times I)$, where for simplicity $I=\,]0,T[\,$, $T<\infty$, and~$\Omega$ is smooth as in Definition~\ref{defn:smoothDomain} with compact boundary $\Gamma$.

\subsubsection{Preliminaries}
The trace is first worked out locally
and then it is observed that the local pieces define a global trace. 
In this process we use that the trace $\gamma_{0,1}$ is a bounded surjection, cf.~\cite[Thm.~2.2]{JS08},
\begin{equation}\label{eq:thm22}  
\begin{split}
  \gamma_{0,1}: &F^{s,\vec a}_{\vec p,q}(\R^{n+1}) \to F^{s-\frac{a_0}{p_0},a''}_{p'',p_0}(\Rn)\\
  &\qquad\quad\text{for } \enskip
s > \frac{a_0}{p_0} + 
(n-1)\Big(\frac{a_0}{\min(1,p_0,q)}-a_0\Big) +\Big(\frac{a_t}{\min(1,p_0,p_t,q)}-a_t\Big).
\end{split}\end{equation}
This is also valid for $\gamma_{0,n}$ in view of~\eqref{eq:conditionsOn_ap_anisotropic2} and we prefer to work with this, for locally the boundary $\Gamma$ is defined by the equation $x_n=0$, as usual.
For the $s$ in~\eqref{eq:thm22}, we have by \cite[Thm.~2.1]{JS08}, since $r_k:=\max(1,p_k)$, 
\begin{equation}
\label{eq:CbEmbeddedInL1loc}
F^{s,\vec a}_{\vec p,q}(\R^{n+1})\hookrightarrow C_{\operatorname{b}}(\R,L_{r''}(\Rn))\hookrightarrow  L_{1,\loc}(\R^{n+1}).
\end{equation}
So when $u\in \overline F^{s,\vec a}_{\vec p,q}(\Omega\times I)$ for such $s$, an extension $f$ in the corresponding space on $\Rn$ is a function and for this we right away get 
\begin{equation}\label{eq:fComposedInL1loc}
f\circ (\lambda^{-1}\times\id_\R)\in L_{1,\loc}(B\times\R).
\end{equation}

Moreover, if we work locally with cut-off functions $\psi\in C_0^\infty(U_\lambda)$, $\varphi\in C_0^\infty(\R)$, then Lemma~\ref{multTraces} yields $\psi\otimes\varphi f \in F^{s,\vec a}_{\vec p,q}(\R^{n+1})$. Changing coordinates, Theorem~\ref{thm:infiniteCylinder} implies that $(\psi\otimes\varphi f)\circ (\lambda^{-1}\times\id_\R)$ is in $\overline F^{s,\vec a}_{\vec p,q}(B\times\R)$, hence it extends by 0 to $F^{s,\vec a}_{\vec p,q}(\R^{n+1})$. 
By~\eqref{eq:thm22}, 
\begin{equation*}
\gamma_{0,n}\big((\psi\otimes\varphi f) \circ (\lambda^{-1}\times\id_\R)\big) 
\in F^{s-a_0/p_0,a''}_{p'',p_0}(\Rn).
\end{equation*}
Strictly speaking, we should have inserted the extension by $0$, namely $e_{B\times \R}$, before applying~$\gamma_{0,n}$, but we have chosen not to burden notation with this. Now restriction to $B'\times\R$ gives an element in $\overline F^{s-a_0/p_0,a''}_{p'',p_0}(B'\times \R)$, and since it is easily seen using~\eqref{eq:CbEmbeddedInL1loc} that restriction to $\{x_n=0\}$ and $e_{B\times\R}$ can be interchanged, we obtain
\begin{equation}\label{eq:interchangedRandE}
  (\psi\otimes\varphi f)\circ(\lambda^{-1}(\cdot,0)\times\id_\R) \in \overline F^{s-a_0/p_0,a''}_{p'',p_0}(B'\times \R).
\end{equation}

Furthermore, to describe the range of $\gamma$,  we introduce for an open interval $I'\supset I$ the restriction (with notation as in Section~\ref{cylinders})
\begin{equation*}
r_I: F^{s,\vec a}_{\vec p,q;\loc}(\Gamma\times I')\to F^{s,\vec a}_{\vec p,q;\loc}(\Gamma\times I),
\end{equation*}
which for any $v\in F^{s,\vec a}_{\vec p,q;\loc}(\Gamma\times I')$ is defined as the distribution arising from the family $\{r_{B'\times I} v_{\kappa\times\id_{I'}}\}_{\kappa\in\cf_0}$ of distributions on $B'\times I$, cf.~the paragraph on restriction just below Lemma~\ref{lem:hor634}. 

Using $r_I$, we also introduce a space of restricted distributions (in the time variable only),
\begin{equation}\label{eq:introFspaceOverline}
\overline F^{s,\vec a}_{\vec p,q}(\Gamma\times I) 
:= r_I F^{s,\vec a}_{\vec p,q;\loc}(\Gamma\times \R)
= r_I \overset{\circ}{F}{}^{s,\vec a}_{\!\vec p,q}(\Gamma\times J)
\end{equation}
valid for any compact interval~$J \supset I$.  
Since $\overset{\circ}{F}{}^{s,\vec a}_{\!\vec p,q}(\Gamma\times J)$ is a quasi-Banach space, cf.~Theorem~\ref{thm:LTcurvedBoundary}, the space $\overline F^{s,\vec a}_{\vec p,q}(\Gamma\times I)$ is so too when equipped with
\begin{equation}\label{eq:normOnRestrictionCompactnessSpace}
\|\, u\, | \overline F^{s,\vec a}_{\vec p,q}(\Gamma\times I) \| 
:= \inf_{r_I v = u} 
\|\, v\, | \overset{\circ}{F}{}^{s,\vec a}_{\!\vec p,q}(\Gamma\times J)\|.
\end{equation}

\subsubsection{The Definition}\label{subsubsec:theDef}
To give sense to $\gamma u$ in $\cd'(\Gamma\times I)$, it is first observed that~\eqref{eq:fComposedInL1loc} induces invariantly defined functions. Indeed, in view of the identity $\kappa^{-1}(\cdot) = \lambda^{-1}(\cdot,0)$, we set
\begin{equation*}
f_\kappa = f\circ(\lambda^{-1}(\cdot,0)\times\id_\R)\in L_{1,\loc}(B'\times\R)
\end{equation*}
and as distributions they transform as in~\eqref{eq:distFamily}, since 
\begin{equation}\label{eq:transformationOff}
f_\kappa\circ(\kappa\circ\kappa_1^{-1}\times\id_\R) = f_{\kappa_1}\quad \text{on}\quad \kappa_1(\Gamma_{\kappa_1}\cap\Gamma_\kappa)\times\R.
\end{equation}
Hence by Lemma~\ref{lem:hor634} there exists a unique $v\in \cd'(\Gamma\times\R)$ with
\begin{equation}\label{eq:connectionBtwnvAndf}
v_{\kappa\times\id_\R} = f_\kappa.
\end{equation}
That $v$ is in $F^{s-a_0/p_0,a''}_{p'',p_0;\loc}(\Gamma\times\R)$ is a special case of~\eqref{eq:interchangedRandE}, cf.\ Definition~\ref{def:LTspaceCurvedBoundary}.

Note that the distribution $v$ does not depend on the atlas $\cf_0$, for when 
another atlas $\cf_1$ in the same way induces a distribution $v_1$, then formula~\eqref{eq:transformationOff} read with 
$\kappa$ running through $\cf_0$ and $\kappa_1$ running through $\cf_1$ implies that both $v$ and $v_1$ result by ``restriction" from the distribution $w$ induced by $\cf_0\cup \cf_1$.

Now we define the trace $\gamma u$ in $\cd'(\Gamma\times I)$ by
\begin{equation}\label{eq:defOfGamma}
\gamma u = r_I v.
\end{equation}
Indeed, to verify that $\gamma u$ is independent of the chosen~$f$, it suffices to derive that for any two extensions $f_1,f_2\in F^{s,\vec a}_{\vec p,q}(\R^{n+1})$, the following identity holds for each $\lambda\in\Lambda$ and $(x',t)\in B'\times I$:
\begin{equation}\label{eq:independentOfExtension}
f_1\circ(\lambda^{-1}(\cdot,0)\times\id_\R)(x',t) = f_2\circ(\lambda^{-1}(\cdot,0)\times\id_\R)(x',t).
\end{equation}
To do so, we choose $\psi\in C_0^\infty(U_\lambda)$, $\varphi\in C_0^\infty(\R)$ such that $\psi(\lambda^{-1}(x',0))\neq 0$ and $\varphi(t)\neq 0$. Since $f_1,f_2$ coincide in $\Omega\times I$, the functions
\begin{equation*}
e_{B\times\R}\big((\psi\otimes\varphi f_j)\circ (\lambda^{-1}\times\id_\R)\big)(x,t), \quad j=1,2
\end{equation*}
are identical for $(x,t)\in B\times I$ with $x_n>0$.
Letting $x_n\to 0^+$ therefore gives the same limits in
$L_{r''}(\R^{n-1}\times I)$, 
cf.~\eqref{eq:CbEmbeddedInL1loc}, in particular they coincide in $L_{r''}(B'\times I)$.
As $(\psi\otimes\varphi)\circ(\lambda^{-1}(\cdot,0)\times\id_\R)(x',t)\neq 0$, this yields~\eqref{eq:independentOfExtension}.

Furthermore,~\eqref{eq:independentOfExtension} can be used to show that $\gamma$ does not depend on the Lizorkin--Triebel space satisfying~\eqref{eq:thm22}. For when $u$ belongs to two different Lizorkin--Triebel spaces, we can take $f_1$ above to be an extension in one of the spaces and $f_2$ to be an extension in the other. The identity in~\eqref{eq:independentOfExtension} then gives that $\gamma u$ belongs to the intersection of the corresponding Lizorkin--Triebel spaces over the curved boundary.

We also note that the trace $\gamma$ has the natural property that $r_I \circ \gamma = \gamma \circ r_I$ on $\overline F^{s,\vec a}_{\vec p,q}(\Omega\times I')$ for any open interval $I'\supset I$.

Finally, $\gamma$ applied to any $u\in r_{\Omega\times I} C(\R^{n+1})$ gives the expected, namely $r_{\Gamma\times I} \widetilde u$ for any extension $\widetilde u \in C(\R^{n+1})$ of $u$.
Indeed using~\eqref{eq:defOfGamma},
\begin{align*}
  (\gamma u)_{\kappa\times\id_I} 
  = r_{B'\times I} \big(\widetilde u \circ (\lambda^{-1}(\cdot,0)\times\id_\R)\big) 
  = (r_{\Gamma\times I}\widetilde u)\circ (\kappa^{-1}\times\id_I)
  = (r_{\Gamma\times I}\widetilde u)_{\kappa\times\id_I},
\end{align*}
which shows that $\gamma u$ equals a restriction, $r_{\Gamma\times I}\widetilde u$, of the continuous function $\widetilde u$.

\subsubsection{The Theorem}\label{subsubsec:theThm}
To construct a right-inverse of $\gamma$, we use a bounded right-inverse $K_n$ of $\gamma_{0,n}$, where because of~\eqref{eq:conditionsOn_ap_anisotropic2} we may refer to~\cite[Thm.~2.6]{JS08} for a right-inverse of the similar trace $\gamma_{0,1}$ in~\eqref{eq:thm22},
\begin{equation}\label{eq:rightInverseKn}
  K_n: F^{s-a_0/p_0,a''}_{p'',p_0}(\Rn)\to F^{s,\vec a}_{\vec p,q}(\R^{n+1}), \quad s\in\R,
\end{equation}
given by, cf.~just above~\eqref{eq:defOfK1} for the $\psi$,
\begin{equation*}
K_n v(x) := \sum_{j=0}^\infty \psi(2^{ja_n}x_n) \cf^{-1}(\Phi_j(\xi',0,\xi_{n+1})\cf v(\xi',\xi_{n+1}))(x',x_{n+1}).
\end{equation*}
Hereby we have set $p''=(p_0,\ldots,p_0,p_t)\in \,]0,\infty[\,^n$, which results when $p_n=p_0$ is left out; cf.~\eqref{eq:conditionsOn_ap_anisotropic2}.

\begin{theorem}\label{thm:boundednessCurvedTrace}
When $\Gamma$ is compact, $\vec a,\vec p$ satisfy \eqref{eq:conditionsOn_ap_anisotropic2} 
and $(s,q)$
fulfils the inequality in~\eqref{eq:thm22},
then
\begin{equation*}
\gamma: \overline F^{s,\vec a}_{\!\vec p,q}(\Omega \times I) \to \overline F^{s-a_0/p_0,a''}_{\!p'',p_0}(\Gamma\times I)
\end{equation*}
is a bounded \emph{surjection}, which has a right-inverse $K_\gamma$. 
More precisely, the operator $K_\gamma$ can be chosen such that $K_\gamma: \overline F^{s-a_0/p_0,a''}_{p'',p_0}(\Gamma\times I)\to \overline F^{s,\vec a}_{\vec p,q}(\Omega \times I)$ is bounded for every $s\in\R$.
\end{theorem}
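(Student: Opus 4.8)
The plan is to reduce the statement to the Euclidean trace $\gamma_{0,n}$ from~\eqref{eq:thm22} and its right-inverse $K_n$ from~\eqref{eq:rightInverseKn} (together with Rychkov's operator $\ce_{\univ}$), by localising with the finite atlas $\cf_0$ on $\Gamma$ and the partition of unity $1=\sum_\lambda\psi_\lambda+\psi$ on $\overline\Omega$ from Section~\ref{ssect:CompManf} — note $\supp\psi\subset\Omega$, so $\psi$ is invisible to the curved trace — transferring each local piece between $U_\lambda\times\R$ and $B\times\R$ by Theorem~\ref{thm:infiniteCylinder}, and gluing. Since the lower bound on $s$ is needed only to make $\gamma u$ meaningful via~\eqref{eq:thm22}, while all the remaining tools (Lemma~\ref{multTraces}, Theorem~\ref{thm:infiniteCylinder}, Lemma~\ref{lem:extensionBy0}, and $K_n$ and $\ce_{\univ}$) are available for every $s\in\R$, the boundedness of $K_\gamma$ for all $s$ will come for free, exactly as for $K_0$ in Theorem~\ref{main11}. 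I fix a compact interval $J\supset\overline I$ and $\chi\in C_0^\infty(\R)$ with $\chi\equiv1$ on $\overline I$ and $\supp\chi\subset J$, so that $\overline F^{s-a_0/p_0,a''}_{p'',p_0}(\Gamma\times I)=r_I\overset{\circ}{F}{}^{s-a_0/p_0,a''}_{\!p'',p_0}(\Gamma\times J)$ by~\eqref{eq:introFspaceOverline}.

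For the boundedness of $\gamma$: given $u$, I would pick an extension $f\in F^{s,\vec a}_{\vec p,q}(\R^{n+1})$ with $\|f\|\le2\|u\|$, let $v$ be the distribution on $\Gamma\times\R$ with $v_{\kappa\times\id_\R}=f_\kappa$ from~\eqref{eq:connectionBtwnvAndf}, so $\gamma u=r_Iv$ by~\eqref{eq:defOfGamma}, and put $\widetilde v:=(1\otimes\chi)\,v$. Then $\widetilde v$ lies in $\overset{\circ}{F}{}^{s-a_0/p_0,a''}_{\!p'',p_0}(\Gamma\times J)$ (its support is contained in $\Gamma\times\supp\chi$) and $r_I\widetilde v=r_Iv=\gamma u$. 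In the quasi-norm~\eqref{eq:quasinormOnGammaTimesI} of Theorem~\ref{thm:LTcurvedBoundary}, the $\kappa$-th summand of $\widetilde v$ is $\|\widetilde{\psi_\kappa\otimes\ind\R}\cdot\widetilde v_{\kappa\times\id_\R}\,|\,\overline F^{s-a_0/p_0,a''}_{p'',p_0}(B'\times\R)\|$, and $\widetilde{\psi_\kappa\otimes\ind\R}\cdot\widetilde v_{\kappa\times\id_\R}=(\psi_\lambda\otimes\chi f)\circ(\lambda^{-1}(\cdot,0)\times\id_\R)$; by the chain behind~\eqref{eq:interchangedRandE} — multiply $f$ by $\psi_\lambda\otimes\chi$ (Lemma~\ref{multTraces}), pull back by $\lambda^{-1}\times\id_\R$ (Theorem~\ref{thm:infiniteCylinder}), extend by zero (Lemma~\ref{lem:extensionBy0}), apply $\gamma_{0,n}$ via~\eqref{eq:thm22} — this is bounded by $c\|f\|$. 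Summing over $\cf_0$ gives $\|\gamma u\|\le\|\widetilde v\|\le c\|u\|$.

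For the right-inverse, given $g\in\overline F^{s-a_0/p_0,a''}_{p'',p_0}(\Gamma\times I)$, I would first pass to a representative $v\in\overset{\circ}{F}{}^{s-a_0/p_0,a''}_{\!p'',p_0}(\Gamma\times J)$ with $r_Iv=g$ and $\|v\|\le2\|g\|$, produced by a fixed bounded linear right-inverse of $r_I$ built from $\ce_{\univ}$ acting in the time variable (so $K_\gamma$ is genuinely a composition of bounded operators, for every $s\in\R$). Chart by chart, let $h_\kappa$ be the zero-extension in $x'$ of $\widetilde{\psi_\kappa\otimes\ind\R}\,v_{\kappa\times\id_\R}$ — it lies in $F^{s-a_0/p_0,a''}_{p'',p_0}(\R^n)$ by Lemma~\ref{lem:extensionBy0} since its $x'$-support is compact in $B'$ — apply $K_n$, multiply by $\zeta_\kappa\in C_0^\infty(\Rn)$ with $\supp\zeta_\kappa\subset B$ and $\zeta_\kappa\equiv1$ near $(\text{$x'$-support of }\widetilde\psi_\kappa)\times\{x_n=0\}$ (Lemma~\ref{multTraces}), pull back by $\lambda\times\id_\R$ (Theorem~\ref{thm:infiniteCylinder}), extend by zero (Lemma~\ref{lem:extensionBy0}), sum over $\lambda$, and restrict to $\Omega\times I$:
\begin{equation*}
K_\gamma g:=r_{\Omega\times I}\sum_\lambda e_{U_\lambda\times\R}\bigl((\zeta_\kappa\,K_nh_\kappa)\circ(\lambda\times\id_\R)\bigr).
\end{equation*}
Concatenating the norm estimates with~\eqref{eq:quasinormOnGammaTimesI} gives $\|K_\gamma g\|\le c\|g\|$ for every $s\in\R$. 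For $s$ as in~\eqref{eq:thm22} one checks $\gamma K_\gamma g=g$: computing $(\gamma K_\gamma g)_{\kappa'\times\id_I}$ and composing each summand with $\lambda'^{-1}(\cdot,0)\times\id_\R$ forces $x_n=0$, where $\zeta_\kappa\equiv1$ and $\gamma_{0,n}K_n=\id$, so the $\kappa$-summand contributes $(\psi_\kappa\circ\kappa'^{-1})\cdot v_{\kappa'\times\id_\R}$ on the overlap via the transition rule~\eqref{eq:connectionBtwnvAndf}; since $\sum_\kappa\psi_\kappa\equiv1$ on $\Gamma$, the sum is $v_{\kappa'\times\id_\R}$, and restriction to $B'\times I$ yields $g_{\kappa'\times\id_I}$. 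In particular $\gamma$ is surjective.

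The main obstacle is the support bookkeeping in the construction of $K_\gamma$: since $K_n$ convolves with the Schwartz tail $\psi(2^{ja_n}x_n)$ it destroys compact support in the normal variable, so the cut-off $\zeta_\kappa$ is unavoidable, and it has to equal $1$ on precisely the part of $\{x_n=0\}$ needed for $\gamma_{0,n}(\zeta_\kappa K_nh_\kappa)=h_\kappa$, while still being supported deeply enough inside $B$ for the transfer back by Theorem~\ref{thm:infiniteCylinder} and the zero-extension by Lemma~\ref{lem:extensionBy0} to apply on $U_\lambda\times\R$. One must also check that the finitely many local pieces glue to a genuine distribution on $\Omega\times I$, that the off-diagonal chart contributions collapse through~\eqref{eq:connectionBtwnvAndf}, and that the chosen right-inverse of $r_I$ on the curved boundary is bounded for all $s$ — the latter being guaranteed by the fact that $\ce_{\univ}$ is (Theorem~\ref{thm:rychkovExtension}).
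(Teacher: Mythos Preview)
Your proposal is correct and follows essentially the same route as the paper: localise via the finite atlas $\cf_0$ and the partition $1=\sum_\lambda\psi_\lambda+\psi$, transfer to $B\times\R$ by Theorem~\ref{thm:infiniteCylinder}, invoke the Euclidean trace $\gamma_{0,n}$ and its right-inverse $K_n$, cut off with $\zeta_\kappa$ (the paper's $\eta_\lambda$), and glue. The only organisational difference is in the time extension for $K_\gamma$: the paper works chartwise first and then extends each $w^\kappa$ from $\R^{n-1}\times I$ to $\R^n$ via the two-ended split $w^\kappa_{\ext}=\ce_{\univ}(\chi_1 w^\kappa)+\ce_{\univ,T}(\chi_2 w^\kappa)$ with $\chi_1+\chi_2\equiv1$ near $\overline I$, whereas you package this as a single bounded linear right-inverse of $r_I$ on $\Gamma\times I$ before localising --- which amounts to the same construction carried out in the other order.
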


\begin{proof}
Since the space $\overline F^{s-a_0/p_0,a''}_{p'',p_0}(\Gamma\times I)$, cf.~\eqref{eq:introFspaceOverline}, does not depend on how the compact interval $J\supset I$ is chosen,  it is fixed in the following.
Moreover, $\gamma u$ does not depend on the extension $f$ of $u$, thus we take $f$ such that $\supp f \subset \Rn \times J$. 
By~\eqref{eq:connectionBtwnvAndf} 
and~\eqref{eq:introFspaceOverline}, $\gamma u = r_I v$ is in $\overline F^{s-a_0/p_0,a''}_{\!p'',p_0}(\Gamma\times I)$.

To prove boundedness, note that $v$ according to~\eqref{eq:equivSupport} belongs to $\overset{\circ}{F}{}^{s, a''}_{\! p'',q}(\Gamma\times J)$, 
since 
\begin{equation}\label{eq:suppOfv}
\supp v \subset \bigcup_{\lambda\in\Lambda} (\lambda^{-1}(\cdot,0)\times\id_\R)(B'\times J) = \Gamma\times J.
\end{equation}
Hence it can be inferred from Theorem~\ref{thm:LTcurvedBoundary} that
\begin{gather}\begin{split}\label{eq:boundednessOfGamma1}
\|\, \gamma u\, | \overline F^{s-a_0/p_0,a''}_{p'',p_0}(\Gamma\times I) &\|^d\\
\le \inf_{\substack{r_{\Omega\times I}f=u\\ \supp f\subset\Rn\times J}}
\sum_{\lambda\in\Lambda} &\|\, (\psi_\lambda\otimes\ind \R f)\circ(\lambda^{-1}(\cdot,0)\times\id_\R)\,
| \overline F^{s-a_0/p_0,a''}_{p'',p_0}(B'\times\R) \|^d.
\end{split}
\end{gather}
By choosing first a cut-off function on $\R$, we can use the infimum norm to fix~$f$ such that 
$\|\, f\, | F^{s,\vec a}_{\vec p,q}(\R^{n+1})\| \le 2\|\, u \, |\overline F^{s,\vec a}_{\vec p,q}(\Omega\times I)\|$. 
Using the arguments leading up to~\eqref{eq:interchangedRandE} and the boundedness of $\gamma_{0,n}$, cf.~\eqref{eq:thm22}, each summand in~\eqref{eq:boundednessOfGamma1} can be estimated by
\begin{equation*}
c \|\, (\psi_\lambda\otimes\ind \R f)\circ(\lambda^{-1}\times\id_\R)\, 
| \overline F^{s,\vec a}_{\vec p,q}(B\times\R)\|^d.
\end{equation*}
Finally, applying Theorem~\ref{thm:infiniteCylinder} and Lemma~\ref{multTraces}, since $\psi_\lambda\otimes\ind\R\in C^\infty_{L_\infty}(\R^{n+1})$, 
we obtain
\begin{equation*}
\|\, \gamma u\, | \overline F^{s-a_0/p_0,a''}_{p'',p_0}(\Gamma\times I)\|
\le c \|\, f\, | F^{s,\vec a}_{\vec p,q}(\R^{n+1})\|
\le 2c \|\, u\, | \overline F^{s,\vec a}_{\vec p,q}(\Omega\times I)\|.
\end{equation*}

The construction of a right-inverse $K_\gamma$ uses that for any $w\in\overline F^{s-a_0/p_0,a''}_{p'',p_0}(\Gamma\times I)$ there exists a $v\in \overset{\circ}{F}{}^{s-a_0/p_0,a''}_{\!p'',p_0}(\Gamma\times J)$ such that $r_I v=w$. It is easily verified that 
\begin{equation}\label{eq:defOfwKappa}
w^\kappa:=r_{\R^{n-1}\times I} \big( e_{B'\times\R}(\widetilde\psi_\kappa\otimes\ind \R v_{\kappa\times\id_\R})\big)
\end{equation}
is independent of the extension $v$; and obviously $w^\kappa$ is in $\overline F^{s-a_0/p_0,a''}_{p'',p_0}(\R^{n-1}\times I)$ with support in $B'\times I$.
For $\chi_1,\chi_2\in C_0^\infty(\R)$ such that $\chi_1+\chi_2\equiv 1$ on a neighbourhood of $I$ and such that $\chi_1$, $\chi_2$ vanish before the right, respective the left end point of $I$, we let, cf.~Theorem~\ref{thm:rychkovExtension} and Corollary~\ref{cor:rychkovExtension},
\begin{equation*}
  w^\kappa_{\ext} = \ce_{\univ}(\chi_1 w^\kappa) + \ce_{\univ,T} (\chi_2 w^\kappa),
\end{equation*}
where extension by 0 to $\R^n_+$ and $\R^{n-1}\times ]-\infty,T[$ before application of $\ce_{\univ}$, respectively $\ce_{\univ,T}$ is understood. 
Lemma~\ref{lem:extensionBy0inLastCoord} gives that this extension does not change the regularity of the elements, hence $w^\kappa_{\ext}$ belongs to $F^{s-a_0/p_0,a''}_{p'',p_0}(\R^n)$; and furthermore $r_{\R^{n-1}\times I} w^\kappa_{\ext} = w^\kappa$.

Now using $K_n$ in~\eqref{eq:rightInverseKn} as well as functions $\eta_\lambda\in C^\infty_0(\Rn)$, $\lambda\in\Lambda$, such that $\supp\eta_\lambda\subset B$ and $\eta_\lambda=1$ on $\supp \widetilde\psi_\lambda$, we define (using the $v$-independence of $w^\kappa_{\ext}$)
\begin{equation}\label{eq:defOfRightInverseOfGamma}
K_\gamma w = r_{\Omega\times I}\sum_{\lambda\in\Lambda} e_{U_\lambda\times\R} (\eta_\lambda K_n w^\kappa_{\ext})\circ(\lambda\times\id_\R).
\end{equation}

Boundedness of $K_\gamma$ is a consequence of first using Lemma~\ref{lem:extensionBy0} and Theorem~\ref{thm:infiniteCylinder}, $d:=\min(1,p_0,p_t,q)$,
\begin{align*}
\|\, K_\gamma w\, | \overline F^{s,\vec a}_{\vec p,q}(\Omega \times I)\|^d
&\le \sum_{\lambda\in\Lambda} \|\, (\eta_\lambda K_n w^\kappa_{\ext})\circ(\lambda\times\id_\R)\, | \overline F^{s,\vec a}_{\vec p,q}(U_\lambda\times\R)\|^d\\
&\le c \sum_{\lambda\in\Lambda} \|\, \eta_\lambda K_n w^\kappa_{\ext}\, | F^{s,\vec a}_{\vec p,q}(\R^{n+1})\|^d,
\end{align*}
and then Lemma~\ref{multTraces}, Lemma~\ref{lem:extensionBy0inLastCoord} and the mapping properties of $K_n$, $\ce_{\univ}$, $\ce_{\univ,T}$,
\begin{align*}
\|\, K_\gamma w\, | \overline F^{s,\vec a}_{\vec p,q}(\Omega \times I)\|^d
&\le
c\sum_{\substack{\kappa\in\cf_0\\j=1,2}} \|\, \chi_j w^\kappa\, | \overline F^{s-a_0/p_0,a''}_{p'',p_0}(\R^{n-1}\times I)\|^d\\
&\le c \sum_{\kappa\in\cf_0} \|\, (\psi_\kappa \otimes \ind \R v) \circ (\kappa^{-1}\times\id_\R)\, | \overline F^{s-a_0/p_0,a''}_{p'',p_0}(B'\times\R)\|^d.
\end{align*}
The extension $v$ is chosen arbitrarily among those in $\overset{\circ}{F}{}^{s-a_0/p_0,a''}_{\!p'',p_0}(\Gamma\times J)$ satisfying 
$r_I v = w$, thus taking the infimum over all such $v$ yields the boundedness of $K_\gamma$, cf.~\eqref{eq:normOnRestrictionCompactnessSpace} and~\eqref{eq:quasinormOnGammaTimesI}.

To verify that $K_\gamma$ is indeed a right-inverse, we use that an extension of $K_\gamma w$ is
\begin{equation*}
  f = \sum_{\lambda\in\Lambda} e_{U_\lambda\times\R} (\eta_\lambda K_n w^\kappa_{\ext})\circ(\lambda\times\id_\R).
\end{equation*}
Hence the definition of $\gamma$, cf.~\eqref{eq:defOfGamma}, gives that $\gamma(K_\gamma w) = r_I h$, where $h_{\kappa_1\times\id_\R} = f\circ(\lambda_1^{-1}(\cdot,0)\times\id_\R)$. 
We shall now prove that $r_{B'\times I} h_{\kappa_1\times\id_R} = w_{\kappa_1\times\id_I}$ for each $\kappa_1\in\cf_0$.
Indeed,
\begin{align}\label{eq:firstCalculationRestrictionTrace}
  r_{B'\times I} h_{\kappa_1\times\id_\R}
= r_{B'\times I} \sum_{\lambda\in\Lambda} (\eta_\lambda K_n w^\kappa_{\ext})\circ(\lambda\circ\lambda^{-1}_1(\cdot,0)\times\id_\R),
\end{align}
where extension by 0 from $\kappa_1(\Gamma_{\kappa_1}\cap\Gamma_\kappa)\times\R$ to $B'\times \R$ in each term is understood.
Using that $K_n$ is a right-inverse of $\gamma_{0,n}$ and that $w^\kappa_{\ext} = w^\kappa$ on $\kappa(\Gamma_{\kappa_1}\cap\Gamma_\kappa)\times I$, each summand in~\eqref{eq:firstCalculationRestrictionTrace} equals, cf.\ also~\eqref{eq:defOfwKappa},~\eqref{eq:distFamily}, 
\begin{align*}
  (\eta_\lambda w^\kappa) \circ (\lambda\circ\lambda_1^{-1}(\cdot,0)\times\id_\R) 
  =  (\eta_\lambda \circ\lambda \cdot \psi_\kappa\otimes\ind \R)\circ(\lambda_1^{-1}(\cdot,0)\times\id_\R) v_{\kappa_1\times\id_\R} .
\end{align*}
As $\eta_\lambda\circ\lambda \equiv 1$ on $\supp \psi_\kappa$ and $\sum \psi_\kappa \equiv 1$ on $\Gamma$, we finally obtain, using that $r_I v = w$,
\begin{equation*}
r_{B'\times I} h_{\kappa_1\times\id_\R} 
= r_{B'\times I}\Big(v_{\kappa_1\times\id_\R} \sum_{\lambda\in\Lambda} (\psi_\kappa\otimes\ind \R)\circ(\lambda_1^{-1}(\cdot,0)\times\id_\R) \Big)
= w_{\kappa_1\times\id_I},
\end{equation*}
hence $K_\gamma$ is a right-inverse of $\gamma$.
\end{proof}

\subsection{The Traces at the Corner}\label{subsec:tracesAtTheCorner}
The trace from either the flat or the curved boundary to the corner $\Gamma\times\{0\}\simeq \Gamma$ cannot simply be obtained by applying $r_0$ and then $\gamma$, or vice versa, since these operators are defined on spaces over the whole cylinder. 

In the following, under the assumptions that $I=\,]0,T[\,$ is finite and $\Gamma$ compact, the trace operators $r_{0,\Gamma}$, $\gamma_\Gamma$ will therefore be introduced (the subscript $\Gamma$ indicates that we end up at $\Gamma\times\{0\}\simeq \Gamma$). We note that focus will not be on optimality regarding the co-domains, since the purpose of this section merely is to prepare for a discussion of compatibility conditions in connection with PDEs; and from this point of view the interesting question is whether the following identity holds in $\cd'(\Gamma)$,
\begin{equation}
\label{eq:compatibilityConditions1}
  r_{0,\Gamma} \circ \gamma u = \gamma_\Gamma\circ r_0 u.
\end{equation}

Recall that when working with spaces on the boundary, the anisotropy and the vector of integral exponents only have $n$ entries. Since it is different entries that need to be left out, depending on whether we are studying $\Gamma\times I$ or $\Omega$, it will in the following be convenient to use $a''=(a_1,\ldots,a_{n-1},a_t)$ as well as $a'=(a_1,\ldots,a_n)$; and likewise for $p'$, $p''$. Moreover,~\eqref{eq:conditionsOn_ap_anisotropic2} is a standing assumption on $\vec a$, $\vec p$.

We assume that $s$ satisfies the inequality in~\eqref{spqn-cnda} adapted to vectors of $n$ entries, i.e. for the trace from the curved boundary $\Gamma\times I$,
\begin{equation}\label{eq:conditionOnsCornerTrace}
s>\frac{a_t}{p_t}+ (n-1)\Big(\frac{a_0}{\min(1,p_0)}-a_0\Big),
\end{equation}
and for the trace from the flat boundary $\Omega$,
\begin{equation}\label{eq:conditionOnsCornerTraceFlat}
s>\frac{a_0}{p_0}+ (n-1) \Big(\frac{a_0}{\min(1,p_0)}-a_0\Big).
\end{equation}

\begin{remark}\label{rem:regardingContinuityInTimeAndCurvedTrace}
When $v\in \overset{\circ}{F}{}^{s,a''}_{\!p'',q}(\Gamma\times J)$ for a compact interval $J$ and $s$ fulfils~\eqref{eq:conditionOnsCornerTrace}, then 
\begin{equation*}
v_{\kappa\times\id_\R} \in C_{\operatorname{b}}(\R_t,L_{1,\loc}(B'))\quad\text{for each }\kappa\in\cf_0.
\end{equation*}
This follows if for every compact set $K\subset B'$, the map $t\mapsto v_{\kappa\times\id_\R}(\cdot,t)$ is continuous with values in $L_1(K)$.
In Theorem~\ref{thm:LTcurvedBoundary} we may, if necessary, change the partition of unity (using some $\varphi\in C_0^\infty(B')$ equalling 1 on $K$) such that $\psi_\kappa\equiv 1$ on $\kappa^{-1}(K)$. Then $\widetilde\psi_\kappa v_{\kappa\times\id_\R}$ is in $\overline F^{s,a''}_{p'',q}(B'\times\R)$, which because of~\eqref{spqn-cnda} and~\eqref{eq:conditionOnsCornerTrace} is contained in $C_{\operatorname{b}}(\R_t,L_1(B'))$. Hence $v_{\kappa\times\id_\R}$ is in $L_1(K)$, continuously in time.
\end{remark}

\subsubsection{The Curved Boundary}
For $w\in \overline F^{s,a''}_{p'',q}(\Gamma\times I)$ there exists a $v\in \overset{\circ}{F}{}^{s,a''}_{\!p'',q}(\Gamma\times J)$, where $J\supset I$ is any compact interval, such that $r_I v= w$, cf.~\eqref{eq:introFspaceOverline}.
By exploiting that $v_{\kappa\times\id_\R}$ is continuous with respect to $t$, cf.\ Remark~\ref{rem:regardingContinuityInTimeAndCurvedTrace}, we define for $x\in\Gamma$,
\begin{equation}\label{eq:defOfTheTraceToTheCornerFromCurvedBoundary}
r_{0,\Gamma} w (x) = \sum_{\kappa\in\cf_0} \psi_\kappa(x) v_{\kappa\times\id_\R}(\kappa(x),0)
\end{equation}
with the understanding that the product $\psi_\kappa(x)v_{\kappa\times\id_\R}(\kappa(x),0)$ is defined to be 0 outside $\Gamma_\kappa$.
On $\Gamma_\kappa$ the product is meaningful, since $v_{\kappa\times\id_\R}$ is in $C_{\operatorname{b}}(\R, L_{1,\loc}(B'))$.

The trace $r_{0,\Gamma}$ in~\eqref{eq:defOfTheTraceToTheCornerFromCurvedBoundary} is independent of the chosen $v\in \overset{\circ}{F}{}^{s,a''}_{\!p'',q}(\Gamma\times J)$, 
since for any two extensions $v_1,v_2$ in this space, $\widetilde\psi_\kappa\cdot r_{B'\times I} v_{j,\kappa\times\id_\R}$, $j=1,2$, coincide on $B'\times I$, hence by continuity also on $B'\times\{0\}$. 

Moreover, the trace depends neither on the atlas nor on the subordinate partition of unity. Indeed, considering another atlas $\cf_1$ with a subordinate partition of unity $1=\sum_{\kappa_1\in\cf_1}\varphi_{\kappa_1}$, we have on $\Gamma$, cf.~\eqref{eq:distFamily} for the atlas $\cf_0\cup\cf_1$,
\begin{align*}
  \sum_\kappa \psi_\kappa v_{\kappa\times\id_\R}(\kappa(\cdot),0)
  = \sum_{\kappa,\kappa_1} \psi_\kappa \varphi_{\kappa_1} v_{\kappa_1\times\id_\R}(\kappa_1(\cdot),0)
  = \sum_{\kappa_1} \varphi_{\kappa_1} v_{\kappa_1\times\id_\R}(\kappa_1(\cdot),0).
\end{align*}

In the following theorem the co-domain of the trace is $B^{s-\frac{a_t}{p_t},a_0}_{p_0,p_t}(\Gamma)$; the definition and properties of this space follow from Section~\ref{sec:isotropicBesovManifolds}, since it coincides with an isotropic space in view of~\eqref{eq:conditionsOn_ap_anisotropic2} and Lemma~\ref{lem:lambdaLemmaBesov}. Note that we have abbreviated the $(n-1)$-vector $(a_0,\ldots,a_0)$ to $a_0$, and similarly for $p_0$.

\begin{theorem}\label{thm:traceAtCornerFromCurvedBoundary}
When $a'', p''$  are as above with $0<p''<\infty$ and $s$ satisfies~\eqref{eq:conditionOnsCornerTrace},
then $r_{0,\Gamma}$ is bounded,
\begin{equation*}
  r_{0,\Gamma}:\overline F^{s, a''}_{ p'',q}(\Gamma\times  I)\to B^{s-\frac{a_t}{p_t},a_0}_{p_0,p_t}(\Gamma).
\end{equation*}
\end{theorem}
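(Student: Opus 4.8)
The plan is to transfer the assertion, chart by chart, to the Euclidean trace at $\{t=0\}$ from \cite{JS08} and then reassemble over the finite atlas $\cf_0$. Fix $w\in\overline F^{s,a''}_{p'',q}(\Gamma\times I)$ and choose an extension $v\in\overset{\circ}{F}{}^{s,a''}_{\!p'',q}(\Gamma\times J)$ with $r_I v=w$; by Remark~\ref{rem:regardingContinuityInTimeAndCurvedTrace} each $v_{\kappa\times\id_\R}$ lies in $C_{\operatorname{b}}(\R_t,L_{1,\loc}(B'))$, so its value at $t=0$ is a well-defined element of $L_{1,\loc}(B')$. The first step is the local identity
\[
  (r_{0,\Gamma}w)_\kappa = v_{\kappa\times\id_\R}(\cdot,0)\quad\text{on }B',\qquad \kappa\in\cf_0 ,
\]
which follows by composing~\eqref{eq:defOfTheTraceToTheCornerFromCurvedBoundary} with $\kappa^{-1}$, using the transition rule~\eqref{eq:distFamily} for the family $\{v_{\kappa\times\id_\R}\}$ to rewrite $v_{\kappa'\times\id_\R}(\kappa'\circ\kappa^{-1}(\cdot),0)=v_{\kappa\times\id_\R}(\cdot,0)$ on $\kappa(\Gamma_\kappa\cap\Gamma_{\kappa'})$, and finally $\sum_{\kappa'\in\cf_0}\psi_{\kappa'}=1$ on $\Gamma$. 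In particular $r_{0,\Gamma}w\in L_{1,\loc}(\Gamma)$.

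Next I would estimate the local pieces. Choose $\chi\in C_0^\infty(\R)$ with $\chi=1$ on $J$; since $\supp v_{\kappa\times\id_\R}\subset B'\times J$ by~\eqref{eq:compactSupportLTspaceAtlas}, one has $\widetilde{\psi_\kappa\otimes\ind\R}\,v_{\kappa\times\id_\R}=(\widetilde\psi_\kappa\otimes\chi)\,v_{\kappa\times\id_\R}$ with $\widetilde\psi_\kappa\otimes\chi\in C_0^\infty(\R^n)$, so this element belongs to $\overline F^{s,a''}_{p'',q}(B'\times\R)$, its quasi-norm being a summand of~\eqref{eq:quasinormOnGammaTimesI}. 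Multiplying the previous display by $\widetilde\psi_\kappa$ shows that $\widetilde\psi_\kappa(r_{0,\Gamma}w)_\kappa$ is the value at $t=0$ of that function. Picking an extension $g\in F^{s,a''}_{p'',q}(\R^n)$ of it, and noting that~\eqref{eq:conditionOnsCornerTrace} is exactly~\eqref{spqn-cnda} with $n$ replaced by $n-1$, the trace at $\{t=0\}$ (that is, $\gamma_{0,n}$ in $n$ variables) is by \cite[Thms.~2.4, 2.5]{JS08} bounded $F^{s,a''}_{p'',q}(\R^n)\to B^{s-\frac{a_t}{p_t},a_0}_{p_0,p_t}(\R^{n-1})$, and restriction to $\{t=0\}$ commutes with restriction to $B'\times\R$ because all functions involved are in $L_{1,\loc}$, exactly as around~\eqref{eq:interchangedRandE}. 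Passing to the infimum over $g$ yields
\[
  \big\|\,\widetilde\psi_\kappa(r_{0,\Gamma}w)_\kappa\,\big|\,\overline B^{s-\frac{a_t}{p_t},a_0}_{p_0,p_t}(B')\big\|
  \le c\,\big\|\,\widetilde{\psi_\kappa\otimes\ind\R}\,v_{\kappa\times\id_\R}\,\big|\,\overline F^{s,a''}_{p'',q}(B'\times\R)\big\| .
\]
Here I should recall that the codomain $B^{s-\frac{a_t}{p_t},a_0}_{p_0,p_t}(\Gamma)$ is the manifold space built in Section~\ref{sec:isotropicBesovManifolds}, which by~\eqref{eq:conditionsOn_ap_anisotropic2} and Lemma~\ref{lem:lambdaLemmaBesov} coincides (with equivalent quasi-norms) with the corresponding isotropic Besov space, so both the Euclidean trace theorem above and the displayed local estimate feed directly into that construction.

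Finally I would raise the last display to the power $d=\min(1,p_0,p_t)$ and sum over the finite atlas $\cf_0$. By the definition~\eqref{eq:theNormOnBesovOverManifold} of the norm on $B^{s-\frac{a_t}{p_t},a_0}_{p_0,p_t}(\Gamma)$ — using the same $\cf_0$ and partition $\{\psi_\kappa\}$ as in~\eqref{eq:defOfTheTraceToTheCornerFromCurvedBoundary} — together with the definition~\eqref{eq:quasinormOnGammaTimesI} of the norm on $\overset{\circ}{F}{}^{s,a''}_{\!p'',q}(\Gamma\times J)$, this gives both $r_{0,\Gamma}w\in B^{s-\frac{a_t}{p_t},a_0}_{p_0,p_t}(\Gamma)$ and
\[
  \big\|\,r_{0,\Gamma}w\,\big|\,B^{s-\frac{a_t}{p_t},a_0}_{p_0,p_t}(\Gamma)\big\|\le c\,\big\|\,v\,\big|\,\overset{\circ}{F}{}^{s,a''}_{\!p'',q}(\Gamma\times J)\big\| ;
\]
taking the infimum over the extensions $v$ with $r_I v=w$ produces the bound by $c\,\|\,w\,|\,\overline F^{s,a''}_{p'',q}(\Gamma\times I)\|$, cf.~\eqref{eq:normOnRestrictionCompactnessSpace}. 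I do not expect a deep obstacle: the work is the careful bookkeeping with the transition maps that produces the local identity $(r_{0,\Gamma}w)_\kappa=v_{\kappa\times\id_\R}(\cdot,0)$, and keeping track of the rescaling of Lemma~\ref{lem:lambdaLemmaBesov} between the anisotropic Besov spaces supplied by the Euclidean trace theorem and the isotropic manifold Besov space serving as codomain, along with the routine interchanges of traces with restriction and with extension by zero.
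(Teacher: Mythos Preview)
Your proof is correct and follows the same overall strategy as the paper: localise via the finite atlas $\cf_0$, apply the Euclidean trace theorem from \cite{JS08} to the local pieces, reassemble, and pass to the infimum over extensions $v$.

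The one genuine difference is your opening move. You first establish the clean local identity $(r_{0,\Gamma}w)_\kappa = v_{\kappa\times\id_\R}(\cdot,0)$ by invoking the transition rule~\eqref{eq:distFamily} for the family $\{v_{\kappa\times\id_\R}\}$ and collapsing the sum $\sum_{\kappa'}\psi_{\kappa'}=1$. This lets you estimate $\widetilde\psi_\kappa(r_{0,\Gamma}w)_\kappa$ directly by the single local datum $\widetilde{\psi_\kappa\otimes\ind\R}\,v_{\kappa\times\id_\R}$. The paper instead works straight from the defining sum~\eqref{eq:defOfTheTraceToTheCornerFromCurvedBoundary}, which produces a double sum over $\kappa,\kappa_1\in\cf_0$; each cross term then requires a change of coordinates $x\mapsto\kappa\circ\kappa_1^{-1}(x)$ in the Besov norm (using the diffeomorphism invariance from Section~\ref{sec:isotropicBesovManifolds}), a restriction to a compactly contained subset, and a multiplication estimate with auxiliary cut-offs $\chi_{\kappa_1}$ before everything reduces to the single-chart terms. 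Your route sidesteps all of that bookkeeping at the cost of one elementary observation about the transition rule for $L_{1,\loc}$ functions; the paper's route is more pedestrian but makes the dependence on the Besov invariance under diffeomorphisms explicit. Either way the substance is the same.
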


\begin{proof}
From Remark~\ref{rem:regardingContinuityInTimeAndCurvedTrace} we have that $v_{\kappa\times\id_\R}$
is in $C_{\operatorname{b}}(\R,L_{1,\loc}(B'))$, hence using the bounded trace operator,
cf.~\cite[Thm.~2.5]{JS08} and~\eqref{eq:conditionOnsCornerTrace},  
\begin{equation}\label{eq:traceGamma0n}
  \gamma_{0,n}: F^{s, a''}_{p'',q}(\Rn) \to B^{s-\frac{a_t}{p_t},a_0}_{p_0,p_t}(\R^{n-1}),
\end{equation}
it is readily seen that
\begin{align*}
  \widetilde\psi_\kappa v_{\kappa\times\id_\R}(\cdot,0)
  = r_{B'} \gamma_{0,n} e_{B'\times\R} (\widetilde\psi_\kappa v_{\kappa\times\id_\R}).
\end{align*}
Since $\widetilde\psi_\kappa v_{\kappa\times\id_\R} \in F^{s, a''}_{p'',q}(B'\times\R)$,
we therefore have by~\eqref{eq:traceGamma0n} that 
$\widetilde\psi_\kappa v_{\kappa\times\id_\R}(\cdot,0)$ belongs to $\overline B^{s-\frac{a_t}{p_t},a_0}_{p_0,p_t}(B')$.
Now Corollary~\ref{cor:extensionByZeroLTspaces} adapted to Besov spaces,
cf.~Section~\ref{sec:isotropicBesovManifolds}, implies that $r_{0,\Gamma} w\in
B^{s-\frac{a_t}{p_t},a_0}_{p_0,p_t}(\Gamma)$. 

To prove $r_{0,\Gamma}$ is bounded, we use~\eqref{eq:theNormOnBesovOverManifold} to estimate $\|\, r_{0,\Gamma} w\, |B^{s-\frac{a_t}{p_t},a_0}_{p_0,p_t}(\Gamma)\|^d$, $d:=\min(1,p_0,p_t)$, by
\begin{align*}
  \sum_{\kappa,\kappa_1\in\cf_0} \big\|\, \psi_{\kappa_1}\circ\kappa^{-1} \cdot
  \widetilde\psi_\kappa v_{\kappa_1\times\id_\R}(\kappa_1\circ\kappa^{-1}(\cdot),0)\, \big|
  \overline
  B^{s-\frac{a_t}{p_t},a_0}_{p_0,p_t}(\kappa(\Gamma_\kappa\cap\Gamma_{\kappa_1}))\big\|^d. 
\end{align*}
After a change of coordinates $x\mapsto \kappa\circ\kappa_1^{-1}(x)$ and a slight restriction of the
domain to a suitable open subset $W$ such that $\overline W\subset
\kappa_1(\Gamma_{\kappa_1}\cap\Gamma_\kappa)$, and finally multiplication by a $\chi_{\kappa_1}\in
C_0^\infty(B')$ where $\chi_{\kappa_1}\equiv 1$ on $\supp \widetilde\psi_{\kappa_1}$, this can be
estimated by, cf.~\cite[4.2.2]{tri92} for an $s_1$ large enough,  
\begin{align*}
  c \sum_{\kappa,\kappa_1\in\cf_0} \Big(\sum_{|\alpha|\le s_1} \|\, D^\alpha
      e_{B'}(\psi_\kappa\circ\kappa_1^{-1}\chi_{\kappa_1})\, | L_\infty \|\Big)^d \,
  \big\|\, e_{B'}(\widetilde\psi_{\kappa_1} v_{\kappa_1\times\id_\R}(\cdot,0))\,
  \big|B^{s-\frac{a_t}{p_t},a_0}_{p_0,p_t}(\R^{n-1})\big\|^d; 
\end{align*}
the constant $c$ contains $\sup_{\overline W} |\det J(\kappa\circ\kappa_1^{-1})|^d$ as a finite
factor ($J$ denotes the Jacobian matrix). 
Now boundedness of $\gamma_{0,n}$ in~\eqref{eq:traceGamma0n} gives
\begin{align*}
  \big\|\, r_{0,\Gamma} w\, \big| B^{s-\frac{a_t}{p_t}, a_0}_{p_0,p_t}(\Gamma)\big\|^d
  \le c \sum_{\kappa_1\in\cf_0} \|\, \widetilde\psi_{\kappa_1} v_{\kappa_1\times\id_\R}\, |
  \overline F^{s,a''}_{ p'',q}(B'\times\R)\|^d, 
\end{align*}
hence taking the infimum over all admissible $v$ (as we may since $r_{0,\Gamma}$ is independent of the extension) 
proves that $r_{0,\Gamma}$ is bounded.
\end{proof}

\subsubsection{The Flat Boundary}
\label{subsubsec:flatBoundary}
In this section we consider the trace operator $\gamma_\Gamma$, which simply is the trace at $\Gamma$ of distributions defined on~$\Omega$. In view of~\eqref{eq:compatibilityConditions1} and Theorem~\ref{main11}, the domain of interest for $\gamma_\Gamma$ is the unmixed Besov space $\overline B^{s,a'}_{p',q}(\Omega)$, which according to Lemma~\ref{lem:lambdaLemmaBesov} even equals an isotropic space, cf.~\eqref{eq:conditionsOn_ap_anisotropic2},

The operator is defined by carrying over the definition and results for $\gamma$ in Section~\ref{subsec:traceCurvedBoundaryMathNr}. 
Indeed, we remove the time dependence and use the Besov space result in~\cite[Thm.~1]{FJS00} for
$\gamma_{0,n}$. An embedding similar to~\eqref{eq:CbEmbeddedInL1loc} also holds in the case of Besov
spaces, cf.~\cite[Prop.~1]{FJS00} and~\eqref{eq:conditionOnsCornerTraceFlat}, and
Theorem~\ref{thm:LTcurvedBoundary},~\ref{thm:infiniteCylinder} are replaced by the Besov versions,
cf.~Section~\ref{sec:isotropicBesovManifolds}, of
Theorem~\ref{thm:compact},~\ref{thm:isotropicCompactSupport} respectively. 
Recalling that the $(n-1)$-vector $(a_0,\ldots,a_0)$ is abbreviated $a_0$, and likewise for $p_0$, this yields

\begin{theorem}\label{thm:traceToCornerFromFlatBoundary}
When $a'=(a_0,\ldots,a_0)\in [1,\infty[\,^n$, $p'=(p_0,\ldots,p_0)\in \,]0,\infty[\,^n$ and $s$ satisfies~\eqref{eq:conditionOnsCornerTraceFlat}, then $\gamma_\Gamma$ is a bounded operator,
\begin{equation*}
  \gamma_\Gamma: \overline B^{s,a'}_{p',q}(\Omega)\to B^{s-a_0/p_0,a_0}_{p_0,q}(\Gamma).
\end{equation*}
\end{theorem}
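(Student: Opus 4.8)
The plan is to carry over, with the time variable suppressed, the construction of the curved trace $\gamma$ from Section~\ref{subsec:traceCurvedBoundaryMathNr} together with the proof of Theorem~\ref{thm:traceAtCornerFromCurvedBoundary}, everything now taking place in the Besov scale. Since $a'=(a_0,\dots,a_0)$ and $p'=(p_0,\dots,p_0)$ by \eqref{eq:conditionsOn_ap_anisotropic2}, Lemma~\ref{lem:lambdaLemmaBesov} allows the rescaling $(s,a')\leftrightarrow s/a_0$, so that $\overline B^{s,a'}_{p',q}(\Omega)$ and $B^{s-a_0/p_0,a_0}_{p_0,q}(\Gamma)$ may be identified with isotropic Besov spaces; this makes the isotropic-only tools applicable, namely the Besov variant of Theorem~\ref{thm:isotropicCompactSupport} recorded in Section~\ref{sec:isotropicBesovManifolds}, the pointwise multiplier assertion \cite[4.2.2]{tri92}, and the Besov trace result \cite[Thm.~1]{FJS00} for $\gamma_{0,n}$ with its accompanying embedding \cite[Prop.~1]{FJS00}, the latter playing the role of \eqref{eq:CbEmbeddedInL1loc}.

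First I would give $\gamma_\Gamma u$ a meaning. Given $u\in\overline B^{s,a'}_{p',q}(\Omega)$ and any extension $f\in B^{s,a'}_{p',q}(\Rn)$, the embedding \cite[Prop.~1]{FJS00}, valid under \eqref{eq:conditionOnsCornerTraceFlat}, identifies $f$ with a locally integrable function that is continuous in the normal direction with values in $L_{\max(1,p_0)}$ of the tangential variables. In the boundary charts $\lambda\colon U_\lambda\to B$ of Definition~\ref{defn:smoothDomain} one sets $\kappa=\widetilde\gamma_{0,n}\circ\lambda$ and $f_\kappa:=f\circ(\lambda^{-1}(\cdot,0))\in L_{1,\loc}(B')$; these transform as in \eqref{eq:distFamily}, so Lemma~\ref{lem:hor634} yields a unique $v\in\cd'(\Gamma)$ with $v_\kappa=f_\kappa$, and the Besov analogue of the argument leading to \eqref{eq:interchangedRandE} shows $\widetilde\psi_\kappa v_\kappa\in\overline B^{s-a_0/p_0,a_0}_{p_0,q}(B')$, whence $v\in B^{s-a_0/p_0,a_0}_{p_0,q}(\Gamma)$ by the Besov version of Section~\ref{sec:LTspacesMfd}. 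I would then put $\gamma_\Gamma u:=v$ and check, exactly as in Section~\ref{subsubsec:theDef}, that this is independent of the extension $f$ (two extensions agree on $\Omega$, and letting $x_n\to0^+$ in local coordinates gives equal $L_{\max(1,p_0)}$-limits by the embedding) and independent of the atlas.

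Next I would prove boundedness by mirroring the proof of Theorem~\ref{thm:traceAtCornerFromCurvedBoundary}. Using the norm \eqref{eq:theNormOnBesovOverManifold} on $B^{s-a_0/p_0,a_0}_{p_0,q}(\Gamma)$, the finite atlas $\cf_0$ and the partition of unity $1=\sum_\kappa\psi_\kappa+\psi$ from Section~\ref{ssect:CompManf}, one bounds $\|\,\gamma_\Gamma u\,|B^{s-a_0/p_0,a_0}_{p_0,q}(\Gamma)\|^d$, with $d=\min(1,p_0,q)$, by a finite sum of terms of the form $\|\,(\psi_\kappa f)\circ(\lambda^{-1}(\cdot,0))\,|\overline B^{s-a_0/p_0,a_0}_{p_0,q}(B')\|^d$. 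A change of coordinates (Besov version of Theorem~\ref{thm:isotropicCompactSupport}) estimates each such term by $c\,\|\,(\psi_\kappa f)\circ\lambda^{-1}\,|\overline B^{s,a'}_{p',q}(B)\|^d$, the constant absorbing a finite factor $\sup|\det J|^d$; then the multiplier property of $\psi_\kappa\in C^\infty_{L_\infty}$ via \cite[4.2.2]{tri92} together with the boundedness of $\gamma_{0,n}$ from \cite[Thm.~1]{FJS00} bounds this by $c\,\|\,f\,|B^{s,a'}_{p',q}(\Rn)\|^d$. Fixing $f$ with $\|\,f\,|B^{s,a'}_{p',q}(\Rn)\|\le 2\|\,u\,|\overline B^{s,a'}_{p',q}(\Omega)\|$ through the quotient norm and using the $f$-independence of $\gamma_\Gamma u$ then completes the estimate.

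I do not expect a genuine obstacle here --- the statement is essentially a ``Besov, no time'' transcription of Theorem~\ref{thm:traceAtCornerFromCurvedBoundary}. The two points requiring care are: (i) confirming that the embedding \cite[Prop.~1]{FJS00} under \eqref{eq:conditionOnsCornerTraceFlat} indeed legitimizes the pointwise restriction to $\Gamma$ and the independence of $\gamma_\Gamma u$ from the extension; and (ii) ensuring that the manifold-space machinery of Section~\ref{sec:LTspacesMfd} --- the gluing in Lemma~\ref{lem:hor634}, the quasi-norm on a compact manifold \eqref{eq:theNormOnBesovOverManifold}, and the diffeomorphism invariance --- carries over verbatim to Besov spaces, which is precisely what Section~\ref{sec:isotropicBesovManifolds} provides. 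The remaining bookkeeping of cut-off functions and Jacobian factors is routine.
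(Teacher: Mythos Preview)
Your proposal is correct and follows essentially the same approach as the paper, which explicitly says to carry over the definition and results for $\gamma$ from Section~\ref{subsec:traceCurvedBoundaryMathNr} with the time variable removed, replacing the Lizorkin--Triebel ingredients by their Besov counterparts: \cite[Thm.~1]{FJS00} for $\gamma_{0,n}$, \cite[Prop.~1]{FJS00} for the embedding, and the Besov versions of Theorems~\ref{thm:compact} and~\ref{thm:isotropicCompactSupport} from Section~\ref{sec:isotropicBesovManifolds}. One small slip in your boundedness sketch: the passage from $\|(\psi_\kappa f)\circ(\lambda^{-1}(\cdot,0))\,|\,\overline B^{s-a_0/p_0,a_0}_{p_0,q}(B')\|$ to $\|(\psi_\kappa f)\circ\lambda^{-1}\,|\,\overline B^{s,a'}_{p',q}(B)\|$ uses the boundedness of $\gamma_{0,n}$ from \cite[Thm.~1]{FJS00}, not a change of coordinates; the diffeomorphism invariance comes afterwards to pass from $B$ back to $U_\lambda$ and then to $\Rn$.
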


We note that, as usual for Besov spaces, the sum exponent is not changed and, moreover, a formula similar to the one in~\eqref{eq:defOfTheTraceToTheCornerFromCurvedBoundary}  for $r_{0,\Gamma}$ holds for $\gamma_\Gamma$. 
I.e. for any extension $f$ of $w\in \overline B^{s,a'}_{p',q}(\Omega)$,
with~\eqref{eq:connectionBtwnvAndf}--\eqref{eq:defOfGamma} adapted to $\gamma_\Gamma$ for the
$f_\kappa$, we have when extension by 0 outside $\Gamma_\kappa$ is suppressed, 
\begin{align}\label{eq:traceToCornerAsSum}
\gamma_\Gamma w = \sum_{\kappa\in\cf_0} \psi_\kappa\cdot f_\kappa \circ\kappa.
\end{align}
Indeed, $\big(\sum_{\kappa\in\cf_0} \psi_\kappa\cdot f_\kappa \circ\kappa\big)_{\kappa_1} 
= \sum_{\kappa\in\cf_0} \psi_\kappa\circ \kappa_1^{-1}\cdot f_{\kappa_1}
= f_{\kappa_1} = (\gamma_\Gamma w)_{\kappa_1}$ for each $\kappa_1\in \cf_0$.
This formula is convenient in a discussion of compatibility conditions, cf.\ the next section.

\subsection{Applications}\label{subsec:TracesPaperApplicationsSubSec}
Without proof, we now indicate, by merely adapting~\cite[Ch.~6]{GS90} to the present set-up, what
the above considerations yield in a study of e.g.\ the heat equation. That is, for $\Delta
= \partial_{x_1}^2+\ldots + \partial_{x_n}^2$ we consider 
\begin{alignat}{4}
\partial_t u -\Delta u &= g \enskip&&\text{in}\enskip &\Omega&&&\times I , \label{eq:heatEquation1Traces}\\
\gamma u &= \varphi \enskip&&\text{on}\enskip &\Gamma&&&\times I \label{eq:heatEquation2Traces}, \\
r_0 u &= u_0 \enskip&&\text{on}\enskip &\Omega &&&\times \{0\}\label{eq:heatEquation3Traces}.
\end{alignat}

Under the assumption that $\vec a = (1,\ldots,1,2)$ and $\vec p =(p_0,\ldots,p_0,p_t)<\infty$, we
give in the theorem below necessary conditions for the existence of a solution $u$ in $\overline
F^{s,\vec a}_{\vec p,q}(\Omega\times I)$, when $\gamma$ and $r_0$
in~\eqref{eq:heatEquation2Traces},~\eqref{eq:heatEquation3Traces} make sense, i.e.\ when $s$ fulfils
the two conditions 
\begin{equation}\label{eq:tracesPaperHeatEquationS}\begin{split}
s &> \frac1{p_0} + (n-1)\Big(\frac{a_0}{\min(1,p_0,q)}-a_0\Big) +\Big(\frac{a_t}{\min(1,p_0,p_t,q)}-a_t\Big) \quad\text{and}\\
s &>\frac2{p_t} + n\Big(\frac1{\min(1,p_0)}-1\Big).
\end{split}\end{equation}

\begin{theorem} \label{thm:compatibility}
Let $\vec a$, $\vec p$ and $s$ satisfy the requirements above.
When the boundary value problem in~\eqref{eq:heatEquation1Traces}--\eqref{eq:heatEquation3Traces} has a solution $u \in \overline F^{s,\vec a}_{\vec p,q}(\Omega\times I)$, then the data $(g,\varphi,u_0)$ necessarily satisfy
\begin{equation}
g\in \overline F^{s-2,\vec a}_{\!\vec p,q}(\Omega\times I),\quad \varphi\in \overline
F^{s-\frac1{p_0},a''}_{\!p'',p_0}(\Gamma\times I),\quad u_0\in \overline
B^{s-\frac2{p_t}}_{p_0,p_t}(\Omega).
\label{eq:data} 
\end{equation}
Moreover, for all $l\in\N_0$ fulfilling both
\begin{equation}\label{eq:tracesPaperHeatEquationl1}
\begin{split}
  2l &< s - \frac1{p_0}-\frac2{p_t}-(n-1)\Big(\frac1{\min(1,p_0)}-1\Big) \quad\text{and}\\
  2l &< s - \frac1{p_0} - (n-1)\Big(\frac{a_0}{\min(1,p_0,q)}-a_0\Big) -\Big(\frac{a_t}{\min(1,p_0,p_t,q)}-a_t\Big),
\end{split}\end{equation}
the data are compatible in the sense that
\begin{equation}\label{eq:finalLabelPutHere}
r_{0,\Gamma} \partial_t^l \varphi 
= \gamma_\Gamma \Big(\Delta^l u_0 + \sum_{j=0}^{l-1} \Delta^j r_0 (\partial_t^{l-1-j}g)\Big),
\end{equation}
which reduces to $r_{0,\Gamma} \varphi = \gamma_\Gamma u_0$ for $l=0$ (the sum is void).
\end{theorem}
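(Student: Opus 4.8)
The plan is to obtain the three regularity statements in \eqref{eq:data} directly from known mapping properties, and then the compatibility relations \eqref{eq:finalLabelPutHere} by differentiating the heat equation in $t$ and tracing to the corner $\Gamma\times\{0\}$ along two routes. Since $\vec a=(1,\dots,1,2)$, both $\partial_t$ and $\Delta=\partial_{x_1}^2+\dots+\partial_{x_n}^2$ carry anisotropic weight $2$, so Lemma~\ref{derivativeTraces}(i), applied to any extension of $u$, gives $g=\partial_t u-\Delta u\in\overline F^{s-2,\vec a}_{\vec p,q}(\Omega\times I)$. The first inequality in \eqref{eq:tracesPaperHeatEquationS} is exactly \eqref{eq:thm22} for this $\vec a$, so Theorem~\ref{thm:boundednessCurvedTrace} yields $\varphi=\gamma u\in\overline F^{s-1/p_0,a''}_{p'',p_0}(\Gamma\times I)$; the second inequality is \eqref{spqn-cnda} for this $\vec a$, so Theorem~\ref{main11} yields $u_0=r_0u\in\overline B^{s-2/p_t,a'}_{p',p_t}(\Omega)$, which coincides with the isotropic space $\overline B^{s-2/p_t}_{p_0,p_t}(\Omega)$ because $a'=(1,\dots,1)$ (cf.\ Lemma~\ref{lem:lambdaLemmaBesov} and \eqref{eq:conditionsOn_ap_anisotropic2}).

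For the compatibility relations I would first establish, by induction on $l$ using $\partial_t\Delta^l u=\Delta^l(\partial_t u)=\Delta^l(\Delta u+g)=\Delta^{l+1}u+\Delta^l g$, the identity
\[
  \partial_t^l u=\Delta^l u+\sum_{j=0}^{l-1}\Delta^j\,\partial_t^{l-1-j}g ,
\]
which holds in $\overline F^{s-2l,\vec a}_{\vec p,q}(\Omega\times I)$ by iterating Lemma~\ref{derivativeTraces}(i), while likewise $\partial_t^{l-1-j}g\in\overline F^{s-2(l-j),\vec a}_{\vec p,q}(\Omega\times I)$.

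Now fix $l$ as in \eqref{eq:tracesPaperHeatEquationl1} and trace along the two routes. Along one route, the second inequality in \eqref{eq:tracesPaperHeatEquationl1} says precisely that $s-2l$ satisfies \eqref{eq:thm22}, so $\gamma$ applies to $\partial_t^l u$; as the curved trace only involves the spatial variables it commutes with $\partial_t$, whence $\gamma(\partial_t^l u)=\partial_t^l\varphi\in\overline F^{s-1/p_0-2l,a''}_{p'',p_0}(\Gamma\times I)$, and the first inequality in \eqref{eq:tracesPaperHeatEquationl1} says precisely that $s-1/p_0-2l$ satisfies the condition \eqref{eq:conditionOnsCornerTrace} needed for $r_{0,\Gamma}$, producing the left-hand side of \eqref{eq:finalLabelPutHere} (Theorem~\ref{thm:traceAtCornerFromCurvedBoundary}). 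Along the other route, the first inequality in \eqref{eq:tracesPaperHeatEquationl1} forces $s-2l$, and a fortiori each $s-2(l-j)$, to satisfy \eqref{spqn-cnda} (an elementary comparison using $\tfrac1{\min(1,p_0)}-1\le\tfrac1{p_0}$), so $r_0$ applies to $\partial_t^l u$ and to each $\partial_t^{l-1-j}g$; since $r_0$ commutes with the spatial operators $\Delta^j$, the displayed identity gives $r_0(\partial_t^l u)=\Delta^l u_0+\sum_{j=0}^{l-1}\Delta^j r_0(\partial_t^{l-1-j}g)\in\overline B^{s-2/p_t-2l,a'}_{p',p_t}(\Omega)$, and the same first inequality gives that $s-2/p_t-2l$ satisfies \eqref{eq:conditionOnsCornerTraceFlat}, so $\gamma_\Gamma$ applies (Theorem~\ref{thm:traceToCornerFromFlatBoundary}), producing the right-hand side of \eqref{eq:finalLabelPutHere}. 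Both sides then lie in $B^{s-1/p_0-2/p_t-2l,a_0}_{p_0,p_t}(\Gamma)$.

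It remains to identify the two routes, i.e.\ to prove $r_{0,\Gamma}\circ\gamma\circ\partial_t^l=\gamma_\Gamma\circ r_0\circ\partial_t^l$ as maps $\overline F^{s,\vec a}_{\vec p,q}(\Omega\times I)\to B^{s-1/p_0-2/p_t-2l,a_0}_{p_0,p_t}(\Gamma)$; for $l=0$ this is exactly \eqref{eq:compatibilityConditions1}. By the preceding paragraph both composites are continuous, so it suffices to verify the identity on a dense subset, and for $q<\infty$ the restrictions to $\Omega\times I$ of Schwartz functions form one. For a smooth $u$ every operator in sight reduces to classical evaluation or differentiation, and \eqref{eq:finalLabelPutHere} collapses to the obvious statement that restricting $\partial_t^l$ of $\partial_t u-\Delta u=g$ to $\Gamma\times\{0\}$ yields both sides pointwise. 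The case $q=\infty$ follows from the $q<\infty$ case via the embedding $\overline F^{s,\vec a}_{\vec p,\infty}(\Omega\times I)\subset\overline F^{s',\vec a}_{\vec p,1}(\Omega\times I)$ for every $s'<s$ (immediate by summing a geometric series), with $s'$ chosen so close to $s$ that the fixed $l$ still satisfies \eqref{eq:tracesPaperHeatEquationl1} with $s'$ in place of $s$ — admissible since these are strict inequalities and passing from $q=\infty$ to $q'=1$ changes none of the quantities $\min(1,p_0,q)$, $\min(1,p_0,p_t,q)$ — and since $\gamma$, $r_0$, $r_{0,\Gamma}$, $\gamma_\Gamma$ are defined without reference to the sum exponent. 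The main obstacle is the bookkeeping: confirming that the two inequalities in \eqref{eq:tracesPaperHeatEquationl1} are exactly what is needed at each of the four tracing steps, together with the routine but not entirely trivial check, from the manifold definitions in Section~\ref{chap:domains}, that $\partial_t$ commutes with $\gamma$ and with the curved corner trace while the $\Delta^j$ commute with $r_0$.
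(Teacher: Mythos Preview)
The paper explicitly states this theorem \emph{without proof} (``Without proof, we now indicate, by merely adapting~\cite[Ch.~6]{GS90} to the present set-up\dots''), so there is nothing to compare against; your proposal is therefore being judged on its own merits.

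Your argument is correct and is precisely the adaptation of the Grubb--Solonnikov strategy the paper alludes to. The bookkeeping is carried out cleanly: you correctly identify that the two inequalities in~\eqref{eq:tracesPaperHeatEquationl1} are exactly the thresholds~\eqref{eq:thm22} and~\eqref{eq:conditionOnsCornerTrace} (respectively~\eqref{spqn-cnda} and~\eqref{eq:conditionOnsCornerTraceFlat}) shifted by $2l$ and by the first trace loss, and your elementary inequality $\tfrac{1}{\min(1,p_0)}-1\le\tfrac{1}{p_0}$ is what is needed to pass from the first line of~\eqref{eq:tracesPaperHeatEquationl1} to~\eqref{spqn-cnda} at level $s-2l$. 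The inductive identity for $\partial_t^l u$ is correct, and the $q=\infty$ reduction via $F^{s,\vec a}_{\vec p,\infty}\hookrightarrow F^{s',\vec a}_{\vec p,1}$ works because, as you observe, the minima in~\eqref{eq:tracesPaperHeatEquationl1} are unaffected.

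One remark on the density step for~\eqref{eq:compatibilityConditions1}: it is sound, but a more direct route is available from the paper's own formulas. Using~\eqref{eq:connectionBtwnvAndf}--\eqref{eq:defOfGamma} and~\eqref{eq:defOfTheTraceToTheCornerFromCurvedBoundary} one computes, for an extension $f$ of $\partial_t^l u$, that $r_{0,\Gamma}\gamma(\partial_t^l u)(x)=\sum_{\kappa}\psi_\kappa(x)\,f\big(\lambda^{-1}(\kappa(x),0),0\big)$; and~\eqref{eq:traceToCornerAsSum} together with~\eqref{eq:r0AppliedToFspace}, taking $f(\cdot,0)$ as the extension of $r_0(\partial_t^l u)$, gives the identical expression for $\gamma_\Gamma r_0(\partial_t^l u)$. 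This bypasses the density argument entirely and handles all $q$ at once. Similarly, the commutation $\gamma\partial_t=\partial_t\gamma$ can be read off directly from the definition $(\gamma u)_{\kappa\times\id_\R}=f\circ(\lambda^{-1}(\cdot,0)\times\id_\R)$, since the charts are of the form $\lambda\times\id_\R$ and $\gamma_{0,n}$ plainly commutes with $\partial_t$ on the continuous functions in~\eqref{eq:CbEmbeddedInL1loc}.
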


We recall that the corrections containing the minima
in~\eqref{eq:tracesPaperHeatEquationS},~\eqref{eq:tracesPaperHeatEquationl1} amount to 0 in the
classical case in which $\vec p,q\ge 1$. 

\begin{remark} \label{rem:compatibility}
In the construction of solutions to
e.g.~\eqref{eq:heatEquation1Traces}--\eqref{eq:heatEquation3Traces}, it is well known
from work of Grubb and Solonnikov~\cite[Thm.~6.3]{GS90} that the problem is solvable for
$p_0=2=p_t$, provided the data $(g,\varphi,u_0)$ are given as in \eqref{eq:data} and subjected to the
compatibility \emph{conditions} in \eqref{eq:tracesPaperHeatEquationl1}--\eqref{eq:finalLabelPutHere}.  
In an extension of this to general $p_0, p_t$, 
a first step could be to reduce to problems having $\varphi\equiv 0$, $u_0\equiv 0$. 
By linearity, the surjectivity of $\gamma$ in Theorem~\ref{thm:boundednessCurvedTrace} allows a
first reduction to the case $\varphi\equiv0$. Secondly, reduction to $u_0\equiv0$ is obtained by adding
and subtracting $\tilde u=r_{\Omega\times I}Q_\Omega e_\Omega u_0$ (the compatibility condition for
$l=0$ entails that $e_\Omega u_0$ is defined, and for low $s$ has the same regularity as $u_0$), 
for here $r_0\tilde u= u_0$ holds as well as $\gamma \tilde u=0$ because the operator $Q_\Omega$ 
preserves support in $\overline\Omega$, as shown 
in Theorem~\ref{thm:constructionOfRightInverseOnCylinderPerservingSupport}. 
(Details were given by the second author \cite[Ch.~7]{SMHphd}.)
\end{remark}

\providecommand{\bysame}{\leavevmode\hbox to3em{\hrulefill}\thinspace}
\vspace{\baselineskip}

\end{document}